\documentclass[12pt]{amsart}
\usepackage{mathrsfs}
\usepackage{}

\usepackage{amsmath}
\usepackage{amssymb}
\usepackage{amsfonts}
\usepackage{amsthm}
\usepackage{enumerate}
\usepackage{hyperref}
\usepackage{color}
\usepackage{psfrag}
\usepackage[all]{xy}
\usepackage{tikz}
\usepackage{epstopdf}
\usepackage{graphicx}
\usepackage{subcaption}
\usepackage{skak}

\theoremstyle{plain}
\newtheorem{thm}{Theorem}[section]

\newtheorem{prop}[thm]{Proposition}
\newtheorem{lem}[thm]{Lemma}
\newtheorem{remark}[thm]{Remark}
\theoremstyle{definition}
\newtheorem{dfn}[thm]{Definition}
\newtheorem{ob}[thm]{Observation}
\newtheorem{ex}[thm]{Example}
\newtheorem{conj}[thm]{Conjecture}
\newtheorem{cor}[thm]{Corollary}
\newtheorem{setup}[thm]{Setup}

\newtheorem*{main}{Main Theorem}

\newcommand{\al}{\alpha}
\newcommand{\seq}{\subseteq}

\begin{document}


\title[A class of polyocollection ideals with quadratic Gr\"{o}bner bases]{A class of polyocollection ideals with quadratic Gr\"{o}bner bases}


\author[Wang]{H. Wang}

\address{H. Wang, School of mathematics and statistics,  Suzhou university of technology, Changshu 215500, China}

\email{wanghong@szut.edu.cn}

\author[Guo]{J. Guo$^{*}$}

\address{J. Guo, School of Mathematics and Statistics, Hainan University, Haikou 570228, China}

\email{guojinecho@163.com}

\let\thefootnote\relax\footnotetext{*Corresponding Author: J. Guo -- {\tt guojinecho@163.com}}


\begin{abstract}
In 2024, Cisto et al. introduced polyocollections and polyocollection ideals, which generalize the notions of collections of cells and inner $2$-minors ideals, respectively. In this paper, we naturally extend the notions and results about zig-zag walk, zero-sum condition, rook number, and switching polynomial from collections of cells to polyocollections, and we define a class of polyocollections, called natural polyocollections, whose structure is similar to that of collections of cells.
We focus on natural polyocollections whose polyocollection ideals admit quadratic Gr\"obner bases with respect to lexicographic order induced by some specific orders on vertices. Using the zero-sum condition, we prove the primality of this class of polyocollections, which shows that the zig-zag conjecture proposed by Mascia et al. holds true for collections of cells satisfying the aforementioned properties. Moreover, we show that $h$-polynomials and regularities of the coordinate rings of this class of polyocollections are equal to their switching rook polynomials and rook numbers, respectively. These results give positive answers to the switching rook polynomial conjecture proposed by Jahangir and Navarra, and the rook number open problem proposed by Rinaldo and Romeo for collections of cells satisfying the aforementioned properties.
\end{abstract}


\let\thefootnote\relax\footnotetext{2020 {\it Mathematics Subject Classification}:
 05B50, 05E40, 13C70.
}



\keywords{Polyocollection,  Gr\"{o}bner bases, Prime ideals, Hilbert series, Regularity}


\thanks{This research was supported by the National Natural Science Foundation of China (Grant No. 12361003), the Natural Science Research of the Jiangsu Higher Education Institutions of China(Grant No. 25KJB110015), and the Hainan Provincial Natural Science Foundation of China (Grant No. 126MS0005).}


\maketitle


\section{Introduction} \label{sec1}

Polyominoes are two-dimensional objects obtained by joining unit squares edge-to-edge, which are introduced by Solomon Golomb in 1953. Originating in recreational mathematics and tiling problems. As investigations deepened, mathematicians needed to handle more general structures. This leads to the notion of a collection of cells, which is a finite set of unit squares in $\mathbb{Z}^2$. In 2012, Qureshi, in \cite{2012q}, introduced the  ideal $I_{\mathcal{P}}$ of inner $2$-minors  of a collection $\mathcal{P}$ of cells as the ideal in $S_{\mathcal{P}} = \mathbb{K}[x_{\mathbf{v}} \mid \mathbf{v} \in V(\mathcal{P})]$ generated by the inner $2$-minors corresponding to its inner intervals, where $ V(\mathcal{P})$ denotes the vertex set of  $\mathcal{P}$. The quotient ring $\mathbb{K}[{\mathcal{P}}]:=S_{\mathcal{P}}/I_{\mathcal{P}}$ is called the {\it coordinate ring} of  $\mathcal{P}$. This ideal is sometimes referred to as the inner $2$-minors ideal. If $\mathcal{P}$ is a polyomino, $I_{\mathcal{P}}$ is called a polyomino ideal. Since then, algebraic and homological properties of inner $2$-minors ideals have attracted the interest of many researchers. Primality, Gr\"{o}bner bases, Cohen--Macaulayness, Hilbert series, regularity of these ideals have been widely studied in \cite{2025cjn, 2023cn, 2022cnu, 2023cnv, 2023dn, 2021ehq, 2015hq, 2014hs, 2015hq1, 2024jn, 2020mrr, 2022mrr, 2025nrr, 2012q, 2022qrr, 2017q, 2021rr,2018s, ZGW, ZGW2}.

Among these properties,
primality is one of the most fundamental and well-studied. However, a complete characterization about this property remains a challenging problem. Usually, we say that a collection of cells is prime if its inner 2-minors ideal is prime. In \cite{2012q}, it is proved that the collections of cells whose connected components are row or column convex are prime. In \cite{2015hq}, a special class of prime polyominoes,  called balanced polyominoes, is introduced. Moreover, it is proved in \cite{2014hs} that a polyomino is balanced if and only if it is simple, thereby establishing that simple polyominoes are prime. Later, in \cite{2017q}, the primality of simple polyominoes is also verified by showing that their coordinate rings are isomorphic to the edge rings of weakly chordal bipartite graphs. In \cite{2022cnu}, the method of \cite{2017q} is extended to  more general case of simple collections of cells, proving that such collections are also prime. The focus has now turned to nonsimple collections of cells, which are much more complicated. It is proved in \cite{2015hq1} that the polyomino obtained by removing a convex polyomino from a rectangular polyomino is prime, which demonstrates that certain nonsimple polyominoes can still retain primality. A deeper investigation  of primality in nonsimple polyominoes leads to the development of the zig-zag walk, which is introduced by Mascia et al.. It is known that a prime polyomino contains no zig-zag walk. The following converse is conjectured.\\[-0.4cm]
\begin{conj}{ (\cite[Conjecture 4.6]{2020mrr})}\label{conj2}
Let $\mathcal{P}$ be a polyomino. Then $\mathcal{P}$ is prime if and only if  $\mathcal{P}$ contains no zig-zag walk.
\end{conj}

To date, all known results concerning primality on all kinds of polyominoes are consistent with this conjecture, such as  grid polyominoes (\cite{2020mrr}), closed path polyominoes (\cite{2023cn}), weakly closed path polyominoes (\cite{2022cnu}), certain polyominoes whose polyomino ideals have quadratic Gr\"obner bases (\cite{2022mrr}),  SR-polyominoes (\cite{ZGW}) and extended bipartite collections of cells (\cite{ZGW2}).

A binomial ideal is prime if and only if it is a toric ideal. Hence, a standard way to verify the primality of a collection of cells is to prove that its inner $2$-minors ideal coincides with a toric ideal. For example,  primality is established in \cite{2023cn,2022cnu,2020mrr} by showing that the ideals under discussion equal the toric ideals corresponding to some toric rings. In \cite{2015hq}, a lattice $\Lambda_{\mathcal{P}}$ associated with a polyomino $\mathcal{P}$ is defined as the linear space over $\mathbb{Z}$ spanned by the vectors corresponding to the cells of $\mathcal{P}$. It is proved that $\Lambda_{\mathcal{P}}$ is saturated, and consequently, the lattice ideal $I_{\Lambda_{\mathcal{P}}}$ is a toric ideal. These definitions and conclusions extend naturally to collections of cells. In \cite{2022mrr,ZGW,ZGW2}, the primality of the ideal $I_{\mathcal{P}}$ is obtained by proving the equality with the corresponding lattice ideal $I_{\Lambda_{\mathcal{P}}}$. It is worth noting that, in \cite{ZGW}, the zero-sum condition is introduced to characterize the elements in $I_{\Lambda_{\mathcal{P}}}$, and consequently it plays an important role in verifying the equality between $I_{\mathcal{P}}$ and $I_{\Lambda_{\mathcal{P}}}$. For instance, this condition is used in \cite{ZGW2}  to establish  primality for SR-polyominoes and extended bipartite collections of cells.

Besides the primality, another hot topic concerns characterizing the Hilbert series and regularity (see \cite{2025nq}) of the coordinate ring of a collection of cells in terms of its rook polynomial or switching rook polynomial. For a collection $\mathcal{P}$ of cells, the {\it rook polynomial} of $\mathcal{P}$ is \\[-0.2cm]
$$r_{\mathcal {P}}(t)=\sum_{j=0}^{r(\mathcal {P})}r_j(\mathcal {P})t^j,$$
where  $r(\mathcal {P})$ is the rook number (the maximum number of  non-attacking rooks that can be placed on $\mathcal {P}$), $r_0(\mathcal {P})=1$, and $r_j(\mathcal {P})$ denotes the number of $j$-rook configurations (placements of $j$ non-attacking rooks) for all $1\leq j\leq r(\mathcal {P})$. See \cite{2021rr} for more details. In \cite{2021ehq}, it is shown that the coordinate ring of an $L$-convex polyomino $\mathcal{P}$ is isomorphic to the edge ring of a Ferrers graph, and consequently its regularity equals $r(\mathcal {P})$. This result sparks further interest in rook polynomials.  In \cite{2021rr}, it is proved that the $h$-polynomial and  regularity of coordinate ring for a simple thin polyomino $\mathcal{P}$ are the rook polynomial $r_{\mathcal {P}}(t)$ and rook number $r(\mathcal {P})$, respectively. Moreover, it is conjectured that for a polyomino $\mathcal{P}$, the $h$-polynomial of $\mathbb{K}[{\mathcal{P}}]$ equals $r_{\mathcal {P}}(t)$ if and only if $\mathcal{P}$ is thin, while an open question is proposed: whether the regularity of  $\mathbb{K}[{\mathcal{P}}]$ always equals $r(\mathcal {P})$ for any polyomino. For some other classes of thin polyominoes, such as closed path polyominoes and grid polyominoes, the conjecture holds  and the question has a positive answer, refer to \cite{2025cjn} and \cite{2023dn}. As many authors investigate the Hilbert series and regularity of coordinate rings for non-thin collections, the switching rook polynomial has emerged. In  \cite{2022qrr}, an equivalence relation $\overset{\scriptscriptstyle\mathcal{P}}{\thicksim}$ is introduced on  the set of rook configurations. The {\it  switching rook polynomial} is \\[-0.2cm]
$$\widetilde{r}_{\mathcal {P}}(t):=\sum_{j=0}^{r(\mathcal {P})}\widetilde{{r}}_j(\mathcal {P})t^j,$$
where $\widetilde{{r}}_0(\mathcal {P})=1$ and $\widetilde{{r}}_j(\mathcal {P})$ is the number of equivalence classes of $j$-rook configurations of $\mathcal {P}$ under the equivalence relation $\overset{\scriptscriptstyle\mathcal{P}}{\thicksim}$ for all $1\leq j\leq r(\mathcal {P})$.
More details can be  found in \cite{2022qrr}. Observe that  $r_{\mathcal {P}}(t)=\widetilde{r}_{\mathcal {P}}(t)$ exactly when $\mathcal {P}$ is thin. Based on  \cite[Conjecture 4.9]{2024jn} and \cite[Question 4.6]{2021rr}, the following  conjecture can be  formulated.

\begin{conj}\label{conj1}
Let $\mathcal{P}$ be a collection of cells. Then  $h$-polynomial  of  $\mathbb{K}[{\mathcal{P}}]$ is equal to $\widetilde{r}_{\mathcal {P}}(t)$ and its regularity is equal to $r(\mathcal {P})$.
\end{conj}

This conjecture has been verified for parallelogram polyominoes (\cite{2022qrr}), frame polyominoes (\cite{2024jn}) and  collections of cells whose inner $2$-minors ideals have quadratic Gr\"obner bases, and each of whose weakly connected components is convex (\cite{2025nrr}).

In 2024, Cisto et al., in \cite{2024cnv}, generalized the concept of collections of cells and introduced polyocollections, whose structures are similar to that of collections of cells.
They also defined polyocollection ideals by analogy with the inner $2$-minors ideals. See section \ref{sec2} for more details. Let $\mathcal{C}$ be a polyocollection and $I_{\mathcal{C}}$ its polyocollection ideal, and let $V(\mathcal {C})$ be its vertex set. For any $\mathbf{a},\mathbf{b}\in V(\mathcal {C})$ with $\mathbf{a}=(i,j)$ and $\mathbf{b}=(k,\ell)$, define $x_{\mathbf{a}}<x_{\mathbf{b}}$ if and only if  $i<k$, or $i=k$ and $j<\ell$. Let $<_{\text{lex}}$ be the lexicographical order induced by this order of the variables. In \cite[Theorem 4.1]{2012q},  for the case that $\mathcal{C}$ is a collection of cells, Qureshi gave an equivalent combinatorial characterization for $I_{\mathcal{C}}$ having a reduced quadratic Gr\"obner basis with respect to the monomial order $<_{\text{lex}}$.
Note that polyocollections have similar structures to collections of cells, especially inner intervals on polyocollections share similar intrinsic meaning with inner intervals on collections of cells (see section \ref{sec2} for details). The following theorem can be derived naturally.

\begin{thm} \label{condition}
Let $\mathcal{C}$ be a polyocollection. The set of inner $2$-minors of $\mathcal {C}$ forms a reduced quadratic Gr\"obner basis with respect to $<_{\text{\em lex}}$  if and only if $\mathcal {C}$ satisfies the following condition$:$ \\[-0.4cm]
\begin{itemize}
\item[$(\dag)$] For any two inner intervals $[\mathbf{a}, \mathbf{b}]$ and $[\mathbf{b}, \mathbf{c}]$ of $\mathcal {C}$, at least one of $[\mathbf{e},\mathbf{c}]$ or $[\mathbf{d},\mathbf{c}]$ is an inner interval of $\mathcal {C}$, where $\mathbf{d}$ and $\mathbf{e}$ are the anti-diagonal corners of $[\mathbf{a}, \mathbf{b}]$, {\em see Figure \ref{fig1} (a)}.  \\[-0.4cm]
\end{itemize}
\end{thm}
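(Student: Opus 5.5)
We follow Qureshi's proof of \cite[Theorem 4.1]{2012q}, noting that it uses only the combinatorics of inner intervals and their corners, which polyocollections share with collections of cells. Fix an inner interval $[\mathbf{a},\mathbf{b}]$ with anti-diagonal corners $\mathbf{d}$ (upper left) and $\mathbf{e}$ (lower right), and let $f_{\mathbf{a}\mathbf{b}}=x_{\mathbf{a}}x_{\mathbf{b}}-x_{\mathbf{d}}x_{\mathbf{e}}$. Under $<_{\text{lex}}$ the smallest of the four variables is $x_{\mathbf{a}}$, since it has the smallest first coordinate and, among the two corners with that first coordinate, the smaller second coordinate. Hence $x_{\mathbf{a}}$ divides neither $x_{\mathbf{d}}$ nor $x_{\mathbf{e}}$, and in lex order the monomial $x_{\mathbf{d}}x_{\mathbf{e}}$, which avoids this smallest variable, is the larger one. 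So $\mathrm{in}(f_{\mathbf{a}\mathbf{b}})=x_{\mathbf{d}}x_{\mathbf{e}}$ is the anti-diagonal product. Reducedness is then automatic. Distinct inner intervals have distinct anti-diagonal pairs $\{\mathbf{d},\mathbf{e}\}$. A non-leading term $x_{\mathbf{a}}x_{\mathbf{b}}$ cannot equal another leading term $x_{\mathbf{d}'}x_{\mathbf{e}'}$, because $\mathbf{a}$ is the lower left corner of its interval while $\mathbf{d}',\mathbf{e}'$ are an upper-left/lower-right pair. So the task reduces to the Gröbner basis property.

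\textbf{Sufficiency.} Assume $(\dag)$ and apply Buchberger's criterion. S-pairs whose leading terms are coprime reduce to zero automatically, so we only need pairs whose leading terms share a variable. Up to symmetry, the shared corner $\mathbf{b}$ is a lower-right corner of one interval and an upper-left corner of the other, or the same type of corner in both. For the mixed configuration, the shared vertex is the situation drawn in $(\dag)$ after relabelling, with the two intervals arranged as $[\mathbf{a},\mathbf{b}]$ and $[\mathbf{b},\mathbf{c}]$. The S-polynomial is then a difference of two degree-three monomials. Using $(\dag)$, we reduce it by $f_{\mathbf{e}\mathbf{c}}$ or $f_{\mathbf{d}\mathbf{c}}$, and then by the minor of the union rectangle or the residual rectangle. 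We must check that these rectangles are inner intervals, because a sub-rectangle of an inner interval spanned by vertices of $\mathcal{C}$ is again inner. For the same-corner configurations (intervals sharing an edge line), the S-polynomial reduces using the minor of the interval bounded by the two non-shared corners. This interval is inner because it is contained in the union of two inner intervals sharing a side. These are exactly the computations of \cite{2012q}, and we would redo them in the polyocollection setting.

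\textbf{Necessity.} Suppose $(\dag)$ fails for $[\mathbf{a},\mathbf{b}]$ and $[\mathbf{b},\mathbf{c}]$. We form the S-polynomial $S$ of their minors and reduce it as far as possible. Neither $[\mathbf{e},\mathbf{c}]$ nor $[\mathbf{d},\mathbf{c}]$ is inner, so no leading term $x_{\mathbf{d}'}x_{\mathbf{e}'}$ divides either monomial of the resulting nonzero binomial. It lies in $I_{\mathcal{C}}$ but its initial term is not in the initial ideal generated by the quadratics, so they are not a Gröbner basis.

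The main obstacle is the bookkeeping. We must ensure that, in a polyocollection, every auxiliary rectangle used in the reductions is genuinely an inner interval. This requires precise facts about how inner intervals of polyocollections can be glued and cut, namely the analogue of the cell-based properties, and these must be justified from the definitions in Section~\ref{sec2}.
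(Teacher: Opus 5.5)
Your plan of adapting Qureshi's Buchberger argument from \cite[Theorem 4.1]{2012q} is the route the paper takes; the paper only asserts that the adaptation is natural. However, your first step computes the wrong initial term, and everything after it inherits the error. In a lexicographic order you compare exponents of the \emph{largest} variable first. For an inner interval $[\mathbf{a},\mathbf{b}]$, the largest of the four corner variables is $x_{\mathbf{b}}$: it has maximal first coordinate, and the larger second coordinate of the two right-hand corners. Hence $\mathrm{in}_{<_{\text{lex}}}(f_{[\mathbf{a},\mathbf{b}]})=x_{\mathbf{a}}x_{\mathbf{b}}$, the \emph{diagonal} product, which is exactly what Remark~\ref{graph} records. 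The rule you used (``the monomial avoiding the smallest variable is larger'') is the reverse-lexicographic criterion.

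This error changes the argument substantively.
\begin{itemize}
\item With anti-diagonal leading terms, the two minors in the $(\dag)$ configuration (intervals meeting only at $\mathbf{b}$, the upper-right corner of one and the lower-left corner of the other) have coprime leading terms. Their S-polynomial therefore reduces to $0$ by the product criterion, and your necessity argument produces no obstruction.
\item The failure of $(\dag)$ says that $\{\mathbf{e},\mathbf{c}\}$ and $\{\mathbf{d},\mathbf{c}\}$, which lie in lower-left/upper-right position, are not diagonals of inner intervals. That has no bearing on divisibility by anti-diagonal products.
\item For anti-diagonal leading terms, the critical mixed pairs are those where the upper-left corner of one interval is the lower-right corner of another. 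That is the mirror-image configuration of $(\dag\dag)$, the condition for $<'_{\text{lex}}$, and no relabelling turns it into $(\dag)$.
\item Your sufficiency paragraph contradicts your first paragraph: reducing by $f_{\mathbf{ec}}$ or $f_{\mathbf{dc}}$ only helps if their leading terms are $x_{\mathbf{e}}x_{\mathbf{c}}$ and $x_{\mathbf{d}}x_{\mathbf{c}}$, i.e.\ diagonal.
\end{itemize}

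With the correct leading terms the plan goes through. The critical pairs are intervals sharing the lower-left corner, sharing the upper-right corner, or meeting as in $(\dag)$. In the last case $S=x_{\mathbf{a}}x_{\mathbf{g}}x_{\mathbf{h}}-x_{\mathbf{c}}x_{\mathbf{d}}x_{\mathbf{e}}$, where $\mathbf{g},\mathbf{h}$ are the anti-diagonal corners of $[\mathbf{b},\mathbf{c}]$. Its leading monomial is $x_{\mathbf{c}}x_{\mathbf{d}}x_{\mathbf{e}}$. This is divisible by some $x_{\mathbf{p}}x_{\mathbf{q}}$ with $[\mathbf{p},\mathbf{q}]$ inner if and only if $[\mathbf{d},\mathbf{c}]$ or $[\mathbf{e},\mathbf{c}]$ is inner. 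That gives necessity at once and the first reduction step for sufficiency. Reducedness follows as you argue, with the roles of diagonal and anti-diagonal swapped.

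The bookkeeping you flag is that suitable sub-rectangles of inner intervals are again inner; the paper glosses over it too. You can handle it by showing that tilings of inner intervals by basic intervals are unique and compatible on overlaps, via a bottom-up sweep. The key input is the polyocollection axiom: two distinct basic intervals whose edges overlap in a segment must meet in a common edge.
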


Similarly, for the order defined by  $x_{\mathbf{a}}>x_{\mathbf{b}}$ if and only if  $i<k$, or $i=k$ and $j>\ell$, where $\mathbf{a}=(i,j)$ and $\mathbf{b}=(k,\ell)$, let $<'_{\text{lex}}$ be the lexicographical order induced by this order of the variables. Then one can easily verify that the set of inner $2$-minors of $\mathcal {C}$ forms a reduced quadratic Gr\"obner basis with respect to $<'_{\text{lex}}$  if and only if $\mathcal {C}$ satisfies the following condition$:$
\begin{itemize}
{\em \item[$(\dag\dag)$] For any two inner intervals $[\mathbf{a}, \mathbf{b}]$ and $[\mathbf{c},\mathbf{d}]$ of $\mathcal {C}$ whose anti-diagonals are $\mathbf{e}$, $\mathbf{f}$ and $\mathbf{e},\mathbf{g}$, respectively, at least one of $[\mathbf{h},\mathbf{e}]$ or $[\mathbf{e},\mathbf{i}]$ is an inner interval of $\mathcal {C}$}, see Figure \ref{fig1} (b).\\[-0.4cm]
\end{itemize}

\begin{figure}[h]
\centering
    \begin{tikzpicture}
    \tikzstyle{every node}=[font=\small,scale=0.9]
    \coordinate[](a1)at(0,0){};\coordinate[](a2)at(2,0){};\coordinate[](a3)at(4,0){};
    \coordinate[](b1)at(0,1){};\coordinate[](b2)at(2,1){};\coordinate[](b3)at(4,1){};
    \coordinate[](c1)at(0,2){};\coordinate[](c2)at(2,2){}; \coordinate[](c3)at(4,2){};
    \draw[fill=black!16,line width=0.8pt]
    (a1)--(a2)--(b2)--(b1)--(a1)--cycle;
    \draw[fill=black!16,line width=0.8pt]
    (b2)--(b3)--(c3)--(c2)--(b2)--cycle;
    \draw[densely dashed, line width=0.8pt] (b1)--(c1)--(c2);
    \draw[densely dashed, line width=0.8pt] (a2)--(a3)--(b3);
 \coordinate[label=225:$\mathbf{a}$]()at(a1){};
 \coordinate[label=225:$\mathbf{b}$]()at(b2){};
 \coordinate[label=225:$\mathbf{c}$]()at(c3){};
 \coordinate[label=225:$\mathbf{d}$]()at(a2){};
 \coordinate[label=225:$\mathbf{e}$]()at(b1){};

   \node[below,scale=1] at (2,-0.5) {(a)\hspace{0.15cm} condition $(\dag)$};

   \tikzstyle{every node}=[font=\small,scale=0.9]
    \coordinate[](d1)at(6,0){};\coordinate[](d2)at(8,0){};\coordinate[](d3)at(10,0){};
    \coordinate[](e1)at(6,1){};\coordinate[](e2)at(8,1){};\coordinate[](e3)at(10,1){};
    \coordinate[](f1)at(6,2){};\coordinate[](f2)at(8,2){}; \coordinate[](f3)at(10,2){};
    \draw[fill=black!16,line width=0.8pt]
    (e1)--(e2)--(f2)--(f1)--(e1)--cycle;
    \draw[fill=black!16,line width=0.8pt]
    (e2)--(e3)--(d3)--(d2)--(e2)--cycle;
    \draw[densely dashed, line width=0.8pt] (f2)--(f3)--(e3);
    \draw[densely dashed, line width=0.8pt] (e1)--(d1)--(d2);
 \coordinate[label=225:$\mathbf{h}$]()at(d1){};
 \coordinate[label=225:$\mathbf{a}$]()at(d2){};
 \coordinate[label=225:$\mathbf{f}$]()at(d3){};
 \coordinate[label=225:$\mathbf{c}$]()at(e1){};
 \coordinate[label=225:$\mathbf{e}$]()at(e2){};
 \coordinate[label=225:$\mathbf{b}$]()at(e3){};
 \coordinate[label=225:$\mathbf{g}$]()at(f1){};
 \coordinate[label=225:$\mathbf{d}$]()at(f2){};
 \coordinate[label=225:$\mathbf{i}$]()at(f3){};

    \node[below,  scale=1] at (8,-0.5) {(b)\hspace{0.15cm} condition $(\dag\dag)$};
    \end{tikzpicture}
\vspace{0.1cm}\caption{An illustration for condition $(\dag)$ or $(\dag\dag)$}\label{fig1}
\end{figure}

For convenience, we say $\mathcal {C}$ is of type $\mathcal {Q}_1$ (resp. $\mathcal {Q}_2$) if it satisfies the condition $(\dag)$ (resp. $(\dag\dag)$). In \cite{2025nrr}, for a convex collection $\mathcal{P}$ of cells of type $\mathcal {Q}_2$, the key for computing the Hilbert series of $\mathbb{K}[\mathcal{P}]$ is to choose a suitable vertex $\mathbf{v}$ such that \\[-0.3cm]
\[
\text{in}_{<'_{\text{lex}}}(I_{\mathcal{P}}):x_{\mathbf{v}} = \text{in}_{<'_{\text{lex}}}(I_{\mathcal{P}'})+J,\qquad
\big(\text{in}_{<'_{\text{lex}}}(I_{\mathcal{P}}),x_{\mathbf{v}}\big) = \text{in}_{<'_{\text{lex}}}(I_{\mathcal{P}''})+(x_{\mathbf{v}}),
\]\\[-0.3cm]
where $\mathcal{P}'$ and $\mathcal{P}''$ are also convex collections of cells of type $\mathcal {Q}_2$, and $J$ is an ideal generated by certain variables. In general, for an arbitrary collection of cells of type $\mathcal {Q}_2$, such a vertex $\mathbf{v}$ may not exist. However, we find that when $\mathcal{P}$ is a polyocollection of type $\mathcal {Q}_2$, one can still choose an appropriate vertex  $\mathbf{v}$ so that the above relations hold. In this case, $\mathcal{P}'$ and $\mathcal{P}''$ may not be collections of cells but polyocollections that also belong to type $\mathcal {Q}_2$. Moreover, the same situation applies to the condition $(\dag)$.  This motivates us to focus on polyocollections and polyocollection ideals in this paper. In Section \ref{sec2}, we recall some relevant basic notations and results. In Section \ref{sec3}, a class of polyocollections called natural polyocollections is defined. Furthermore, some concepts are extended naturally from collections of cells to polyocollections, and some results that hold for collections of cells are listed for polyocollections without proofs because polyocollections retain some structural features of collections of cells. Sections \ref{sec4} and \ref{sec5} focus on natural polyocollections  of type $\mathcal {Q}_1$ or of type $\mathcal {Q}_2$. In Section \ref{sec4}, it is proved that these classes of natural polyocollections are prime. Consequently, their Cohen--Macaulayness are established. In Section \ref{sec5}, the Krull dimension, Hilbert series  and regularity of these classes of natural polyocollections are computed. The main results are presented in  Theorem \ref{the primality of coc type O}, Corollary \ref{cor1}, Proposition \ref{dim-depth} and Theorem \ref{hs-r}. We conclude this section by stating the following Main Theorem. These results yield that Conjectures \ref{conj2} and \ref{conj1} are true for collection of cells of type  $\mathcal {Q}_1$  and of type  $\mathcal {Q}_2$.

\begin{main}
Let $\mathcal{C}$ be a natural polyocollection of type $\mathcal {Q}_1$ or of type $\mathcal {Q}_2$. Then the following statements hold$:$
\begin{itemize}
\item[(1)] $\mathcal{C}$ is prime. \\[-0.3cm]
\item[(2)] $\mathbb{K}[\mathcal {C}]$ is a Cohen-Macaulay domain with Krull dimension $|V(\mathcal {C})|-|\mathcal {C}|$.  \\[-0.3cm]
\item[(3)] The $h$-polynomial  of  $\mathbb{K}[{\mathcal{C}}]$ is $\widetilde{r}_{\mathcal {C}}(t)$ and its regularity is equal to $r(\mathcal {C})$, where $\widetilde{r}_{\mathcal {C}}(t)$  and $r(\mathcal {C})$ are the switching rook polynomial and rook number of $\mathcal {C}$ respectively.
\end{itemize}	
\end{main}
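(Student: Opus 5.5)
The plan follows the three parts of the Main Theorem in order. The main tools are the quadratic Gr\"obner basis from Theorem \ref{condition} (and its analogue for $<'_{\text{lex}}$), the lattice ideal $I_{\Lambda_{\mathcal C}}$, and the zero-sum condition.

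For (1), I aim to show $I_{\mathcal C}=I_{\Lambda_{\mathcal C}}$. Saturation of $\Lambda_{\mathcal C}$ for natural polyocollections, which extends the cell case, then makes $I_{\Lambda_{\mathcal C}}$ toric, hence prime. The inclusion $I_{\mathcal C}\subseteq I_{\Lambda_{\mathcal C}}$ is immediate. For the reverse inclusion, take a binomial $f=u-w\in I_{\Lambda_{\mathcal C}}$ with $u,w$ coprime. Using the Gr\"obner basis, reduce $u$ and $w$ to their normal forms $u',w'$ modulo $I_{\mathcal C}$. These are standard monomials, meaning no inner $2$-minor leading term divides them. Since $u'-w'$ still lies in $I_{\Lambda_{\mathcal C}}$, it suffices to show that two distinct standard monomials can never satisfy the zero-sum condition.

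To prove this, suppose $u'\neq w'$. Take the $<_{\text{lex}}$-largest variable $x_{\mathbf a}$ occurring in only one of them, say in $u'$. The zero-sum condition says exponents balance along every row and column segment cut out by the cells. From this I would extract a vertex $\mathbf b$ forming an inner interval $[\mathbf a,\mathbf b]$ (or one with $\mathbf a$ as the anti-diagonal corner) with $x_{\mathbf b}\mid u'$. Condition $(\dag)$ then lets me propagate this along a chain of intervals until an inner $2$-minor leading term divides $u'$, which contradicts standardness. Naturality of $\mathcal C$ is what guarantees that the segments on which the zero-sum balances really are edges of inner intervals. The type $\mathcal Q_2$ case is symmetric, using $<'_{\text{lex}}$ and $(\dag\dag)$. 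I expect this combinatorial extraction step to be the \textbf{main obstacle}.

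For (2), primality makes $I_{\mathcal C}$ toric. Its initial ideal is squarefree, since the leading terms of inner $2$-minors are products of two variables. By Sturmfels' result, the toric ring $\mathbb K[\mathcal C]$ is then normal, and Hochster's theorem gives Cohen--Macaulayness. For the dimension, the height of $I_{\Lambda_{\mathcal C}}$ equals the rank of $\Lambda_{\mathcal C}$. This rank is $|\mathcal C|$, because the cell vectors are linearly independent, which I would check by an ordering or leaf-cell argument. Hence $\dim=|V(\mathcal C)|-|\mathcal C|$.

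For (3), I would induct on $|\mathcal C|$ following the colon-ideal strategy described in the introduction. Choose an extremal vertex $\mathbf v$, for instance the lex-smallest corner of a suitable cell, such that
$\text{in}(I_{\mathcal C}):x_{\mathbf v}=\text{in}(I_{\mathcal C'})+J$ and $(\text{in}(I_{\mathcal C}),x_{\mathbf v})=\text{in}(I_{\mathcal C''})+(x_{\mathbf v})$, where $\mathcal C',\mathcal C''$ are natural polyocollections of the same type and $J$ is generated by variables. The short exact sequence then gives $HS_{\mathcal C}=HS_{\mathcal C''}+t\,HS_{\mathcal C'}$, up to the appropriate factors of $(1-t)$ determined by the dimensions from (2). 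On the rook side, I would prove the matching recursion $\widetilde r_{\mathcal C}(t)=\widetilde r_{\mathcal C''}(t)+t\,\widetilde r_{\mathcal C'}(t)$ by splitting switching classes according to whether a canonical representative places a rook on the cell at $\mathbf v$. Finally, by Cohen--Macaulayness, the regularity equals the degree of the $h$-polynomial. That degree is $r(\mathcal C)$, because $\widetilde r_{r(\mathcal C)}(\mathcal C)\ge1$.
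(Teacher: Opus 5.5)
Part (2) of your plan is fine. Cohen--Macaulayness via Sturmfels and Hochster is what the paper does. Computing the dimension as $|V(\mathcal C)|-\text{rank}\,\Lambda_{\mathcal C}$ once (1) is known is a valid shortcut for the paper's maximal-independent-set count; here the $\mu_I$ are independent by triangularity on the lower-left corners, which are distinct by naturality.

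The gap is in (1). Reducing to ``no two distinct standard monomials differ by an element of $\Lambda_{\mathcal C}$'' is legitimate, since it amounts to $\text{in}(I_{\Lambda_{\mathcal C}})=\text{in}(I_{\mathcal C})$. But the lemma carrying it is not proved, and as sketched it looks for the wrong partner. If $\mathbf a$ is the lex-largest vertex with $\alpha(\mathbf a)\neq 0$, every lex-larger vertex has label $0$. That covers $\mathbf b$ in $[\mathbf a,\mathbf b]$, and also the upper-right diagonal corner when $\mathbf a$ is an anti-diagonal corner. So $\alpha$ says nothing about whether $x_{\mathbf b}\mid u'$.

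The only usable configuration is some $\mathbf b$ with $[\mathbf b,\mathbf a]\in\mathcal I(\mathcal C)$ and $\alpha(\mathbf b)\alpha(\mathbf a)>0$. Its existence is exactly where $(\dag)$ must do real work. Take the natural polyocollection $\{A,B\}$ with $A=[(0,0),(1,1)]$, $B=[(1,1),(2,2)]$ and $\alpha=\mu_A-\mu_B$. The only candidate is $(1,1)$, which has label $0$, and both $x^{\alpha^+}$ and $x^{\alpha^-}$ are standard. ``Propagate along a chain using $(\dag)$'' does not supply the missing step.

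The paper instead shows directly that every nonzero $\alpha\in\Lambda_{\mathcal C}$ can be effectively reduced. It inducts on $k+j$ along $ini(\alpha)$ and applies the zero-sum condition at the specific vertices $\mathbf a_2$ and $(l_1-1,s_2)$. Together with $(\dag)$, this produces the inner intervals needed for norm-decreasing moves. You need an argument of that strength.

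In (3) you assume exactly what has to be built. For the extreme vertex $\mathbf v$ (the paper uses the uppermost-rightmost one), $\text{in}(I_{\mathcal C}):x_{\mathbf v}=\text{in}(I_{\mathcal C_{\mathbf v}})+(x_{\mathbf u}\mid \mathbf u\in U_{\mathbf v}(\mathcal C))$. Here $\mathcal C_{\mathbf v}$ is obtained by deleting the basic intervals that meet $U_{\mathbf v}(\mathcal C)$ and adding the non-cell intervals $\mathcal T_1,\mathcal T_2$. It is not natural, so it must be replaced by the algebraically isomorphic $\mathcal C'_{\mathbf v}$. One then has to prove three things:
\begin{itemize}
\item its inner intervals are exactly those of $\mathcal C$ with diagonal corners outside $U_{\mathbf v}(\mathcal C)$;
\item it is again of type $\mathcal Q_1$;
\item $|\mathcal C|=|\mathcal C_{\mathbf v}|+|U_{\mathbf v}(\mathcal C)|$, which uses $(\dag)$ and is what makes the powers of $1-t$ match.
\end{itemize}

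Likewise, choosing canonical representatives does not settle the rook recursion. The substantive point is this: configurations on $\mathcal C\setminus\mathbf v$ that become equivalent through switches involving $\overline{I_{\mathbf v}}(\mathcal C)$ were already equivalent in $\mathcal C\setminus\mathbf v$. This is the paper's case analysis in Step 1, again via $(\dag)$. You also need to identify the remaining classes with $\widetilde{\mathcal R}_{k-1}(\mathcal C_{\mathbf v})$.
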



\section{Preliminaries} \label{sec2}

In this section,  we recall some  basic definitions, notations and results. For undefined terms and concepts, the reader is referred to \cite{2024cnv,hho,2012q}.

\subsection{Some definitions and results about algebraic invariants}

Let $R$ be a ring. The {\it Krull dimension} of $R$, denoted by  $\text{dim} (R)$, is the supermum of the length of all strictly decreasing chains of prime ideals of $R$. Let $M$ be a $R$-module.
The {\it Krull dimension} of $M$, denoted by $\text{dim} (M)$,  is defined as the Krull dimension of the quotient ring $R/\text{Ann}_R(M)$. Here $\text{Ann}_R(M)=\big\{x\in R \mid ax=0 \text{ for all $a\in M$ }\big\}$.

\medskip
Let $\mathbb{K}$ be a field and $R$ a standard graded $\mathbb{K}$-algebra with graded maximal ideal $\frak{m}$, and let $M$ be a finitely generated graded $R$-module. The {\it depth} of $M$, denoted by $\text{depth} (M)$, is the common length of maximal $M$-sequences contained in $\frak{m}$.
The  {\it Castelnuovo-Mumford regularity} (or for short  {\it regularity}) of $M$, denoted by $\text{reg}\,(M)$, is defined as
$$\text{reg}\,(M):=\text{max}\,\big\{j-i\ |\ \beta_{i,j}(M)\neq 0\big\},$$
where the number $\beta_{i,j}(M)$, called $(i,j)$-th graded Betti number of $M$, is an invariant of $M$ that equals the number of minimal generators of degree $j$ in the $i$-th syzygy module of $M$. Denote by $\text{dim}_{\mathbb{K}}(M_i)$ the dimension of the vector space of $i$-th graded component $M_i$ over  $\mathbb{K}$. The numerical function
$$H(M,-):\mathbb{Z}\longrightarrow \mathbb{Z}, \text{   } i\mapsto H(M,i):=\text{dim}_{\mathbb{K}}(M_i)$$
is called the {\it Hilbert function} of $M$. The  {\it Hilbert series} of $M$ is defined as
$$H_M(t)=\sum_{i\in \mathbb{Z}}H(M,i)t^{i}.$$
See \cite[Theorem 6.1.3]{2011hh}, there exists a unique polynomial $h_M(t)$ such that
$$H_M(t)=\frac{h_M(t)}{(1-t)^d},$$
where $d=\text{dim}(M)$. Such polynomial $h_M(t)$ is called the {\it $h$-polynomial} of $M$.

\medskip
We now present some results concerning these algebraic invariants, which will be useful later.

\begin{lem}{\em (\cite[Theorem 4.4.3]{bh})}\label{h-r} Let $S=\mathbb{K}[x_1,x_2,\ldots, x_n]$ be a polynomial ring over a field $\mathbb{K}$ and $I\subseteq S$ a homogenous ideals such that $S/I$ is Cohen-Macaulay. Then \\[-0.4cm]
$$\text{\em reg}(S/I)=\text{\em degree}(h_{S/I}(t)).$$
\end{lem}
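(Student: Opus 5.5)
The plan is to reduce to the Artinian case by cutting down with a regular sequence of linear forms, and then to read off both invariants directly from the Artinian ring. Let $d=\dim(S/I)$ and write $h_{S/I}(t)=\sum_{i=0}^{s} h_i t^i$ with $h_s\neq 0$.

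\textbf{Step 1 (infinite field).} First I will assume $\mathbb{K}$ is infinite. If it is not, replace $\mathbb{K}$ by an infinite extension $\mathbb{L}$ and $I$ by $I\otimes_{\mathbb{K}}\mathbb{L}$. This base change is faithfully flat, so it preserves:
\begin{itemize}
\item the Hilbert function, hence the $h$-polynomial;
\item the graded Betti numbers, hence the regularity;
\item the Cohen--Macaulay property.
\end{itemize}

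\textbf{Step 2 (cutting down).} Since $S/I$ is Cohen--Macaulay of dimension $d$ and $\mathbb{K}$ is infinite, prime avoidance gives linear forms $\ell_1,\ldots,\ell_d$ that form an $S/I$-regular sequence. Let $\ell$ be a linear form that is a nonzerodivisor on a graded module $M$. I will use two facts.
\begin{enumerate}
\item From the exact sequence $0\to M(-1)\xrightarrow{\ell} M\to M/\ell M\to 0$ we get $H_{M/\ell M}(t)=(1-t)H_M(t)$. So the $h$-polynomial is unchanged, and the dimension drops by one.
\item Let $S'=S/(\ell)$, again a polynomial ring in $n-1$ variables. A minimal graded free resolution of $M$ over $S$, tensored with $S/(\ell)$, stays exact because $\ell$ is regular on both $S$ and $M$. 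It is still minimal, so it is a minimal $S'$-resolution of $M/\ell M$. Hence $\beta^{S}_{i,j}(M)=\beta^{S'}_{i,j}(M/\ell M)$, and the regularity is unchanged.
\end{enumerate}
Iterating $d$ times, $A:=S/(I,\ell_1,\ldots,\ell_d)$ is an Artinian standard graded quotient of a polynomial ring $T$ in $n-d$ variables. It satisfies $H_A(t)=h_{S/I}(t)$ and $\operatorname{reg}(A)=\operatorname{reg}(S/I)$.

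\textbf{Step 3 (Artinian case).} Here I must show that $\operatorname{reg}(A)=s=\max\{i : A_i\neq 0\}$. I will compute $\beta_{i,j}(A)=\dim_{\mathbb{K}}\operatorname{Tor}_i^T(A,\mathbb{K})_j$ using the Koszul complex on the variables of $T$ tensored with $A$.
\begin{itemize}
\item \emph{Upper bound.} The $i$-th term of this complex is $A\otimes\bigwedge^i T_1$, which lives in degrees at most $s+i$. So $\beta_{i,j}(A)=0$ whenever $j-i>s$, giving $\operatorname{reg}(A)\le s$.
\item \emph{Lower bound.} The top Koszul homology $H_{n-d}$ is $(0:_A \frak m)(-(n-d))$. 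The component $A_s$ is nonzero and lies in the socle, so $\beta_{n-d,\,s+n-d}(A)\neq 0$, giving $\operatorname{reg}(A)\ge s$.
\end{itemize}
Equivalently, one can use $\operatorname{reg}(A)=\operatorname{end}(H^0_{\frak m}(A))=\operatorname{end}(A)$.

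Combining the three steps, $\operatorname{reg}(S/I)=s=\deg h_{S/I}(t)$.

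The only step requiring care is Step 2(2), the preservation of graded Betti numbers under quotienting by a regular linear form. This is the standard fact that $\operatorname{Tor}^S_i(M,\mathbb{K})\cong\operatorname{Tor}^{S'}_i(M/\ell M,\mathbb{K})$ when $\ell$ is regular on both $S$ and $M$, and I expect it to go through as sketched.
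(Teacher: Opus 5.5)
Your proof is correct. The paper gives no argument of its own for this lemma; it imports the statement from Bruns--Herzog. So what you have written is a self-contained substitute rather than a variant of an in-paper proof.

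Your route is the elementary one:
\begin{itemize}
\item extend to an infinite field;
\item cut $S/I$ down by a maximal regular sequence of linear forms, which leaves both $h_{S/I}(t)$ and the graded Betti numbers unchanged;
\item in the Artinian case, identify $\operatorname{reg}(A)$ with $s=\max\{i : A_i\neq 0\}$ via the Koszul complex. The upper bound comes from degree support. The lower bound comes from the top Koszul homology $(0:_A\mathfrak{m})(-(n-d))$, which contains $A_s$.
\end{itemize}

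The other standard route goes through local cohomology. By Eisenbud--Goto, $\operatorname{reg}(R)=\max_i\{\operatorname{end}(H^i_{\mathfrak{m}}(R))+i\}$. For $R$ Cohen--Macaulay of dimension $d$, only $H^d_{\mathfrak{m}}(R)$ is nonzero. The Grothendieck--Serre formula then identifies $a(R)=\operatorname{end}(H^d_{\mathfrak{m}}(R))$ with $\deg h_R(t)-d$. That approach works over any field and needs no linear regular sequence, at the price of heavier machinery. Yours uses only regular sequences, the behaviour of Hilbert series under hyperplane sections, and an explicit Koszul computation.

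One small point should be made explicit when you iterate Step 2. The image of $\ell_i$ in $S/(\ell_1,\ldots,\ell_{i-1})$ is a nonzero linear form, because it is a nonzerodivisor on the nonzero module $S/(I,\ell_1,\ldots,\ell_{i-1})$. This guarantees that each ambient ring is again a polynomial ring, so the Betti-number comparison in Step 2(2) applies at every stage.
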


\begin{lem}{\em (\cite[Theorem 3.3.4 and Corollary 6.1.5]{2011hh}, \cite[Corollary 2.7]{2020c} )} \label{intial}
Let $S=\mathbb{K}[x_1,x_2,\ldots, x_n]$ be a polynomial ring over a field $\mathbb{K}$ and $I$ a homogenous ideals of $S$. Then the following statements hold for any monomial order $<$ on $S$$:$\\[-0.3cm]
\begin{itemize}
\item[(1)] $\text{\em dim}(S/I)=\text{\em dim}(S/\text{\em in}_<(I))$$;$ \\[-0.3cm]
\item[(2)] The Hilbert functions of $S/I$ and $S/\text{\em in}_<(I)$ coincide. Consequently, $H_{S/I}(t)=H_{S/\text{\em in}_<(I)}(t)$. \\[-0.3cm]
\end{itemize}
In addition, if $\text{\em in}_<(I)$ is square-free, then $\text{\em depth}(S/I)=\text{\em depth}(S/\text{\em in}_<(I))$.
\end{lem}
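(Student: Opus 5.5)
The plan is to derive (1) and (2) from Macaulay's basis theorem. I would then treat the depth statement as a consequence of the Conca--Varbaro theorem on square-free Gr\"obner degenerations, which cannot be obtained by elementary means.

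\emph{Part (2).} Fix the monomial order $<$ and let $\mathcal{B}$ be the set of standard monomials, i.e.\ the monomials of $S$ not lying in $\text{in}_<(I)$.

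1. Show that $\mathcal{B}$ spans $S/I$. Given a polynomial $f$, let $u=\text{in}_<(f)$. If $u\in\text{in}_<(I)$, then $u=\text{in}_<(g)$ for some $g\in I$, and subtracting a scalar multiple of $g$ cancels the leading term of $f$. Since $<$ is a well-order, iterating this process terminates and writes $f$, modulo $I$, as a combination of standard monomials.
2. Show that $\mathcal{B}$ is linearly independent modulo $I$. A nonzero combination of standard monomials lying in $I$ would have its leading term in $\text{in}_<(I)$, which is a contradiction.
3. Use homogeneity. Because $I$ is homogeneous, both reduction steps can be carried out degree by degree, with $g$ chosen homogeneous of the same degree as $u$. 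Hence the standard monomials of degree $i$ form a $\mathbb{K}$-basis of $(S/I)_i$.

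The same monomials form a basis of $(S/\text{in}_<(I))_i$. Therefore $H(S/I,i)=H(S/\text{in}_<(I),i)$ for all $i$, and so $H_{S/I}(t)=H_{S/\text{in}_<(I)}(t)$.

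\emph{Part (1).} For a finitely generated graded module over a standard graded algebra, the Krull dimension equals the order of the pole at $t=1$ of the Hilbert series. Equivalently, it is one more than the degree of the Hilbert polynomial; this is the Hilbert--Serre theorem. Since the two Hilbert series coincide by (2), the two dimensions coincide.

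\emph{Depth statement.} This is the main obstacle. Depth is not determined by the Hilbert function, and in general one only has the inequality $\text{depth}(S/I)\ge\text{depth}(S/\text{in}_<(I))$. That inequality follows from upper semicontinuity of Betti numbers in the flat Gr\"obner degeneration together with Auslander--Buchsbaum: $\beta_{i,j}(S/I)\le\beta_{i,j}(S/\text{in}_<(I))$, so $\text{pd}(S/I)\le\text{pd}(S/\text{in}_<(I))$.

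For the reverse inequality I would invoke Conca--Varbaro \cite[Corollary 2.7]{2020c}. Their argument runs as follows.

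1. Choose a weight vector $w$ with $\text{in}_w(I)=\text{in}_<(I)$, and form the flat family $\widetilde{I}\subseteq S[t]$ whose general fibre is $S/I$ and whose special fibre is $S/\text{in}_<(I)$.
2. Use that $\text{in}_<(I)$ is square-free, so $S/\text{in}_<(I)$ is a Stanley--Reisner ring. Hochster's formula then shows that its local cohomology is concentrated in degrees $\le 0$. Together with a Frobenius / $F$-splitting argument (reducing to positive characteristic in general), this yields equality of the graded Hilbert functions of the local cohomology modules $H^i_{\frak m}(S/I)$ and $H^i_{\frak m}(S/\text{in}_<(I))$.
3. In particular, the two rings have the same extremal Betti numbers. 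Hence they have equal depth, and also equal regularity.

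I would cite this step rather than reprove it. I would only check that its hypotheses are met: $I$ homogeneous and $\text{in}_<(I)$ square-free.
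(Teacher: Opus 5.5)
Your proposal is correct and takes the same route as the paper. The paper gives no argument of its own: it cites Herzog--Hibi for the equality of Hilbert functions and dimensions, which is exactly your standard-monomial basis argument together with the pole order of the Hilbert series, and it cites Conca--Varbaro for $\text{depth}(S/I)=\text{depth}(S/\text{in}_<(I))$ when $\text{in}_<(I)$ is square-free, which you likewise plan to cite after checking its hypotheses.
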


From \cite[Exercise 2.1.14, Propostion 2.2.20]{2015V} it follows the following lemma.
\begin{lem}\label{sum}
Let $S_{1}=k[x_{1},\dots,x_{m}]$, $S_{2}=k[x_{m+1},\dots,x_{n}]$ be two polynomial rings and $S=S_1\otimes_k S_2$, let $I\subset S_{1}$, $J\subset S_{2}$ be two nonzero homogeneous ideals. Then\\[-0.3cm]
\begin{itemize}
\item[(1)] $\text{\em dim}(S/(I+J))=\text{\em dim}(S_1/I)+\text{\em dim}(S_2/J)$$;$\\[-0.3cm]
\item[(2)] $H_{S/(I+J)}(t)=H_{S_1/I}(t)\cdot H_{S_2/J}(t)$.\\[-0.3cm]
\end{itemize}
\end{lem}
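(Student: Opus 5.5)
The statement immediately above is Lemma \ref{sum}, and I will prove it by reducing both parts to the tensor product structure $S/(I+J)\cong (S_1/I)\otimes_{\mathbb{K}}(S_2/J)$. The first step is this isomorphism. Consider the natural surjection $S=S_1\otimes_{\mathbb{K}} S_2\to (S_1/I)\otimes_{\mathbb{K}}(S_2/J)$. Its kernel is $I\otimes S_2+S_1\otimes J$, which is exactly the ideal $IS+JS=I+J$; this follows from right exactness of $\otimes$ over a field, applied first in one factor and then in the other. Since $I,J$ are homogeneous and the grading on $S$ is the total degree, the isomorphism is graded, where the tensor product carries the grading $\big((S_1/I)\otimes(S_2/J)\big)_k=\bigoplus_{i+j=k}(S_1/I)_i\otimes(S_2/J)_j$.

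Part (2) follows immediately. Taking $\mathbb{K}$-dimensions in each degree gives
\[
H(S/(I+J),k)=\sum_{i+j=k}H(S_1/I,i)\,H(S_2/J,j),
\]
which is precisely the coefficient identity for the product of the two Hilbert series. All sums are finite because the rings are nonnegatively graded with finite-dimensional components.

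For part (1), I will read the Krull dimension off the Hilbert series. For a finitely generated standard graded $\mathbb{K}$-algebra $R$, write $H_R(t)=h_R(t)/(1-t)^{d}$ with $h_R(1)\neq 0$; then $d=\dim R$, which is \cite[Theorem 6.1.3]{2011hh} as quoted before Lemma \ref{h-r}. Equivalently, $d$ is the order of the pole of $H_R(t)$ at $t=1$. Apply this to $S_1/I$ and $S_2/J$, with dimensions $d_1,d_2$ and $h$-polynomials $h_1,h_2$. By part (2),
\[
H_{S/(I+J)}(t)=\frac{h_1(t)h_2(t)}{(1-t)^{d_1+d_2}},
\]
and $h_1(1)h_2(1)\neq 0$. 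Hence the pole order, and therefore $\dim S/(I+J)$, equals $d_1+d_2$.

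The only real point needing care is the uniqueness and normalization of the $h$-polynomial, namely that $h(1)\neq 0$, so that the pole order is well defined. This is standard: $h(1)$ is the multiplicity, which is positive. One should also note that nonzero proper homogeneous ideals make the quotients nonzero, so that the dimensions are genuine nonnegative integers. If one preferred a ring-theoretic argument for (1), an alternative would be to pick Noether normalizations $\mathbb{K}[y_1,\dots,y_{d_1}]\subseteq S_1/I$ and $\mathbb{K}[z_1,\dots,z_{d_2}]\subseteq S_2/J$. Their tensor product is a polynomial ring in $d_1+d_2$ variables over which $S/(I+J)$ is module-finite, which gives the same conclusion. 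I expect no genuine obstacle here; the Hilbert series route is the cleanest.
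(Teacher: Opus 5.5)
Your proof is correct. The paper gives no argument of its own for this lemma: it simply derives it from \cite[Exercise 2.1.14, Proposition 2.2.20]{2015V}, delegating both the tensor-product description of $S/(I+J)$ and the Hilbert-series and dimension facts to Villarreal's book.

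You instead give the standard self-contained argument, in three steps:
\begin{itemize}
\item the graded isomorphism $S/(I+J)\cong (S_1/I)\otimes_{\mathbb{K}}(S_2/J)$, whose kernel is $IS+JS$ by right exactness, with flatness over a field identifying $I\otimes_{\mathbb{K}}S_2$ with $IS$;
\item the resulting convolution of Hilbert functions, which gives (2);
\item (1), obtained by reading $\dim$ off as the order of the pole at $t=1$.
\end{itemize}
The last step is legitimate precisely because $h_{S_1/I}(1)$ and $h_{S_2/J}(1)$ are the positive multiplicities, as in \cite[Theorem 6.1.3]{2011hh}. You correctly flag this point, which the paper's paraphrase of that theorem leaves implicit.

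The citation buys brevity. Your route buys transparency about which hypotheses are actually used. The ``nonzero'' assumption in the statement plays no role, whereas properness of $I$ and $J$ is what the dimension formula needs, since it makes the quotients nonzero with genuine multiplicities. Your Noether-normalization alternative for (1) is also sound: a tensor product of two injective $\mathbb{K}$-linear maps is injective, and module-finiteness passes to the tensor product.
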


\begin{lem}\label{exact} Let $S=\mathbb{K}[x_1,x_2,\ldots, x_n]$ be a polynomial ring over a field $\mathbb{K}$ and $I\subseteq S$ a homogenous ideals of $S$. Then for any homogenous  polynomial $f$,\\[-0.3cm]
$$H_{S/I}(t)=t^{d}\cdot H_{S/(I:f)}(t)+H_{S/(I,f)}(t),$$
where $d=\text{\em degree}(f)$.
\end{lem}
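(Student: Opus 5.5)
We have to prove Lemma \ref{exact}: for a homogeneous ideal $I$ and a homogeneous polynomial $f$ of degree $d$,
$$H_{S/I}(t)=t^{d}\cdot H_{S/(I:f)}(t)+H_{S/(I,f)}(t).$$
The plan is to read off the identity from a short exact sequence of graded $S$-modules, using additivity of Hilbert functions.

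\textbf{The exact sequence.} Consider multiplication by $f$ as a map $S/(I:f)\to S/I$, $\bar g\mapsto \overline{fg}$.
\begin{itemize}
\item It is well defined, since $g\in (I:f)$ gives $fg\in I$.
\item It is injective, because $fg\in I$ means exactly $g\in (I:f)$.
\item Its image is $(I+(f))/I$, so its cokernel is $S/(I,f)$.
\end{itemize}
Since $f$ is homogeneous of degree $d$, $(I:f)$ is a homogeneous ideal. The map becomes degree-preserving once the source is shifted, giving
$$0\longrightarrow \big(S/(I:f)\big)(-d)\xrightarrow{\;\cdot f\;} S/I\longrightarrow S/(I,f)\longrightarrow 0.$$

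\textbf{Hilbert functions.} Restricting to each degree $i$ gives a short exact sequence of finite-dimensional $\mathbb{K}$-vector spaces. Hence
$$H(S/I,i)=H(S/(I:f),i-d)+H(S/(I,f),i)\quad\text{for all } i.$$
Multiplying by $t^i$ and summing over $i$, the shift by $d$ contributes the factor $t^{d}$, which yields the claimed formula.

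\textbf{Degenerate cases.} The only points needing any care are the well-definedness of the shift and the edge cases; there is no real obstacle.
\begin{itemize}
\item If $f\in I$, then $(I:f)=S$, so $H_{S/(I:f)}(t)=0$; also $(I,f)=I$, and the formula still holds.
\item If $f$ is a nonzero constant, then $d=0$ and $(I,f)=S$, so $H_{S/(I,f)}(t)=0$; also $(I:f)=I$, and the identity is trivial.
\end{itemize}
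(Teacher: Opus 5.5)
Your proof is correct and follows essentially the same route as the paper. The paper also derives the identity from the graded short exact sequence $0\to \big(S/(I:f)\big)(-d)\xrightarrow{\cdot f} S/I\to S/(I,f)\to 0$. You additionally write out the routine verifications the paper leaves implicit: injectivity, identification of the cokernel, degreewise additivity of $\mathbb{K}$-dimensions, and the degenerate cases.
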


\begin{proof}
This lemma follows from the short exact sequence:\\[-0.3cm]
$$0\longrightarrow \frac{S}{(I:f)}(-d) \stackrel{ \cdot f} \longrightarrow \frac{S}{I}\longrightarrow  \frac{S}{(I,f)}\longrightarrow  0.$$
\end{proof}

\subsection{Some definitions concerning polyocollections}

Equip  $\mathbb{R}^2$ with the natural partial order $\leq$, defined as follows: for any $(i,j),(k,\ell)\in \mathbb{R}^2$,   $(i,j)\leq (k,\ell)$ if and only if $i\leq k$ and $j\leq \ell$.

Let $\mathbf{a}=(i,j), \mathbf{b}=(k,\ell)\in \mathbb{Z}^2$ with $\mathbf{a}\leq \mathbf{b}$.
If $i<k$ and $j<\ell$, the interval $[\mathbf{a},\mathbf{b}]$ is called {\it proper}. In such a case, the elements $\mathbf{a}, \mathbf{b}$  are called the {\it diagonal corners} of  $[\mathbf{a},\mathbf{b}]$, while $\mathbf{c}=(i,\ell)$ and $\mathbf{d}=(k,j)$ are {\it anti-diagonal corners} of  $[\mathbf{a},\mathbf{b}]$. Moreover, the elements $\mathbf{a},\mathbf{b},\mathbf{c},\mathbf{d}$ are called the lower left, upper right, upper left and lower right corners of $[\mathbf{a}, \mathbf{b}]$ respectively.  The four corners are also called the {\it vertices} of the interval. The vertex set of $[\mathbf{a},\mathbf{b}]$ is denoted by $V([\mathbf{a},\mathbf{b}])$. The edge set of $[\mathbf{a},\mathbf{b}]$ is defined by $E([\mathbf{a},\mathbf{b}])=\{[\mathbf{a}, \mathbf{c}], [\mathbf{a}, \mathbf{d}], [\mathbf{c}, \mathbf{b}], [\mathbf{d}, \mathbf{b}]\}$. In particular, an interval of the form $C=[\mathbf{a}, \mathbf{a}+(1,1)]$ is called a cell. For the interval $[\mathbf{a},\mathbf{b}]$ with $\mathbf{a}=(i,j)\in \mathbb{R}^2$ and $\mathbf{b}=(k,\ell)\in \mathbb{R}^2$, we denote $\text{c}([\mathbf{a},\mathbf{b}])=\big\{(r,s)\in \mathbb{R}^2\mid i\leq r\leq k, j\leq s\leq \ell\big\}$
and $\text{int}([\mathbf{a},\mathbf{b}])=\big\{(r,s)\in \mathbb{R}^2\mid i< r< k, j< s< \ell\big\}$. These two set are called the {\it closure} and {\it interior} of $[\mathbf{a},\mathbf{b}]$ respectively.

Let  $\mathcal {C}$ be a finite collection of proper intervals in  $\mathbb{Z}^2$. Its vertex set and edge set is defined by $V(\mathcal {C})=\bigcup_{I\in \mathcal {C}}V(I)$ and $E(\mathcal {C})=\bigcup_{I\in \mathcal {C}}E(I)$, respectively. The rank of $\mathcal {C}$, denoted by $|\mathcal {C}|$, is the number of proper intervals in  $\mathcal {C}$. In \cite{2024cnv},
$\mathcal {C}$ is called a {\it polyocollection} if for any two distinct intervals $I,J\in \mathcal {C}$, we have $I\nsubseteq J$ and one of the followings holds:
\begin{itemize}
\item[(1)] $I\cap J$  is a common edge of $I$ and $J$;\\[-0.4cm]
\item[(2)] For each $F\in E(I)$ and each $G\in E(J)$, $|F\cap G|\leq 1$.\\[-0.4cm]
\end{itemize}
In this paper, we call the proper intervals in $\mathcal{C}$ {\it basic intervals} of $\mathcal{C}$ to distinguish them from other kinds of intervals in $\mathcal{C}$.


Let $\mathcal {C}$ be a polyocollection, and let $I=[\mathbf{a}, \mathbf{b}]$ be an interval with $\mathbf{a}=(i,j)$ and $\mathbf{b}=(k,\ell)$. If $j=\ell$ and there exist $I_1, I_2,\ldots, I_n\in E(\mathcal {C})$ such that $I$ consists of $I_1, I_2,\ldots, I_n$,
 then $[\mathbf{a}, \mathbf{b}]$ is called a {\it horizontal edge interval} of $\mathcal {C}$. Similarly, if $i=k$ and there exist $I_1, I_2,\ldots, I_m\in E(\mathcal {C})$ such that $I$ consists of $I_1, I_2,\ldots, I_m$,
  then $[\mathbf{a}, \mathbf{b}]$  is called a {\it vertical  edge interval} of $\mathcal {C}$.   A horizontal (vertical) edge interval is called {\it maximal} if it is maximal among all horizontal (vertical) edge intervals with respect to the inclusion of sets. If $i<k$, $j<\ell$ and there exist basic intervals $I_1, I_2,\ldots, I_n\in \mathcal {C}$  such that $\text{c}(I)=\bigcup_{t=1}^n\text{c}(I_t)$ and $\text{int}(I_p)\cap \text{int}(I_q)=\emptyset$ for any $p,q=1,\ldots, n$ with $p\neq q$, then $[\mathbf{a}, \mathbf{b}]$ is called an {\it inner interval} of $\mathcal {C}$.
Denote by $\mathcal {I}(\mathcal {C})$ the collection of inner intervals of $\mathcal {C}$. For any $I\in \mathcal {I}(\mathcal {C})$, denote by
$llc(I)$, $urc(I)$, $ulc(I)$ and $lrc(I)$ its the lower left, upper right, upper left and lower right corners  respectively. Moreover, denote by $I_{\mathbf{v}}(\mathcal {C})$ (resp. $\overline{I_{\mathbf{v}}}(\mathcal {C})$) the basic interval of $\mathcal {C}$ with $\mathbf{v}$ being its lower left (resp. upper right) corner.

Let $\mathbb{K}$ be a field and $S_{\mathcal {C}}=\mathbb{K}[x_{\mathbf{v}}\mid \mathbf{v}\in V(\mathcal {C})]$. For any inner interval $I=[\mathbf{a}, \mathbf{b}]$ of $\mathcal {C}$ with anti-diagonal corners $\mathbf{c}, \mathbf{d}$, the binomial $f_{I}=x_{\mathbf{a}}x_{\mathbf{b}}-x_{\mathbf{c}}x_{\mathbf{d}}$ is called an {\it inner 2-minor} of  $\mathcal {C}$. The ideal $I_{\mathcal {C}}\subseteq S_{\mathcal {C}}$ generated by all the inner $2$-minors of $\mathcal {C}$ is called the {\it polyocollection ideal} of  $\mathcal {C}$. The quotient ring $\mathbb{K}[{\mathcal{C}}]:=S_{\mathcal{C}}/I_{\mathcal{C}}$ is called the {\it coordinate ring} of  $\mathcal{C}$.

 A finite collection $\mathcal {C}$ of proper intervals in $\mathbb{R}^2$ can be naturally regarded as a collection of proper intervals in $\mathbb{Z}^2$. Especially, if the proper intervals in $\mathcal {C}$ satisfy the conditions in the definition of a polyocollection, we regard $\mathcal {C}$ as a polyocollection. Moreover, two distinct polyocollections $\mathcal {C}_1$ and $\mathcal {C}_2$ are called {\it algebraically isomorphic} if $\mathbb{K}[\mathcal {C}_{1}]$ and $\mathbb{K}[\mathcal {C}_{2}]$ are isomorphic as $\mathbb{K}$-algebras. For example, for a polyocollection $\mathcal {C}$ and a $k\in \mathbb{R}$, one can define $k\mathcal {C}$ as follows:
$$k\mathcal {C} = \{[k\mathbf{a}, k\mathbf{b}] \,|\, [\mathbf{a}, \mathbf{b}] \in \mathcal {C}\}.$$
By the above discussion, $k\mathcal {C}$ could be regarded as a polyocollection. It is direct to check that $\mathcal {C}$ and $k\mathcal {C}$ are algebraically isomorphic. Some polyocollections are algebraically isomorphic to some collections of cells, whereas others  cannot, as shown in  \cite[Section 3]{2024cnv}.

Let $\mathcal {C}$ be a polyocollection, and let $I_1=[\mathbf{u}_1, \mathbf{v}_1]$ and $I_2=[\mathbf{u}_2, \mathbf{v}_2]$ be two distinct basic intervals in $\mathcal {C}$ with $\mathbf{u}_1\leq \mathbf{u}_2$. If $[\mathbf{u}_1,\mathbf{v}_2]$ is an inner interval of $\mathcal {C}$ and  $[\mathbf{u}_1, \mathbf{u}_2]$ is a horizontal edge interval (resp. vertical edge interval) of $\mathcal {C}$, then we say that $I_1$ and $I_2$ are in the same row (resp. the same column) of $\mathcal {C}$.  If $[\mathbf{u}_1, \mathbf{v}_2]$ is an inner interval of $\mathcal {C}$ and $I_1, I_2$ are not in a same row or a same column, then  we say that $I_1$ and $I_2$ are in diagonal position of $[\mathbf{u}_1, \mathbf{v}_2]$, and we say that $I'_1$ and $I'_2$ are in anti-diagonal position of $[\mathbf{u}_1, \mathbf{v}_2]$, where $I'_1$ is the basic interval of $\mathcal {C}$ whose upper left corner coincides with the upper left corner of $[\mathbf{u}_1, \mathbf{v}_2]$, and  $I'_2$ is the basic interval of $\mathcal {C}$ whose lower right corner coincides with lower right corner of $[\mathbf{u}_1, \mathbf{v}_2]$.

\begin{ex}
Let $\mathcal {C}$ be a polyocollection as shown in Figure \ref{fig2}. Then $[\mathbf{u}_1, \mathbf{u}_2]$ (resp. $[\mathbf{u}_1, \mathbf{u}_{13}]$) is a maximal horizontal (resp. vertical) edge interval of $\mathcal {C}$ and $[\mathbf{u}_3,\mathbf{u}_{15}]$ is an inner interval of $\mathcal {C}$. Moreover, $[\mathbf{u}_3,\mathbf{u}_9]$ and $[\mathbf{u}_8,\mathbf{u}_{14}]$  (resp. $[\mathbf{u}_3,\mathbf{u}_9]$ and $[\mathbf{u}_4,\mathbf{u}_{10}]$) are in the same column (resp. the same row) of $\mathcal {C}$, and $[\mathbf{u}_3,\mathbf{u}_9]$ and $[\mathbf{u}_9,\mathbf{u}_{15}]$ (resp. $[\mathbf{u}_8,\mathbf{u}_{14}]$ and $[\mathbf{u}_4,\mathbf{u}_{10}]$) are in diagonal position (resp.  anti-diagonal position) of $[\mathbf{u}_3,\mathbf{u}_{15}]$.
\end{ex}

\begin{figure}[h]
\centering
    \begin{tikzpicture}[scale=1.2, transform shape]
    \draw[fill=black!16,line width=0.8pt]
    (0,0)--(4.5,0)--(4.5,0.5)--(0,0.5)--(0,0)--cycle
    (0,0.5)--(0.5,0.5)--(0.5,1)--(1.5,1)--(1.5,1.5)--(3.5,1.5)-- (3.5,0.5)--(4.5,0.5)--(4.5,2.5)--(0,2.5)--(0,0.5)--cycle;
 \draw[fill=blue!16,line width=0.8pt](2,2)--(2,1)--(3,1)--(3,2)--(2,2)--cycle;
  \draw[line width=0.8pt] (0,1)--(0.5,1)  (0,1.5)--(4.5,1.5) (0.5,0)--(0.5,2.5)
  (1.5,1)--(1.5, 2.5) (3.5,0)--(3.5,2.5);
  \draw[fill=black] (0,2.5) circle (1pt);
  \draw[fill=black] (0.5,2.5) circle (1pt);
  \draw[fill=black] (1.5,2.5) circle (1pt);
  \draw[fill=black] (3.5,2.5) circle (1pt);
  \draw[fill=black] (4.5,2.5) circle (1pt);

  \draw[fill=black] (0,1.5) circle (1pt);
  \draw[fill=black] (0.5,1.5) circle (1pt);
  \draw[fill=black] (1.5,1.5) circle (1pt);
  \draw[fill=black] (3.5,1.5) circle (1pt);
  \draw[fill=black] (4.5,1.5) circle (1pt);

  \draw[fill=black] (0,1) circle (1pt);
  \draw[fill=black] (0.5,1) circle (1pt);
  \draw[fill=black] (1.5,1) circle (1pt);
  \draw[fill=black] (2,1) circle (1pt);
  \draw[fill=black] (3,1) circle (1pt);

  \draw[fill=black] (2,2) circle (1pt);
  \draw[fill=black] (3,2) circle (1pt);

  \draw[fill=black] (0,0.5) circle (1pt);
  \draw[fill=black] (0.5,0.5) circle (1pt);
  \draw[fill=black] (3.5,0.5) circle (1pt);
  \draw[fill=black] (4.5,0.5) circle (1pt);

  \draw[fill=black] (0,0) circle (1pt);
  \draw[fill=black] (0.5,0) circle (1pt);
  \draw[fill=black] (3.5,0) circle (1pt);
  \draw[fill=black] (4.5,0) circle (1pt);

   \node[below, font=\small,scale=0.6] at (0,0) {$\mathbf{u}_{1}$};
   \node[below, font=\small,scale=0.6] at (4.5,0) {$\mathbf{u}_{2}$};
   \node[below, font=\small,scale=0.6] at (-0.15,1) {$\mathbf{u}_{3}$};
   \node[below, font=\small,scale=0.6] at (0.35,1) {$\mathbf{u}_{4}$};
   \node[below, font=\small,scale=0.6] at (1.45,1) {$\mathbf{u}_{5}$};
   \node[below, font=\small,scale=0.6] at (1.95,1) {$\mathbf{u}_{6}$};
   \node[below, font=\small,scale=0.6] at (2.95,1) {$\mathbf{u}_{7}$};

   \node[below, font=\small,scale=0.6] at (-0.15,1.5) {$\mathbf{u}_{8}$};
   \node[below, font=\small,scale=0.6] at (0.35,1.5) {$\mathbf{u}_{9}$};
   \node[below, font=\small,scale=0.6] at (1.3,1.5) {$\mathbf{u}_{10}$};
   \node[above, font=\small,scale=0.6] at (2,2) {$\mathbf{u}_{11}$};
   \node[above, font=\small,scale=0.6] at (3,2) {$\mathbf{u}_{12}$};

   \node[above, font=\small,scale=0.6] at (0,2.5) {$\mathbf{u}_{13}$};
   \node[above, font=\small,scale=0.6] at (0.5,2.5) {$\mathbf{u}_{14}$};
   \node[above, font=\small,scale=0.6] at (1.5,2.5) {$\mathbf{u}_{15}$};

    \end{tikzpicture}
\vspace{-0.2cm}\caption{An example of a polyocollection}\label{fig2}
\end{figure}


Let  $I,J$  be two distinct basic intervals of $\mathcal {C}$. A {\it path} from $I$ to $J$ in  $\mathcal {C}$ is a sequence $\pi: I=I_0, I_1,\ldots,I_k=J$ of distinct basic intervals  in $\mathcal {C}$ such that $E(I_{i-1})\cap E(I_i)\neq \emptyset$ for each $i=1,\ldots, k$. If such a path exists, we say that intervals $I$ and $J$ are {\it connected}.  The polyocollection $\mathcal {C}$ is {\it connected} if any two basic intervals in $\mathcal {C}$ are connected. Moreover, the polyocollection $\mathcal {C}$ is {\it weakly connected} if for any two basic intervals $I$ and $J$ in $\mathcal {C}$, there exists a sequence $\pi: I=I_0, I_1,\ldots,I_k=J$ of basic intervals in $\mathcal {C}$ such that $V(I_{i-1})\cap V(I_i)\neq \emptyset$  for $i=1,\ldots, k$. A subset $\mathcal {C}'$ of $\mathcal {C}$ is called a {\it connected component} (resp. {\it weakly connected component}) of $\mathcal {C}$ if $\mathcal {C}'$ is connected (resp. weakly connected) and maximal with respect to inclusion.

\section{Generalizing some notations and results on collections of cells to  polyocollections}\label{sec3}

In this section, we generalize certain notions and results on collections of cells to polyocollections.

\subsection{Primality of polyocollection ideals}

Let $\mathcal {C}$ be a polyocollection. A function $\alpha: V(\mathcal {C})\rightarrow \mathbb{Z}$ is called a {\it labeling} of $\mathcal {C}$. It is clear that each labeling could be viewed as a vector on $\mathbb{Z}^{|V(\mathcal {C})|}$. A labeling $\alpha$ is called {\it admissible} if $\alpha([\mathbf{a}, \mathbf{b}]):=\sum_{\mathbf{c}\in[\mathbf{a}, \mathbf{b}]\cap V(\mathcal {C})}\alpha(\mathbf{c})=0$  for each maximal edge interval $[\mathbf{a}, \mathbf{b}]$  of $\mathcal {C}$. Let $I=[\mathbf{a}, \mathbf{b}]$ be an inner interval of $\mathcal {C}$  with anti-diagonal corners $\mathbf{c}$ and $\mathbf{d}$, denote by $\mu_{I}$ the labeling on $\mathcal{C}$ such that $\mu_{I}(\mathbf{a})=\mu_{I}(\mathbf{b})=1$, $\mu_{I}(\mathbf{c})=\mu_{I}(\mathbf{d})=-1$, and  $\mu_{I}(\mathbf{u})=0$ for other vertices $\mathbf{u}$ in $\mathcal{C}$. $\Lambda_{\mathcal{C}}$ is the {\it lattice} spanned by all $\mu_{I}$ for $I\in \mathcal{C}$. A labeling $\alpha$ can be written uniquely as $\alpha =\alpha^+ - \alpha^-$, where $\alpha^+$ denotes the vector obtained from $\alpha$ by replacing all negative components of $\alpha$ by zeroes, and $\alpha^-=\alpha^+ - \alpha$. The ideal $I_{\Lambda_{\mathcal{C}}}$ is generated by all binomials $f_{\alpha}=\mathbf{x}^{\alpha^+}-\mathbf{x}^{\alpha^-}$ for $\alpha \in \Lambda_{\mathcal{C}}$. It is direct to check that $I_\mathcal{C} \seq I_{\Lambda_{\mathcal{C}}}$. From \cite[Theorem 3.8]{2024cnv} it follows that $I_\mathcal{C}$ is prime if and only if $I_\mathcal{C}=I_{\Lambda_{\mathcal{C}}}$. For an inner interval $I=[\mathbf{a}, \mathbf{b}]$ with anti-diagonal corners $\mathbf{c}$ and $\mathbf{d}$, if $\al(\mathbf{a})\al(\mathbf{b})>0$, then denote \\[0.1cm]
\begin{displaymath}
\beta=\left\{
\begin{array}{r@{,\quad}l}
\alpha-\mu_I &\al(\mathbf{a})>0,\\
\alpha+\mu_I &\al(\mathbf{a})<0.
\end{array} \right.
\end{displaymath}

If $\al(\mathbf{c})\al(\mathbf{d})>0$, then denote \\[0.1cm]
\begin{displaymath}
\beta=\left\{
\begin{array}{r@{,\quad}l}
\alpha+\mu_I &\al(\mathbf{c})>0,\\
\alpha-\mu_I &\al(\mathbf{c})<0.
\end{array} \right.
\end{displaymath}

Similar to \cite{2012q}, $\beta$ is said to be {\em obtained} from $\al$ (or $\al$ is {\em reduced} to $\beta$) by a {\em single move}. $\al$ is said to be {\em reduced} to $\beta$ by $k$ moves, if there exists a sequence $\al=\al_0,\al_1,\ldots, \al_k=\beta$ such that $\al_{i+1}$ is obtained from $\al_i$ by a single move for each $i=0,\ldots,k-1$. Especially, we also call that $\al$ can be reduced to $\al$ by $0$ move. Similar to \cite{ZGW}, for an admissible labeling $\al$ of $\mathcal{C}$ and a subset $S \seq V(\mathcal{C})$, denote $\sigma(\alpha|_S)=\sum_{\mathbf{v}\in S} |\alpha(\mathbf{v})|$. $\sigma(\alpha)=\sigma(\alpha|_{V(\mathcal{C})})$ is called the {\em norm} of $\alpha$.  It is clear that if $\al$ can be reduced to $\beta$, then $\sigma(\beta) \leq \sigma(\alpha)$. Furthermore, if $\sigma(\beta) < \sigma(\alpha)$, then $\alpha$ is said to be {\em effectively reduced} to $\beta$. Similar to (\cite[Theorem 4.2]{ZGW}), we can obtain the following proposition.
\begin{prop}\label{reduce}
Let $\mathcal{C}$ be a polyocollection, and let $\al$, $\beta$ be two admissible labelings of $\mathcal{C}$. Assume that $\al$ can be reduced to $\beta$, then $\beta\in \Lambda_{\mathcal {C}} $ if and only if $\al\in \Lambda_{\mathcal {C}}$, and $f_\beta \in I_\mathcal{C}$ implies $f_\alpha \in I_\mathcal{C}$. Assume further $\sigma(\alpha)=\sigma(\beta)$, then $f_\alpha \in I_\mathcal{C}$ if and only if $f_\beta \in I_\mathcal{C}$.
\end{prop}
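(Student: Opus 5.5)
By induction on the number $k$ of moves it suffices to treat a single move $\beta=\alpha\pm\mu_I$, where $I=[\mathbf{a},\mathbf{b}]$ is an inner interval with anti-diagonal corners $\mathbf{c},\mathbf{d}$. For a chain $\alpha=\alpha_0,\ldots,\alpha_k=\beta$, the lattice statement is an equivalence and passes along the chain in both directions. The implication $f_\beta\in I_\mathcal{C}\Rightarrow f_\alpha\in I_\mathcal{C}$ passes from $\alpha_k$ back to $\alpha_0$. For the last assertion, note that each move weakly decreases the norm. So if $\sigma(\alpha)=\sigma(\beta)$, every intermediate move preserves the norm, and it is enough to prove the equivalence for a single norm-preserving move. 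The lattice part is immediate, since $\mu_I\in\Lambda_\mathcal{C}$ and so $\beta-\alpha=\pm\mu_I\in\Lambda_{\mathcal{C}}$.

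For the ideal statement, I fix the case $\alpha(\mathbf{a}),\alpha(\mathbf{b})>0$ and $\beta=\alpha-\mu_I$; the other three cases are symmetric, swapping the roles of diagonal and anti-diagonal, or of $\alpha^+$ and $\alpha^-$. Then $\mathbf{x}^{\alpha^+}=x_\mathbf{a}x_\mathbf{b}\,m$ for some monomial $m$. I would compute $\beta^{\pm}$ explicitly. At $\mathbf{a}$ and $\mathbf{b}$ the positive part drops by one. At $\mathbf{c}$ and $\mathbf{d}$ the value increases by one: if $\alpha(\mathbf{c})<0$ the negative part drops by one, and if $\alpha(\mathbf{c})\ge 0$ the positive part gains $x_\mathbf{c}$.

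Define monomials $p$, collecting the factors $x_\mathbf{c},x_\mathbf{d}$ that are cancelled from $\alpha^-$, and $q$, collecting those that are added to $\beta^+$. Then
\[
\mathbf{x}^{\alpha^-}=p\,\mathbf{x}^{\beta^-},\qquad \mathbf{x}^{\beta^+}=q\,m,\qquad pq=x_\mathbf{c}x_\mathbf{d}.
\]
Hence
\[
p\,f_\beta=pq\,m-\mathbf{x}^{\alpha^-}=x_\mathbf{c}x_\mathbf{d}\,m-\mathbf{x}^{\alpha^-},
\]
and so
\[
f_\alpha=x_\mathbf{a}x_\mathbf{b}\,m-\mathbf{x}^{\alpha^-}=m\,f_I+p\,f_\beta .
\]
Since $f_I\in I_\mathcal{C}$, this gives $f_\beta\in I_\mathcal{C}\Rightarrow f_\alpha\in I_\mathcal{C}$.

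For the converse under $\sigma(\alpha)=\sigma(\beta)$, the move decreases $\sigma$ by $2$ at $\mathbf{a},\mathbf{b}$. It changes $\sigma$ by $-1$ at each of $\mathbf{c},\mathbf{d}$ where $\alpha<0$, and by $+1$ where $\alpha\ge0$. Equality of norms therefore forces $\alpha(\mathbf{c}),\alpha(\mathbf{d})\ge0$, which gives $p=1$, $q=x_\mathbf{c}x_\mathbf{d}$ and $f_\beta=f_\alpha-mf_I$. Thus $f_\alpha\in I_\mathcal{C}$ implies $f_\beta\in I_\mathcal{C}$.

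The main obstacle is only bookkeeping: handling all the sign configurations at $\mathbf{c}$ and $\mathbf{d}$ uniformly, which the monomials $p$ and $q$ are designed to absorb. Admissibility plays no role beyond guaranteeing that $\beta$ is again admissible, which holds because $\mu_I$ is admissible.
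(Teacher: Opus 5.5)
Your proof is correct. The single-move identity $f_\alpha = m f_I + p f_\beta$ (with $p=1$ when the move preserves the norm), chained along the moves, is the natural argument here; the paper gives no proof of its own and only points to \cite[Theorem 4.2]{ZGW}. You use $\mu_I\in\Lambda_{\mathcal{C}}$ for an arbitrary inner interval $I$ without comment; it follows from $\mu_I=\sum_t\mu_{I_t}$ over the basic intervals tiling $I$, and the paper relies on the same fact when it asserts $I_{\mathcal{C}}\subseteq I_{\Lambda_{\mathcal{C}}}$.
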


Theorem 4.6 in \cite{ZGW} presents a method for judging the primality of collections of cells. The following result extends this idea to polycollections.

\begin{prop}\label{equivalent main idea to prove the primality}
Let $\mathcal{C}$ be a polyocollection. If each $\alpha \in \Lambda_{\mathcal{C}}$ with $\sigma(\alpha)>0$ can be effectively reduced, then $I_\mathcal{C} = I_{\Lambda_{\mathcal{C}}}$. As a consequence, $I_\mathcal{C}$ is prime.
\end{prop}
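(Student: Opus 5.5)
We argue by strong induction on the norm $\sigma(\alpha)$ and show that $f_\alpha\in I_{\mathcal C}$ for every $\alpha\in\Lambda_{\mathcal C}$. This is enough. The binomials $f_\alpha$ with $\alpha\in\Lambda_{\mathcal C}$ generate $I_{\Lambda_{\mathcal C}}$, so we get $I_{\Lambda_{\mathcal C}}\subseteq I_{\mathcal C}$. Together with the inclusion $I_{\mathcal C}\subseteq I_{\Lambda_{\mathcal C}}$ noted above, this gives $I_{\mathcal C}=I_{\Lambda_{\mathcal C}}$. Primality then follows from \cite[Theorem 3.8]{2024cnv}, which says that $I_{\mathcal C}$ is prime exactly when it equals $I_{\Lambda_{\mathcal C}}$.

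\emph{Base case.} If $\sigma(\alpha)=0$, then $\alpha=0$ and $f_\alpha=1-1=0\in I_{\mathcal C}$.

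\emph{Inductive step.} Let $\alpha\in\Lambda_{\mathcal C}$ with $\sigma(\alpha)>0$, and assume $f_\gamma\in I_{\mathcal C}$ for every $\gamma\in\Lambda_{\mathcal C}$ with $\sigma(\gamma)<\sigma(\alpha)$.
\begin{enumerate}
\item Every element of $\Lambda_{\mathcal C}$ is an admissible labeling. Indeed, each generator $\mu_I$ places exactly one $+1$ and one $-1$ on each maximal edge interval that meets two corners of $I$, and these two corners lie on a common maximal edge interval because the edges of $I$ are covered by edges of basic intervals. Sums of admissible labelings are admissible.
\item By hypothesis, $\alpha$ can be effectively reduced to some admissible $\beta$ with $\sigma(\beta)<\sigma(\alpha)$.
\item Proposition \ref{reduce} gives $\beta\in\Lambda_{\mathcal C}$, since $\alpha\in\Lambda_{\mathcal C}$.
\item The induction hypothesis gives $f_\beta\in I_{\mathcal C}$.
\item Proposition \ref{reduce} then gives $f_\alpha\in I_{\mathcal C}$.
\end{enumerate}

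Since $\sigma$ takes values in the non-negative integers, the induction terminates.

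Almost all of the work is carried by Proposition \ref{reduce}, which we are allowed to assume. The only point needing care is the admissibility check in step 1, which Proposition \ref{reduce} requires. Effective reduction strictly lowers the norm by definition, so well-foundedness is automatic.
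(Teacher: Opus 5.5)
Your proof is correct and follows the paper's argument: induction on $\sigma(\alpha)$, applying Proposition~\ref{reduce} once to get $\beta\in\Lambda_{\mathcal C}$ and once more to pass from $f_\beta\in I_{\mathcal C}$ to $f_\alpha\in I_{\mathcal C}$, with primality then following from the quoted equivalence $I_{\mathcal C}$ prime $\iff I_{\mathcal C}=I_{\Lambda_{\mathcal C}}$. Your step~1, checking that elements of $\Lambda_{\mathcal C}$ (and hence the reduced labelings $\beta$) are admissible so that Proposition~\ref{reduce} applies, is a detail the paper leaves implicit, and your justification of it is sound.
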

\begin{proof} For each $\alpha \in \Lambda_{\mathcal{C}}$, we show that $f_\alpha\in I_\mathcal{C}$ by induction on $\sigma(\alpha)$. If $\sigma(\alpha)=0$, then $\alpha=\mathbf{0}$ and $f_\alpha=0\in I_\mathcal{C}$. Assume that $f_\alpha\in I_\mathcal{C}$ if $\sigma(\alpha)<k$, where $k$ is a positive integer. We claim that $f_\alpha\in I_\mathcal{C}$ if $\sigma(\alpha)=k$. In fact, note that $\al$ can be effectively reduced to some labeling $\beta$, so it follows from Proposition \ref{reduce} that $\beta\in \Lambda_{\mathcal{C}}$. Since $\sigma(\beta)< \sigma(\alpha)=k$, it follows from induction  hypothesis that $f_\beta \in I_{\mathcal{C}}$. By Proposition \ref{reduce} again, one has $f_\alpha\in I_\mathcal{C}$. Hence $I_\mathcal{C} = I_{\Lambda_{\mathcal{C}}}$. Consequently, $I_\mathcal{C}$ is prime.
\end{proof}

As in \cite[Definition 3.2]{2015hq1},  zig-zag walks in a polyocollection can be defined.

\begin{dfn}
Let $\mathcal {C}$ be a polyocollection.  A {\it zig-zag walk} of $\mathcal {C}$ is a finite sequence of distinct inner intervals $\mathcal {W}: I_1,\ldots, I_k$ with $k\geq 2$ such that, for each $1\leq i\leq k$ the interval $I_i$ has either the diagonal corners $\mathbf{v}_i,\mathbf{z}_i$ and anti-diagonal corners $\mathbf{u}_i,\mathbf{v}_{i+1}$ or the anti-diagonal corners $\mathbf{v}_i,\mathbf{z}_i$ and diagonal corners $\mathbf{u}_i,\mathbf{v}_{i+1}$, where $\mathbf{v}_{k+1}=\mathbf{v}_1$, and the following conditions are satisfied$:$\\[-0.4cm]
\begin{itemize}
\item[(1)] $I_i\cap I_{i+1}=\{\mathbf{v}_{i+1}\}$ for $i \in \{1,\ldots,k\}$;\\[-0.4cm]
\item[(2)] $\mathbf{v}_i$ and $\mathbf{v}_{i+1}$ lie on the same horizontal or vertical edge interval of $\mathcal {C}$ for $i\in \{1,\ldots,k\}$;\\[-0.4cm]
\item[(3)] For any distinct $i,j\in \{1,\ldots,k\}$, there is no inner interval $J$ in $\mathcal {C}$ containing both $\mathbf{z}_i$ and $\mathbf{z}_j$.   \\[-0.4cm]
\end{itemize}
\end{dfn}

For example, the sequence $[\mathbf{u}_{1}, \mathbf{v}_{2}], [\mathbf{v}_{2}, \mathbf{z}_{2}], [\mathbf{v}_{4}, \mathbf{u}_{3}],[\mathbf{z}_{4}, \mathbf{v}_{4}]$ is a zig-zag walk of the polyocollection in Figure \ref{fig3}.

\begin{figure}[h]
\centering
    \begin{tikzpicture}[scale=1.2, transform shape]
    \draw[fill=black!16,line width=0.8pt]
    (0.5,0)--(3.5,0)--(3.5,0.5)--(0.5,0.5)--(0.5,0)--cycle
    (0,0.5)--(0.5,0.5)--(0.5,1)--(1.5,1)--(1.5,1.5)--(0,1.5)--(0,0.5)--cycle
    (0.5,1.5)--(3.5,1.5)--(3.5,2.5)--(0.5,2.5)--(0.5,1.5)--cycle
    (3.5,0.5)--(4.5,0.5)--(4.5,1.5)--(3,1.5)--(3,1)--(3.5,1)--(3.5,0.5)--cycle;
 \draw[fill=blue!16,line width=0.8pt](2,2)--(2,1)--(2.5,1)--(2.5,2)--(2,2)--cycle;
  \draw[line width=0.8pt] (0,1)--(0.5,1) (3.5,1)--(4.5,1) (0,1.5)--(4.5,1.5) (0.5,0)--(0.5,2.5)
  (1.5,1)--(1.5, 2.5) (3.5,0)--(3.5,2.5) (3,1.5)--(3,2.5);

 \textcolor[rgb]{0.00,0.59,0.00}{\draw[line width=0.8pt] (0.5,0)--(3.5,0)--(3.5,0.5)--(0.5,0.5)--(0.5,0)
(0.5,1.5)--(3.5,1.5)--(3.5,2.5)--(0.5,2.5)--(0.5,1.5);}
 \textcolor[rgb]{0.00,0.07,1.00}{\draw[line width=0.8pt]  (0,0.5)--(0.5,0.5)--(0.5,1.5)--(0,1.5)--(0,0.5)
(3.5,0.5)--(4.5,0.5)--(4.5,1.5)--(3.5,1.5)--(3.5,0.5);}

  \draw[fill=black] (0.5,2.5) circle (1pt);
  \draw[fill=black] (1.5,2.5) circle (1pt);
  \draw[fill=black] (3.5,2.5) circle (1pt);
  \draw[fill=black] (3,2.5) circle (1pt);

  \draw[fill=black] (0,1.5) circle (1pt);
  \draw[fill=black] (0.5,1.5) circle (1pt);
  \draw[fill=black] (1.5,1.5) circle (1pt);
  \draw[fill=black] (3.5,1.5) circle (1pt);
  \draw[fill=black] (4.5,1.5) circle (1pt);
  \draw[fill=black] (3,1.5) circle (1pt);

  \draw[fill=black] (0,1) circle (1pt);
  \draw[fill=black] (0.5,1) circle (1pt);
  \draw[fill=black] (1.5,1) circle (1pt);
  \draw[fill=black] (2,1) circle (1pt);
  \draw[fill=black] (3,1) circle (1pt);
  \draw[fill=black] (3.5,1) circle (1pt);
  \draw[fill=black] (4.5,1) circle (1pt);

  \draw[fill=black] (2,2) circle (1pt);
  \draw[fill=black] (2.5,1) circle (1pt);
  \draw[fill=black] (2.5,2) circle (1pt);

  \draw[fill=black] (0,0.5) circle (1pt);
  \draw[fill=black] (0.5,0.5) circle (1pt);
  \draw[fill=black] (3.5,0.5) circle (1pt);

  \draw[fill=black] (0.5,0) circle (1pt);
  \draw[fill=black] (3.5,0) circle (1pt);

  \node[below, font=\small,scale=0.6] at (0.35,0.5) {$\mathbf{v}_{1}$};
  \node[below, font=\small,scale=0.6] at (-0.15,0.5) {$\mathbf{u}_{1}$};
  \node[below, font=\small,scale=0.6] at (-0.15,1.5) {$\mathbf{z}_{1}$};

  \node[below, font=\small,scale=0.6] at (0.35,1.5) {$\mathbf{v}_{2}$};
  \node[above, font=\small,scale=0.6] at (0.5,2.5) {$\mathbf{u}_{2}$};
  \node[above, font=\small,scale=0.6] at (3.5,2.5) {$\mathbf{z}_{2}$};

  \node[below, font=\small,scale=0.6] at (3.35,1.5) {$\mathbf{v}_{3}$};
  \node[right, font=\small,scale=0.6] at (4.5,1.5) {$\mathbf{u}_{3}$};
  \node[right, font=\small,scale=0.6] at (4.5,0.5) {$\mathbf{z}_{3}$};

  \node[below, font=\small,scale=0.6] at (3.35,0.5) {$\mathbf{v}_{4}$};
  \node[below, font=\small,scale=0.6] at (3.5,0) {$\mathbf{u}_{4}$};
  \node[below, font=\small,scale=0.6] at (0.5,0) {$\mathbf{z}_{4}$};

    \end{tikzpicture}
\vspace{-0.2cm}\caption{An example of a zig-zag walk}\label{fig3}
\end{figure}

In the following, we introduce a class of polyocollections which shares a similar structure with collections of cells.

\begin{dfn}\label{polyocollection}
A polyocollection $\mathcal {C}$  is called a {\it natural polyocollection} if for any two distinct basic intervals $I,J\in \mathcal {C}$, we have $\text{int}(I)\cap \text{int}(J)=\emptyset$.
\end{dfn}

For example, the polyocollection $\mathcal {C}$ in Figure \ref{fig2} is not natural, whereas the  polyocollection obtained by removing the  basic interval $[\mathbf{u}_{6},\mathbf{u}_{12}]$ is natural.


Let $\mathcal {C}$ be a natural polyocollection, and let $\mathcal{P}_{\mathcal {C}}$ be its associated collection of cells, given by
$$\mathcal{P}_{\mathcal {C}}=\{C \,|\, C \,\, \text{is a cell in} \,\, \mathbb{Z}^2, \,\, \text{and there exists}\,\, I \in \mathcal {C} \,\, \text{such that} \,\, C \seq I \}.$$
We say $\mathcal {C}$ is {\it simple} if $\mathcal {P}_{\mathcal {C}}$ is simple.
Based on the structural similarity between natural polyocollections and collections of cells, together with  \cite[Proposition 1.6 and Theorem 2.1]{2014hs} (or see \cite[Theorem 2.2]{2017q}, or see \cite[Theorem 3.2]{2022cnu}) and \cite[Corollary 3.6]{2020mrr}, the following conclusion is readily obtained.

\begin{thm}
Let $\mathcal {C}$ be a  polyocollection. Then the following statements hold$:$
\begin{itemize}
\item[(1)] If $\mathcal {C}$ is natural and simple, then $\mathcal {C}$ is prime$;$
\item[(2)] If $\mathcal {C}$ is prime, then $\mathcal {C}$ contains no zig-zag walk.
\end{itemize}
\end{thm}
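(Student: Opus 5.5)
For part (1), I would reduce to the case of collections of cells via the associated collection $\mathcal{P}_{\mathcal{C}}$. Since $\mathcal{C}$ is natural, the basic intervals have pairwise disjoint interiors. Hence every inner interval of $\mathcal{C}$ is an inner interval of $\mathcal{P}_{\mathcal{C}}$ whose corners lie in $V(\mathcal{C})$. Conversely, the grid lines produced by the corners of the basic intervals refine $\mathcal{C}$ into a collection of cells after a coordinate change. Concretely, I would take the sorted sets of distinct $x$- and $y$-coordinates occurring in $V(\mathcal{C})$ and compress them to consecutive integers. This gives a collection of "generalized cells", each a rectangle of the refined grid lying inside some basic interval.

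The key observation is that the binomial $x_{\mathbf{a}}x_{\mathbf{b}}-x_{\mathbf{c}}x_{\mathbf{d}}$ attached to a basic interval is determined by the corners of that interval. So I would rather mimic the proof for simple collections of cells directly. Following \cite{2017q,2022cnu}, I would show that $\mathbb{K}[\mathcal{C}]$ is isomorphic to the edge ring of a weakly chordal bipartite graph $G_{\mathcal{C}}$. Its vertices are the maximal horizontal and vertical edge intervals of $\mathcal{C}$, and its edges are the vertices of $V(\mathcal{C})$, each joining the two maximal edge intervals through it. The map $x_{\mathbf{v}}\mapsto h_{\mathbf{v}}v_{\mathbf{v}}$ kills every inner $2$-minor, so $I_{\mathcal{C}}\subseteq J_{G_{\mathcal{C}}}$. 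For the reverse inclusion, the toric ideal of $G_{\mathcal{C}}$ is generated by even cycles. Weak chordality reduces these to $4$-cycles by Ohsugi--Hibi. A $4$-cycle corresponds to four vertices forming the corners of a rectangle whose sides lie on edge intervals. Simplicity of $\mathcal{P}_{\mathcal{C}}$ (no holes), together with naturality, forces this rectangle to be an inner interval of $\mathcal{C}$: its closure is covered by cells of $\mathcal{P}_{\mathcal{C}}$, and these glue along edges to basic intervals tiling it.

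Weak chordality of $G_{\mathcal{C}}$ would follow as in \cite[Theorem 2.2]{2017q}. A chordless cycle of length at least $6$ yields a closed polygonal path in $\mathcal{P}_{\mathcal{C}}$ enclosing a region outside $\mathcal{P}_{\mathcal{C}}$, that is, a hole, which contradicts simplicity.

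For part (2), I would copy \cite[Corollary 3.6]{2020mrr}. Suppose $\mathcal{W}: I_1,\ldots,I_k$ is a zig-zag walk. Consider the binomial $f=\prod_i x_{\mathbf{v}_i}\,(\cdots)$ built from the walk, namely $f_{\mathcal{W}}=\prod_{i} x_{\mathbf{z}_i}$ minus $\prod_i x_{\mathbf{u}_i}$ with appropriate multiplication by $\prod x_{\mathbf{v}_i}$. Telescoping the inner $2$-minors $f_{I_i}$ shows $\prod_i x_{\mathbf{v}_i}\cdot(\prod x_{\mathbf{z}_i}-\prod x_{\mathbf{u}_i})\in I_{\mathcal{C}}$, so if $I_{\mathcal{C}}$ is prime, $g=\prod x_{\mathbf{z}_i}-\prod x_{\mathbf{u}_i}\in I_{\mathcal{C}}$ (variables are nonzerodivisors, since $I_{\mathcal{C}}$ contains no monomials). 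Then $\prod x_{\mathbf{z}_i}$ must be connected to another monomial by a move along some inner $2$-minor. By condition (3), no inner interval contains two $\mathbf{z}_i$'s, so the squarefree monomial $\prod x_{\mathbf{z}_i}$ is not divisible by the diagonal or anti-diagonal monomial of any inner $2$-minor. It therefore lies alone in its fiber of the generators, which makes $g\notin I_{\mathcal{C}}$, a contradiction.

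The main obstacle is part (1), namely that each $4$-cycle of $G_{\mathcal{C}}$ yields an inner interval of $\mathcal{C}$ rather than merely of $\mathcal{P}_{\mathcal{C}}$. To prove it, I would show that the basic intervals meeting the rectangle cannot protrude past its sides. A protruding basic interval would have a vertex in the open interior of a side, contradicting that the side is contained in a maximal edge interval. Condition (2) of the polyocollection definition handles the partial-edge overlaps.
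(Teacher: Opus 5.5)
Your proposal is correct and takes essentially the paper's route: the paper gives no argument of its own, only asserting the result by structural analogy and citing \cite{2014hs} or, equivalently, the weakly chordal edge-ring proofs of \cite{2017q,2022cnu} for (1), and \cite[Corollary 3.6]{2020mrr} for (2). In (2), note that condition (3) also gives $\prod x_{\mathbf{z}_i}\neq\prod x_{\mathbf{u}_i}$, which you need for $g\neq 0$. Two justifications in (1) should be tightened: non-protrusion holds by naturality, because the edges forming a side of the rectangle would otherwise pass through the interior of the protruding basic interval (such an interval need not have a vertex on that side); and the chord argument of \cite{2017q} transfers because naturality together with the axiom $|F\cap G|\le 1$ implies that every basic interval containing a vertex of $\mathcal{C}$ has it as a corner, and that horizontal and vertical edge intervals meet only in vertices of $\mathcal{C}$.
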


Next, we naturally extend the definition and conclusions about zero-sum condition, established in \cite{ZGW} for collections of cells, to natural polyocollections. This  generalization will be used to investigate the primality of  natural polyocollections in Section \ref{sec4}.


Let $\mathcal {C}$ be a natural polyocollection and $\mathcal {P}_{\mathcal {C}}$ the associated collection of cells. For each $\mathbf{u}\in V(\mathbb{Z}^2)$, $\mathcal{C}$ is divided into four parts by the vertical line and the horizontal line passing through $\mathbf{u}$. The upper right, upper left, lower left, lower right parts
of $\mathbf{u}$ are denoted by  $\mathcal{C}^{(1)}_\mathbf{u}$, $\mathcal{C}^{(2)}_\mathbf{u}$, $\mathcal{C}^{(-1)}_\mathbf{u}$, $\mathcal{C}^{(-2)}_\mathbf{u}$ respectively, and the upper right, upper left, lower left, lower right cells in $\mathbb{Z}^2$ containing $\mathbf{u}$ are denoted by  $C^{(1)}_\mathbf{u}, C^{(2)}_\mathbf{u}, C^{(-1)}_\mathbf{u}, C^{(-2)}_\mathbf{u}$ respectively. For example,
for the natural polyocollection $\mathcal{C}$ and the vertex $\mathbf{u}\in V(\mathcal{C})$ in Figure \ref{fig4}, $\mathcal{C}^{(-2)}_\mathbf{u}$ consists of the basic interval $[\mathbf{b},\mathbf{e}]$, half of the basic interval $[\mathbf{a},\mathbf{b}]$ and the edges $[\mathbf{f},\mathbf{u}]$ and $[\mathbf{u},\mathbf{d}]$. It is clear that $\mathcal{C}^{(-2)}_\mathbf{u}$ is not a polyocollection.
Let $\alpha$ be an admissible labeling of $\mathcal{C}$, and let $\mathcal{W}$ be a subset of $V(\mathcal{C})$, denote $\al(\mathcal{W})=\sum_{\mathbf{v}\in \mathcal{W}} \al(\mathbf{v})$. If $\mathcal{C}'$ is a natural subpolyocollection of $\mathcal{C}$, or $\mathcal{C}'=\mathcal{C}^{(i)}_\mathbf{u}$ for some $i\in\{1, -1, 2, -2\}$, denote by $\al(\mathcal{C}')=\sum_{\mathbf{v}\in V(\mathcal{C}')} \al(\mathbf{v})$.  If for each $i \in \{1, -1, 2, -2\}$,  $C^{(-i)}_\mathbf{u} \not\in \mathcal {P}_{\mathcal {C}}$ implies $\alpha (\mathcal{C}^{(i)}_\mathbf{u})=0$, then $\alpha$ is said to satisfy the {\em zero-sum} condition on the vertex  $\mathbf{u}$. If $\alpha$ satisfies the {\em zero-sum} condition on each  $\mathbf{u}\in V(\mathbb{Z}^2)$, then $\alpha$ is said to satisfy the {\em zero-sum} condition on $\mathcal{C}$. According to the structure of natural  polyocollections, \cite[Theorem 3.12]{ZGW} can be generalized to the case of natural polyocollections.

\begin{figure}[h]
\centering
    \begin{tikzpicture}[scale=1, transform shape]
    \draw[fill=black!16,line width=0.8pt]
    (0,0)--(3.5,0)--(3.5,0.5)--(0,0.5)--(0,0)--cycle
    (0,0.5)--(0.5,0.5)--(0.5,1)--(1.5,1)--(1.5,1.5)--(3.5,1.5)-- (3.5,0.5)--(4.5,0.5)--(4.5,2.5)--(0,2.5)--(0,0.5)--cycle;
  \draw[line width=0.8pt] (0,1)--(0.5,1)  (0,1.5)--(4.5,1.5) (0.5,0)--(0.5,2.5)
  (1.5,1)--(1.5, 2.5) (3.5,0)--(3.5,2.5);
  \draw[fill=black] (0,2.5) circle (1pt);
  \draw[fill=black] (0.5,2.5) circle (1pt);
  \draw[fill=black] (1.5,2.5) circle (1pt);
  \draw[fill=black] (3.5,2.5) circle (1pt);
  \draw[fill=black] (4.5,2.5) circle (1pt);

  \draw[fill=black] (0,1.5) circle (1pt);
  \draw[fill=black] (0.5,1.5) circle (1pt);
  \draw[fill=black] (1.5,1.5) circle (1pt);
  \draw[fill=black] (3.5,1.5) circle (1pt);
  \draw[fill=black] (4.5,1.5) circle (1pt);

  \draw[fill=black] (0,1) circle (1pt);
  \draw[fill=black] (0.5,1) circle (1pt);
  \draw[fill=black] (1.5,1) circle (1pt);


  \draw[fill=black] (0,0.5) circle (1pt);
  \draw[fill=black] (0.5,0.5) circle (1pt);
  \draw[fill=black] (3.5,0.5) circle (1pt);
  \draw[fill=black] (4.5,0.5) circle (1pt);

  \draw[fill=black] (0,0) circle (1pt);
  \draw[fill=black] (0.5,0) circle (1pt);
  \draw[fill=black] (3.5,0) circle (1pt);

   \node[left, font=\small,scale=0.6] at (1.5,1.3) {$\mathbf{u}$};
   \node[left, font=\small,scale=0.6] at (1.5,0.8) {$\mathbf{f}$};
   \node[below, font=\small,scale=0.6] at (0.5,0) {$\mathbf{a}$};
   \node[left, font=\small,scale=0.6] at (3.5,0.7) {$\mathbf{b}$};
   \node[right, font=\small,scale=0.6] at (4.5,0.5) {$\mathbf{c}$};
   \node[left, font=\small,scale=0.6] at (3.5,1.3) {$\mathbf{d}$};
   \node[right, font=\small,scale=0.6] at (4.5,1.3) {$\mathbf{e}$};
  \node[below, font=\small,scale=0.6] at (3.5,0) {$\mathbf{g}$};

   {\color{red}  \draw[line width=1.2pt] (1.5,-0.2)--(1.5,2.7) (-0.2,1.5)--(4.7,1.5);}
   {\color{blue}  \draw[line width=0.8pt] (1.5,1.5)--(1.5,1) (3.5,0)--(3.5,1.5)--(4.5,1.5)--(4.5,0.5)--(3.5,0.5) (1.5,0.5)--(3.5,0.5)  (1.5,0)--(3.5,0) (1.5,1.5)--(3.5,1.5);}
{\color{blue} \draw[densely dashed, blue, line width=0.8pt](1.5,0.5)--(1.5,0);}

    \end{tikzpicture}
\vspace{-0.2cm}\caption{An example for $\mathcal{C}^{(-2)}_\mathbf{u}$}\label{fig4}
\end{figure}

\begin{prop}\label{zero sum condition}
Let $\mathcal{C}$ be a natural ployocollection with a labeling $\alpha$. Then $\alpha$ satisfies the zero-sum condition on $\mathcal{C}$ if and only if $\alpha \in  \Lambda_{\mathcal {C}}$.
\end{prop}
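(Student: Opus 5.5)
One direction is a direct check; the other goes through a transfer to the associated collection of cells $\mathcal{P}_{\mathcal{C}}$, where \cite[Theorem 3.12]{ZGW} is available.

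\emph{($\Rightarrow$ direction: lattice elements satisfy zero-sum.)} Both conditions are linear in $\alpha$, so it suffices to treat a generator $\mu_I$ with $I=[\mathbf{a},\mathbf{b}]$ an inner interval. Fix $\mathbf{u}\in V(\mathbb{Z}^2)$ and $i$ with $C^{(-i)}_\mathbf{u}\notin\mathcal{P}_{\mathcal{C}}$. Since $\mathcal{C}$ is natural, $\text{c}(I)$ is tiled by basic intervals with disjoint interiors, so every cell of $\mathbb{Z}^2$ inside $I$ lies in $\mathcal{P}_{\mathcal{C}}$. Hence $C^{(-i)}_\mathbf{u}\not\subseteq I$, and $\mathbf{u}$ cannot lie in $\text{int}(I)$, nor in the relative interior of an edge of $I$ on the side facing $C^{(-i)}_\mathbf{u}$. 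It follows that the quadrant $\mathcal{C}^{(i)}_\mathbf{u}$ contains either all four corners of $I$, exactly two corners on one side (one $+1$ and one $-1$), or none of them. In every case $\mu_I(\mathcal{C}^{(i)}_\mathbf{u})=0$. The remaining case is that the quadrant contains exactly one corner, and this forces $\mathbf{u}$ to be interior to $I$, which is already excluded.

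\emph{($\Leftarrow$ direction: zero-sum labelings lie in $\Lambda_{\mathcal{C}}$.)} Let $\alpha$ satisfy the zero-sum condition on $\mathcal{C}$.
\begin{enumerate}
\item Extend $\alpha$ by zero to a labeling $\tilde\alpha$ on $V(\mathcal{P}_{\mathcal{C}})$.
\item Check that $\tilde\alpha$ satisfies the zero-sum condition for the collection of cells $\mathcal{P}_{\mathcal{C}}$. This is immediate, because the cells of $\mathbb{Z}^2$ missing from $\mathcal{P}_{\mathcal{C}}$ are the same for both objects, and quadrant sums are unchanged by the zero extension.
\item Apply \cite[Theorem 3.12]{ZGW} to write $\tilde\alpha=\sum_C c_C\,\mu_C$ as a combination over cells $C\in\mathcal{P}_{\mathcal{C}}$.
\item Group the cells by the basic interval $J\in\mathcal{C}$ containing them; each cell lies in a unique $J$ by naturality.
\end{enumerate}

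The obstacle is that $\mu_C$ for a single cell is generally not in $\Lambda_{\mathcal{C}}$; only combinations whose support lies in $V(\mathcal{C})$ can be. So I would rather give a direct proof that mimics the proof of \cite[Theorem 3.12]{ZGW}. Choose a basic interval $J$ whose upper right corner $\mathbf{u}$ is maximal in the order used in $(\dag)$, and subtract $\alpha(\mathbf{u})\mu_{J}$. This zeroes out $\mathbf{u}$ and preserves the zero-sum condition, since $\mu_J$ satisfies it by the first direction. Then induct on the number of basic intervals, removing $J$.

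The key verification is that after removing $J$ the new labeling, supported on $V(\mathcal{C}\setminus\{J\})$, still satisfies the zero-sum condition for the smaller natural polyocollection $\mathcal{C}\setminus\{J\}$. The new vanishing requirements come from vertices $\mathbf{v}$ whose facing cell was in $J$. For such $\mathbf{v}$ the required quadrant sums are obtained by taking the old quadrant conditions at suitable vertices on the boundary of $J$ (for instance at $\mathbf{u}$ itself, where $C^{(1)}_\mathbf{u}\notin\mathcal{P}_{\mathcal{C}}$ by maximality) and subtracting edge-interval sums; these vanish by admissibility, which follows from zero-sum. I expect this boundary bookkeeping — showing that the vertices of $J$ not in other basic intervals end up with label $0$, and that the new quadrant sums vanish — to be the main difficulty. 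Naturality, meaning that no other interval overlaps the interior of $J$, is what makes it work, exactly as in the cell case.
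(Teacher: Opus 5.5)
Your ($\Rightarrow$) direction is correct, up to one slip. For $i=1$ and $I=[(a_1,a_2),(b_1,b_2)]$, the closed quadrant $\mathcal{C}^{(1)}_{\mathbf u}$ contains exactly one corner of $I$ iff $a_1<u_1\le b_1$ and $a_2<u_2\le b_2$. This allows $\mathbf u$ on the top or right side of $I$ (including $\mathbf u=\mathbf b$), not only in $\text{int}(I)$. In all these cases $C^{(-1)}_{\mathbf u}\subseteq I$, so the conclusion stands; naturality is not used there.

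\textbf{The gap is the ($\Leftarrow$) direction.} This is the real content of the proposition; the paper itself only asserts that \cite[Theorem 3.12]{ZGW} carries over. You abandon your first route, and in the second you defer exactly the step that needs proof. That step is not routine bookkeeping, for three reasons.
\begin{itemize}
\item For $\mathcal{C}\setminus\{J\}$ the zero-sum condition must hold at every lattice point of $\text{c}(J)$, not only at vertices. At a lattice point of $\text{int}(J)$, all four quadrant sums of $\beta=\alpha-\alpha(\mathbf u)\mu_J$ must vanish.
\item $\beta$ must vanish at corners of $J$ that lie in no other basic interval.
\item $\beta$ must be admissible for $\mathcal{C}\setminus\{J\}$, whose maximal edge intervals may split. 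Your remark that admissibility follows from zero-sum is unproved, and the paper defines the zero-sum condition only for admissible labelings.
\end{itemize}

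Naturality alone also does not carry the step. Write $J=[(a_1,a_2),\mathbf u]$ with $\mathbf u=(u_1,u_2)$. You need three facts.
\begin{itemize}
\item[(a)] By maximality of $\mathbf u$, no vertex of $\mathcal C$ has first coordinate larger than $u_1$, and none lies on $x=u_1$ above $\mathbf u$.
\item[(b)] No vertex of $\mathcal C$ lies in $\text{int}(J)$ (naturality) or in the relative interior of an edge of $J$. The latter comes from the edge condition in the definition of a polyocollection.
\item[(c)] No cell directly above the top edge of $J$ is in $\mathcal P_{\mathcal C}$. A basic interval $K$ containing one would have its bottom edge on $y=u_2$ (naturality). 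That edge meets the top edge of $J$ in at least two lattice points, so $K$ and $J$ would share the whole top edge. Then the upper right corner $(u_1,k_2)$ of $K$, with $k_2>u_2$, would beat $\mathbf u$.
\end{itemize}
Since line sums vanish, every new zero-sum condition reduces to $\alpha(\mathcal{C}^{(1)}_{\mathbf p})=\alpha(\mathbf u)$ for lattice points $\mathbf p=(p_1,p_2)$ with $a_1<p_1\le u_1$ and $a_2<p_2\le u_2$. Facts (a)--(c) turn the left side into $\alpha(\mathbf u)+\alpha(\mathcal{C}^{(1)}_{(p_1,u_2+1)})$, and the second term vanishes by the zero-sum condition at $(p_1,u_2+1)$.

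\textbf{Your first route is actually shorter, and the obstacle you name is not real.}
\begin{itemize}
\item The $i=1$ conditions alone give $\tilde\alpha=\sum_C c_C\mu_C$ with $c_C=\tilde\alpha(\mathcal{C}^{(1)}_{\mathbf p_C})$, where $\mathbf p_C$ is the upper right corner of $C$. By inclusion--exclusion, both $\tilde\alpha(\mathbf w)$ and $(\sum_C c_C\mu_C)(\mathbf w)$ equal the mixed difference of $\mathbf p\mapsto\tilde\alpha(\mathcal{C}^{(1)}_{\mathbf p})$ at $\mathbf w$. The $i=1$ condition says exactly that $c_C=0$ for cells $C\notin\mathcal P_{\mathcal C}$.
\item Since $\mu_J=\sum_{C\subseteq J}\mu_C$, it only remains to show that $c_C$ is constant on the cells of each basic interval $J$.
\item Take horizontally adjacent cells of $J$ with upper right corners $\mathbf p$ and $\mathbf p+(1,0)$. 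Their coefficient difference is the sum of $\tilde\alpha$ over the ray $\{p_1\}\times[p_2,\infty)$.
\item By (b) and the first half of the argument in (c), this ray meets no vertex while it climbs through the finite stack of basic intervals above $J$. Each interval in that stack shares a whole horizontal edge with the one below.
\item If the stack's top edge is at height $t$, the two cells just above it flanking $x=p_1$ are absent. The sum over $\{p_1\}\times[t+1,\infty)$ is the difference of their coefficients, namely $0$.
\item Vertically adjacent cells are handled symmetrically, so $\tilde\alpha\in\Lambda_{\mathcal C}$.
\end{itemize}
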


\subsection{Switching rook polynomials of polyocollections}

We now introduce the switching rook polynomial of a polyocollection, which is defined similarly to that in \cite[Section 3]{2022qrr}. Let $\mathcal {C}$ be a polyocollection, and let $R_1$ and $R_2$ be two rooks placed on basic intervals $I_1$ and $I_2$ of $\mathcal {C}$, respectively. If $I_1$ and  $I_2$ are in the same row or the same column of $\mathcal {C}$, then  $R_1$ and $R_2$ are said to be in {\it attacking position} (or are called {\it attacking rooks}). Otherwise, $R_1$ and $R_2$ are said to be in {\it non-attacking position} (or are called {\it non-attacking rooks}). The {\it rook number}  $r(\mathcal {C})$ is the maximum number of non-attacking rooks that can be arranged on $\mathcal {C}$. For each $1\leq k\leq r(\mathcal {C})$, a {\it $k$-rook configuration} in $\mathcal {C}$ is a $k$-rook placement (i.e., a set of $k$ rooks) placed on non-attacking positions in  $\mathcal {C}$; let $\mathcal {R}_k(\mathcal {C})$ be the set of all $k$-rook configurations and let $r_k(\mathcal {C}):=|\mathcal {R}_k(\mathcal {C})|$. For $k=0$, set $\mathcal {R}_0(\mathcal {C})=\emptyset$ and $r_0(\mathcal {C})=1$. Moreover, denote $\mathcal {R}(\mathcal {C}):=\bigcup_{k=0}^{r(\mathcal {C})}\mathcal {R}_k(\mathcal {C})$. Two non-attacking rooks $R_1$ and $R_2$ in $\mathcal {C}$ are said to be in {\it switching position} (or are called {\it switching rooks}) if they are arranged on the diagonal or anti-diagonal position of an inner interval of $\mathcal {C}$. Let $F\in \mathcal {R}(\mathcal {C})$ be a $k$-rook configuration containing switching rooks $R_1$ and $R_2$ for some $1\leq k\leq r(\mathcal {C})$, and let $I$ be the inner interval of $\mathcal {C}$ such that $R_1$ and $R_2$ are placed on its diagonal (resp. anti-diagonal) position. Then it is easy to check that
$$(F\setminus\{R_1, R_2\})\cup \{R'_1, R'_2\}$$
is also a $k$-rook configuration, where $R'_1$ and $R'_2$ are rooks placed on the anti-diagonal (resp. diagonal) position of $I$.  This replacements is called a {\it switch} from  $R_1, R_2$ to $R'_1, R'_2$. The switch operation induces an equivalent relation $\overset{\scriptscriptstyle\mathcal{C}}{\thicksim}$ on $\mathcal {R}(\mathcal {C})$: for any $1\leq k \leq r(\mathcal {C})$ and any $F_1, F_2\in \mathcal {R}_k(\mathcal {C})$, $F_1\overset{\scriptscriptstyle\mathcal{C}}{\thicksim} F_2$ is defined to hold if $F_2$ can be obtained from $F_1$ via a sequence of switches: $F_1  \to L_1 \to L_2 \to \cdots \to L_{t-1} \to L_t= F_2$, where $t$ is called the length of the sequence. If $F_1\overset{\scriptscriptstyle\mathcal{C}}{\thicksim} F_2$, then the {\it switching number} from $F_1$ to $F_2$ on $\mathcal {C}$, denoted by $n_{\mathcal {C}}(F_1, F_2)$, is the minimum among all lengths of sequences of switches from $F_1$ to $F_2$.  If $F_1$ and $F_2$ are not equivalent, then denote $F_1\overset{\scriptscriptstyle\mathcal{C}}{\nsim} F_2$. For example, the two $5$-rook configurations in Figure \ref{fig5} are  equivalent under $\overset{\scriptscriptstyle\mathcal{C}}{\thicksim}$ with switching number 1. For each $1\leq k\leq r(\mathcal {C})$, let $\widetilde{\mathcal {R}}_k(\mathcal {C}):=\mathcal {R}_k(\mathcal {C})/\overset{\scriptscriptstyle\mathcal{C}}{\thicksim}$ be the set of equivalence classes of $k$-rook configurations on $\mathcal {C}$,  and let $\widetilde{r}_k(\mathcal {C}):=|\widetilde{\mathcal {R}}_k(\mathcal {C})|$. For $k=0$, set $\widetilde{\mathcal {R}}_0(\mathcal {C})=\emptyset$ and $\widetilde{r}_0(\mathcal {C})=1$. The {\it switching rook polynomial} of $\mathcal {C}$ is defined as
$$\widetilde{r}_{\mathcal {C}}(t):=\sum_{k=0}^{r(\mathcal {C})}\widetilde{{r}}_k(\mathcal {C})t^k.$$

\begin{figure}[htbp]
\centering
\begin{minipage}{0.4\textwidth}
\centering
\resizebox{\linewidth}{!}{%
\begin{tikzpicture}[every node/.style={font=\small}]
      \draw[fill=black!16,line width=0.8pt]
    (0,0)--(4.5,0)--(4.5,0.5)--(0,0.5)--(0,0)--cycle
    (0,0.5)--(0.5,0.5)--(0.5,1)--(1.5,1)--(1.5,1.5)--(3.5,1.5)-- (3.5,0.5)--(4.5,0.5)--(4.5,2.5)--(0,2.5)--(0,0.5)--cycle;
 \draw[fill=blue!16,line width=0.8pt](2,2)--(2,1)--(3,1)--(3,2)--(2,2)--cycle;
  \draw[line width=0.8pt] (0,1)--(0.5,1)  (0,1.5)--(4.5,1.5) (0.5,0)--(0.5,2.5)
  (1.5,1)--(1.5, 2.5) (3.5,0)--(3.5,2.5);
  \draw[fill=black] (0,2.5) circle (1pt);
  \draw[fill=black] (0.5,2.5) circle (1pt);
  \draw[fill=black] (1.5,2.5) circle (1pt);
  \draw[fill=black] (3.5,2.5) circle (1pt);
  \draw[fill=black] (4.5,2.5) circle (1pt);

  \draw[fill=black] (0,1.5) circle (1pt);
  \draw[fill=black] (0.5,1.5) circle (1pt);
  \draw[fill=black] (1.5,1.5) circle (1pt);
  \draw[fill=black] (3.5,1.5) circle (1pt);
  \draw[fill=black] (4.5,1.5) circle (1pt);

  \draw[fill=black] (0,1) circle (1pt);
  \draw[fill=black] (0.5,1) circle (1pt);
  \draw[fill=black] (1.5,1) circle (1pt);
  \draw[fill=black] (2,1) circle (1pt);
  \draw[fill=black] (3,1) circle (1pt);

  \draw[fill=black] (2,2) circle (1pt);
  \draw[fill=black] (3,2) circle (1pt);

  \draw[fill=black] (0,0.5) circle (1pt);
  \draw[fill=black] (0.5,0.5) circle (1pt);
  \draw[fill=black] (3.5,0.5) circle (1pt);
  \draw[fill=black] (4.5,0.5) circle (1pt);

  \draw[fill=black] (0,0) circle (1pt);
  \draw[fill=black] (0.5,0) circle (1pt);
  \draw[fill=black] (3.5,0) circle (1pt);
  \draw[fill=black] (4.5,0) circle (1pt);

\textcolor[rgb]{1.00,1.00,1.00}{   \node[left, font=\small,scale=0.6] at (-0.5,0) {$1$};}

 \coordinate[label=225:$\symrook$]()at(0.55,2.25){};
 \coordinate[label=225:$\symrook$]()at(1.25,1.55){};
  \coordinate[label=225:$\symrook$]()at(2.25,0.55){};
 \coordinate[label=225:$\symrook$]()at(4.25,1.25){};
 \coordinate[label=225:$\symrook$]()at(2.8,1.75){};
\end{tikzpicture}}
\end{minipage}\hfill
\begin{minipage}{0.4\textwidth}
\centering
\resizebox{\linewidth}{!}{%
\begin{tikzpicture}[every node/.style={font=\small}]
      \draw[fill=black!16,line width=0.8pt]
    (0,0)--(4.5,0)--(4.5,0.5)--(0,0.5)--(0,0)--cycle
    (0,0.5)--(0.5,0.5)--(0.5,1)--(1.5,1)--(1.5,1.5)--(3.5,1.5)-- (3.5,0.5)--(4.5,0.5)--(4.5,2.5)--(0,2.5)--(0,0.5)--cycle;
 \draw[fill=blue!16,line width=0.8pt](2,2)--(2,1)--(3,1)--(3,2)--(2,2)--cycle;
  \draw[line width=0.8pt] (0,1)--(0.5,1)  (0,1.5)--(4.5,1.5) (0.5,0)--(0.5,2.5)
  (1.5,1)--(1.5, 2.5) (3.5,0)--(3.5,2.5);
  \draw[fill=black] (0,2.5) circle (1pt);
  \draw[fill=black] (0.5,2.5) circle (1pt);
  \draw[fill=black] (1.5,2.5) circle (1pt);
  \draw[fill=black] (3.5,2.5) circle (1pt);
  \draw[fill=black] (4.5,2.5) circle (1pt);

  \draw[fill=black] (0,1.5) circle (1pt);
  \draw[fill=black] (0.5,1.5) circle (1pt);
  \draw[fill=black] (1.5,1.5) circle (1pt);
  \draw[fill=black] (3.5,1.5) circle (1pt);
  \draw[fill=black] (4.5,1.5) circle (1pt);

  \draw[fill=black] (0,1) circle (1pt);
  \draw[fill=black] (0.5,1) circle (1pt);
  \draw[fill=black] (1.5,1) circle (1pt);
  \draw[fill=black] (2,1) circle (1pt);
  \draw[fill=black] (3,1) circle (1pt);

  \draw[fill=black] (2,2) circle (1pt);
  \draw[fill=black] (3,2) circle (1pt);

  \draw[fill=black] (0,0.5) circle (1pt);
  \draw[fill=black] (0.5,0.5) circle (1pt);
  \draw[fill=black] (3.5,0.5) circle (1pt);
  \draw[fill=black] (4.5,0.5) circle (1pt);

  \draw[fill=black] (0,0) circle (1pt);
  \draw[fill=black] (0.5,0) circle (1pt);
  \draw[fill=black] (3.5,0) circle (1pt);
  \draw[fill=black] (4.5,0) circle (1pt);

 \coordinate[label=225:$\symrook$]()at( 1.25,2.25){};
 \coordinate[label=225:$\symrook$]()at(0.55,1.55){};
  \coordinate[label=225:$\symrook$]()at(2.25,0.55){};
 \coordinate[label=225:$\symrook$]()at(4.25,1.25){};
 \coordinate[label=225:$\symrook$]()at(2.8,1.75){};

\textcolor[rgb]{1.00,1.00,1.00}{\node[left, font=\small,scale=0.6] at (5,0) {$1$};}
\end{tikzpicture}}
\end{minipage}\hfill
\caption{Two equivalent $5$-rook configurations under $\overset{\scriptscriptstyle\mathcal{C}}{\thicksim}$}\label{fig5}
\end{figure}


\section{Primality of natural polyocollections of type  $\mathcal {Q}_1$  and  of type  $\mathcal {Q}_2$ } \label{sec4}


Since every natural polyocollection of type $\mathcal {Q}_2$ has a symmetric counterpart of type $\mathcal {Q}_1$, we only need to examine the latter when investigating their properties.
Let $\mathcal{C}$ be a natural polyocollection,
and let $\al$ be an admissible labeling on $\mathcal{C}$ with $\sigma(\alpha)>0$. The {\it initial horizontal edge interval} of $\al$, denoted by $ini(\al)$, is  the uppermost and leftmost maximal  horizontal edge interval $[\mathbf{a},\mathbf{b}]$ of $\mathcal{C}$ with $\sigma([\mathbf{a},\mathbf{b}]) \neq 0$. In the initial horizontal edge interval of $\al$, there exists some vertex $\mathbf{u}$ with $\al(\mathbf{u}) \neq 0$. Among these vertices with nonzero value, the leftmost one is denoted by $inv(\al)$. Note that $\al$ is an admissible labeling. There exists some vertex $\mathbf{v}$ in $ini(\al)$ such that $\al(\mathbf{v})\al(inv(\al))<0$. Among these $\mathbf{v}$, the leftmost  one is denoted by $inv(\al)^{o}$.

\begin{thm}\label{the primality of coc type O} Let $\mathcal{C}$ be a natural polyocollection of type $\mathcal{Q}_1$ or of type $\mathcal{Q}_2$. Then $\mathcal{C}$ is prime.
\end{thm}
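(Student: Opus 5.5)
By the symmetry noted at the start of this section (reflecting across a vertical line exchanges $(\dag)$ and $(\dag\dag)$ and preserves naturality and primality), it suffices to treat a natural polyocollection $\mathcal{C}$ of type $\mathcal{Q}_1$. The plan is to apply Proposition \ref{equivalent main idea to prove the primality}. We must show that every $\alpha\in\Lambda_{\mathcal{C}}$ with $\sigma(\alpha)>0$ can be effectively reduced, i.e. that there is a sequence of single moves lowering the norm. Membership $\alpha\in\Lambda_{\mathcal{C}}$ will be used only through Proposition \ref{zero sum condition}: $\alpha$ is admissible and satisfies the zero-sum condition at every vertex of $\mathbb{Z}^2$.

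Set $[\mathbf{a},\mathbf{b}]=ini(\alpha)$, $\mathbf{u}=inv(\alpha)$ and $\mathbf{v}=inv(\alpha)^o$, and assume without loss of generality $\alpha(\mathbf{u})>0>\alpha(\mathbf{v})$. Since $ini(\alpha)$ is the uppermost row carrying nonzero values, every vertex strictly above it has label $0$. The first step is to produce an inner interval $J$ having $\mathbf{u}$ as its upper-left corner and lying below the row of $\mathbf{u}$, together with a vertex $\mathbf{w}$ in the lower row of $J$ with $\alpha(\mathbf{w})<0$. The existence of some cell below $\mathbf{u}$, and of negative mass below, should follow from the zero-sum condition applied at $\mathbf{u}$ and at neighbouring vertices. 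Indeed, since nothing above carries weight and $\mathbf{u}$ is leftmost nonzero, the quadrant sums $\alpha(\mathcal{C}^{(-1)}_{\mathbf{u}})$ and $\alpha(\mathcal{C}^{(-2)}_{\mathbf{u}})$ are forced to compensate $\alpha(\mathbf{u})$ unless the relevant cells exist. Admissibility on the vertical maximal edge interval through $\mathbf{u}$ then gives a vertex $\mathbf{w}'$ below $\mathbf{u}$ with $\alpha(\mathbf{w}')<0$.

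With such a vertex found, we look at the interval whose diagonal is $[\mathbf{w}',\mathbf{v}]$ or the appropriate rectangle spanned by $\mathbf{u},\mathbf{v},\mathbf{w}'$. Suppose it is inner with $\mathbf{u},\mathbf{v}$ on the top edge and a negative vertex at a corner. Then a move along it sees two anti-diagonal (or two diagonal) corners of the same sign. This gives an effective reduction, since $|\alpha(\mathbf{u})|$ and $|\alpha(\mathbf{w}')|$ both drop while the two new corners change by at most $1$, and at least one of them started at $0$ or with the right sign.

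The core of the argument is to guarantee that the needed rectangle is an inner interval, and this is where condition $(\dag)$ enters. I would argue by induction downward along the column of $\mathbf{u}$ and rightward along the row toward $\mathbf{v}$. Whenever two inner intervals meet at a corner in the staircase configuration $[\mathbf{a}',\mathbf{b}'],[\mathbf{b}',\mathbf{c}']$, condition $(\dag)$ supplies one of $[\mathbf{e}',\mathbf{c}']$, $[\mathbf{d}',\mathbf{c}']$ as inner. Naturality (disjoint interiors) lets us glue basic intervals into larger inner intervals as for cells. When the rectangle fails to be inner, one instead performs a non-increasing move of equal norm (allowed by Proposition \ref{reduce}) that pushes the nonzero mass strictly left or down. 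A potential function, such as the position of $inv$ together with the norm, decreases, so the process terminates in an effective reduction.

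The main obstacle is exactly this case analysis. We must show that, using only $(\dag)$, naturality and the zero-sum condition, one cannot get stuck with all available inner intervals through $\mathbf{u}$ having corners whose signs prevent any effective move. I would first attempt to handle it by choosing, among negative vertices in the column region below $\mathbf{u}$, one minimizing the distance, and deriving a contradiction with the zero-sum condition at a suitable corner vertex whenever $(\dag)$ yields no usable interval.
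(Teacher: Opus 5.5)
\textbf{There is a genuine gap.} Your framework matches the paper's: effective reduction plus the zero-sum condition, organised around $ini(\alpha)$, $inv(\alpha)$ and $inv(\alpha)^o$. Your effective move when the rectangle with diagonal corners $\mathbf{w}'$ and $inv(\alpha)^o$ is inner is exactly the paper's base step. Your auxiliary inner interval $J$ with upper-left corner $inv(\alpha)$ is neither justified nor needed, since admissibility alone gives $\mathbf{w}'$. The real problem is the case where that rectangle is \emph{not} inner, which is essentially the whole proof. There you give no concrete inner interval, no concrete move and no invariant; you only say the case analysis is the obstacle. The two heuristics you offer also point the wrong way.
\begin{itemize}
\item Your potential (the position of $inv(\alpha)$ together with $\sigma(\alpha)$) does not decrease under the move that actually works. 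In the non-effective case, $inv(\alpha)$ and the norm can both stay unchanged; what moves strictly left is $inv(\alpha)^o$.
\item The negative vertex you need generally does not lie in the column of $inv(\alpha)$. The argument also ends in a possibly norm-preserving move, not a contradiction. So looking for a closest negative vertex in that column will not close the case.
\end{itemize}

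\textbf{The missing idea.} Write $inv(\alpha)=(i_k,i'_0)$ and $inv(\alpha)^o=(i_j,i'_0)$ as the $k$-th and $j$-th vertices of $ini(\alpha)$, and induct on $k+j$.
\begin{itemize}
\item If the rectangle is not inner, let $\mathbf{a}_2=(s_1,s_2)$ be the upper-left corner of its uppermost, then leftmost, uncovered cell $C_1$. Then $[\mathbf{a}_2,inv(\alpha)^o]$ is inner.
\item Take $l_1$ minimal such that $[(l_1,l'_1),\mathbf{a}_2]$ is inner for some $l'_1$. Apply $(\dag)$ to $[(l_1,l'_1),\mathbf{a}_2]$ and $[\mathbf{a}_2,inv(\alpha)^o]$. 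The alternative $[(s_1,l'_1),inv(\alpha)^o]$ contains $C_1$, so it is excluded. Hence $[(l_1,l'_1),(s_1,i'_0)]$ is inner; this is where $(\dag)$ enters.
\item Use the zero-sum condition at two vertices at once. Both $(s_1,s_2)$ and $(l_1-1,s_2)$ have an uncovered lower-right cell, and subtracting the two identities gives $\alpha(\mathcal{C}^{(2)}_{(s_1,s_2)}\cap\mathcal{C}^{(1)}_{(l_1,s_2)})=0$.
\item The top-row part of this strip has positive sum. It contains $inv(\alpha)$, and all other top-row labels in it are $\ge 0$ because $s_1<i_j$. So some $\mathbf{y}=(c_1,c_2)$ in the strip has $c_2<i'_0$ and $\alpha(\mathbf{y})<0$.
\item The move $\beta=\alpha+\mu_{[\mathbf{y},inv(\alpha)^o]}$ has both diagonal corners negative. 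If $\alpha((c_1,i'_0))>0$, it is effective.
\item If $\alpha((c_1,i'_0))=0$, the move does not increase $\sigma$ and keeps $ini(\beta)=ini(\alpha)$. It creates a $-1$ at $(c_1,i'_0)$ strictly left of $inv(\alpha)^o$, so the index sum drops and the induction hypothesis applies to $\beta$.
\end{itemize}
Without this strip argument and the induction on $k+j$, your proposal does not establish that every $\alpha\in\Lambda_{\mathcal{C}}$ can be effectively reduced.
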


\begin{proof} Let $\mathcal{C}$ be a natural polyocollection of type $\mathcal{Q}_1$. We prove the primality of $\mathcal{C}$ by Proposition \ref{equivalent main idea to prove the primality}. For each $\alpha \in \Lambda_{\mathcal{C}}$ with $\sigma(\alpha)>0$. Assume that  $ini(\al)$ contains $t+1$ vertices $(i_0,i'_0),(i_1,i'_0),\ldots, (i_t,i'_0)$ of $V(\mathcal{C})$, where $t\geq 1$ and $i_0< i_1<\cdots<i_t$. Then we can assume that $inv(\al)=(i_k, i'_0)$ and $inv(\al)^o=(i_j, i'_0)$, where $0 \leq k < j\leq t$. We prove that $\al$ can be effectively reduced by induction on $n=k+j$. Assume without loss of generality that $\al(inv(\al))>0$, it is clear  $\al(inv(\al)^o)<0$. Denote $\mathbf{a}=(i_0, i'_0)$.

If $n=1$, then $inv(\al)=\mathbf{a}=(i_0, i'_0)$ and $inv(\al)^o=(i_1, i'_0)$. Choose a vertex $\mathbf{a}_1$ of $\mathcal{C}$ in the same vertical edge interval with $\mathbf{a}$ such that $\al(\mathbf{a}_1)<0$. We claim that the interval $[\mathbf{a}_1, (i_1, i'_0)]$ is an inner interval. Otherwise, choose the uppermost cell $C_1=[\mathbf{a}_2-(0,1), \mathbf{a}_2+(1,0)]$ in $[\mathbf{a}_1, inv(\al)^o]$ which is not contained in any basic interval of $\mathcal{C}$, see Figure \ref{fig6}. By Proposition \ref{zero sum condition}, one has $\alpha(\mathcal{C}^{(2)}_{\mathbf{a}_2})=0$.   Hence $\mathbf{a}_2$ is not in $ini(\al)$. By the choice of $C_1$,  $[\mathbf{a}_2, (i_1, i'_0)]$ is an inner interval of $\mathcal{C}$. It follows from the definition of natural polyocollection that $\mathbf{a}_2$ is in the vertical edge interval $[\mathbf{a}_1,\mathbf{a}]$. Hence there exists a basic interval $F$ in $\mathcal{C}$ such that $\mathbf{a}_2$ is the upper right corner of $F$. Assume that $F=[\mathbf{a}_2-(p,q), \mathbf{a}_2]$. Since $\mathcal{C}$ is a natural polyocollection of type $\mathcal{Q}_1$, $[\mathbf{a}_2-(p,0), \mathbf{a}]$ is an inner interval of $\mathcal{C}$. Consequently, $[\mathbf{a}-(p,0), (i_1, i'_0)]$ is a horizontal edge interval of $\mathcal{C}$. This contradicts the fact that $\mathbf{a}$ is an end vertex of the maximal horizontal edge interval $ini(\al)$ of $\mathcal{C}$. Hence $[\mathbf{a}_1, (i_1, i'_0)]$ is an inner interval. Denote by $\beta = \al + \mu_{[\mathbf{a}_1, (i_1, i'_0)]}$. It is direct to check that $\al$ can be effectively reduced to $\beta$.

\begin{figure}[h]
\centering
    \begin{tikzpicture}[scale=1, transform shape]
     \fill[black!16]
    (1,6)--(1,4.5)--(-0.5,4.5)--(-0.5,3.5)--(3.5,3.5)--(3.5,2)--(3,2)--(3,1)--(7,1)--(7,4.5)--(6,4.5)--(6,6)--(1,6)--cycle;
     \fill[black!16]
    (-0.5,3.5)--(-0.5,1)--(3,1)--(3,2.5)--(1,2.5)--(1,3.5)--(-0.5,3.5)--cycle;

  \draw[line width=0.8pt]  (0.5,5.5)--(1,5.5)--(1,4.5)--(0.5,4.5)  (1,5)--(6,5)  (6.5,5.5)--(6,5.5)--(6,4.5)--(6.5,4.5);
  \draw[line width=0.8pt]  (1,4.5)--(1,1.5);

   \draw[line width=0.8pt]  (0,2.5)--(0,3.5)--(1,3.5)--(1,2.5)--(0,2.5);

  \draw[densely dashed, black,  line width=1pt]  (1,3.5)--(1.5,3.5)--(1.5,3)--(1,3);
   \draw[densely dashed, blue,  line width=1pt]  (0,3.5)--(0,5)--(1,5);
    \draw[densely dashed, black,  line width=1pt]   (3,1.3)--(3,5) (1,1.5)--(3,1.5) ;

   \draw[fill=black] (1,5) circle (1pt);
   \node[left, font=\tiny,scale=0.8] at (1,5.2) {$\mathbf{a}=inv(\al)=(i_0,i'_0)$};
   \node[above, font=\tiny,scale=0.8] at (3,5) {$(i_1,i'_0)$};
    \draw[fill=black] (3,5) circle (1pt);
       \draw[fill=black] (1,1.5) circle (1pt);
     \node[below, font=\tiny,scale=0.8] at (1,1.5) {$\mathbf{a}_1$};
       \draw[fill=black] (1,3.5) circle (1pt);
     \node[above, font=\tiny,scale=0.8] at (1.2,3.5) {$\mathbf{a}_2$};
      \node[below, font=\tiny,scale=0.8] at (1.25,3.45) {$C_1$};
       \draw[fill=black] (0,2.5) circle (1pt);
     \node[below, font=\tiny,scale=0.8] at (0,2.5) {$\mathbf{a}_2-(p,q)$};
    \end{tikzpicture}
\caption{The illustration for $n=1$}\label{fig6}
\end{figure}

Assume that $\al$ can be effectively reduced when $n<n_0$. Now, we consider the case when $n=n_0$.
We choose a vertex $\mathbf{a}_1$ of $\mathcal{C}$ in the same vertical edge interval with $\mathbf{a}$ such that $\al(\mathbf{a}_1)<0$. If the interval $[\mathbf{a}_1, inv(\al)^o]$ is an inner interval, then one can prove that $\al$ can be effectively reduced by the same way with the above discussion. Otherwise, if $[\mathbf{a}_1, inv(\al)^o]$ is not an inner interval, then choose the uppermost and leftmost cell $[\mathbf{a}_2-(0,1), \mathbf{a}_2+(1,0)]$ in $[\mathbf{a}_1, inv(\al)^o]$ which is not contained in any basic interval of $\mathcal{C}$, see Figure \ref{fig7}. Assume that $\mathbf{a}_2=(s_1, s_2)$. By a similar discussion as above, we have $[\mathbf{a}_2, inv(\al)^o]$ is an inner interval of $\mathcal{C}$.
Choose the smallest $l_1$ such that $[(l_1, l'_1), (s_1, s_2)]$ is an inner interval of $\mathcal{C}$ for some $l'_1$. It is easy to check that $i_0 \leq l_1 \leq i_{k}\leq s_1 < i_{j}$. Since $\mathcal{C}$ is a natural polyocollection of type $\mathcal{Q}_1$, $[(l_1,l'_1), (s_1,i'_0)]$ is an inner interval of $\mathcal{C}$.
By the choice of $l_1$, the cell $[(l_1-1,s_2-1),(l_1,s_2)]$ is not contained in any basic interval of $\mathcal{C}$. Note that $\alpha \in \Lambda_{\mathcal{C}}$, it follows from Proposition \ref{zero sum condition} that $\alpha$ satisfies the zero-sum condition on $\mathcal{C}$. Hence $\al(\mathcal{C}^{(2)}_{(s_1, s_2)})=0$ and $\al(\mathcal{C}^{(2)}_{(l_1-1, s_2)})=0$. It is easy to check that $\al(\mathcal{C}^{(2)}_{(s_1, s_2)} \cap \mathcal{C}^{(1)}_{(l_1, s_2)})=0$. Note that $inv(\al) \in V(\mathcal{C}^{(2)}_{(s_1, s_2)} \cap \mathcal{C}^{(1)}_{(l_1, s_2)})$, so there exists a vertex $\mathbf{u}=(c_1, c_2) \in [(l_1, s_2), (s_1, i'_0)]\cap V(\mathcal{C})$ such that $\al(u)<0$. By the definition of $inv(\al)^o$, $c_2< i'_0$ and $\al((c_1, i'_0))\geq 0$.
Denote by $\beta = \al + \mu_{[\mathbf{u}, inv(\al)^o]}$. If $\al((c_1, i'_0)) > 0$, then it is direct to check that $\al$ can be effectively reduced to $\beta$. If $\al((c_1, i'_0)) = 0$, then $\al$ can be reduced to $\beta$, and $ini(\beta)=ini(\al)$. Assume that $inv(\beta)=(i_{k_0}, i'_0)$ and $inv(\beta)^o=(i_{j_0}, i'_0)$, then $k_0 \leq k$ and $j_0 < j$. It follows that $k_0+j_0< k+j =n=n_0$. By induction hypothesis, $\beta$ can be effectively reduced. Hence $\al$ can be effectively reduced.

It follows from Proposition \ref{equivalent main idea to prove the primality} that $\mathcal{C}$ is prime.

By a similar way, one can prove that a natural polyocollection of type $\mathcal{Q}_2$ is prime.
\end{proof}

\begin{figure}[h]
\centering
    \begin{tikzpicture}[scale=0.7, transform shape]
     \fill[black!16]
    (-5,2.2)--(-5,0)--(5,0)--(5,1)--(6,1)--(6,2.2)--(-5,2.2)--cycle;
  \fill[black!16]
 (-6,-3)--(-6,-1)--(-5,-1)--(-5,-0)--(5,0)--(5,-1)--(6,-1)--(6,-6.5)--(4,-6.5)--(4,-3)--(2,-3)--(2,-5)--(1,-5)--(1,-6.5)--(-6,-6.5)--(-6,-4)--(-4,-4)--(-4,-3)--(-6,-3)--cycle;

  \draw[line width=0.8pt]  (-6,1)--(-5,1)--(-5,-1)--(-6,-1);
  \draw[line width=0.8pt](-5,0)--(5,0);
  \draw[line width=0.8pt](6,1)--(5,1)--(5,-1)--(6,-1);

  \draw[line width=0.8pt]  (-5,-3)--(-4,-3)--(-4,-4)--(-5,-4);
   \draw[line width=0.8pt]  (1,-5.5)--(1,-5)--(2,-5)--(2,-3)--(4,-3);

   \draw[densely dashed, blue, line width=1pt]  (-4,-3)--(-4,2) (2,-3)--(2,2) (-4,-3)--(2,-3);
    \draw[densely dashed, line width=0.8pt] (-4,-4)--(2,-4);
   \draw[densely dashed, red, line width=1pt]  (-3,0)--(-3,-2)--(4,-2)--(4,0);
    \draw[densely dashed, line width=0.8pt] (-2,0)--(-2,-6)--(4,-6)--(4,0);
   \draw[densely dashed, green, line width=1pt](0,0)--(0,-1)--(4,-1)--(4,0);
    \draw[densely dashed, line width=0.8pt] (2.5,-3)--(2.5,-3.5)--(2,-3.5);

     \draw[fill=black] (-5,0) circle (1.2pt);
   \node[left, font=\small,scale=0.8] at (-5,0) {$\mathbf{a}=(i_0,i'_0)$};
     \draw[fill=black] (-2,0) circle (1.2pt);
   \node[above, font=\small,scale=0.8] at (-2,0) {$inv(\al)=(i_k,i'_0)$};
    \draw[fill=black] (4,0) circle (1.2pt);
     \node[above, font=\small,scale=0.8] at (3.5,0) {$inv(\al)^o=(i_j,i'_0)$};
     \draw[fill=black] (5,0) circle (1.2pt);
          \draw[fill=black] (0,-1) circle (1.2pt);
     \node[below, font=\small,scale=0.8] at (0,-1) {$\mathbf{u}$};
      \draw[fill=black] (-3,-2) circle (1.2pt);
     \node[below, font=\small,scale=0.8] at (-3,-2) {$\mathbf{u}$};
        \draw[fill=black] (-4,-3) circle (1.2pt);
     \node[left, font=\small,scale=0.8] at (-4,-3.3) {$(l_1,s_2)$};
   \draw[fill=black] (-4,-4) circle (1.2pt);
     \node[right, font=\small,scale=0.8] at (-4,-4.3) {$(l_1,l'_1)$};
    \draw[fill=black] (2,-3) circle (1.2pt);
     \node[right, font=\small,scale=0.8] at (2,-2.8) {$\mathbf{a}_2=(s_1,s_2)$};
    \draw[fill=black] (-2,-6) circle (1.2pt);
     \node[below, font=\small,scale=0.8] at (-2,-6) {$\mathbf{a}_1$};

      \node[below, font=\tiny,scale=1] at (2.25,-3) {$C_1$};
 \node[below, font=\small,scale=1] at (-1,1.8) {$\mathcal{C}_{(s_1,s_2)}^{(2)}\cap \mathcal{C}_{(l_1,s_2)}^{(1)}$};
    \end{tikzpicture}
\caption{The illustration for $n=n_0$}\label{fig7}
\end{figure}


Note that there exists no zig-zag walk in a natural polyocollection of type $\mathcal{Q}_1$ or of type $\mathcal{Q}_2$, so it follows from Theorem \ref{the primality of coc type O} that the zig-zag conjecture holds true for collections of cells of type $\mathcal{Q}_1$ or of type $\mathcal{Q}_2$.
Since some well-studied collections of cells are natural polyocollections of type $\mathcal{Q}_1$ or of type $\mathcal{Q}_2$, Theorem \ref{the primality of coc type O} directly yields the following corollary, which recovers the primality of these collections of cells in a unified way.

\begin{cor}\label{known prime collection of cells}The following collections of cells are prime$:$\\[-0.4cm]
\begin{itemize}
\item[(1)] Polyominoes with the shape ``a rectangle minus a rectangle" (\cite{2015s})$;$\\[-0.4cm]

\item[(2)]Polyominoes obtained by removing a convex polyomino from its ambient rectangle (\cite{2015hq1,2018s})$;$\\[-0.4cm]

\item[(3)] Grid polyominoes (\cite{2020mrr})$;$\\[-0.4cm]

\item[(4)] Subgrid polyominoes (\cite{2022mrr})$;$\\[-0.4cm]

\item[(5)] Thin cycle polyominoes whose maximal inner intervals have length at least 3 (\cite{2022mrr})$;$\\[-0.4cm]

\item[(6)]Bipartite collections of cells (\cite{ZGW2}).\\[-0.4cm]
\end{itemize}
\end{cor}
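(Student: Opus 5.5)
The corollary will follow from Theorem \ref{the primality of coc type O} by a case check. Each listed family is a collection of cells, hence a natural polyocollection (distinct cells have disjoint interiors), so it suffices to show that each family satisfies condition $(\dag)$ or condition $(\dag\dag)$. The plan is to verify this family by family, using the explicit geometry of the inner intervals in each case. Where convenient, I will reflect a configuration to pass between $(\dag)$ and $(\dag\dag)$; this swaps the two types, and the theorem covers both.

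\textbf{Hole-type families (1) and (2).} For $\mathcal{P}=R\setminus Q$ with $R$ a rectangle and $Q$ a convex (in particular rectangular) polyomino, an interval $[\mathbf{x},\mathbf{y}]\subseteq R$ is inner exactly when its interior avoids the hole $Q$. Take inner intervals $[\mathbf{a},\mathbf{b}]$ and $[\mathbf{b},\mathbf{c}]$, and suppose both $[\mathbf{e},\mathbf{c}]$ and $[\mathbf{d},\mathbf{c}]$ meet $Q$. Then $Q$ meets both the upper-left block $[\mathbf{e},\mathbf{b}+(0,\ast)]$ and the lower-right block next to $[\mathbf{b},\mathbf{c}]$, while avoiding the two given inner intervals. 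Row- and column-convexity of $Q$ would then force $Q$ to contain cells of $[\mathbf{a},\mathbf{b}]$ or of $[\mathbf{b},\mathbf{c}]$, a contradiction. I will write this argument carefully using the convexity of $Q$.

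\textbf{Families (3)--(5).} Grid polyominoes, subgrid polyominoes and thin cycle polyominoes with long maximal inner intervals are built from maximal rectangles (or strips) meeting in cells or along whole edges. Here an inner interval lies inside a single maximal rectangle, or the strips cross in a way that makes the union of two inner intervals sharing the vertex $\mathbf{b}$ lie in one maximal inner interval. I will check that two inner intervals meeting only at a corner $\mathbf{b}$ in the configuration of Figure \ref{fig1}(a) either lie in a common maximal rectangle, which makes $(\dag)$ trivial, or cannot occur because of the hole structure. For thin cycles, the length-$\geq 3$ hypothesis is what excludes the bad corner configuration. For (3) and (4), the references \cite{2020mrr,2022mrr} already prove quadratic Gr\"obner bases with respect to such lex orders, so by Theorem \ref{condition} the condition follows directly from those results.

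\textbf{Family (6).} For bipartite collections of cells I will quote the description of inner intervals given in \cite{ZGW2} and verify $(\dag)$ directly in the same way.

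\textbf{Main obstacle.} The hardest step is the verification for (2) with a general convex hole and for (5). There, inner intervals can wrap in several ways, and one must make sure that the order choice ($<_{\text{lex}}$ versus $<'_{\text{lex}}$), possibly after a reflection, is uniform over the whole polyomino. If a direct check of $(\dag)$ fails for some orientation, I will instead cite the known quadratic Gr\"obner basis results from the original papers and apply Theorem \ref{condition} in reverse.
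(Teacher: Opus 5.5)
Your proposal takes the same route as the paper, which gives no separate argument beyond the remark that each listed family is a natural polyocollection of type $\mathcal{Q}_1$ or $\mathcal{Q}_2$, so Theorem \ref{the primality of coc type O} applies; your family-by-family checks are exactly what that remark leaves implicit. One caution for when you write them out: your ``common maximal rectangle or cannot occur'' dichotomy for (3) and (5) is not right as stated, and in (2) the convexity step needs a specific form.

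For (3) and (5), the configuration of Figure \ref{fig1}(a) does occur with $[\mathbf{a},\mathbf{b}]$ and $[\mathbf{b},\mathbf{c}]$ in different strips. For example, take $[\mathbf{a},\mathbf{b}]$ in a row strip just left of a crossing or turning cell, and $[\mathbf{b},\mathbf{c}]$ in the column strip above it. There $(\dag)$ holds because the strip containing $[\mathbf{b},\mathbf{c}]$ continues through that cell, so you need this third case.

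For (2), the clean form of your hole argument uses connectivity of $Q$ as well as row/column convexity. Take a monotone path inside the convex polyomino $Q$ from a hole cell of $[\mathbf{e},\mathbf{c}]$ to a hole cell of $[\mathbf{d},\mathbf{c}]$. Such a path must enter $[\mathbf{a},\mathbf{b}]$ or $[\mathbf{b},\mathbf{c}]$, which gives the contradiction.
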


Let $\mathcal {C}$ be a polyocollection. One can define an associated simple graph, denoted by  $G(\mathcal {C})$, whose vertex set is $V(\mathcal {C})$ and edge set is  $\big\{\{\mathbf{a}, \mathbf{b}\}\mid [\mathbf{a}, \mathbf{b}] \text{ is an inner interval of } \mathcal {C}\big\}$.

\begin{remark}\label{graph}
Let $\mathcal {C}$ be a natural polyocollection of type $\mathcal{Q}_1$, and let $<_{\text{\em lex}}$ be the monomial order on $S_{\mathcal {C}}$ defined in Section \ref{sec1}. From Theorem \ref{condition} it follows that
 $$\text{\em in}_{<_{\text{\em lex}}}(I_{\mathcal {C}})=\big(x_{\mathbf{a}}x_{\mathbf{b}}\mid [\mathbf{a}, \mathbf{b}] \in \mathcal{I}(\mathcal {C})\big).$$
Consequently, $\text{\em in}_{<_{\text{\em lex}}}(I_{\mathcal {C}})$ is exactly the edge ideal of $G(\mathcal {C})$.
 \end{remark}

\begin{cor} \label{cor1}
Let $\mathcal {C}$ be a natural polyocollection of type $\mathcal{Q}_1$ or of type $\mathcal{Q}_2$. Then
$\mathbb{K}[\mathcal {C}]$ is a Cohen-Macaulay domain.
\end{cor}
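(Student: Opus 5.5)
We need $\mathbb{K}[\mathcal{C}]$ to be a domain and Cohen--Macaulay. The domain part is immediate from Theorem \ref{the primality of coc type O}, since $I_{\mathcal{C}}$ is prime. For Cohen--Macaulayness, we reduce to the type $\mathcal{Q}_1$ case by the symmetry noted at the start of Section \ref{sec4}: reflecting $\mathcal{C}$ in a vertical line turns a natural polyocollection of type $\mathcal{Q}_2$ into one of type $\mathcal{Q}_1$ with an isomorphic coordinate ring. Then, by Remark \ref{graph}, $\text{in}_{<_{\text{lex}}}(I_{\mathcal{C}})$ is the edge ideal of $G(\mathcal{C})$, which is squarefree. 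By the last statement of Lemma \ref{intial}, $\operatorname{depth}$ and $\dim$ are preserved when passing to this initial ideal. So it suffices to show that $S_{\mathcal{C}}/I(G(\mathcal{C}))$ is Cohen--Macaulay.

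To prove this, I would use the theorem of Conca--Varbaro in the same form as \cite[Corollary 2.7]{2020c}. It says that if $\text{in}_<(I)$ is squarefree, then $S/I$ is Cohen--Macaulay if and only if $S/\text{in}_<(I)$ is. So the task is equivalent to proving Cohen--Macaulayness of $\mathbb{K}[\mathcal{C}]$ by any route. The cleanest route is Hochster's theorem. Since $I_{\mathcal{C}}$ is a prime binomial ideal, it equals the toric ideal $I_{\Lambda_{\mathcal{C}}}$, because $\Lambda_{\mathcal{C}}$ is saturated by Proposition \ref{zero sum condition} (the zero-sum condition is a linear condition, so $\Lambda_{\mathcal{C}}$ is the kernel-type lattice cut out by it). Hence $\mathbb{K}[\mathcal{C}]$ is an affine semigroup ring $\mathbb{K}[H]$ with $H\cong \mathbb{Z}_{\ge0}^{|V(\mathcal{C})|}/\!\sim$. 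By Sturmfels' theorem, a squarefree (quadratic) initial ideal of the toric ideal gives a unimodular regular triangulation of the associated configuration. This makes $H$ normal, and Hochster's theorem then yields that $\mathbb{K}[H]$ is Cohen--Macaulay.

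So the plan is as follows.
\begin{enumerate}
\item Primality gives that $\mathbb{K}[\mathcal{C}]$ is a domain, and that $I_{\mathcal{C}}=I_{\Lambda_{\mathcal{C}}}$ is a toric ideal with a saturated lattice.
\item Realize $\mathbb{K}[\mathcal{C}]$ as a toric ring $\mathbb{K}[\mathbf{x}^{A}]$ for an integer configuration $A$ whose kernel lattice is $\Lambda_{\mathcal{C}}$.
\item Use Remark \ref{graph} (squarefree initial ideal) together with Sturmfels' unimodular-triangulation criterion to deduce normality.
\item Conclude by Hochster's theorem that $\mathbb{K}[\mathcal{C}]$ is Cohen--Macaulay.
\item Handle type $\mathcal{Q}_2$ by the reflection symmetry, with $<'_{\text{lex}}$ playing the role of $<_{\text{lex}}$.
\end{enumerate}

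The main obstacle is step 2. We must check that $I_{\Lambda_{\mathcal{C}}}$ really is the toric ideal of a configuration, which holds because saturated lattice ideals are exactly toric ideals of $A$ with $\ker_{\mathbb{Z}}A=\Lambda_{\mathcal{C}}$. We also need the triangulation it determines to be unimodular. Sturmfels' result (a squarefree initial ideal is equivalent to a unimodular regular triangulation) handles this directly once the toric description is in place. If one prefers to avoid triangulations, a fallback is to show that $G(\mathcal{C})$ has a Cohen--Macaulay edge ideal directly, for instance via a vertex-decomposition or shelling argument. That would use the $(\dag)$ closure property of inner intervals, and it is likely the harder route.
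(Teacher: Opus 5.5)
Your proposal is correct and follows the paper's argument. Primality (Theorem \ref{the primality of coc type O}) makes $\mathbb{K}[\mathcal{C}]$ a toric domain, the squarefree initial ideal from Remark \ref{graph} gives normality by Sturmfels' result \cite{1995s}, and Hochster's theorem \cite[Theorem 6.3.5]{bh} then gives Cohen--Macaulayness; your detour through Lemma \ref{intial} and Conca--Varbaro is harmless but unnecessary, since you end up proving that $\mathbb{K}[\mathcal{C}]$ itself is Cohen--Macaulay directly.
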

\begin{proof}
It follows from Remark \ref{graph}, \cite{1995s} and \cite[Theorem 6.3.5]{bh}.
\end{proof}

\section{The algebraic invariants of natural polyocollections of type $\mathcal{Q}_1$ and of type $\mathcal{Q}_2$}\label{sec5}

The following result, together with Corollary \ref{cor1}, implies that (2) of Main Theorem is true.

\begin{prop} \label{dim-depth}
Let $\mathcal {C}$ be a natural polyocollection of type $\mathcal{Q}_1$ or of type $\mathcal{Q}_2$.  Then \\[-0.2cm]
$$\text{\em dim}(\mathbb{K}[\mathcal {C}])=\text{\em depth}(\mathbb{K}[\mathcal {C}])=|V(\mathcal {C})|-|\mathcal {C}|.$$
\end{prop}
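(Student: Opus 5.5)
By Corollary \ref{cor1}, $\mathbb{K}[\mathcal{C}]$ is Cohen--Macaulay, so $\text{depth}(\mathbb{K}[\mathcal{C}])=\text{dim}(\mathbb{K}[\mathcal{C}])$. It therefore suffices to show $\text{dim}(\mathbb{K}[\mathcal{C}])=|V(\mathcal{C})|-|\mathcal{C}|$. By symmetry we restrict to type $\mathcal{Q}_1$.

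We use the lattice description from Section \ref{sec3}. By Theorem \ref{the primality of coc type O}, $I_{\mathcal{C}}=I_{\Lambda_{\mathcal{C}}}$, and $\Lambda_{\mathcal{C}}$ is saturated. A lattice ideal $I_\Lambda\subseteq \mathbb{K}[x_1,\dots,x_N]$ has height $\text{rank}(\Lambda)$, so
$$\text{dim}(\mathbb{K}[\mathcal{C}])=|V(\mathcal{C})|-\text{rank}(\Lambda_{\mathcal{C}}).$$
The task thus reduces to showing $\text{rank}(\Lambda_{\mathcal{C}})=|\mathcal{C}|$.

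\emph{Spanning.} Every inner interval $I$ is tiled by basic intervals $I_1,\dots,I_n$ with disjoint interiors, and $\mu_I=\sum_t \mu_{I_t}$, since interior and edge vertices cancel exactly as for cells. Hence $\Lambda_{\mathcal{C}}$ is spanned by the $\mu_J$ for basic intervals $J\in\mathcal{C}$, which gives $\text{rank}\le|\mathcal{C}|$.

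\emph{Independence.} Suppose $\sum_{J\in\mathcal{C}} c_J\mu_J=0$. Order the basic intervals by their lower left corners using the lexicographic order of Section \ref{sec1}. Let $J_0=[\mathbf{a},\mathbf{b}]$ be the minimal one with $c_{J_0}\neq 0$. We claim the vertex $\mathbf{a}$ occurs in no other $\mu_J$ with $c_J\ne0$. If $\mathbf{a}$ were a corner of another $J$, then, since $\mathcal{C}$ is natural, $J$ cannot have $\mathbf{a}$ as its lower left corner. So $\mathbf{a}$ would be the upper right, upper left or lower right corner of $J$, and then $llc(J)$ would be lexicographically smaller than $\mathbf{a}$, contradicting minimality. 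Comparing coefficients at $\mathbf{a}$ then gives $c_{J_0}=0$, a contradiction.

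The only delicate point is the claim that two distinct basic intervals of a natural polyocollection cannot share a lower left corner. If they did, their interiors would intersect near that common corner, which is excluded by Definition \ref{polyocollection}. With that settled, $\text{rank}(\Lambda_{\mathcal{C}})=|\mathcal{C}|$, and the formula follows.

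As an alternative route to the dimension, one could compute it from the initial ideal: by Remark \ref{graph} the initial ideal is the edge ideal of $G(\mathcal{C})$, and $\text{dim}=|V|-|\mathcal{C}|$ would follow from Lemma \ref{intial}. That approach, however, requires a matching/cover count, which is harder; the lattice-rank argument above is preferred.
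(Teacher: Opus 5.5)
Your argument is correct, but it takes a genuinely different route from the paper.

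\textbf{The paper's route.} The paper never uses the lattice. By Remark~\ref{graph}, $\text{in}_{<_{\text{lex}}}(I_{\mathcal C})$ is the edge ideal of $G(\mathcal C)$. Cohen--Macaulayness of $\mathbb{K}[\mathcal C]$, together with the squarefree part of Lemma~\ref{intial}, makes $G(\mathcal C)$ a Cohen--Macaulay and hence well-covered graph. So the dimension equals the size of any single maximal independent set. The paper then checks that $V(\mathcal C)\setminus T$ is such a set, where $T$ is the set of lower left corners of basic intervals; maximality is where condition $(\dag)$ of Theorem~\ref{condition} enters.

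\textbf{Your route.} You compute $\operatorname{rank}\Lambda_{\mathcal C}=|\mathcal C|$ by a triangularity argument on lower left corners. You then transfer this to $I_{\mathcal C}$ via $I_{\mathcal C}=I_{\Lambda_{\mathcal C}}$ (from the proof of Theorem~\ref{the primality of coc type O}) and the standard fact that a lattice ideal has height equal to the rank of its lattice.

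\textbf{What each route buys.} Your rank computation needs neither naturality nor type $\mathcal{Q}_1$. In any polyocollection, distinct basic intervals have distinct lower left corners, because overlapping bottom edges violate the definition. So you actually get $\dim\mathbb{K}[\mathcal C]\geq|V(\mathcal C)|-|\mathcal C|$ for every polyocollection, since $I_{\mathcal C}\subseteq I_{\Lambda_{\mathcal C}}$, with equality whenever $\mathcal C$ is prime. The hypotheses of the proposition are used only through primality and, for the depth, Corollary~\ref{cor1}; the cost is reliance on the external height formula for lattice ideals. The paper's route stays inside the Gr\"obner/edge-ideal framework that Section~\ref{sec5} needs anyway. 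It also yields the combinatorial fact that $V(\mathcal C)\setminus T$ is a maximum independent set of $G(\mathcal C)$. It is also less laborious than your closing remark suggests: well-coveredness means one only has to exhibit one maximal independent set, so no cover count is needed.

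\textbf{A minor point on spanning.} Your spanning step is not needed for the rank, since the paper defines $\Lambda_{\mathcal C}$ as the span of the $\mu_J$ with $J\in\mathcal C$. For tilings by rectangles of different sizes, the identity $\mu_I=\sum_t\mu_{I_t}$ also deserves more than ``cancel exactly as for cells''. One clean argument: $\mu_J$ is the inclusion--exclusion expansion of the indicator function of the half-open rectangle $J$ in terms of quadrant indicators, and these quadrant indicators are linearly independent.
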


\begin{proof}
We  need only consider the case that  $\mathcal {C}$ is a natural polyocollection of type $\mathcal{Q}_1$. Since $\mathbb{K}[\mathcal {C}]$ is Cohen-Macaulay by Corollary \ref{cor1}, then $\text{dim}(\mathbb{K}[\mathcal {C}])=\text{depth}(\mathbb{K}[\mathcal {C}])$. Let  $<_{\text{lex}}$ be the monomial order on $S_{\mathcal {C}}$ defined in Section \ref{sec1}. Since $\text{in}_{<_{\text{lex}}}(I_{\mathcal {C}})$ is square-free by Remark \ref{graph}, we have
\[\text{dim}(S_{\mathcal {C}}/\text{in}_{<_{\text{lex}}}(I_{\mathcal {C}}))= \text{depth}(S_{\mathcal {C}}/\text{in}_{<_{\text{lex}}}(I_{\mathcal {C}})) \tag{$\ast$} \label{formula1}\]
by Lemma \ref{intial} and the equality  $\text{dim}(\mathbb{K}[\mathcal {C}])=\text{depth}(\mathbb{K}[\mathcal {C}])$. Note that $\text{in}_{<_{\text{lex}}}(I_{\mathcal {C}})$ can be viewed as the edge ideal of the graph $G(\mathcal {C})$.
Formula (\ref{formula1}) yields that $G(\mathcal {C})$ is a Cohen-Macaulay graph. Thus all maximal independent sets of $G(\mathcal {C})$ have the same cardinality, which equals $\text{dim}(S_{\mathcal {C}}/\text{in}_{<_{\text{lex}}}(I_{\mathcal {C}}))$. Let $T$ be the set of all lower left corners of basic intervals in $\mathcal {C}$. We claim that $V(\mathcal {C})\setminus T$ is a maximal independent set of $G(\mathcal {C})$. Based on this claim, we obtain that
$$\text{dim}(S_{\mathcal {C}}/\text{in}_{<_{\text{lex}}}(I_{\mathcal {C}}))=|V(\mathcal {C})\setminus T|=|V(\mathcal {C})|-|\mathcal {C}|,$$
as desired.

We now establish the claim. First, we show that $V(\mathcal {C})\setminus T$ is an independent set of $G(\mathcal {C})$. Suppose that there exist two vertices $\mathbf{a},\mathbf{b}\in V(\mathcal {C})\setminus T$ such that $\{\mathbf{a},\mathbf{b}\}$ is an edge of $G(\mathcal {C})$. Without loss of generality, assume that $[\mathbf{a},\mathbf{b}]$ is an inner interval of $\mathcal {C}$, then there exists a basic interval $[\mathbf{a}, \mathbf{b}']\subseteq [\mathbf{a},\mathbf{b}]$. Hence  $\mathbf{a}\in T$, a contradiction.

Next, we prove the maximality of $V(\mathcal {C})\setminus T$. Suppose that there exists some $\mathbf{a}=(i,j)\in T$ such that $\{\mathbf{a},\mathbf{v}\}$ is not an edge of  $G(\mathcal {C})$ for any $\mathbf{v}\in V(\mathcal {C})\setminus T$. Let $\mathbf{b}=(k,j)$,
 where $k\!=\!\max\{ p\mid \text{$[(i,j),(p,j')]\in  \mathcal {I}(\mathcal {C})$ for some $p>i$ and $j'>j$} \}$. Here the  existence of $\mathbf{b}$ comes from the fact that $\mathbf{a}$ is a lower left corner of  some basic interval of $\mathcal {C}$. See Figure \ref{fig8}. Set
$\ell=\max\{ q\mid \text{$[\mathbf{a},(k, q)]\in  \mathcal {I}(\mathcal {C})$}\}$.
By the definition of $G(\mathcal {C})$, $\{\mathbf{a},\mathbf{d}\}$ is an edge of $G(\mathcal {C})$, where $\mathbf{d}=(k,\ell)$. This yields that $\mathbf{d}\in T$. Consequently, there exists a basic interval  $[\mathbf{d},\mathbf{d}+(r,s)]$ in $\mathcal {C}$. Applying Theorem \ref{condition} to the inner intervals  $[\mathbf{a}, \mathbf{d}]$ and $[\mathbf{d},\mathbf{d}+(r,s)]$, it follows from the maximality of $k$ that $[\mathbf{a},\mathbf{d}+(0,s)]$ is an inner interval of $\mathcal {C}$, contradicting the choice of $\ell$.
\end{proof}

\begin{figure}[h]
\centering
    \begin{tikzpicture}
    \tikzstyle{every node}=[font=\small,scale=0.9]
    \coordinate[](a1)at(0,0){};\coordinate[](a2)at(1,0){};\coordinate[](a3)at(2,0){};
    \coordinate[](a4)at(3,0){};\coordinate[](a5)at(4,0){};\coordinate[](a6)at(5,0){};
    \coordinate[](b1)at(0,1){};\coordinate[](b2)at(1,1){};\coordinate[](b3)at(2,1){};
    \coordinate[](b4)at(3,1){};\coordinate[](b5)at(4,1){};\coordinate[](b6)at(5,1){};
    \coordinate[](c1)at(0,2){};\coordinate[](c2)at(1,2){};\coordinate[](c3)at(2,2){};
    \coordinate[](c4)at(3,2){};\coordinate[](c5)at(4,2){};\coordinate[](c6)at(5,2){};
    \coordinate[](g1)at(0,3){};\coordinate[](g2)at(1,3){};\coordinate[](g3)at(2,3){};
    \coordinate[](g4)at(3,3){};\coordinate[](g5)at(4,3){};\coordinate[](g6)at(5,3){};
    \draw[densely dashed, fill=blue!8,line width=0.8pt]
    (c1)--(c5)--(g5)--(g1)--(c1)--cycle;
    \draw[densely dashed, fill=blue!5,line width=0.8pt]
    (a6)--(a5)--(c5)--(c6)--(a6)--cycle;

    \draw[fill=black!8,line width=0.8pt]
    (a1)--(a5)--(c5)--(c1)--(a1)--cycle;
   \draw[fill=black!8,line width=0.8pt]
    (g6)--(g5)--(c5)--(c6)--(g6)--cycle;

    \draw[fill=black!16,line width=0.8pt]
    (a1)--(a5)--(b5)--(b2)--(c2)--(c1)--(a1)--cycle;
    \draw[line width=0.8pt](b1)--(b2) (b2)--(a2);
    \node[below, font=\small,scale=0.8] at (0.3,0) {$\mathbf{a}=(i,j)$};
   \node[below, font=\small,scale=0.8] at (4.2,0) {$\mathbf{b}=(k,j)$};
    \node[above, font=\small,scale=0.8] at (-0.3,1.9) {$(i,\ell)$};
   \node[left, font=\small,scale=0.8] at (4,1.8) {$(k,\ell)=\mathbf{d}$};
    \node[above, font=\small,scale=0.8] at (4,3) {$\mathbf{d}+(0,s)$};
     \node[right, font=\small,scale=0.8] at (5,3) {$\mathbf{d}+(r,s)$};
    \end{tikzpicture}
\vspace{-0.1cm}\caption{An illustration of the proof of proposition \ref{dim-depth}}\label{fig8}
\end{figure}

Let $\mathcal {C}$ be a natural polyocollection. For $U\subseteq V(C)$, denote by $\mathcal {C}\setminus U$ the collection of proper intervals obtained from $\mathcal {C}$ by removing all basic intervals $I$ with $V(I) \cap U \neq \emptyset$. In particular, if $U=\{\mathbf{v}\}$ for some $\mathbf{v}\in V(\mathcal{C})$, we write $\mathcal {C}\setminus \mathbf{v}$ for $\mathcal {C}\setminus \{\mathbf{v}\}$. One easily observes that $\mathcal {C}\setminus U$ is also a natural polyocollection and $\mathcal {C}\setminus U\subseteq \mathcal {C}$.   A vertex $\mathbf{u}$ is called a {\it single vertex} of $\mathcal {C}$  if there exists exactly one basic interval in  $\mathcal {C}$ containing $\mathbf{u}$.
For each vertex $\mathbf{v} \in V(\mathcal {C})$, one easily sees that $V(\mathcal {C}\setminus \mathbf{v})\subseteq V(\mathcal {C})\setminus \{\mathbf{v}\}$, and the equality holds if and only if there is no single vertex $\mathbf{w}$  such that $\mathbf{v}$ and $\mathbf{w}$ are two vertices of a common basic interval of $\mathcal {C}$. It is easy to observe the following results.

\begin{ob} \label{ob-1}
If $\mathbf{v}\in V(\mathcal {C})$ is an upper right or lower left single vertex of $\mathcal {C}$, then, for any $\mathbf{a},\mathbf{b}\in V(\mathcal {C}\setminus \mathbf{v})$,
$[\mathbf{a},\mathbf{b}]\in \mathcal {I}(\mathcal {C}\setminus \mathbf{v})$ if and only if $[\mathbf{a},\mathbf{b}]\in \mathcal {I}(\mathcal {C})$.
\end{ob}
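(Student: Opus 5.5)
We prove the two implications separately, working throughout with the definition of an inner interval: $[\mathbf{a},\mathbf{b}]$ is inner in a polyocollection exactly when its closure is tiled, with pairwise disjoint interiors, by basic intervals of that polyocollection. Let $I_0$ be the unique basic interval of $\mathcal{C}$ containing the single vertex $\mathbf{v}$. By definition, $\mathcal{C}\setminus\mathbf{v}=\mathcal{C}\setminus\{I_0\}$, since $I_0$ is the only basic interval having $\mathbf{v}$ as a vertex.

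The forward implication is immediate. Suppose $[\mathbf{a},\mathbf{b}]\in\mathcal{I}(\mathcal{C}\setminus\mathbf{v})$. Then it is tiled by basic intervals of $\mathcal{C}\setminus\mathbf{v}$. These are also basic intervals of $\mathcal{C}$, so the same tiling shows $[\mathbf{a},\mathbf{b}]\in\mathcal{I}(\mathcal{C})$.

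For the converse, assume $[\mathbf{a},\mathbf{b}]\in\mathcal{I}(\mathcal{C})$ with $\mathbf{a},\mathbf{b}\in V(\mathcal{C}\setminus\mathbf{v})$, and fix a tiling $\mathrm{c}([\mathbf{a},\mathbf{b}])=\bigcup_t \mathrm{c}(I_t)$. It suffices to show that $I_0$ does not occur among the $I_t$. Treat the case where $\mathbf{v}$ is the upper right corner of $I_0$; the lower left case is symmetric under rotation by $180^\circ$. Suppose $I_0$ is one of the tiles. We claim $\mathbf{v}=\mathbf{b}$.
\begin{itemize}
\item If $\mathbf{v}$ lay in the interior of $[\mathbf{a},\mathbf{b}]$, then points just above and to the right of $\mathbf{v}$ would be covered by other tiles, one of which would contain $\mathbf{v}$. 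This contradicts $\mathbf{v}$ being single.
\item If $\mathbf{v}$ lay in the relative interior of the top edge or the right edge of $[\mathbf{a},\mathbf{b}]$, the same argument, using only the points to the right of $\mathbf{v}$ (respectively, only the points above $\mathbf{v}$), again produces a second tile containing $\mathbf{v}$.
\end{itemize}
Hence $\mathbf{v}$ is the upper right corner $\mathbf{b}$ of $[\mathbf{a},\mathbf{b}]$. But $\mathbf{b}\in V(\mathcal{C}\setminus\mathbf{v})\subseteq V(\mathcal{C})\setminus\{\mathbf{v}\}$, so $\mathbf{b}\neq\mathbf{v}$, a contradiction.

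Therefore $I_0$ is not used, every tile lies in $\mathcal{C}\setminus\mathbf{v}$, and $[\mathbf{a},\mathbf{b}]\in\mathcal{I}(\mathcal{C}\setminus\mathbf{v})$.

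The main obstacle is the covering step: showing that any vertex of a tile other than $\mathbf{b}$ on the boundary of $[\mathbf{a},\mathbf{b}]$ lies in another tile. This needs care because basic intervals may meet only partially along edges. The point is that the tiles close up the region around $\mathbf{v}$, so some tile containing points arbitrarily close to $\mathbf{v}$ on the appropriate side must, being closed, contain $\mathbf{v}$ itself. That is all that is needed to contradict singleness. This holds in any polyocollection, and naturality is not required.
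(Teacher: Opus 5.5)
Your argument is correct. The paper records this statement as an observation without proof, and your reasoning is exactly the intended justification: a tiling of $[\mathbf{a},\mathbf{b}]$ that used $\overline{I_{\mathbf{v}}}(\mathcal{C})$ would force $\mathbf{v}$ to be the corner $\mathbf{b}$ (resp.\ $\mathbf{a}$), and this is excluded because $\mathbf{a},\mathbf{b}\in V(\mathcal{C}\setminus\mathbf{v})$. Your closedness step relies on the paper's literal definition of a single vertex, namely that exactly one basic interval contains it; under that reading the step is sufficient and, as you note, naturality is not needed.
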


Next, we will prove part (3) of Main Theorem by using the method of \cite{2025nrr}. Specifically, let $\mathcal {C}$ be a natural polyocollection of type $\mathcal{Q}_1$ and $\mathbf{v}\in V(\mathcal {C})$  the uppermost and rightmost vertex, then by Lemmas \ref{intial} and  \ref{exact},
$$H_{\mathbb{K}[\mathcal {C}]}(t)=H_{S_{\mathcal {C}}/\text{in}_{<_{\text{lex}}}(I_{\mathcal {C}})}(t)=t\cdot H_{S_{\mathcal {C}}/(\text{in}_{<_{\text{lex}}}(I_{\mathcal {C}}):x_{\mathbf{v}})}(t)+H_{S_{\mathcal {C}}/(\text{in}_{<_{\text{lex}}}(I_{\mathcal {C}}),x_{\mathbf{v}})}(t).$$
Hence, in order to compute $H_{\mathbb{K}[\mathcal {C}]}(t)$, we should study some properties of $\text{in}_{<_{\text{lex}}}(I_{\mathcal {C}}):x_{\mathbf{v}}$ and $(\text{in}_{<_{\text{lex}}}(I_{\mathcal {C}}),x_{\mathbf{v}})$. To this end, we make the following setup. The reader may refer to Figure \ref{fig9} for illustration.

\begin{setup}\label{setup}
Let $\mathcal {C}$ be a natural polyocollection, and let $\mathbf{v}$ be the uppermost and rightmost vertex of $\mathcal{C}$. Suppose that $\mathcal{C}$ contains $m-1$ basic intervals in the same row as $\overline{I_{\mathbf{v}}}(\mathcal {C})$ (including $\overline{I_{\mathbf{v}}}(\mathcal {C})$) with upper left corners (from left to right) $\mathbf{u}_{1,n}, \ldots, \mathbf{u}_{m-1,n}$, and suppose that $\mathcal{C}$ contains $n-1$ basic intervals in the same column as $\overline{I_{\mathbf{v}}}(\mathcal {C})$ (including $\overline{I_{\mathbf{v}}}(\mathcal {C})$) with lower right corners (from bottom to top) $\mathbf{u}_{m,1}, \ldots, \mathbf{u}_{m,n-1}$. Let $\mathbf{v} = \mathbf{u}_{m,n}$. For any $1\leq i<m$ and any $1\leq j < n$, let $\mathbf{u}_{i,j}$ be the lower left corner of the proper interval with $\mathbf{u}_{i,n}$ and $\mathbf{u}_{m,j}$ being its anti-diagonal corners. Define $$U_{\mathbf{v}}(\mathcal {C})=\big\{\mathbf{u}\mid [\mathbf{u},\mathbf{v}]\in \mathcal {I}(\mathcal {C})\big\}.$$
Assume that $\mathbf{u}_{k_1, \ell_1},\ldots, \mathbf{u}_{k_s, \ell_s}$ are all vertices of $\mathcal {C}$ such that each $[\mathbf{u}_{k_i, \ell_i}, \mathbf{v}]$ is a maximal inner interval of  $\mathcal {C}$.  Without  loss of generality, we also assume that $1=k_1<\cdots<k_s<m$. Then, by the  maximality of each $[\mathbf{u}_{k_i, \ell_i}, \mathbf{v}]$, we obtain that  $n>\ell_1>\cdots>\ell_s=1$. Set $\ell_0=n$ and $k_{s+1}=m$. Under such assumptions, we have
$$U_{\mathbf{v}}(\mathcal {C})=\{\mathbf{u}_{p, q}\mid k_i \leq p<m \text{ and } \ell_i \leq q<n \text{ for some } i=1,2,\ldots,s\}.$$
We define
\[\mathcal {C}_{\mathbf{v}}=\big(\mathcal {C}\setminus U_{\mathbf{v}}(\mathcal {C})\big)\cup \mathcal {T}_1\cup \mathcal {T}_2, \tag{$\ast\ast$} \label{formula2}\]
where
$\mathcal {T}_1 = \{ [llc(\overline{I_{\mathbf{u}_{k_i,q}}}(\mathcal {C})),\mathbf{u}_{m,q}]\mid$  $\mathbf{u}_{k_i,q}$ is the upper right corner of a basic interval of $\mathcal {C}$ for some $\ell_i<q<\ell_{i-1}$ and $1\leq i\leq s\}$, and
$\mathcal {T}_2 = \{[llc(\overline{I_{\mathbf{u}_{p,\ell_j}}}(\mathcal {C})),\mathbf{u}_{p,n}] \mid$ $\mathbf{u}_{p,\ell_j}$ is the upper right corner of a basic interval of $\mathcal {C}$ for some $k_j<p<k_{j+1}$ and $1\leq j\leq s\}$.
Moreover, we define
\[\mathcal {C}'_{\mathbf{v}}=\big(\mathcal {C}\setminus U_{\mathbf{v}}(\mathcal {C})\big)\cup \mathcal {T}_1\cup \mathcal {T}'_2, \tag{$\ddagger$} \label{formula3}\]
where $\mathcal {T}'_2=\big\{[llc(\overline{I_{\mathbf{u}_{p,\ell_j}}}(\mathcal {C})),\mathbf{u}_{p,\ell_j}-(0,1/2)] \mid$ $\mathbf{u}_{p,\ell_j}$ is the upper right corner of a basic interval of $\mathcal {C}$ for some $k_j<p<k_{j+1}$ and $1\leq j\leq s \big\}$.
In other words, $\mathcal {C}'_{\mathbf{v}}$ can be obtained from $\mathcal {C}_{\mathbf{v}}$ by replacing  each basic interval in $\mathcal {T}_2$ with a proper  interval in $\mathbb{R}^2$ that shares the same lower left corner and lower right corner but smaller ``height".
\end{setup}

\begin{figure}[htbp]
\centering
\begin{minipage}{0.43\textwidth}
\centering
\resizebox{\linewidth}{!}{%
\begin{tikzpicture}[every node/.style={font=\small}]
  \fill[black!16] (-0.5,4)--(-0.5,3)--(0.5,3)--(0.5,4)--(-0.5,4)--cycle;
   \fill[black!16] (0,2.5)--(1.5,2.5)--(1.5,2)--(0,2)--(0,2.5)--cycle
   (2.5,0.5)--(2.5,1)--(3,1)--(3,0.5)--(2.5,0.5)--cycle
   (5,0.5)--(5.5,0.5)--(5.5,3)--(5,3)--(5,0.5)--cycle
   (2.5,1.5)--(3,1.5)--(3,1)--(2.5,1)--(2.5,1.5)--cycle ;
    \fill[black!16] (-0.5,3)--(0,3)--(0,0)--(-0.5,0)--(-0.5,3)--cycle;
    \fill[black!16] (0,0)--(3,0)--(3,0.5)--(0,0.5)--(0,0)--cycle;
   \fill[black!16] (2,1.5)--(2,1)--(2.5,1)--(2.5,1.5)--(2,1.5)--cycle;

  \draw[fill=black!16,line width=0.8pt]
  (0.5,4.5)--(0.5,3)--(2,3)--(2,1.5)--(3.5,1.5)--(3.5,0.5)--(5,0.5)--(5,4.5)--(0.5,4.5)--cycle
  (1.5,2.5)--(1.5,2)--(5,2)--(5,2.5)--(1.5,2.5)--cycle;

 \draw[line width=0.8pt] (0,3)--(0.5,3) (0.5,4)--(0,4) (2,3)--(2,4.5) (3.5,1.5)--(3.5,4.5) (2,3)--(5,3) (3.5,1.5)--(5,1.5)
   (2.5,0.5)--(2.5,1) (3,1)--(3,0.5) (5,0.5)--(5.5,0.5) (5.5,3)--(5,3)  (0,2.5)--(1.5,2.5) (1.5,2)--(0,2)  (4.5,4.5)--(4.5,4)--(5,4)
   (0.5,4)--(4.5,4)--(4.5,0.5)      (-0.5,4)--(0,4)  (0,3)--(0,2.5) (0,2)--(0,0.5)--(2.5,0.5) (3,0.5)--(3,0) (2,1.5)--(2,1)--(2.5,1)--(2.5,1.5);

\textcolor[rgb]{0.00,0.59,0.00}{ \draw[line width=1pt]
 (1.5,2.5)--(1.5,2)--(5,2)--(5,2.5)--(1.5,2.5);
   \draw[line width=1pt]
 (0,4)--(0,3)--(5,3)--(5,4)--(0,4);
    \draw[line width=1pt]
 (0,3.5)--(5,3.5);}

\textcolor[rgb]{0.00,0.07,1.00}{ \draw[line width=1pt]
(2.5,1)--(3,1)--(3,4.5)--(2.5,4.5)--(2.5,1);
\draw[line width=1pt]
(2,1)--(2.5,1)--(2.5,4.5)--(2,4.5)--(2,1);
}

  \node[above, font=\small,scale=0.6] at (0.5,4.5) {$\mathbf{u}_{1,n}$};
  \node[above, font=\small,scale=0.6] at (2,4.5) {$\mathbf{u}_{k_i,n}$};
  \node[above, font=\small,scale=0.6] at (3.5,4.5) {$\mathbf{u}_{k_s,n}$};
  \node[above, font=\small,scale=0.6] at (4.5,4.5) {$\mathbf{u}_{m-1,n}$};
  \node[right, font=\small,scale=0.6] at (5,4.5) {$\mathbf{v}=\mathbf{u}_{m,n}$};

  \node[right, font=\small,scale=0.6] at (5,3.1) {$\mathbf{u}_{m,\ell_1}$};
  \node[right, font=\small,scale=0.6] at (5,1.5) {$\mathbf{u}_{m,\ell_i}$};
  \node[right, font=\small,scale=0.6] at (5,0.6) {$\mathbf{u}_{m,1}$};

  \node[below, font=\small,scale=0.6] at (0.5,2.9) {$\mathbf{u}_{k_1,\ell_1}$};
  \node[left, font=\small,scale=0.6] at (2,1.5) {$\mathbf{u}_{k_i,\ell_i}$};
  \node[below, font=\small,scale=0.6] at (3.5,0.4) {$\mathbf{u}_{k_s,\ell_s}$};

  \node[below, font=\small,scale=0.4] at (4.75,4.4) {$\overline{I_{\mathbf{v}}}(\mathcal {C})$};

 \node[right, font=\small,scale=0.6] at (5,2.5) {$\mathbf{u}_{m,q}$};
 \node[left, font=\small,scale=0.6] at (2,2.62) {$\mathbf{u}_{k_i,q}$};
 \node[above, font=\small,scale=0.6] at (3,4.5) {$\mathbf{u}_{p,n}$};
 \node[below, font=\small,scale=0.6] at (3.24,1.5) {$\mathbf{u}_{p,\ell_i}$};

 \draw[fill=black] (0.5,4.5) circle (1pt);
  \draw[fill=black] (2,4.5) circle (1pt);
  \draw[fill=black] (2.5,4.5) circle (1pt);
  \draw[fill=black] (3,4.5) circle (1pt);
  \draw[fill=black] (3.5,4.5) circle (1pt);
  \draw[fill=black] (4.5,4.5) circle (1pt);
  \draw[fill=black] (5,4.5) circle (1pt);
  \draw[fill=black] (5,4) circle (1pt);
\draw[fill=black] (5,3.5) circle (1pt);
  \draw[fill=black] (5,3) circle (1pt);
  \draw[fill=black] (5,2.5) circle (1pt);
  \draw[fill=black] (5,2) circle (1pt);
  \draw[fill=black] (5,1.5) circle (1pt);
  \draw[fill=black] (5,0.5) circle (1pt);

 \draw[fill=black] (0,4) circle (1.2pt);
   \draw[fill=red] (0.5,4) circle (1.2pt);
  \draw[fill=red] (2,4) circle (1.2pt);
   \draw[fill=red] (2.5,4) circle (1.2pt);
    \draw[fill=red] (3,4) circle (1.2pt);
  \draw[fill=red] (3.5,4) circle (1.2pt);
  \draw[fill=red] (4.5,4) circle (1.2pt);

   \draw[fill=black] (0,3.5) circle (1.2pt);
   \draw[fill=red] (0.5,3.5) circle (1.2pt);
  \draw[fill=red] (2,3.5) circle (1.2pt);
   \draw[fill=red] (2.5,3.5) circle (1.2pt);
    \draw[fill=red] (3,3.54) circle (1.2pt);
  \draw[fill=red] (3.5,3.5) circle (1.2pt);
  \draw[fill=red] (4.5,3.5) circle (1.2pt);

  \draw[fill=black] (0,3) circle (1pt);
   \draw[fill=red] (0.5,3) circle (1.2pt);
  \draw[fill=red] (2,3) circle (1.2pt);
   \draw[fill=red] (2.5,3) circle (1.2pt);
    \draw[fill=red] (3,3) circle (1.2pt);
  \draw[fill=red] (3.5,3) circle (1.2pt);
  \draw[fill=red] (4.5,3) circle (1.2pt);

  \draw[fill=black] (1.5,2.5) circle (1pt);
  \draw[fill=red] (2,2.5) circle (1.2pt);
   \draw[fill=red] (2.5,2.5) circle (1.2pt);
    \draw[fill=red] (3,2.5) circle (1.2pt);
  \draw[fill=red] (3.5,2.5) circle (1.2pt);
  \draw[fill=red] (4.5,2.5) circle (1.2pt);

\draw[fill=black] (1.5,2) circle (1pt);
  \draw[fill=red] (2,2) circle (1.2pt);
   \draw[fill=red] (2.5,2) circle (1.2pt);
    \draw[fill=red] (3,2) circle (1.2pt);
  \draw[fill=red] (3.5,2) circle (1.2pt);
  \draw[fill=red] (4.5,2) circle (1.2pt);

  \draw[fill=red] (2,1.5) circle (1.2pt);
   \draw[fill=red] (2.5,1.5) circle (1.2pt);
    \draw[fill=red] (3,1.5) circle (1.2pt);
  \draw[fill=red] (3.5,1.5) circle (1.2pt);
  \draw[fill=red] (4.5,1.5) circle (1.2pt);

  \draw[fill=black] (2.5,1) circle (1pt);
  \draw[fill=black] (3,1) circle (1pt);

 \draw[fill=black] (2,1) circle (1.2pt);
  \draw[fill=red] (3.5,0.5) circle (1.2pt);
  \draw[fill=red] (4.5,0.5) circle (1.2pt);
   \node[below, font=\small,scale=1] at (2.4,-0.2) {(a)\hspace{0.15cm} $\mathcal {C}$};

\end{tikzpicture}}
\end{minipage}\hfill
\begin{minipage}{0.43\textwidth}
\centering
\resizebox{\linewidth}{!}{%
\begin{tikzpicture}[every node/.style={font=\small}]
  \fill[black!16] (-0.5,4)--(-0.5,3)--(5,3)--(5,4)--(-0.5,4)--cycle;
  \fill[black!16] (0,2.5)--(0,2)--(5,2)--(5,2.5)--(0,2.5)--cycle;
 \fill[black!16] (2.5,0.5)--(3,0.5)--(3,1.3)--(2.5,1.3)--(2.5,0.5)--cycle;
 \fill[black!16] (-0.5,3)--(0,3)--(0,0)--(-0.5,0)--(-0.5,3)--cycle;
  \fill[black!16] (0,0)--(3,0)--(3,0.5)--(0,0.5)--(0,0)--cycle;
  \fill[black!16] (5,3)--(5,0.5)--(5.5,0.5)--(5.5,3)--(5,3)--cycle;

  \draw[fill=black!16,line width=0.8pt]
  (1.5,2.5)--(1.5,2)--(5,2)--(5,2.5)--(1.5,2.5)--cycle;

 \draw[line width=0.8pt] (2.5,1)--(2.5,0.5)  (3,1)--(3,0.5) (0,2.5)--(1.5,2.5) (0,2)--(1.5,2)  (-0.5,4)--(0,4)  (0,3)--(0,2.5) (0,2)--(0,0.5)--(2.5,0.5) (3,0.5)--(3,0)
 (5.5,0.5)--(5,0.5)--(5,3)--(5.5,3);

\textcolor[rgb]{0.00,0.59,0.00}{ \draw[line width=1pt]
 (1.5,2.5)--(1.5,2)--(5,2)--(5,2.5)--(1.5,2.5);
   \draw[line width=1pt]
 (0,4)--(0,3)--(5,3)--(5,4)--(0,4);}
     \textcolor[rgb]{0.00,0.59,0.00}{
       \draw[line width=1pt]
 (0,3.5)--(5,3.5);}
\fill[black!16] (2,1)--(2.5,1)--(2.5,1.3)--(2,1.3)--(2,1)--cycle;

\textcolor[rgb]{0.00,0.07,1.00}{ \draw[line width=1pt]
 (2.5,1)--(3,1)--(3,1.3)--(2.5,1.3)--(2.5,1);
\draw[line width=1pt]
(2,1)--(2.5,1)--(2.5,1.3)--(2,1.3)--(2,1);}

\textcolor[rgb]{1.00,1.00,1.00}{   \node[above, font=\small,scale=0.6] at (0.5,4.5) {$\mathbf{u}_{1,n}$};}

 \node[right, font=\small,scale=0.6] at (5,2.5) {$\mathbf{u}_{m,q}$};
  \node[right, font=\small,scale=0.6] at (5,3.1) {$\mathbf{u}_{m,\ell_1}$};
  \node[right, font=\small,scale=0.6] at (5,1.5) {$\mathbf{u}_{m,\ell_i}$};
  \node[right, font=\small,scale=0.6] at (5,0.6) {$\mathbf{u}_{m,1}$};

\draw[fill=black] (2,1.3) circle (1pt);
\draw[fill=black] (2,1) circle (1pt);

\draw[fill=black] (0,3.5) circle (1pt);
\draw[fill=black] (5,3.5) circle (1pt);
  \draw[fill=black] (5,2.5) circle (1pt);
  \draw[fill=black] (5,2) circle (1pt);
  \draw[fill=black] (5,1.5) circle (1pt);
   \draw[fill=black] (5,0.5) circle (1pt);
 \draw[fill=black] (5,4) circle (1pt);
  \draw[fill=black] (5,3) circle (1pt);
 \draw[fill=black] (0,4) circle (1pt);
  \draw[fill=black] (0,3) circle (1pt);

  \draw[fill=black] (1.5,2.5) circle (1pt);

\draw[fill=black] (1.5,2) circle (1pt);

  \draw[fill=black] (2.5,1) circle (1pt);
  \draw[fill=black] (3,1) circle (1pt);

  \draw[fill=black] (3,1.3) circle (1pt);
  \draw[fill=black] (2.5,1.3) circle (1pt);

   \node[below, font=\small,scale=1] at (2.4,-0.2)  {(b)\hspace{0.15cm} $\mathcal {C}'_{\mathbf{v}}$};
\end{tikzpicture}}
\end{minipage}\hfill

\caption{An illustration of Setup \ref{setup}}\label{fig9}
\end{figure}

From Setup \ref{setup}, the following observation and remark can be obtained directly.
\begin{ob} \label{ob-2}
 Under Setup \ref{setup},  let $\mathbf{v}=(v_1,v_2)$ and $\mathbf{w}=(w_1,w_2) \in V(\mathcal {C})$ with $w_1 < v_1$ and $w_2 < v_2$. If $\mathbf{w}\in U_{\mathbf{v}}(\mathcal {C})$, then $(p,q)\in U_{\mathbf{v}}(\mathcal {C})$ for each $(p,q)\in V(\mathcal {C})$ with $w_1\leq p<v_1$ and $w_2\leq q< v_2$. In other words, if $\mathbf{w}\notin U_{\mathbf{v}}(\mathcal {C})$, then $(k,\ell)\notin U_{\mathbf{v}}(\mathcal {C})$ for each $(k,\ell)\in V(\mathcal {C})$ with $k\leq w_1$ and $\ell\leq w_2$.
\end{ob}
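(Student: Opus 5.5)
The statement to prove is Observation~\ref{ob-2}: $U_{\mathbf{v}}(\mathcal{C})$ is closed "down-left" inside the box below and to the left of $\mathbf{v}$. The plan is to argue directly from the definition $U_{\mathbf{v}}(\mathcal{C})=\{\mathbf{u}\mid[\mathbf{u},\mathbf{v}]\in\mathcal{I}(\mathcal{C})\}$ and the fact that a sub-rectangle of an inner interval, cut along the grid lines of the tiling, is again an inner interval. The second formulation is just the contrapositive of the first, so only the first needs proof.

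First, fix $\mathbf{w}=(w_1,w_2)\in U_{\mathbf{v}}(\mathcal{C})$, so $[\mathbf{w},\mathbf{v}]$ is an inner interval. By definition, $\mathrm{c}([\mathbf{w},\mathbf{v}])$ is tiled by basic intervals $I_1,\dots,I_n$ of $\mathcal{C}$ with pairwise disjoint interiors. Now take $(p,q)\in V(\mathcal{C})$ with $w_1\le p<v_1$ and $w_2\le q<v_2$. Then $(p,q)$ lies in the closed rectangle $\mathrm{c}([\mathbf{w},\mathbf{v}])$. The goal is to show that $\mathrm{c}([(p,q),\mathbf{v}])$ is a union of some of the $I_t$.

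For this, it suffices to show that the lines $x=p$ and $y=q$ cross no interior of any $I_t$. Suppose the vertical line $x=p$ meets $\mathrm{int}(I_t)$ for some $t$. The vertex $(p,q)$ belongs to some basic interval $J$ of $\mathcal{C}$ as a corner. Two cases arise.

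\begin{itemize}
\item If $(p,q)$ lies in the interior of the rectangle, the tiles around $(p,q)$ cover a neighbourhood of it. Using that $\mathcal{C}$ is natural (basic intervals have disjoint interiors) and that $(p,q)$ is a corner of $J$, the point $(p,q)$ cannot lie in the interior of any $I_t$. So $(p,q)$ is a corner or an edge point of each tile containing it.
\item Since $(p,q)$ lies on an edge of $J$, it lies on a maximal edge interval. The polyocollection condition (edges of distinct intervals overlap in at most a point, or the intervals share a full edge), together with naturality, forces the lines through $(p,q)$ to propagate along tile boundaries up to the boundary of $[\mathbf{w},\mathbf{v}]$.
\end{itemize}

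This propagation step is the main obstacle, and honestly I would not expect it to hold in general. A vertex can be a T-junction, where a horizontal line $y=q$ stops at a vertical edge of a neighbouring tile, so the line need not extend to the right boundary. The clean way around this is to exploit the specific grid of Setup~\ref{setup}: the vertices to the right and above are exactly the $\mathbf{u}_{i,n}$ and $\mathbf{u}_{m,j}$. Using these, one can try to check that every $(p,q)\in V(\mathcal{C})$ in the relevant box is of the form $\mathbf{u}_{i,j}$. Here $\mathbf{u}_{i,j}$ is the lower left corner of the interval spanned by $\mathbf{u}_{i,n}$ and $\mathbf{u}_{m,j}$, whose top row and right column consist of full basic intervals in the row and column of $\overline{I_{\mathbf{v}}}(\mathcal{C})$. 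Then $[\mathbf{u}_{i,j},\mathbf{v}]$ is cut from $[\mathbf{w},\mathbf{v}]$ along the lines through $\mathbf{u}_{i,n}$ and $\mathbf{u}_{m,j}$, which are tile boundaries because those row and column intervals are basic intervals in the same row or column as $\overline{I_{\mathbf{v}}}(\mathcal{C})$.

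Once the cut lines are tile boundaries, the tiles of $[\mathbf{w},\mathbf{v}]$ lying in $[(p,q),\mathbf{v}]$ cover it with disjoint interiors. Hence $[(p,q),\mathbf{v}]\in\mathcal{I}(\mathcal{C})$, that is, $(p,q)\in U_{\mathbf{v}}(\mathcal{C})$. This matches the explicit description $U_{\mathbf{v}}(\mathcal{C})=\{\mathbf{u}_{p,q}\mid k_i\le p<m,\ \ell_i\le q<n\}$ in Setup~\ref{setup}, from which the observation can also be read off directly as a staircase set.
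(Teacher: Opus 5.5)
Your proposal has a genuine gap. The reduction is sound: it suffices that the segments $[p,v_1]\times\{q\}$ and $\{p\}\times[q,v_2]$ cross no tile interior. But you never prove this, and the reason you give for doubting it is wrong, because T-junctions are impossible in any polyocollection.

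Suppose a corner $\mathbf{x}$ of a basic interval $J$ lay in the relative interior of an edge $F$ of a different basic interval $I$. Let $G$ be the edge of $J$ through $\mathbf{x}$ parallel to $F$. Then $G$ contains $\mathbf{x}$ and an adjacent lattice point that also lies in $F$, so $|F\cap G|\geq 2$. The definition of a polyocollection then forces $I\cap J$ to be a common edge of $I$ and $J$. That edge contains $\mathbf{x}$, so it must be $F$. This is impossible, since $\mathbf{x}$ is an endpoint of every edge of $J$ through it.

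Naturality adds that a corner of $J$ cannot lie in the interior of another basic interval. Combining the two facts, every vertex of $\mathcal{C}$ in $\mathrm{c}([\mathbf{w},\mathbf{v}])$ is a corner of every tile containing it. The argument then runs as follows.
\begin{enumerate}
\item $(p,q)$ is the lower left corner of the tile $K_1$ just to its upper right.
\item The lower right corner of $K_1$ is again such a vertex. Iterating, $[p,v_1]\times\{q\}$ is a union of bottom edges of tiles $K_1,K_2,\dots$.
\item Since tiles have pairwise disjoint interiors, no tile crosses this segment. The same argument applies to $\{p\}\times[q,v_2]$.
\item Hence the tiles meeting $\mathrm{int}([(p,q),\mathbf{v}])$ tile $[(p,q),\mathbf{v}]$, and $(p,q)\in U_{\mathbf{v}}(\mathcal{C})$.
\end{enumerate}
In effect, the tiling of any inner interval of a natural polyocollection is a product grid; type $\mathcal{Q}_1$ is not needed.

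Your ``clean way around'' does not avoid the issue. Knowing that $\mathbf{u}_{i,n}$ is a corner of a basic interval in the row of $\overline{I_{\mathbf{v}}}(\mathcal{C})$ only makes the vertical line through it a tile boundary inside that top row. Two further claims are exactly the propagation you had just declared doubtful, and ``one can try to check'' leaves them unproved:
\begin{itemize}
\item that this line stays a tile boundary all the way down to height $w_2$;
\item that every vertex of $\mathcal{C}$ in the box is some $\mathbf{u}_{i,j}$.
\end{itemize}
The paper itself offers no more than your final sentence. It reads the observation off from the staircase description of $U_{\mathbf{v}}(\mathcal{C})$ in Setup~\ref{setup}, and that description tacitly rests on the same grid property. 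So the missing ingredient in your write-up is the no-T-junction argument above; with it, your first route becomes a complete proof.
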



\begin{remark} \label{T1}
By Setup \ref{setup}, it is easy to observe that each proper interval of $\mathcal {C}_{\mathbf{v}}$ is an inner interval of $\mathcal {C}$. Let $[\mathbf{a},\mathbf{b}]$ be an inner interval of $\mathcal {C}$  with $\mathbf{a} = (a_1, a_2)$, $\mathbf{b} = (b_1, b_2)$,  and anti-diagonal corners $\mathbf{c} = (a_1, b_2)$, $\mathbf{d} = (b_1, a_2)$.  Under Setup \ref{setup}, let $\mathbf{v}=(v_1, v_2)$. Then the following statements are true$:$ \\[-0.4cm]
\begin{itemize}
\item[(1)] If $[\mathbf{a},\mathbf{b}]\in \mathcal {C}$ with $\mathbf{c}\notin U_{\mathbf{v}}(\mathcal {C})$ and
$\mathbf{d}\in U_{\mathbf{v}}(\mathcal {C})$, then $[\mathbf{a}, (v_1, b_2)]\in \mathcal {T}_1$$;$ \\[-0.4cm] \\
if $[\mathbf{a},\mathbf{b}]\in \mathcal {C}$ with $\mathbf{d}\notin U_{\mathbf{v}}(\mathcal {C})$ and
$\mathbf{c}\in U_{\mathbf{v}}(\mathcal {C})$, then $[\mathbf{a}, (b_1, v_2)]\in \mathcal {T}_2$. \\[-0.4cm]
\item[(2)]
$[\mathbf{a},\mathbf{b}]\in \mathcal {T}_1$ if and only if
 $\mathbf{c}\notin U_{\mathbf{v}}(\mathcal {C})$, $b_1=v_1$, and $\emptyset\neq(\text{\em c}([\mathbf{a},\mathbf{b}])\setminus \{\mathbf{a},\mathbf{b}, \mathbf{c}, \mathbf{d}\})\cap V(\mathcal {C})\subseteq U_{\mathbf{v}}(\mathcal {C})$$;$ \\[-0.4cm]\\
  $[\mathbf{a},\mathbf{b}]\in \mathcal {T}_2$ if and only if
 $\mathbf{d}\notin U_{\mathbf{v}}(\mathcal {C})$, $b_2=v_2$, and
$\emptyset\neq(\text{\em c}([\mathbf{a},\mathbf{b}])\setminus \{\mathbf{a},\mathbf{b}, \mathbf{c}, \mathbf{d}\})\cap V(\mathcal {C})\subseteq U_{\mathbf{v}}(\mathcal {C})$.\\[-0.4cm]
\end{itemize}
\end{remark}

\begin{ex}\label{exm1}
Let $\mathcal {C}$ be a natural polyocollection,  and let  $\mathbf{v}\in V(\mathcal {C})$ be its uppermost and rightmost vertex, as shown in Figure \ref{fig10} (a). Then $U_{\mathbf{v}}(\mathcal {C})$ consists precisely of all the vertices marked in red in Figure \ref{fig10} (a). Moreover, $\mathcal {C}_{\mathbf{v}}$ and $\mathcal {C}'_{\mathbf{v}}$ are shown in (b) and (c)  of Figure \ref{fig10} respectively. One can easily observe that $\mathcal {C}_{\mathbf{v}}$ is a polyocollection but  is not natural, whereas $\mathcal {C}'_{\mathbf{v}}$  can be regarded as  a natural polyocollection and it is algebraically isomorphic to $\mathcal {C}_{\mathbf{v}}$.
\end{ex}

\begin{figure}[htbp]
\centering
\begin{minipage}{0.3\textwidth}
\centering
\resizebox{\linewidth}{!}{%
\begin{tikzpicture}[every node/.style={font=\small}]
 \draw[fill=black!16,line width=0.8pt]
  (1,3.5)--(1,2.5)--(2,2.5)--(2,1.5)--(3.5,1.5)--(3.5,1)--(5,1)--(5,3.5)--(1,3.5)--cycle
  (0,3)--(0,2.5)--(5,2.5)--(5,3)--(0,3)--cycle
  (0,3)--(0,1.5)--(0.5,1.5)--(0.5,3)--(0,3)--cycle
  (0,2)--(0,1.5)--(5,1.5)--(5,2)--(0,2)--cycle
  (1,3.5)--(1,0)--(1.5,0)--(1.5,3.5)--(1,3.5)--cycle
  (2,3.5)--(2,0)--(2.5,0)--(2.5,3.5)--(2,3.5)--cycle
  (1,0.5)--(1,0)--(5.5,0)--(5.5,0.5)--(1,0.5)--cycle
  (5,0)--(5.5,0)--(5.5,2.5)--(5,2.5)--(5,0)--cycle
  (3.5,1)--(3.5,3.5) (4,1)--(4,3.5) (5,2)--(5.5,2)  (5,1.5)--(5.5,1.5) (5,1)--(5.5,1);

   \draw[fill=black!16,line width=0.8pt]
   (1,4)--(1,3.5)--(5,3.5)--(5,4)--(1,4)--cycle;
   \draw[fill=black!5,line width=0.8pt]
    (1.5,3.5)--(1.5,4)  (2,3.5)--(2,4)   (2.5,3.5)--(2.5,4)  (3.5,3.5)--(3.5,4)  (4,3.5)--(4,4)  (5,3.5)--(5,4);

   \draw[fill=black!16,line width=0.8pt]
   (0,3.5)--(0,3)--(1,3)--(1,3.5)--(0,3.5)--cycle;

   \draw[fill=black!16,line width=0.8pt]
   (0.5,3.5)--(0.5,3);

  \node[above, font=\small,scale=0.6] at (1,4) {$\mathbf{u}_{1,7}$};
  \node[above, font=\small,scale=0.6] at (1.5,4) {$\mathbf{u}_{2,7}$};
  \node[above, font=\small,scale=0.6] at (2,4) {$\mathbf{u}_{3,7}$};
  \node[above, font=\small,scale=0.6] at (2.5,4) {$\mathbf{u}_{4,7}$};
  \node[above, font=\small,scale=0.6] at (3.5,4) {$\mathbf{u}_{5,7}$};
  \node[above, font=\small,scale=0.6] at (4,4) {$\mathbf{u}_{6,6}$};
  \node[above, font=\small,scale=0.6] at (5,4) {$\mathbf{v}=\mathbf{u}_{7,7}$};
  \node[right, font=\small,scale=0.6] at (5,3.5) {$\mathbf{u}_{7,6}$};
  \node[right, font=\small,scale=0.6] at (5,3) {$\mathbf{u}_{7,5}$};
  \node[right, font=\small,scale=0.6] at (5,2.6) {$\mathbf{u}_{7,4}$};
  \node[right, font=\small,scale=0.6] at (5,2.1) {$\mathbf{u}_{7,3}$};
  \node[right, font=\small,scale=0.6] at (5,1.6) {$\mathbf{u}_{7,2}$};
  \node[right, font=\small,scale=0.6] at (5,1.1) {$\mathbf{u}_{7,1}$};

  \node[below, font=\small,scale=0.6] at (0.8,2.5) {$\mathbf{u}_{1,4}$};
  \node[below, font=\small,scale=0.6] at (1.8,1.5) {$\mathbf{u}_{3,2}$};
   \node[below, font=\small,scale=0.6] at (3.3,1) {$\mathbf{u}_{5,1}$};

  \node[below, font=\small,scale=0.6] at (4.5,3.94) {$\overline{I_{\mathbf{v}}}(\mathcal {C})$};

 \draw[fill=black] (1,4) circle (1pt);
  \draw[fill=black] (1.5,4) circle (1pt);
  \draw[fill=black] (2,4) circle (1pt);
  \draw[fill=black] (2.5,4) circle (1pt);
 \draw[fill=black] (3.5,4) circle (1pt);
  \draw[fill=black] (4,4) circle (1pt);
  \draw[fill=black] (5,4) circle (1pt);

\draw[fill=black] (0.5,3.5) circle (1pt);
\draw[fill=black] (0,3.5) circle (1pt);

  \draw[fill=red] (1,3.5) circle (1pt);
  \draw[fill=red] (1.5,3.5) circle (1pt);
  \draw[fill=red] (2,3.5) circle (1pt);
  \draw[fill=red] (2.5,3.5) circle (1pt);
  \draw[fill=red] (3.5,3.5) circle (1pt);
  \draw[fill=red] (4,3.5) circle (1pt);
  \draw[fill=black] (5,3.5) circle (1pt);

  \draw[fill=black] (5,3) circle (1pt);
  \draw[fill=black] (5,2.5) circle (1pt);
  \draw[fill=black] (5,2) circle (1pt);
  \draw[fill=black] (5,1.5) circle (1pt);
  \draw[fill=black] (5,1) circle (1pt);

  \draw[fill=red] (1,3) circle (1.2pt);
  \draw[fill=red] (1.5,3) circle (1.2pt);
  \draw[fill=red] (2,3) circle (1.2pt);
  \draw[fill=red] (2.5,3) circle (1.2pt);
  \draw[fill=red] (3.5,3) circle (1.2pt);
  \draw[fill=red] (4,3) circle (1.2pt);

    \draw[fill=red] (1,2.5) circle (1.2pt);
  \draw[fill=red] (1.5,2.5) circle (1.2pt);
  \draw[fill=red] (2,2.5) circle (1.2pt);
  \draw[fill=red] (2.5,2.5) circle (1.2pt);
  \draw[fill=red] (3.5,2.5) circle (1.2pt);
  \draw[fill=red] (4,2.5) circle (1.2pt);

    \draw[fill=red] (2,2) circle (1.2pt);
  \draw[fill=red] (2.5,2) circle (1.2pt);
  \draw[fill=red] (3.5,2) circle (1.2pt);
  \draw[fill=red] (4,2) circle (1.2pt);

    \draw[fill=red] (2,1.5) circle (1.2pt);
  \draw[fill=red] (2.5,1.5) circle (1.2pt);
  \draw[fill=red] (3.5,1.5) circle (1.2pt);
  \draw[fill=red] (4,1.5) circle (1.2pt);

    \draw[fill=red] (3.5,1) circle (1.2pt);
  \draw[fill=red] (4,1) circle (1.2pt);

  \draw[fill=black] (0,3) circle (1pt);
  \draw[fill=black] (0.5,3) circle (1pt);

  \draw[fill=black] (0,2.5) circle (1pt);
  \draw[fill=black] (0.5,2.5) circle (1pt);

    \draw[fill=black] (0,2) circle (1pt);
  \draw[fill=black] (0.5,2) circle (1pt);
      \draw[fill=black] (1,2) circle (1pt);
  \draw[fill=black] (1.5,2) circle (1pt);

   \draw[fill=black] (0,1.5) circle (1pt);
  \draw[fill=black] (0.5,1.5) circle (1pt);
   \draw[fill=black] (1,1.5) circle (1pt);
  \draw[fill=black] (1.5,1.5) circle (1pt);

    \draw[fill=black] (1,0.5) circle (1pt);
  \draw[fill=black] (1.5,0.5) circle (1pt);
    \draw[fill=black] (2,0.5) circle (1pt);
  \draw[fill=black] (2.5,0.5) circle (1pt);
      \draw[fill=black] (5,0.5) circle (1pt);
  \draw[fill=black] (5.5,0.5) circle (1pt);

      \draw[fill=black] (1,0) circle (1pt);
  \draw[fill=black] (1.5,0) circle (1pt);
    \draw[fill=black] (2,0) circle (1pt);
  \draw[fill=black] (2.5,0) circle (1pt);
      \draw[fill=black] (5,0) circle (1pt);
  \draw[fill=black] (5.5,0) circle (1pt);
  \draw[fill=black] (5.5,1) circle (1pt);
  \draw[fill=black] (5.5,1.5) circle (1pt);
  \draw[fill=black] (5.5,2) circle (1pt);
  \draw[fill=black] (5.5,2.5) circle (1pt);

   \node[below, font=\small,scale=1] at (3,-0.3) {(a)~~~~~~~~~ $\mathcal {C}$};

\end{tikzpicture}}
\end{minipage}\hfill
\begin{minipage}{0.3\textwidth}
\centering
\resizebox{\linewidth}{!}{%
\begin{tikzpicture}[every node/.style={font=\small}]
\draw[fill=black!16,line width=0.8pt]

  (0,3)--(0,2.5)--(5,2.5)--(5,3)--(0,3)--cycle
  (0,3)--(0,1.5)--(0.5,1.5)--(0.5,3)--(0,3)--cycle
  (0,2)--(0,1.5)--(5,1.5)--(5,2)--(0,2)--cycle
  (1,4)--(1,0)--(1.5,0)--(1.5,4)--(1,4)--cycle
  (2,4)--(2,0)--(2.5,0)--(2.5,4)--(2,4)--cycle
  (1,0.5)--(1,0)--(5.5,0)--(5.5,0.5)--(1,0.5)--cycle
  (5,0)--(5.5,0)--(5.5,2.5)--(5,2.5)--(5,0)--cycle
  (5,2)--(5.5,2) (5,1.5)--(5.5,1.5) (5,1)--(5.5,1);

\draw[fill=black!16,line width=0.8pt]
  (0.5,3.5)--(0.5,2.5)--(5,2.5)--(5,3.5)--(0.5,3.5)--cycle;

  \draw[fill=black!16,line width=0.8pt]
   (0,3.5)--(0,3)--(1,3)--(1,3.5)--(0,3.5)--cycle;

   \draw[fill=black!16,line width=0.8pt]
   (0.5,3.5)--(0.5,3);

  \textcolor[rgb]{0.00,0.59,0.00}{
   \draw[line width=1pt]
  (0.5,3.5)--(0.5,2.5)--(5,2.5)--(5,3.5)--(0.5,3.5)
  (1.5,2)--(1.5,1.5)--(5,1.5)--(5,2)--(1.5,2)  (0.5,3)--(5,3);}

 \textcolor[rgb]{0.00,0.07,1.00}{
  \draw[line width=1pt]
  (1,2)--(1.5,2)--(1.5,4)--(1,4)--(1,2)
  (2,0.5)--(2.5,0.5)--(2.5,4)--(2,4)--(2,0.5);}

  \node[above, font=\small,scale=0.6] at (1,4) {$\mathbf{u}_{1,7}$};
  \node[above, font=\small,scale=0.6] at (1.5,4) {$\mathbf{u}_{2,7}$};
  \node[above, font=\small,scale=0.6] at (2,4) {$\mathbf{u}_{3,7}$};
  \node[above, font=\small,scale=0.6] at (2.5,4) {$\mathbf{u}_{4,7}$};

  \node[right, font=\small,scale=0.6] at (5,3.5) {$\mathbf{u}_{7,6}$};

  \node[right, font=\small,scale=0.6] at (5,3) {$\mathbf{u}_{7,5}$};
  \node[right, font=\small,scale=0.6] at (5,2.6) {$\mathbf{u}_{7,4}$};
  \node[right, font=\small,scale=0.6] at (5,2.1) {$\mathbf{u}_{7,3}$};
  \node[right, font=\small,scale=0.6] at (5,1.6) {$\mathbf{u}_{7,2}$};
  \node[right, font=\small,scale=0.6] at (5,1.1) {$\mathbf{u}_{7,1}$};

   \draw[fill=black] (1,4) circle (1pt);
  \draw[fill=black] (1.5,4) circle (1pt);
  \draw[fill=black] (2,4) circle (1pt);
  \draw[fill=black] (2.5,4) circle (1pt);

\draw[fill=black] (0.5,3.5) circle (1pt);
\draw[fill=black] (0,3.5) circle (1pt);
\draw[fill=black] (5,3.5) circle (1pt);

  \draw[fill=black] (5,3) circle (1pt);
  \draw[fill=black] (5,2.5) circle (1pt);
  \draw[fill=black] (5,2) circle (1pt);
  \draw[fill=black] (5,1.5) circle (1pt);
  \draw[fill=black] (5,1) circle (1pt);

   \draw[fill=black] (0,3) circle (1pt);
  \draw[fill=black] (0.5,3) circle (1pt);

  \draw[fill=black] (0,2.5) circle (1pt);
  \draw[fill=black] (0.5,2.5) circle (1pt);

    \draw[fill=black] (0,2) circle (1pt);
  \draw[fill=black] (0.5,2) circle (1pt);
      \draw[fill=black] (1,2) circle (1pt);
  \draw[fill=black] (1.5,2) circle (1pt);

   \draw[fill=black] (0,1.5) circle (1pt);
  \draw[fill=black] (0.5,1.5) circle (1pt);
   \draw[fill=black] (1,1.5) circle (1pt);
  \draw[fill=black] (1.5,1.5) circle (1pt);

    \draw[fill=black] (1,0.5) circle (1pt);
  \draw[fill=black] (1.5,0.5) circle (1pt);
    \draw[fill=black] (2,0.5) circle (1pt);
  \draw[fill=black] (2.5,0.5) circle (1pt);
      \draw[fill=black] (5,0.5) circle (1pt);
  \draw[fill=black] (5.5,0.5) circle (1pt);

      \draw[fill=black] (1,0) circle (1pt);
  \draw[fill=black] (1.5,0) circle (1pt);
    \draw[fill=black] (2,0) circle (1pt);
  \draw[fill=black] (2.5,0) circle (1pt);
      \draw[fill=black] (5,0) circle (1pt);
  \draw[fill=black] (5.5,0) circle (1pt);
  \draw[fill=black] (5.5,1) circle (1pt);
  \draw[fill=black] (5.5,1.5) circle (1pt);
  \draw[fill=black] (5.5,2) circle (1pt);
  \draw[fill=black] (5.5,2.5) circle (1pt);

   \node[below, font=\small,scale=1] at (3,-0.3) {(b)~~~~~~~~~ $\mathcal {C}_{\mathbf{v}}$};
\end{tikzpicture}}
\end{minipage}\hfill
\begin{minipage}{0.3\textwidth}
\centering
\resizebox{\linewidth}{!}{%
\begin{tikzpicture}[every node/.style={font=\small}]

\textcolor[rgb]{1.00,1.00,1.00}{\node[above, font=\small,scale=0.6] at (1,3.5) {$\mathbf{u}_{1,6}$};}

 \draw[fill=black!16,line width=0.8pt]
  (0,3)--(0,2.5)--(5,2.5)--(5,3)--(0,3)--cycle
  (0,3)--(0,1.5)--(0.5,1.5)--(0.5,3)--(0,3)--cycle
  (0,2)--(0,1.5)--(5,1.5)--(5,2)--(0,2)--cycle
  (1,2.3)--(1,0)--(1.5,0)--(1.5,2.3)--(1,2.3)--cycle
  (2,1.3)--(2,0)--(2.5,0)--(2.5,1.3)--(2,1.3)--cycle
  (1,0.5)--(1,0)--(5.5,0)--(5.5,0.5)--(1,0.5)--cycle
  (5,0)--(5.5,0)--(5.5,2.5)--(5,2.5)--(5,0)--cycle
  (5,2)--(5.5,2) (5,1.5)--(5.5,1.5) (5,1)--(5.5,1);

    \draw[fill=black!16,line width=0.8pt]
   (0,3.5)--(0,3)--(0.5,3)--(0.5,3.5)--(0,3.5)--cycle;

\draw[fill=black!16,line width=0.8pt]
  (0.5,3.5)--(0.5,2.5)--(5,2.5)--(5,3.5)--(0.5,3.5)--cycle;

   \draw[fill=black!16,line width=0.8pt]
   (0.5,3.5)--(0.5,3);

  \textcolor[rgb]{0.00,0.59,0.00}{
   \draw[line width=1pt]
  (0.5,3.5)--(0.5,2.5)--(5,2.5)--(5,3.5)--(0.5,3.5)
  (1.5,2)--(1.5,1.5)--(5,1.5)--(5,2)--(1.5,2)  (0.5,3)--(5,3);}

 \textcolor[rgb]{0.00,0.07,1.00}{
  \draw[line width=1pt]
  (1,2)--(1.5,2)--(1.5,2.3)--(1,2.3)--(1,2)
  (2,0.5)--(2.5,0.5)--(2.5,1.3)--(2,1.3)--(2,0.5);
  }

    \draw[fill=black] (1,2.3) circle (1pt);
  \draw[fill=black] (1.5,2.3) circle (1pt);

  \draw[fill=black] (2.5,1.3) circle (1pt);
  \draw[fill=black] (2,1.3) circle (1pt);

  \node[right, font=\small,scale=0.6] at (5,3.5) {$\mathbf{u}_{7,6}$};
  \node[right, font=\small,scale=0.6] at (5,3.1) {$\mathbf{u}_{7,5}$};
  \node[right, font=\small,scale=0.6] at (5,2.6) {$\mathbf{u}_{7,4}$};
  \node[right, font=\small,scale=0.6] at (5,2.1) {$\mathbf{u}_{7,3}$};
  \node[right, font=\small,scale=0.6] at (5,1.6) {$\mathbf{u}_{7,2}$};
  \node[right, font=\small,scale=0.6] at (5,1.1) {$\mathbf{u}_{7,1}$};

  \draw[fill=black] (5,3.5) circle (1pt);
  \draw[fill=black] (5,3) circle (1pt);
  \draw[fill=black] (5,2.5) circle (1pt);
  \draw[fill=black] (5,2) circle (1pt);
  \draw[fill=black] (5,1.5) circle (1pt);
  \draw[fill=black] (5,1) circle (1pt);

  \draw[fill=black] (0,3.5) circle (1pt);

   \draw[fill=black] (0,3) circle (1pt);
  \draw[fill=black] (0.5,3) circle (1pt);
  \draw[fill=black] (0.5,3.5) circle (1pt);

  \draw[fill=black] (0,2.5) circle (1pt);
  \draw[fill=black] (0.5,2.5) circle (1pt);

    \draw[fill=black] (0,2) circle (1pt);
  \draw[fill=black] (0.5,2) circle (1pt);
      \draw[fill=black] (1,2) circle (1pt);
  \draw[fill=black] (1.5,2) circle (1pt);

   \draw[fill=black] (0,1.5) circle (1pt);
  \draw[fill=black] (0.5,1.5) circle (1pt);
   \draw[fill=black] (1,1.5) circle (1pt);
  \draw[fill=black] (1.5,1.5) circle (1pt);

    \draw[fill=black] (1,0.5) circle (1pt);
  \draw[fill=black] (1.5,0.5) circle (1pt);
    \draw[fill=black] (2,0.5) circle (1pt);
  \draw[fill=black] (2.5,0.5) circle (1pt);
      \draw[fill=black] (5,0.5) circle (1pt);
  \draw[fill=black] (5.5,0.5) circle (1pt);

      \draw[fill=black] (1,0) circle (1pt);
  \draw[fill=black] (1.5,0) circle (1pt);
    \draw[fill=black] (2,0) circle (1pt);
  \draw[fill=black] (2.5,0) circle (1pt);
      \draw[fill=black] (5,0) circle (1pt);
  \draw[fill=black] (5.5,0) circle (1pt);
  \draw[fill=black] (5.5,1) circle (1pt);
  \draw[fill=black] (5.5,1.5) circle (1pt);
  \draw[fill=black] (5.5,2) circle (1pt);
  \draw[fill=black] (5.5,2.5) circle (1pt);

\textcolor[rgb]{1.00,1.00,1.00}{  \node[above, font=\small,scale=0.6] at (1,4) {$\mathbf{u}_{1,7}$};}

   \node[below, font=\small,scale=1] at (3,-0.3) {(c)~~~~~~~~~ $\mathcal {C}'_{\mathbf{v}}$};
\end{tikzpicture}}
\end{minipage}
\caption{An illustration of Example \ref{exm1}}\label{fig10}
\end{figure}

The following Lemma shows that the results of the above example hold for all natural polyocollections.

\begin{lem} \label{cv-1}
Under Setup \ref{setup}, the following statements hold$:$\\[-0.4cm]
\begin{itemize}
\item[(1)] $\mathcal {C}_{\mathbf{v}}$ is a polyocollection$;$\\[-0.4cm]
\item[(2)] $\mathcal {C}'_{\mathbf{v}}$ can be regarded as a natural polyocollection and it is  algebraically isomorphic to $\mathcal {C}_{\mathbf{v}}$.\\[-0.4cm]
\end{itemize}
\end{lem}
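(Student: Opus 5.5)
We check the two clauses of the definition of a polyocollection for each pair of distinct intervals of $\mathcal {C}_{\mathbf{v}}$, and then compare inner intervals of $\mathcal {C}_{\mathbf{v}}$ and $\mathcal {C}'_{\mathbf{v}}$ vertex by vertex. By formula (\ref{formula2}), $\mathcal {C}_{\mathbf{v}}$ is the union of three families: $\mathcal {C}\setminus U_{\mathbf{v}}(\mathcal {C})$, which is a natural subpolyocollection of $\mathcal {C}$; the ``horizontal strips'' $\mathcal {T}_1$, whose upper right corners lie on the right boundary column $\{\mathbf{u}_{m,q}\}$; and the ``vertical strips'' $\mathcal {T}_2$, whose upper right corners lie on the top row $\{\mathbf{u}_{p,n}\}$. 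By Remark \ref{T1}, each member of $\mathcal {T}_1\cup\mathcal {T}_2$ is an inner interval of $\mathcal {C}$.

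For (1), we first show that no interval contains another. Pairs inside $\mathcal {C}\setminus U_{\mathbf{v}}(\mathcal {C})$ are fine. A strip $T\in\mathcal {T}_1$ with lower left corner $llc(\overline{I_{\mathbf{u}_{k_i,q}}}(\mathcal {C}))$ cannot contain a surviving basic interval $J$. Indeed, $J\subseteq T$ would force a corner of $J$ to lie in $U_{\mathbf{v}}(\mathcal {C})$, by Observation \ref{ob-2} and the description of $U_{\mathbf{v}}(\mathcal {C})$ as a union of rectangles $[k_i,m)\times[\ell_i,n)$. Conversely, $J\supseteq T$ is impossible, since $T$ reaches the column $v_1$ while $J$ does not contain $\mathbf{u}_{m,q}$ as an interior edge point. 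Next we verify the edge condition. Two strips of $\mathcal {T}_1$ lie in distinct rows $q\neq q'$ with $\ell_i<q<\ell_{i-1}$, so they either share a horizontal edge or meet in at most points. The same holds for two strips of $\mathcal {T}_2$. A strip $T_1\in\mathcal {T}_1$ and a strip $T_2\in\mathcal {T}_2$ cross transversally: one is horizontal in the region $x\ge k_i$, the other vertical in the region $y\ge\ell_j$. Hence any horizontal edge of one meets a vertical edge of the other in at most one point, and parallel edges are disjoint or meet in a point. This last claim is the crossing situation visible in Figure \ref{fig10}(b), and it is exactly why $\mathcal {C}_{\mathbf{v}}$ is not natural. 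Finally, a strip and a surviving basic interval $J$ either share the bottom edge of $\overline{I_{\mathbf{u}_{k_i,q}}}(\mathcal {C})$, or meet with $|F\cap G|\le1$. The second alternative is inherited from $\mathcal {C}$ being a polyocollection, because the strip is a union of basic intervals of $\mathcal {C}$ lying in a row.

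For (2), the key step is to list $V(\mathcal {C}_{\mathbf{v}})$ and $\mathcal {I}(\mathcal {C}_{\mathbf{v}})$. Shrinking each $T\in\mathcal {T}_2$ to height $\tfrac12$ below $\mathbf{u}_{p,\ell_j}$ does three things. It removes every overlap of interiors with the $\mathcal {T}_1$ strips, since these lie at or above row $\ell_j$, so $\mathcal {C}'_{\mathbf{v}}$ is natural once the edge conditions are rechecked as in (1). It replaces the two top vertices $\mathbf{u}_{p,n}$ and $\mathbf{u}_{p-1,n}$ (suitably named) by new vertices at height $\mathbf{u}_{p,\ell_j}-(0,1/2)$. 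And it keeps all other vertices. We then define the bijection $\varphi\colon V(\mathcal {C}_{\mathbf{v}})\to V(\mathcal {C}'_{\mathbf{v}})$ sending each top vertex of a $\mathcal {T}_2$ strip to the corresponding lowered vertex and fixing all others. We must check that $\varphi$ is well defined: a top vertex $\mathbf{u}_{p,n}$ belongs to no other interval of $\mathcal {C}_{\mathbf{v}}$, because the intervals in the top row of $\mathcal {C}$ through $\mathbf{u}_{p,n}$ meet $U_{\mathbf{v}}(\mathcal {C})$ and were removed. Finally we show that $[\mathbf{a},\mathbf{b}]\in\mathcal {I}(\mathcal {C}_{\mathbf{v}})$ if and only if $[\varphi(\mathbf{a}),\varphi(\mathbf{b})]$, after reordering corners, lies in $\mathcal {I}(\mathcal {C}'_{\mathbf{v}})$.

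The main obstacle is this last step. An inner interval of $\mathcal {C}_{\mathbf{v}}$ ending at the top row must be tiled by $\mathcal {T}_2$ strips and intervals below them. The tiling requirement $\mathrm{int}(I_p)\cap\mathrm{int}(I_q)=\emptyset$ rules out using a $\mathcal {T}_1$ strip together with a crossing $\mathcal {T}_2$ strip. Therefore such inner intervals correspond exactly to inner intervals of $\mathcal {C}'_{\mathbf{v}}$ ending at the lowered vertices. Once this correspondence is established, $\varphi$ induces a $\mathbb{K}$-algebra isomorphism $S_{\mathcal {C}_{\mathbf{v}}}\to S_{\mathcal {C}'_{\mathbf{v}}}$ carrying inner $2$-minors to inner $2$-minors, so $\mathbb{K}[\mathcal {C}_{\mathbf{v}}]\cong\mathbb{K}[\mathcal {C}'_{\mathbf{v}}]$.
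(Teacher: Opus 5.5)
Your part (1) has a genuine gap in the one case that needs an idea: a new strip $T\in\mathcal{T}_1\cup\mathcal{T}_2$ against a surviving basic interval $J\in\mathcal{C}\setminus U_{\mathbf{v}}(\mathcal{C})$.

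The edge condition is not inherited from $\mathcal{C}$ just because $T$ is a union of basic intervals of $\mathcal{C}$. Take the bottom edge of $T=[llc(\overline{I_{\mathbf{u}_{k_i,q}}}(\mathcal{C})),\mathbf{u}_{m,q}]$. It is a union of several edges of $\mathcal{C}$. An interval sharing a full edge with only one of these pieces meets the long edge in at least two lattice points, yet its intersection with $T$ is not a common edge. Your first alternative, ``$J$ shares the bottom edge of $\overline{I_{\mathbf{u}_{k_i,q}}}(\mathcal{C})$'', is exactly such a forbidden configuration: that edge is a proper subsegment of the bottom edge of $T$, which runs on to the line $x=v_1$.

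What excludes this is the paper's argument:
\begin{itemize}
\item By Remark~\ref{T1}(2), every vertex of $\mathcal{C}$ on a new edge $F$ (a horizontal edge of a $\mathcal{T}_1$ strip or a vertical edge of a $\mathcal{T}_2$ strip), other than its endpoints, lies in $U_{\mathbf{v}}(\mathcal{C})$. At least one such vertex exists, e.g.\ $\mathbf{u}_{k_i,q}$ on the top edge of $T$.
\item An edge $G$ of a surviving interval has $\text{c}(G)\cap U_{\mathbf{v}}(\mathcal{C})=\emptyset$. Its endpoints avoid $U_{\mathbf{v}}(\mathcal{C})$, and, $\mathcal{C}$ being natural, no vertex of $\mathcal{C}$ lies in the relative interior of an edge of a basic interval.
\item Hence no endpoint of $G$ lies strictly inside $F$, and $F\not\subseteq G$, so $|F\cap G|\le 1$.
\end{itemize}
In your configuration, the upper right corner of $J$ would be $lrc(\overline{I_{\mathbf{u}_{k_i,q}}}(\mathcal{C}))\in U_{\mathbf{v}}(\mathcal{C})$, so $J$ was already deleted.

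The same reasoning is needed for pairs $T_1\in\mathcal{T}_1$, $T_2\in\mathcal{T}_2$ with $j<i$. These do not cross transversally as you claim, but they may touch. Your check of the edge conditions for $\mathcal{C}'_{\mathbf{v}}$ ``as in (1)'' also rests on this argument.

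Your part (2) takes a more explicit route than the paper. The paper only verifies that interiors in $\mathcal{C}'_{\mathbf{v}}$ are disjoint, calls the isomorphism with $\mathcal{C}_{\mathbf{v}}$ ``direct to check'', and rescales by $2$ to land in $\mathbb{Z}^2$. Your vertex bijection $\varphi$ supplies the content hidden in that phrase. So does your observation that an inner interval of $\mathcal{C}_{\mathbf{v}}$ reaching the top row is topped by $\mathcal{T}_2$ strips and therefore cannot also use a crossing $\mathcal{T}_1$ strip.

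One correction: when columns $p$ and $p+1$ both carry $\mathcal{T}_2$ strips, $\mathbf{u}_{p,n}$ is a top corner of both, so your well-definedness claim is not accurate as stated. $\varphi$ is nevertheless well defined. In that case $k_j<p<p+1<k_{j+1}$ for a single $j$, and both strips lower $\mathbf{u}_{p,n}$ to the same point $\mathbf{u}_{p,\ell_j}-(0,1/2)$.
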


\begin{proof} (1) By the definition of polyocollection, it suffices to prove $|F\cap G|\leq 1$ for each two distinct edge $F, G\in E(\mathcal {C}_{\mathbf{v}})$.
Note that $\mathcal {C}$ is a natural polyocollection. If $F, G\in E(\mathcal {C}_{\mathbf{v}}) \cap E(\mathcal {C})$, then it is obvious that $|F\cap G|\leq 1$. Now, without loss of generality, assume that  $F\in E(\mathcal {C}_{\mathbf{v}})\setminus E(\mathcal {C})$, then $F$ is a horizontal edge interval of $\mathcal {T}_1$ or a vertical edge intervals of $\mathcal {T}_2$. Set $F=[\mathbf{a},\mathbf{b}]$. By Remark \ref{T1} (2), one has $(\text{c}(F)\setminus \{\mathbf{a},\mathbf{b}\})\cap V(\mathcal {C})\subseteq U_{\mathbf{v}}(\mathcal {C})$. Consider the following cases:

(i) $G \in E(\mathcal {C}_{\mathbf{v}})\setminus E(\mathcal {C})$. In this case, $G$ is parallel or perpendicular to $F$. Hence $|F\cap G|=0$ or $|F\cap G|=1$, as desired.

(ii) $G \in E(\mathcal {C}_{\mathbf{v}})\cap E(\mathcal {C})$. Note that $\text{c}(G) \cap U_{\mathbf{v}}(\mathcal {C}) = \emptyset$ and $(\text{c}(F)\setminus \{\mathbf{a},\mathbf{b}\})\cap V(\mathcal {C})\subseteq U_{\mathbf{v}}(\mathcal {C})$. It follows that $|F\cap G| \leq 1$, as desired.

(2) We claim that for each pair of distinct $I,J\in \mathcal{C}'_{\mathbf{v}}$, $\text{int}(I)\cap \text{int}(J)=\emptyset$ holds true. In fact, note that $\mathcal {C}'_{\mathbf{v}}=\big(\mathcal {C}\setminus U_{\mathbf{v}}(\mathcal {C})\big)\cup \mathcal {T}_1\cup \mathcal {T}'_2$. If $I$ or $J$ is in $\mathcal {C}\setminus U_{\mathbf{v}}(\mathcal {C})$, it is clear that $\text{int}(I)\cap \text{int}(J)=\emptyset$; if both of $I$ and $J$ are in $\mathcal {T}_1$ or in $\mathcal {T}'_2$, it is easy to see that $\text{int}(I)\cap \text{int}(J)=\emptyset$; if one of $I$ and $J$ is in $\mathcal {T}_1$, the other is in $\mathcal {T}'_2$, then it follows from the construction of the proper intervals in $\mathcal {T}'_2$ that $\text{int}(I)\cap \text{int}(J)=\emptyset$.

It is direct to check that $\mathcal {C}_{\mathbf{v}}$ is algebraically isomorphic to $\mathcal {C}'_{\mathbf{v}}$, which is algebraically isomorphic to $2\mathcal{C}'_{\mathbf{v}}$. Note that $2\mathcal{C}'_{\mathbf{v}}$ is a collection of proper intervals on $\mathbb{Z}^2$. It follows that $2\mathcal{C}'_{\mathbf{v}}$ is a natural polyocollection, and $\mathcal {C}'_{\mathbf{v}}$ can be regarded as a natural polyocollection.
\end{proof}

Now, we could view $\mathcal {T}_1$ and $\mathcal {T}_2$ as two subpolyocollections of $\mathcal {C}_{\mathbf{v}}$.  In the following, we present some auxiliary results required for the proof of the part (3) of Main theorem.

\begin{lem} \label{cv-2}
Under Setup \ref{setup},
$\mathcal {I}(\mathcal {C}_{\mathbf{v}})=\big\{[\mathbf{a},\mathbf{b}] \in \mathcal {I}(\mathcal {C}) \mid \mathbf{a}, \mathbf{b}\notin U_{\mathbf{v}}(\mathcal {C})\big\}.$
\end{lem}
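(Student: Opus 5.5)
I will prove the two inclusions separately, using throughout Remark \ref{T1}, Observation \ref{ob-2} and the explicit description of $U_{\mathbf{v}}(\mathcal{C})$ from Setup \ref{setup}. One fact is used repeatedly: $U_{\mathbf{v}}(\mathcal{C})$ is a ``staircase'' in the upper right corner below and to the left of $\mathbf{v}$, and every basic interval of $\mathcal{C}$ is either removed or has all its vertices outside $U_{\mathbf{v}}(\mathcal{C})$.

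\emph{Inclusion $\subseteq$.} Let $[\mathbf{a},\mathbf{b}]\in\mathcal{I}(\mathcal{C}_{\mathbf{v}})$, tiled by basic intervals $J_1,\ldots,J_r$ of $\mathcal{C}_{\mathbf{v}}$ with pairwise disjoint interiors.
\begin{itemize}
\item By Remark \ref{T1}, each $J_t$ is an inner interval of $\mathcal{C}$, hence is tiled by basic intervals of $\mathcal{C}$. Refining the tiling $J_1,\ldots,J_r$ in this way shows that $[\mathbf{a},\mathbf{b}]\in\mathcal{I}(\mathcal{C})$.
\item Every vertex of $\mathcal{C}_{\mathbf{v}}$ lies outside $U_{\mathbf{v}}(\mathcal{C})$. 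For basic intervals of $\mathcal{C}\setminus U_{\mathbf{v}}(\mathcal{C})$ this is by definition. For members of $\mathcal{T}_1$ and $\mathcal{T}_2$ it follows from Remark \ref{T1}(2): the corner $\mathbf{c}$ (resp. $\mathbf{d}$) is excluded, the corner on the line $x=v_1$ (resp. $y=v_2$) cannot be in $U_{\mathbf{v}}(\mathcal{C})$, and $llc(\overline{I_{\mathbf{u}}})$ is a vertex of a surviving basic interval.
\item Since $\mathbf{a},\mathbf{b}\in V(\mathcal{C}_{\mathbf{v}})$, we get $\mathbf{a},\mathbf{b}\notin U_{\mathbf{v}}(\mathcal{C})$.
\end{itemize}

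\emph{Inclusion $\supseteq$.} Take $[\mathbf{a},\mathbf{b}]\in\mathcal{I}(\mathcal{C})$ with $\mathbf{a},\mathbf{b}\notin U_{\mathbf{v}}(\mathcal{C})$, and a tiling of it by basic intervals of $\mathcal{C}$. If no tile meets $U_{\mathbf{v}}(\mathcal{C})$, we are done. Otherwise, let $\mathcal{R}$ be the region $\mathrm{c}([\mathbf{a},\mathbf{b}])\cap\bigcup\{\mathrm{c}(I): I\text{ a tile with } V(I)\cap U_{\mathbf{v}}(\mathcal{C})\ne\emptyset\}$. The key step is to show that $\mathcal{R}$ is exactly tiled by intervals of $\mathcal{T}_1\cup\mathcal{T}_2$.

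Since $\mathbf{a}\notin U_{\mathbf{v}}(\mathcal{C})$, Observation \ref{ob-2} shows that the staircase $U_{\mathbf{v}}(\mathcal{C})$ meets $[\mathbf{a},\mathbf{b}]$ only in a way that forces it to stretch to the right edge $x=v_1$ or to the top edge $y=v_2$. Since $\mathbf{b}\notin U_{\mathbf{v}}(\mathcal{C})$, while $\mathbf{b}$ lies weakly up-right of some point of $U_{\mathbf{v}}(\mathcal{C})$, we get $b_1=v_1$ or $b_2=v_2$. Accordingly, in the case $b_1=v_1$ the affected tiles are grouped into horizontal strips. The strip at height $q$ is bounded below by the basic interval whose upper right corner is $\mathbf{u}_{k_i,q}$, and it extends to $\mathbf{u}_{m,q}$; this is exactly an element of $\mathcal{T}_1$. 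The case $b_2=v_2$ uses vertical strips from $\mathcal{T}_2$ in the same way. I will check that the strips are pairwise interior-disjoint and that together with the untouched tiles they cover $\mathrm{c}([\mathbf{a},\mathbf{b}])$. This uses Remark \ref{T1}(1) for the boundary tiles whose anti-diagonal corners straddle $U_{\mathbf{v}}(\mathcal{C})$.

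\emph{Main obstacle.} The hard part is the bookkeeping in the $\supseteq$ direction: showing that the strips of $\mathcal{T}_1$ (or $\mathcal{T}_2$) sit exactly inside $[\mathbf{a},\mathbf{b}]$. In particular, for each strip, the left endpoint $llc(\overline{I_{\mathbf{u}_{k_i,q}}})$ must lie inside $[\mathbf{a},\mathbf{b}]$, and the union of the strips must leave no gap. Here I would use the condition $\mathbf{a}\notin U_{\mathbf{v}}(\mathcal{C})$ together with the naturality of $\mathcal{C}$: interiors of distinct basic intervals are disjoint, so the tiling near the staircase is forced.
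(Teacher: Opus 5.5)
Your outline follows the paper's route. The inclusion $\subseteq$ is argued as in the paper, up to a slip noted below. For $\supseteq$, the paper also cuts $[\mathbf{a},\mathbf{b}]$ into pieces lying in $\mathcal{I}(\mathcal{C}\setminus U_{\mathbf{v}}(\mathcal{C}))$ plus a block tiled by members of $\mathcal{T}_1$ (or $\mathcal{T}_2$). However, the step you postpone as the ``main obstacle'' is the actual content of the lemma, and it is missing. The paper's key move is to set $e=\max\{i\mid a_1\le i<v_1,\ (i,b_2)\in V(\mathcal{C}),\ [(i,b_2),\mathbf{v}]\notin\mathcal{I}(\mathcal{C})\}$ and to identify
\[
\mathrm{c}([\mathbf{a},\mathbf{b}])\cap U_{\mathbf{v}}(\mathcal{C})=\{(i,j)\in V(\mathcal{C})\mid e<i<v_1,\ a_2\le j\le b_2\}.
\]
That is, the affected vertices fill whole columns of $[\mathbf{a},\mathbf{b}]$ from bottom side to top side, starting at a common column $e\ge a_1$. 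Only then does $\mathrm{c}([\mathbf{a},\mathbf{b}])$ split as $\mathrm{c}([\mathbf{a},(e,b_2)])\cup\mathrm{c}([(e,a_2),(v_1,b_2)])\cup\mathrm{c}([(v_1,a_2),\mathbf{b}])$. The middle block is stacked from $\mathcal{T}_1$-intervals by Remark \ref{T1}(2), and the outer blocks lie in $\mathcal{I}(\mathcal{C}\setminus U_{\mathbf{v}}(\mathcal{C}))$. The case $b_2=v_2$ is the transpose, with $e'$ and $\mathcal{T}_2$. Without this description, nothing prevents your strips from starting at different columns, protruding from $[\mathbf{a},\mathbf{b}]$, or leaving gaps, which is exactly the worry you state.

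Observation \ref{ob-2} only propagates membership in $U_{\mathbf{v}}(\mathcal{C})$ upward and to the right. So to get $(i,a_2)\in U_{\mathbf{v}}(\mathcal{C})$ from $(i,b_2)\in U_{\mathbf{v}}(\mathcal{C})$ you must glue the sub-rectangle $[(i,a_2),(v_1,b_2)]$ of $[\mathbf{a},\mathbf{b}]$ to $[(i,b_2),\mathbf{v}]$. This works because, in a polyocollection, two basic intervals whose edges overlap in a segment must share a full edge. Hence the tiles of an inner interval form a grid, and the lines $x=i$ and $x=v_1$ run along tile edges.

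Your case split is also wrong. The dichotomy ``$b_1=v_1$ or $b_2=v_2$'' fails, because $\mathbf{v}$ is only the rightmost vertex of the top row, and $\mathcal{C}$ may extend to the right of $x=v_1$ lower down. What Observation \ref{ob-2} actually gives is $b_1\ge v_1$ when $b_2<v_2$ (otherwise $\mathrm{c}([\mathbf{a},\mathbf{b}])\cap U_{\mathbf{v}}(\mathcal{C})=\emptyset$), and $b_1<v_1$ when $b_2=v_2$. For example, let $\mathcal{C}$ consist of the six unit cells of $[(0,0),(3,2)]$ together with the cell $[(1,2),(2,3)]$. Then $\mathbf{v}=(2,3)$ and $U_{\mathbf{v}}(\mathcal{C})=\{(1,0),(1,1),(1,2)\}$, and $[(0,0),(3,2)]$ is an inner interval with $b_1=3>v_1$; compare Figure \ref{fig11}(a). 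This case needs the third piece $[(v_1,a_2),\mathbf{b}]$ and a check that no tile crosses the line $x=v_1$.

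There is also a smaller slip in the $\subseteq$ direction. The vertex $llc(\overline{I_{\mathbf{u}_{k_i,q}}}(\mathcal{C}))$ need not be a vertex of a surviving basic interval, since $\overline{I_{\mathbf{u}_{k_i,q}}}(\mathcal{C})$ is itself removed (its corner $\mathbf{u}_{k_i,q}$ lies in $U_{\mathbf{v}}(\mathcal{C})$). In the example above, $(0,0)$ is such a corner of the $\mathcal{T}_1$-interval $[(0,0),(2,1)]$. The correct reason it avoids $U_{\mathbf{v}}(\mathcal{C})$ is that it lies weakly below and to the left of $\mathbf{c}\notin U_{\mathbf{v}}(\mathcal{C})$ (resp.\ of $\mathbf{d}$ for $\mathcal{T}_2$). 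Observation \ref{ob-2} then applies.
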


\begin{proof}
Since each basic interval of $\mathcal {C}_{\mathbf{v}}$ is an inner interval of $\mathcal {C}$ and its four corners are not in $ U_{\mathbf{v}}(\mathcal {C})$ by Remark \ref{T1}, $\mathcal {I}(\mathcal {C}_{\mathbf{v}})\subseteq \big\{[\mathbf{a},\mathbf{b}] \in \mathcal {I}(\mathcal {C}) \mid \mathbf{a}, \mathbf{b}\notin U_{\mathbf{v}}(\mathcal {C})\big\}$.

Conversely, let $\mathbf{v}=(v_1,v_2)$ and let $[\mathbf{a},\mathbf{b}]$ be an  inner interval of $\mathcal {C}$ with  $\mathbf{a}, \mathbf{b}\notin U_{\mathbf{v}}(\mathcal {C})$. Assume that $\mathbf{a}=(a_1, a_2)$ and $\mathbf{b}=(b_1,b_2)$, then $\mathbf{c}=(a_1, b_2)$ and $\mathbf{d}=(b_1, a_2)$ are anti-diagonal corners of $[\mathbf{a},\mathbf{b}]$.
If $\text{c}([\mathbf{a},\mathbf{b}]) \cap U_{\mathbf{v}}(\mathcal {C}) = \emptyset$, then $[\mathbf{a},\mathbf{b}]\in \mathcal {I}(\mathcal {C}_{\mathbf{v}})$. If $\text{c}([\mathbf{a},\mathbf{b}]) \cap U_{\mathbf{v}}(\mathcal {C}) \neq \emptyset$, then consider the following two cases.\\[-0.4cm]

(i) $b_2 < v_2$. Note that $\mathbf{b}\notin  U_{\mathbf{v}}(\mathcal {C})$. It follows from Observation \ref{ob-2} that $b_1\geq v_1$. Note that $\mathbf{a}\notin  U_{\mathbf{v}}(\mathcal {C})$. There exists some $\mathbf{w} =(w_1, w_2)\in V(\mathcal {C})$ with $a_1\leq w_1 < v_1$ and $b_2 \leq w_2 < v_2$, such that the cell $C^{(1)}_\mathbf{w}$ is not contained in any basic interval of $\mathcal{C}$. Let $e=\max\big\{i\mid a_1\leq i<v_1, (i,b_2) \in V(\mathcal {C}) \,\text{and}\, [(i,b_2),\mathbf{v}]\notin \mathcal {I}(\mathcal {C})\big\}$, as shown in Figure \ref{fig11} (a).
It is clear that $e \geq a_1$. By Observation \ref{ob-2}, one has
$$\text{c}([\mathbf{a},\mathbf{b}])\cap U_{\mathbf{v}}(\mathcal {C})=\big\{(i,j)\in V(\mathcal {C}) \mid e<i<v_1, a_2\leq j \leq b_2\big\}.$$
Therefore,  $[(e,a_2),(v_1,b_2)]\in \mathcal {I}(\mathcal {T}_1)$ by Remark \ref{T1} (2), $[\mathbf{a},(e,b_2)]\in \mathcal {I}(\mathcal {C}\setminus U_{\mathbf{v}}(\mathcal {C}))$ when $e> a_1$, and  $[(v_1,a_2),\mathbf{b})]\in \mathcal {I}(\mathcal {C}\setminus U_{\mathbf{v}}(\mathcal {C}))$ when $b_1> v_1$. Furthermore, by formula (\ref{formula2}) and the equation
$$\text{c}([\mathbf{a},\mathbf{b}])=\text{c}([\mathbf{a},(e,b_2)])\cup \text{c}([(e,a_2),(v_1,b_2)])\cup \text{c}([(v_1,a_2),\mathbf{b})]),$$
we obtain that $[\mathbf{a},\mathbf{b}]\in \mathcal {I}(\mathcal {C}_{\mathbf{v}})$.\\[-0.4cm]

(ii) $b_2 = v_2$. It follows from $\mathbf{a}\notin  U_{\mathbf{v}}(\mathcal {C})$ that $b_1<v_1$. By a similar discussion as above, we could denote $e'=\max\big\{j\mid a_2\leq j< v_2, (b_1,j) \in V(\mathcal {C}) \,\text{and}\, [(b_1,j),\mathbf{v}]\notin \mathcal {I}(\mathcal {C})\big\}$, as shown in Figure \ref{fig11} (b).
It is clear that $e' \geq a_2$. By Observation \ref{ob-2}, one has
$$\text{c}([\mathbf{a},\mathbf{b}])\cap  U_{\mathbf{v}}(\mathcal {C})=\big\{(i,j)\in V(\mathcal {C})\mid  a_1\leq i\leq b_1, e'<j < b_2\big\}.$$
Therefore, $[(a_1,e'),\mathbf{b}]\in \mathcal {I}(\mathcal {T}_2)$ by Remark \ref{T1} (2), and $[\mathbf{a},(b_1,e')]\in \mathcal {I}(\mathcal {C}\setminus U_{\mathbf{v}}(\mathcal {C}))$ when $e'> a_2$. Furthermore, by formula (\ref{formula2}) and the equation
$$\text{c}([\mathbf{a},\mathbf{b}])=\text{c}([(a_1,e'),\mathbf{b}])\cup \text{c}([\mathbf{a},(b_1,e')]),$$
we obtain that $[\mathbf{a},\mathbf{b}]\in \mathcal {I}(\mathcal {C}_{\mathbf{v}})$.
\end{proof}

\begin{figure}[htbp]
\centering
\begin{minipage}{0.40\textwidth}
\centering
\resizebox{\linewidth}{!}{%
\begin{tikzpicture}[every node/.style={font=\small}]
      \draw[fill=black!16,line width=0.8pt]
    (0,0)--(5,0)--(5,3)--(4,3)--(4,4)--(2,4)--(2,3)--(0,3)--(0,0)--cycle;
  \draw[line width=0.8pt] (1,0)--(1,3)  (2,0)--(2,4) (3,0)--(3,4) (4,0)--(4,4) (0,1)--(5,1)  (0,2)--(5,2) (2,3)--(4,3);
  \draw[fill=black] (0,0) circle (1pt);
  \draw[fill=black] (1,0) circle (1pt);
  \draw[fill=black] (2,0) circle (1pt);
  \draw[fill=black] (3,0) circle (1pt);
  \draw[fill=black] (4,0) circle (1pt);
  \draw[fill=black] (5,0) circle (1pt);

  \draw[fill=black] (0,1) circle (1pt);
  \draw[fill=black] (1,1) circle (1pt);
  \draw[fill=black] (2,1) circle (1pt);
  \draw[fill=black] (3,1) circle (1pt);
  \draw[fill=black] (4,1) circle (1pt);
  \draw[fill=black] (5,1) circle (1pt);

  \draw[fill=black] (0,2) circle (1pt);
  \draw[fill=black] (1,2) circle (1pt);
  \draw[fill=black] (2,2) circle (1pt);
  \draw[fill=black] (3,2) circle (1pt);
  \draw[fill=black] (4,2) circle (1pt);
  \draw[fill=black] (5,2) circle (1pt);

  \draw[fill=black] (0,3) circle (1pt);
  \draw[fill=black] (1,3) circle (1pt);
  \draw[fill=black] (2,3) circle (1pt);
  \draw[fill=black] (3,3) circle (1pt);
  \draw[fill=black] (4,3) circle (1pt);
  \draw[fill=black] (5,3) circle (1pt);

  \draw[fill=black] (2,4) circle (1pt);
  \draw[fill=black] (3,4) circle (1pt);
  \draw[fill=black] (4,4) circle (1pt);

 \node[below, font=\small,scale=0.6] at (0,0) {$\mathbf{a}=(a_1,a_2)$};
 \node[below, font=\small,scale=0.6] at (1,0) {$(e,a_2)$};
 \node[below, font=\small,scale=0.6] at (4,0) {$(v_1,a_2)$};
 \node[above, font=\small,scale=0.6] at (1,3) {$(e,b_2)$};
 \node[right, font=\small,scale=0.6] at (4,3.2) {$(v_1,b_2)$};
 \node[above, font=\small,scale=0.6] at (4,4) {$\mathbf{v}=(v_1,v_2)$};
 \node[right, font=\small,scale=0.6] at (5,0) {$\mathbf{d}$};
 \node[right, font=\small,scale=0.6] at (5,3) {$\mathbf{b}=(b_1,b_2)$};

\textcolor[rgb]{1.00,1.00,1.00}{   \node[left, font=\small,scale=0.6] at (-1.5,0) {$1$};}

 \node[below, font=\small,scale=1] at (2.4,-0.5) {(a) $b_2<v_2$};

\end{tikzpicture}}
\end{minipage}\hfill
\begin{minipage}{0.40\textwidth}
\centering
\resizebox{\linewidth}{!}{%
\begin{tikzpicture}[every node/.style={font=\small}]
         \draw[fill=black!16,line width=0.8pt]
    (0,0)--(4,0)--(4,2)--(5,2)--(5,4)--(0,4)--(0,0)--cycle;
  \draw[line width=0.8pt] (1,0)--(1,4)  (2,0)--(2,4) (4,0)--(4,4) (0,1)--(4,1)  (0,2)--(4,2) (0,3)--(5,3);
  \draw[fill=black] (0,0) circle (1pt);
  \draw[fill=black] (1,0) circle (1pt);
  \draw[fill=black] (2,0) circle (1pt);
  \draw[fill=black] (4,0) circle (1pt);

  \draw[fill=black] (0,1) circle (1pt);
  \draw[fill=black] (1,1) circle (1pt);
  \draw[fill=black] (2,1) circle (1pt);
  \draw[fill=black] (4,1) circle (1pt);

  \draw[fill=black] (0,2) circle (1pt);
  \draw[fill=black] (1,2) circle (1pt);
  \draw[fill=black] (2,2) circle (1pt);
  \draw[fill=black] (4,2) circle (1pt);

  \draw[fill=black] (0,3) circle (1pt);
  \draw[fill=black] (1,3) circle (1pt);
  \draw[fill=black] (2,3) circle (1pt);
  \draw[fill=black] (4,3) circle (1pt);
  \draw[fill=black] (5,3) circle (1pt);

  \draw[fill=black] (0,4) circle (1pt);
  \draw[fill=black] (1,4) circle (1pt);
  \draw[fill=black] (2,4) circle (1pt);
  \draw[fill=black] (4,4) circle (1pt);
  \draw[fill=black] (5,4) circle (1pt);

 \node[below, font=\small,scale=0.6] at (0,0) {$\mathbf{a}=(a_1,a_2)$};
 \node[right, font=\small,scale=0.6] at (4,1) {$(b_1,e')$};
 \node[left, font=\small,scale=0.6] at (0,1) {$(a_1,e')$};
 \node[right, font=\small,scale=0.6] at (4,3.3) {$(v_1,b_2)$};
 \node[right, font=\small,scale=0.6] at (5,4) {$\mathbf{v}=(v_1,v_2)$};
 \node[right, font=\small,scale=0.6] at (4,0) {$\mathbf{d}$};
 \node[above, font=\small,scale=0.6] at (4,4) {$\mathbf{b}=(b_1,b_2)$};

 \node[below, font=\small,scale=1] at (2.4,-0.5) {(b) $b_2=v_2$};

\textcolor[rgb]{1.00,1.00,1.00}{\node[left, font=\small,scale=0.6] at (6.5,0) {$1$};}
\end{tikzpicture}}
\end{minipage}\hfill
\caption{The positional relationship between $[\mathbf{a},\mathbf{b}]$ and $\mathbf{v}$}\label{fig11}
\end{figure}

\begin{prop} \label{induction-1}
Under Setup \ref{setup}, if $\mathcal {C}$ is a natural polyocollection of type $\mathcal{Q}_1$, then the following statements hold$:$ \\[-0.3cm]
\begin{itemize}
\item[(1)] $(\text{\em in}_{<_{\text{\em lex}}}(I_{\mathcal {C}}),x_{\mathbf{v}})=(\text{\em in}_{<_{\text{\em lex}}}(I_{\mathcal {C}\setminus \mathbf{v}}),x_{\mathbf{v}})$. Moreover, $\mathcal {C}\setminus \mathbf{v}$ belongs to type $\mathcal{Q}_1$. \\[-0.3cm]
\item[(2)] $(\text{\em in}_{<_{\text{\em lex}}}(I_{\mathcal {C}}):x_{\mathbf{v}})=\text{\em in}_{<_{\text{\em lex}}}(I_{\mathcal {C}_{\mathbf{v}}})+\big(x_{\mathbf{u}}\mid \mathbf{u}\in U_{\mathbf{v}}(\mathcal {C}) \big)$. Moreover, $\mathcal {C}_{\mathbf{v}}$ belongs to  type $\mathcal{Q}_1$.\\[-0.3cm]
\end{itemize}
\end{prop}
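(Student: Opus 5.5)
The plan is to work throughout with the explicit description of the initial ideal from Remark \ref{graph}. Since $\mathcal{C}$ is of type $\mathcal{Q}_1$, we have $\text{in}_{<_{\text{lex}}}(I_{\mathcal{C}})=(x_{\mathbf{a}}x_{\mathbf{b}}\mid [\mathbf{a},\mathbf{b}]\in\mathcal{I}(\mathcal{C}))$, the edge ideal of $G(\mathcal{C})$. Once we know that $\mathcal{C}\setminus\mathbf{v}$ and $\mathcal{C}_{\mathbf{v}}$ are of type $\mathcal{Q}_1$, the same description applies to them. So both ideal equalities reduce to comparing sets of inner intervals, and we verify condition $(\dag)$ first.

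For (1), note that $\mathbf{v}$ is the uppermost and rightmost vertex, so it lies in exactly one basic interval, namely $\overline{I_{\mathbf{v}}}(\mathcal{C})$. Hence $\mathbf{v}$ is an upper right single vertex. By Observation \ref{ob-1}, the inner intervals of $\mathcal{C}\setminus\mathbf{v}$ are exactly the inner intervals of $\mathcal{C}$ with both corners in $V(\mathcal{C}\setminus\mathbf{v})$. Condition $(\dag)$ for $\mathcal{C}\setminus\mathbf{v}$ then follows from $(\dag)$ for $\mathcal{C}$: given $[\mathbf{a},\mathbf{b}],[\mathbf{b},\mathbf{c}]\in\mathcal{I}(\mathcal{C}\setminus\mathbf{v})$, the interval $[\mathbf{e},\mathbf{c}]$ or $[\mathbf{d},\mathbf{c}]$ is inner in $\mathcal{C}$, and its corners are corners of the given intervals, so it lies in $V(\mathcal{C}\setminus\mathbf{v})$. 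For the ideal equality, it remains to check that every generator $x_{\mathbf{a}}x_{\mathbf{b}}$ of $\text{in}(I_{\mathcal{C}})$ not divisible by $x_{\mathbf{v}}$ lies in $\text{in}(I_{\mathcal{C}\setminus\mathbf{v}})$. I expect a small subtlety here. A corner of $[\mathbf{a},\mathbf{b}]$ might be a vertex that disappears from $V(\mathcal{C}\setminus\mathbf{v})$, namely a single vertex of $\overline{I_{\mathbf{v}}}(\mathcal{C})$ other than $\mathbf{v}$. I would rule this out using $\mathbf{a}<\mathbf{b}$ and the extremal position of $\mathbf{v}$. If $\mathbf{b}$ were such a vertex, say $ulc$ or $lrc$ of $\overline{I_{\mathbf{v}}}$, then being a single vertex prevents any inner interval other than one containing $\overline{I_{\mathbf{v}}}$ from ending there. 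If some inner interval cannot be handled this way, I would instead show that its monomial is divisible by a variable that also dies, and compare the two ideals modulo $x_{\mathbf{v}}$ carefully.

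For (2), type $\mathcal{Q}_1$ for $\mathcal{C}_{\mathbf{v}}$ comes from Lemma \ref{cv-2}: $\mathcal{I}(\mathcal{C}_{\mathbf{v}})$ consists of the inner intervals of $\mathcal{C}$ whose diagonal corners avoid $U_{\mathbf{v}}(\mathcal{C})$. Given $[\mathbf{a},\mathbf{b}],[\mathbf{b},\mathbf{c}]$ in it, $(\dag)$ in $\mathcal{C}$ gives $[\mathbf{e},\mathbf{c}]$ or $[\mathbf{d},\mathbf{c}]$ inner in $\mathcal{C}$. We must check that the lower left corner $\mathbf{e}$ or $\mathbf{d}$ is not in $U_{\mathbf{v}}(\mathcal{C})$. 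If $\mathbf{d}\in U_{\mathbf{v}}$, then $\mathbf{b}$, lying weakly up and right of $\mathbf{d}$, would be in $U_{\mathbf{v}}$ by Observation \ref{ob-2} unless $\mathbf{b}$ shares a coordinate with $\mathbf{v}$; those boundary cases are treated by hand. The same argument handles $\mathbf{e}$.

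For the colon ideal, since the initial ideal is a squarefree quadratic monomial ideal, $\text{in}(I_{\mathcal{C}}):x_{\mathbf{v}}$ is generated by the generators $x_{\mathbf{a}}x_{\mathbf{b}}$ not involving $\mathbf{v}$, together with the variables $x_{\mathbf{u}}$ for the neighbours $\mathbf{u}$ of $\mathbf{v}$ in $G(\mathcal{C})$. Since $\mathbf{v}$ is maximal, these neighbours are exactly $U_{\mathbf{v}}(\mathcal{C})$. Modulo $(x_{\mathbf{u}}\mid\mathbf{u}\in U_{\mathbf{v}})$, the surviving generators are exactly those $x_{\mathbf{a}}x_{\mathbf{b}}$ with $\mathbf{a},\mathbf{b}\notin U_{\mathbf{v}}$, and Lemma \ref{cv-2} identifies them with the generators of $\text{in}(I_{\mathcal{C}_{\mathbf{v}}})$. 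The main obstacle I anticipate is the boundary-case bookkeeping in the $(\dag)$ verification for $\mathcal{C}_{\mathbf{v}}$. For it I would use the staircase description of $U_{\mathbf{v}}(\mathcal{C})$ from Setup \ref{setup}.
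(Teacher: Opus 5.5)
Your part (1) is sound, and the subtlety you flag resolves by your first route. If the tiling of an inner interval $[\mathbf{a},\mathbf{b}]$ of $\mathcal{C}$ uses $\overline{I_{\mathbf{v}}}(\mathcal{C})$, then $\mathbf{b}$ lies on the top line weakly to the right of $\mathbf{v}$, so $\mathbf{b}=\mathbf{v}$. Otherwise $[\mathbf{a},\mathbf{b}]$ is already inner in $\mathcal{C}\setminus\mathbf{v}$. Your fallback about ``a variable that also dies'' would not help, but it is never needed.

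The gap is in part (2), in your direct check of $(\dag)$ for $\mathcal{C}_{\mathbf{v}}$. Your colon computation depends on it, because you need type $\mathcal{Q}_1$ to identify $\text{in}(I_{\mathcal{C}_{\mathbf{v}}})$ with the edge ideal of $G(\mathcal{C}_{\mathbf{v}})$. You treat $\mathbf{v}$ as dominating every vertex coordinatewise. In fact it is only the rightmost vertex of the \emph{top} line, and vertices with first coordinate larger than $v_1$ do occur (Figures \ref{fig9} and \ref{fig10}).

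For $\mathbf{d}=(b_1,a_2)$ your argument is fine. Indeed $\mathbf{d}\in U_{\mathbf{v}}(\mathcal{C})$ forces $b_1<v_1$, and $b_2<c_2\le v_2$ holds automatically, so Observation \ref{ob-2} puts $\mathbf{b}$ in $U_{\mathbf{v}}(\mathcal{C})$.

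For $\mathbf{e}=(a_1,b_2)$, however, the case $\mathbf{e}\in U_{\mathbf{v}}(\mathcal{C})$ with $b_1>v_1$ is not a boundary case. Observation \ref{ob-2} says nothing about $\mathbf{b}$, which genuinely lies outside $U_{\mathbf{v}}(\mathcal{C})$. So if $(\dag)$ in $\mathcal{C}$ supplies only $[\mathbf{e},\mathbf{c}]$, your argument does not place it in $\mathcal{I}(\mathcal{C}_{\mathbf{v}})$. What must be shown instead is that this configuration forces $\mathbf{a}\in U_{\mathbf{v}}(\mathcal{C})$, which contradicts $[\mathbf{a},\mathbf{b}]\in\mathcal{I}(\mathcal{C}_{\mathbf{v}})$.

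That step uses the polyocollection edge condition, not the staircase shape of $U_{\mathbf{v}}(\mathcal{C})$. The bottom basic intervals of $[\mathbf{e},\mathbf{v}]$ and the top basic intervals of $[\mathbf{a},\mathbf{b}]$ overlap along $y=b_2$, so they must share whole edges. The edge condition also makes the tiling of an inner interval a grid. Hence $x=v_1$ is a grid line of the tiling of $[\mathbf{a},\mathbf{b}]$, and $[\mathbf{a},\mathbf{v}]$ is tiled by $[\mathbf{a},(v_1,b_2)]$ together with $[\mathbf{e},\mathbf{v}]$. Your sketch does not contain this.

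The paper avoids any combinatorial verification of $(\dag)$ by reversing the order of deduction: it proves the ideal identities first.
\begin{itemize}
\item The inclusion $\text{in}(I_{\mathcal{C}_{\mathbf{v}}})\subseteq\text{in}(I_{\mathcal{C}})\subseteq\text{in}(I_{\mathcal{C}}):x_{\mathbf{v}}$ is free from $\mathcal{I}(\mathcal{C}_{\mathbf{v}})\subseteq\mathcal{I}(\mathcal{C})$.
\item The other inclusion only needs $x_{\mathbf{a}}x_{\mathbf{b}}=\text{in}(f_{[\mathbf{a},\mathbf{b}]})\in\text{in}(I_{\mathcal{C}_{\mathbf{v}}})$ for $[\mathbf{a},\mathbf{b}]\in\mathcal{I}(\mathcal{C}_{\mathbf{v}})$, via Lemma \ref{cv-2}.
\item No $x_{\mathbf{u}}$ with $\mathbf{u}\in U_{\mathbf{v}}(\mathcal{C})$ divides a minimal generator of $\text{in}(I_{\mathcal{C}_{\mathbf{v}}})$. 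Setting these variables to zero therefore gives $\text{in}(I_{\mathcal{C}_{\mathbf{v}}})=(x_{\mathbf{a}}x_{\mathbf{b}}\mid[\mathbf{a},\mathbf{b}]\in\mathcal{I}(\mathcal{C}_{\mathbf{v}}))$.
\end{itemize}
So the inner $2$-minors form a Gr\"obner basis, and Theorem \ref{condition} then yields type $\mathcal{Q}_1$. The paper handles part (1) the same way, with $x_{\mathbf{v}}$ in place of the $x_{\mathbf{u}}$. Switching to this order repairs your proof without any case analysis.
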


\begin{proof}
 (1) The inclusion ``$\supseteq$" is obvious. Recall the notations of Setup \ref{setup}, we can get that $\mathbf{v}$ is a upper right single vertex of $\mathcal {C}$. For any $\mathbf{a},\mathbf{b}\in V(\mathcal {C}\setminus \mathbf{v})$, it follows from Observation \ref{ob-1} that  $[\mathbf{a},\mathbf{b}]\in \mathcal {I}(\mathcal {C}\setminus \mathbf{v})$ if and only if $[\mathbf{a},\mathbf{b}]\in \mathcal {I}(\mathcal {C})$. Using Remark \ref{graph}, we have
\begin{eqnarray*}
(\text{in}_{<_{\text{lex}}}(I_{\mathcal {C}}),x_{\mathbf{v}})\!\!&=&\!\!\big(x_{\mathbf{a}}x_{\mathbf{b}}\mid [\mathbf{a},\mathbf{b}]\in \mathcal {I}(\mathcal {C})  \text{ with } \mathbf{b}\neq \mathbf{v}\big)+\big(x_{\mathbf{a}}x_{\mathbf{v}}\mid [\mathbf{a},\mathbf{v}]\in \mathcal {I}(\mathcal {C})\big)+(x_{\mathbf{v}})\\
\!\!&=&\!\!\big(x_{\mathbf{a}}x_{\mathbf{b}}\mid [\mathbf{a},\mathbf{b}]\in \mathcal {I}(\mathcal {C}\setminus \mathbf{v})\big)+(x_{\mathbf{v}})\\
\!\!&\subseteq&\!\!\text{in}_{<_{\text{lex}}}(I_{\mathcal {C}\setminus \mathbf{v}})+(x_{\mathbf{v}})\\
\!\!&\subseteq&\!\!(\text{in}_{<_{\text{lex}}}(I_{\mathcal {C}}),x_{\mathbf{v}}).
\end{eqnarray*}
Hence, $\big(x_{\mathbf{a}}x_{\mathbf{b}}\mid [\mathbf{a},\mathbf{b}]\in \mathcal {I}(\mathcal {C}\setminus \mathbf{v})\big)+(x_{\mathbf{v}})=
\text{in}_{<_{\text{lex}}}(I_{\mathcal {C}\setminus \mathbf{v}})+(x_{\mathbf{v}})$.
Note that each element in the minimal monomial generator set of $\text{in}_{<_{\text{lex}}}(I_{\mathcal {C}\setminus \mathbf{v}})$ is not divisible by the variable $x_{\mathbf{v}}$, we deduce $\big(x_{\mathbf{a}}x_{\mathbf{b}}\mid [\mathbf{a},\mathbf{b}]\in \mathcal {I}(\mathcal {C}\setminus \mathbf{v})\big)=\text{in}_{<_{\text{lex}}}(I_{\mathcal {C}\setminus \mathbf{v}})$. According to Theorem \ref{condition},  $\mathcal {C}\setminus \mathbf{v}$ belongs to type  $\mathcal{Q}_1$, as desired.

(2)  According to Remark \ref{graph} and Lemma \ref{cv-2}, we have
\begin{eqnarray*}
\text{in}_{<_{\text{lex}}}(I_{\mathcal {C}}):x_{\mathbf{v}} &=&(x_{\mathbf{a}}x_{\mathbf{b}}\mid [\mathbf{a},\mathbf{b}]\in \mathcal {I}(\mathcal {C}))+(x_{\mathbf{a}}\mid [\mathbf{a},\mathbf{v}]\in \mathcal {I}(\mathcal {C}))\\
&=&(x_{\mathbf{a}}x_{\mathbf{b}}\mid [\mathbf{a},\mathbf{b}]\in \mathcal {I}(\mathcal {C}) \text{ with } \mathbf{a}, \mathbf{b}\notin U_{\mathbf{v}}(\mathcal {C}))+(x_{\mathbf{a}}\mid \mathbf{a}\in U_{\mathbf{v}}(\mathcal {C}))\\
&=&(x_{\mathbf{a}}x_{\mathbf{b}}\mid [\mathbf{a},\mathbf{b}]\in \mathcal {I}(\mathcal {C}_{\mathbf{v}}))+(x_{\mathbf{a}}\mid \mathbf{a}\in U_{\mathbf{v}}(\mathcal {C}))\\
&\subseteq&\text{in}_{<_{\text{lex}}}(I_{\mathcal {C}_{\mathbf{v}}})+(x_{\mathbf{a}}\mid \mathbf{a}\in U_{\mathbf{v}}(\mathcal {C})).
\end{eqnarray*}
Note that $\mathcal {I}(\mathcal {C}_{\mathbf{v}})\subseteq \mathcal {I}(\mathcal {C})$, then $I_{\mathcal {C}_{\mathbf{v}}}\subseteq I_{\mathcal {C}}$. So  $\text{in}_{<_{\text{lex}}}(I_{\mathcal {C}_{\mathbf{v}}})\subseteq \text{in}_{<_{\text{lex}}}(I_{\mathcal {C}}) \subseteq \text{in}_{<_{\text{lex}}}(I_{\mathcal {C}}):x_{\mathbf{v}}$. Moreover, $(x_{\mathbf{a}}\mid \mathbf{a}\in U_{\mathbf{v}}(\mathcal {C}))\subseteq \text{in}_{<_{\text{lex}}}(I_{\mathcal {C}}):x_{\mathbf{v}}$ is obvious. Thus, the reverse inclusion ``$\supseteq$" holds. Furthermore,
$$(x_{\mathbf{a}}x_{\mathbf{b}}\mid [\mathbf{a},\mathbf{b}]\in \mathcal {I}(\mathcal {C}_{\mathbf{v}}))+(x_{\mathbf{a}}\mid \mathbf{a}\in U_{\mathbf{v}}(\mathcal {C}))
=\text{in}_{<_{\text{lex}}}(I_{\mathcal {C}_{\mathbf{v}}})+(x_{\mathbf{a}}\mid \mathbf{a}\in U_{\mathbf{v}}(\mathcal {C})).$$
Note that each element in the minimal monomial generator set of $\text{in}_{<_{\text{lex}}}(I_{\mathcal {C}_{\mathbf{v}}})$ is not divisible by any variable in $\big\{x_{\mathbf{a}}\mid \mathbf{a}\in U_{\mathbf{v}}(\mathcal {C})\big\}$, we deduce $(x_{\mathbf{a}}x_{\mathbf{b}}\mid [\mathbf{a},\mathbf{b}]\in \mathcal {I}(\mathcal {C}_{\mathbf{v}}))=\text{in}_{<_{\text{lex}}}(I_{\mathcal {C}_{\mathbf{v}}})$. According to Theorem \ref{condition},  $\mathcal {C}_{\mathbf{v}}$ belongs to type  $\mathcal{Q}_1$, as desired.
\end{proof}

\begin{prop} \label{induction-2}
Under Setup \ref{setup}, if $\mathcal {C}$ belongs to type $\mathcal{Q}_1$, then
$$|\mathcal {C}|=|\mathcal {C}_{\mathbf{v}}|+|U_{\mathbf{v}}(\mathcal {C})|.$$
\end{prop}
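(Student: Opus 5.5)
The plan is to prove the identity by directly counting which basic intervals are removed and which are added in formula (\ref{formula2}). Write $\mathcal{R}$ for the set of basic intervals of $\mathcal{C}$ having at least one vertex in $U_{\mathbf{v}}(\mathcal{C})$, so that $\mathcal{C}\setminus U_{\mathbf{v}}(\mathcal{C})=\mathcal{C}\setminus\mathcal{R}$. We first check that $\mathcal{T}_1$, $\mathcal{T}_2$ and $\mathcal{C}\setminus\mathcal{R}$ are pairwise disjoint. Each interval of $\mathcal{T}_1\cup\mathcal{T}_2$ has, by Remark \ref{T1} (2), a vertex of $U_{\mathbf{v}}(\mathcal{C})$ on its boundary, so it cannot lie in $\mathcal{C}\setminus\mathcal{R}$. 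An element of $\mathcal{T}_1$ has its upper right corner on the line $x=v_1$ and its upper left corner outside $U_{\mathbf{v}}(\mathcal{C})$, whereas an element of $\mathcal{T}_2$ has its upper left corner in $U_{\mathbf{v}}(\mathcal{C})$; hence $\mathcal{T}_1\cap\mathcal{T}_2=\emptyset$. It follows that $|\mathcal{C}_{\mathbf{v}}|=|\mathcal{C}|-|\mathcal{R}|+|\mathcal{T}_1|+|\mathcal{T}_2|$, and the statement reduces to showing $|\mathcal{R}|=|U_{\mathbf{v}}(\mathcal{C})|+|\mathcal{T}_1|+|\mathcal{T}_2|$.

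To prove this, we split $\mathcal{R}$ according to whether the lower left corner lies in $U_{\mathbf{v}}(\mathcal{C})$.

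First, take $\mathbf{u}\in U_{\mathbf{v}}(\mathcal{C})$. Since $[\mathbf{u},\mathbf{v}]$ is an inner interval, $\mathbf{u}$ is the lower left corner of some basic interval $I_{\mathbf{u}}(\mathcal{C})$, which lies in $\mathcal{R}$. This basic interval is unique: two distinct basic intervals with the same lower left corner would have intersecting interiors, which naturality forbids. So $\mathbf{u}\mapsto I_{\mathbf{u}}(\mathcal{C})$ gives exactly $|U_{\mathbf{v}}(\mathcal{C})|$ elements of $\mathcal{R}$.

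Second, take $[\mathbf{a},\mathbf{b}]\in\mathcal{R}$ with $\mathbf{a}\notin U_{\mathbf{v}}(\mathcal{C})$, and let $\mathbf{c},\mathbf{d}$ be its upper left and lower right corners. We show that exactly one of $\mathbf{c},\mathbf{d}$ lies in $U_{\mathbf{v}}(\mathcal{C})$.
\begin{itemize}
\item \emph{At least one.} If $\mathbf{c}$ or $\mathbf{d}$ lies in $U_{\mathbf{v}}(\mathcal{C})$ we are done. Otherwise $\mathbf{b}\in U_{\mathbf{v}}(\mathcal{C})$, so $[\mathbf{b},\mathbf{v}]$ is an inner interval. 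Applying condition $(\dag)$ to $[\mathbf{a},\mathbf{b}]$ and $[\mathbf{b},\mathbf{v}]$ shows that $[\mathbf{c},\mathbf{v}]$ or $[\mathbf{d},\mathbf{v}]$ is inner, i.e. $\mathbf{c}$ or $\mathbf{d}$ lies in $U_{\mathbf{v}}(\mathcal{C})$. This is exactly where type $\mathcal{Q}_1$ is used.
\item \emph{Not both.} Suppose $\mathbf{c},\mathbf{d}\in U_{\mathbf{v}}(\mathcal{C})$. Then $\mathrm{c}([\mathbf{a},\mathbf{v}])=\mathrm{c}([\mathbf{a},\mathbf{b}])\cup\mathrm{c}([\mathbf{c},\mathbf{v}])\cup\mathrm{c}([\mathbf{d},\mathbf{v}])$. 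Each of the three pieces is tiled by basic intervals of $\mathcal{C}$, and by naturality any two distinct basic intervals have disjoint interiors. Hence $[\mathbf{a},\mathbf{v}]$ is an inner interval, so $\mathbf{a}\in U_{\mathbf{v}}(\mathcal{C})$, a contradiction.
\end{itemize}

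Now Remark \ref{T1} (1) sends such an interval to $[\mathbf{a},(v_1,b_2)]\in\mathcal{T}_1$ when only $\mathbf{d}\in U_{\mathbf{v}}(\mathcal{C})$, and to $[\mathbf{a},(b_1,v_2)]\in\mathcal{T}_2$ when only $\mathbf{c}\in U_{\mathbf{v}}(\mathcal{C})$. This map is surjective by the very definition of $\mathcal{T}_1$ and $\mathcal{T}_2$, since each of their elements is built from $llc(\overline{I_{\mathbf{w}}}(\mathcal{C}))$ for a basic interval of the required kind. It is injective because the basic interval $\overline{I_{\mathbf{w}}}(\mathcal{C})$ with a given upper right corner $\mathbf{w}$ is unique by naturality, and $\mathbf{w}$ is recovered from the image as $\mathbf{u}_{k_i,q}$ or $\mathbf{u}_{p,\ell_j}$.

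Combining the two parts gives $|\mathcal{R}|=|U_{\mathbf{v}}(\mathcal{C})|+|\mathcal{T}_1|+|\mathcal{T}_2|$, which finishes the proof. The main obstacle is the dichotomy in the second part; the remaining steps are bookkeeping with the indices of Setup \ref{setup}. In the ``not both'' case the covering argument must be checked carefully. Concretely, one must verify that $[\mathbf{c},\mathbf{v}]$ and $[\mathbf{d},\mathbf{v}]$ overlap only in a region already tiled consistently by basic intervals. Naturality guarantees this.
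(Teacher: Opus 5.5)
Your proof is correct and is essentially the paper's argument with different bookkeeping. The paper identifies basic intervals with their lower left corners and proves $T(\mathcal{C})=T(\mathcal{C}_{\mathbf{v}})\sqcup U_{\mathbf{v}}(\mathcal{C})$, which is exactly your correspondence: each removed interval whose lower left corner lies outside $U_{\mathbf{v}}(\mathcal{C})$ is traded, via Remark \ref{T1}~(1), for the element of $\mathcal{T}_1\cup\mathcal{T}_2$ with the same lower left corner. The key use of $(\dag)$ is the same, and your tiling argument for ``not both'' justifies a step that the paper attributes only to Observation \ref{ob-2}, which alone does not seem to rule out $\mathbf{c},\mathbf{d}\in U_{\mathbf{v}}(\mathcal{C})$.

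One slip needs fixing. An element of $\mathcal{T}_2$ has its upper left corner on the line $y=v_2$, so that corner never lies in $U_{\mathbf{v}}(\mathcal{C})$, and your reason for $\mathcal{T}_1\cap\mathcal{T}_2=\emptyset$ fails as written. Separate the two families by upper right corners instead: these lie on $x=v_1$ strictly below $\mathbf{v}$ for $\mathcal{T}_1$, and on $y=v_2$ strictly left of $\mathbf{v}$ for $\mathcal{T}_2$.
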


\begin{proof}
Let $T(\mathcal {C})$ and $T(\mathcal {C}_{\mathbf{v}})$ be the sets of the lower left corners of basic intervals of $\mathcal {C}$ and $\mathcal {C}_{\mathbf{v}}$ respectively. Note that $|T(\mathcal {C})|=|\mathcal {C}|$ and $|T(\mathcal {C}_{\mathbf{v}})|=|\mathcal {C}_{\mathbf{v}}|$. It suffices to show that $T(\mathcal {C})=T(\mathcal {C}_{\mathbf{v}})\cup U_{\mathbf{v}}(\mathcal {C})$ and $T(\mathcal {C}_{\mathbf{v}})\cap U_{\mathbf{v}}(\mathcal {C})=\emptyset$.

By the definitions of $\mathcal {C}_{\mathbf{v}}$ and $U_{\mathbf{v}}(\mathcal {C})$, we have
$T(\mathcal {C})\supseteq T(\mathcal {C}_{\mathbf{v}})\cup U_{\mathbf{v}}(\mathcal {C})$ and $T(\mathcal {C}_{\mathbf{v}})\cap U_{\mathbf{v}}(\mathcal {C})=\emptyset$. Now, let $\mathbf{a}\in T(\mathcal {C})\setminus U_{\mathbf{v}}(\mathcal {C})$, we will show that  $\mathbf{a}\in T(\mathcal {C}_{\mathbf{v}})$.

Assume that $I_{\mathbf{a}}(\mathcal {C})=[\mathbf{a},\mathbf{b}]$ and its upper left corner and lower right corner are $\mathbf{c}$ and $\mathbf{d}$ respectively. If $[\mathbf{a},\mathbf{b}]\in \mathcal {C}\setminus U_{\mathbf{v}}(\mathcal {C})$, then  $[\mathbf{a},\mathbf{b}]\in \mathcal {C}_{\mathbf{v}}$ by the definition of $\mathcal {C}_{\mathbf{v}}$. Consequently, $\mathbf{a}\in T(\mathcal {C}_{\mathbf{v}})$. Otherwise, $I_{\mathbf{a}}(\mathcal {C})\notin \mathcal {C}\setminus U_{\mathbf{v}}(\mathcal {C})$. This yields that at least one of $\mathbf{b}$, $\mathbf{c}$ and $\mathbf{d}$ belongs to $U_{\mathbf{v}}(\mathcal {C})$. We claim that $\mathbf{c}$ or $\mathbf{d}$ is in $U_{\mathbf{v}}(\mathcal {C})$. Otherwise, if $\mathbf{c}, \mathbf{d}\notin U_{\mathbf{v}}(\mathcal {C})$, then $\mathbf{b}\in U_{\mathbf{v}}(\mathcal {C})$. Hence $[\mathbf{b}, \mathbf{v}]$ is an inner interval of $\mathcal {C}$. Applying Theorem \ref{condition} to the inner intervals $[\mathbf{a},\mathbf{b}]$ and $[\mathbf{b}, \mathbf{v}]$, we obtain that $[\mathbf{c}, \mathbf{v}]$ or $[\mathbf{d}, \mathbf{v}]$ is an inner interval of $\mathcal {C}$, which contradicts the assumption  $\mathbf{c}, \mathbf{d}\notin U_{\mathbf{v}}(\mathcal {C})$.
Consider the following two cases.

(i) $\mathbf{c}\in U_{\mathbf{v}}(\mathcal {C})$. Note that $\mathbf{a}\notin U_{\mathbf{v}}(\mathcal {C})$. It follows from Observation \ref{ob-2} that $\mathbf{b}\in U_{\mathbf{v}}(\mathcal {C})$ and $\mathbf{d}\notin U_{\mathbf{v}}(\mathcal {C})$. Set $\mathbf{b}=(b_1,b_2)$ and $\mathbf{v}=(v_1, v_2)$. According to Remark \ref{T1} (1), we have $[\mathbf{a},(b_1,v_2)]\in \mathcal {T}_2\subseteq \mathcal {C}_{\mathbf{v}}$. Hence,  $\mathbf{a}\in T(\mathcal {C}_{\mathbf{v}})$.

(ii) $\mathbf{d}\in U_{\mathbf{v}}(\mathcal {C})$. Similar to (i), $\mathbf{a}\in T(\mathcal {C}_{\mathbf{v}})$.
\end{proof}


\begin{prop} \label{induction-3}
Under Setup \ref{setup}, if $\mathcal {C}$ belongs to type $\mathcal{Q}_1$, then $$\widetilde{r}_{\mathcal {C}}(t)=t\cdot\widetilde{r}_{\mathcal {C}_{\mathbf{v}}}(t)+\widetilde{r}_{\mathcal {C}\setminus \mathbf{v}}(t).$$
\end{prop}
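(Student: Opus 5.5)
We will prove the identity coefficientwise: for each $k\geq 1$ we show $\widetilde{r}_k(\mathcal{C})=\widetilde{r}_{k-1}(\mathcal{C}_{\mathbf{v}})+\widetilde{r}_k(\mathcal{C}\setminus\mathbf{v})$, while for $k=0$ both sides equal $1$. The plan is to split $\widetilde{\mathcal{R}}_k(\mathcal{C})$ into two kinds of equivalence classes. Type A classes contain no configuration with a rook on $\overline{I_{\mathbf{v}}}(\mathcal{C})$. Type B classes contain at least one configuration with a rook on $\overline{I_{\mathbf{v}}}(\mathcal{C})$. We then biject Type A classes with $\widetilde{\mathcal{R}}_k(\mathcal{C}\setminus\mathbf{v})$ and Type B classes with $\widetilde{\mathcal{R}}_{k-1}(\mathcal{C}_{\mathbf{v}})$.

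\emph{Step 1 (Type A).} The basic intervals of $\mathcal{C}\setminus\mathbf{v}$ are exactly those of $\mathcal{C}$ other than $\overline{I_{\mathbf{v}}}(\mathcal{C})$, since $\mathbf{v}$ is a single vertex. By Observation \ref{ob-1}, inner intervals, rows and columns restricted to these intervals agree in $\mathcal{C}$ and $\mathcal{C}\setminus\mathbf{v}$. Hence attacking, switching positions and switches among configurations avoiding $\overline{I_{\mathbf{v}}}(\mathcal{C})$ coincide. Two such configurations equivalent in $\mathcal{C}$ through a chain that passes through $\overline{I_{\mathbf{v}}}(\mathcal{C})$ would lie in a Type B class. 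Therefore a class is Type A exactly when all its members avoid $\overline{I_{\mathbf{v}}}$, and Type A classes are exactly the classes of $\mathcal{C}\setminus\mathbf{v}$ not equivalent, in $\mathcal{C}$, to a configuration using $\overline{I_{\mathbf{v}}}$. So we must prove that every class of $\mathcal{C}\setminus\mathbf{v}$ whose $\mathcal{C}$-class becomes Type B is accounted for on the $\mathcal{C}_{\mathbf{v}}$ side, and the count must balance.

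A cleaner formulation is the following. Set $A=$ the set of $\mathcal{C}$-classes all of whose members avoid $\overline{I_{\mathbf{v}}}$, and $B=$ the rest. We need (i) $|A|+|B|=\widetilde{r}_{k-1}(\mathcal{C}_{\mathbf{v}})+\widetilde{r}_k(\mathcal{C}\setminus\mathbf{v})$. This requires establishing the key claim: every configuration avoiding $\overline{I_{\mathbf{v}}}$ but switch-equivalent in $\mathcal{C}$ to one using it is already, inside $\mathcal{C}\setminus\mathbf{v}$, equivalent to a configuration having a rook on a basic interval $I$ with $\mathbf{v}$-diagonal. This is a switch with a rook at $I_{\mathbf{u}}$, $\mathbf{u}\in U_{\mathbf{v}}$. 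I am not certain this is the right bookkeeping. Following \cite{2025nrr}, the actual route is likely to show that $\widetilde{r}$ satisfies the same recursion as $h$, i.e. the map $F\mapsto$ class is such that Type B classes correspond to classes of $\mathcal{C}_{\mathbf{v}}$, and Type A classes to all classes of $\mathcal{C}\setminus\mathbf{v}$ after accounting properly.

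\emph{Step 2 (Type B).} Given a configuration $F$ with a rook $R$ on $\overline{I_{\mathbf{v}}}(\mathcal{C})$, the remaining $k-1$ rooks cannot lie in the row or column of $\overline{I_{\mathbf{v}}}$. Using condition $(\dag)$ and the description of $U_{\mathbf{v}}(\mathcal{C})$, we will show they lie on basic intervals of $\mathcal{C}$ avoiding $U_{\mathbf{v}}$, or on intervals whose images live in $\mathcal{T}_1\cup\mathcal{T}_2$. Map each such rook to the interval of $\mathcal{C}_{\mathbf{v}}$ containing it, via Remark \ref{T1}(1). A rook on $I_{\mathbf{a}}$ with $\mathbf{d}\in U_{\mathbf{v}}$ goes to the corresponding $\mathcal{T}_1$ interval, and similarly for $\mathcal{T}_2$. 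We then check, using Lemma \ref{cv-2} (inner intervals of $\mathcal{C}_{\mathbf{v}}$ are those of $\mathcal{C}$ avoiding $U_{\mathbf{v}}$ at the diagonal corners), that non-attacking and switching relations are preserved in both directions. Switches in $\mathcal{C}$ that move $R$ off $\overline{I_{\mathbf{v}}}$ correspond to moving a rook to $I_{\mathbf{u}}$ with $[\mathbf{u},\mathbf{v}]$ inner. Such configurations are then identified with their $R$-on-$\overline{I_{\mathbf{v}}}$ representative.

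\emph{Main obstacle.} The hard step is well-definedness and injectivity on classes. We must show that switches in $\mathcal{C}$ involving $R$ can always be undone, so each Type B class contains a configuration using $\overline{I_{\mathbf{v}}}$ whose remainder is determined up to $\overset{\scriptscriptstyle\mathcal{C}_{\mathbf{v}}}{\thicksim}$. I would attack this by induction on the switching number $n_{\mathcal{C}}$, reordering switches using $(\dag)$ so that switches touching the top row or right column are performed first. I would also use Proposition \ref{induction-2}-style corner arguments to show collapsing the rows and columns through $U_{\mathbf{v}}$ is compatible with the row and column structure of $\mathcal{T}_1,\mathcal{T}_2$.
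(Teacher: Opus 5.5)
\textbf{There is a genuine gap: the split into Type A and Type B classes is the wrong decomposition, and both bijections you aim for fail.} A class can contain configurations both with and without a rook on $\overline{I_{\mathbf{v}}}(\mathcal{C})$. You put such mixed classes in Type B, but they must be counted by $\widetilde{r}_k(\mathcal{C}\setminus\mathbf{v})$.

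Concretely, let $\mathcal{C}$ be the $2\times 2$ square, which is of type $\mathcal{Q}_1$, with cells $A=[(0,0),(1,1)]$, $B=[(1,0),(2,1)]$, $C=[(0,1),(1,2)]$ and $D=[(1,1),(2,2)]=\overline{I_{\mathbf{v}}}(\mathcal{C})$. Here $U_{\mathbf{v}}(\mathcal{C})=\{(0,0),(1,0),(0,1),(1,1)\}$, so $\mathcal{C}\setminus U_{\mathbf{v}}(\mathcal{C})=\emptyset$. Also $\mathcal{T}_1=\mathcal{T}_2=\emptyset$, because $(0,1)$ and $(1,0)$ are not upper right corners of basic intervals. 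Hence $\mathcal{C}_{\mathbf{v}}=\emptyset$ and $\widetilde{r}_1(\mathcal{C}_{\mathbf{v}})=0$, while $\widetilde{r}_2(\mathcal{C}\setminus\mathbf{v})=1$, since $\{B,C\}$ is the only $2$-rook configuration of the L-shape.

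In $\mathcal{C}$, the set $\mathcal{R}_2(\mathcal{C})=\{\{A,D\},\{B,C\}\}$ forms a single class, and it is of your Type B. Your scheme would then require $0=1$ on the Type A side and $1=0$ on the Type B side, although $\widetilde{r}_2(\mathcal{C})=1=0+1$.

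The same example refutes the claim in your Step 2 that the rooks other than the one on $\overline{I_{\mathbf{v}}}(\mathcal{C})$ sit on intervals with a counterpart in $\mathcal{C}_{\mathbf{v}}$. In $\{A,D\}$ the rook sits on $A=I_{(0,0)}(\mathcal{C})$, whose lower left corner lies in $U_{\mathbf{v}}(\mathcal{C})$, and $A$ has no image in $\mathcal{C}_{\mathbf{v}}$. In general, suppose one rook sits on $I_{\mathbf{u}}(\mathcal{C})$ with $\mathbf{u}\in U_{\mathbf{v}}(\mathcal{C})$ and another on $\overline{I_{\mathbf{v}}}(\mathcal{C})$, non-attacking. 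Then they are in diagonal position of $[\mathbf{u},\mathbf{v}]$, and one switch removes the rook from $\overline{I_{\mathbf{v}}}(\mathcal{C})$. Identifying such configurations with an ``$R$-on-$\overline{I_{\mathbf{v}}}(\mathcal{C})$ representative'', as you propose, puts them on the wrong side of the count.

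\textbf{The correct split, which is the paper's,} separates the classes of $\mathcal{R}_k(\mathcal{C})$ that meet $\mathcal{R}_k(\mathcal{C}\setminus\mathbf{v})$ from those that do not.

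The first group is the image of the map $\varphi_k\colon\widetilde{\mathcal{R}}_k(\mathcal{C}\setminus\mathbf{v})\to\widetilde{\mathcal{R}}_k(\mathcal{C})$ induced by inclusion. The key lemma you are missing is that $\varphi_k$ is injective: two configurations avoiding $\overline{I_{\mathbf{v}}}(\mathcal{C})$ that are equivalent in $\mathcal{C}$ are already equivalent in $\mathcal{C}\setminus\mathbf{v}$. The paper proves this as follows:
\begin{itemize}
\item Choose such a pair with minimal switching number $t$.
\item Minimality forces every intermediate configuration $L_1,\dots,L_{t-1}$ to use $\overline{I_{\mathbf{v}}}(\mathcal{C})$. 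So $L_1$ arises from an anti-diagonal pair of some $[\mathbf{u},\mathbf{v}]$ with $\mathbf{u}\in U_{\mathbf{v}}(\mathcal{C})$.
\item The second switch is then either commuted with the first, or rerouted through a configuration of $\mathcal{C}\setminus\mathbf{v}$, using $(\dag)$ where needed. Either way this contradicts minimality.
\end{itemize}
Your idea of reordering switches by induction on $n_{\mathcal{C}}$ using $(\dag)$ is the right tool, but it must be aimed at this injectivity statement.

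The second group then consists exactly of configurations with one rook on $\overline{I_{\mathbf{v}}}(\mathcal{C})$ and all other rooks on intervals $I_{\mathbf{a}}(\mathcal{C})$ with $\mathbf{a}$ not in $U_{\mathbf{v}}(\mathcal{C})$; this follows from the switching remark above. Only for these configurations does the lower-left-corner correspondence $I_{\mathbf{a}}(\mathcal{C})\mapsto I_{\mathbf{a}}(\mathcal{C}_{\mathbf{v}})$ from the proof of Proposition \ref{induction-2} give the bijection with $\widetilde{\mathcal{R}}_{k-1}(\mathcal{C}_{\mathbf{v}})$. Together with Lemma \ref{cv-2} and Remark \ref{T1}, it preserves both non-attacking and switching relations.
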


\begin{proof} It is obvious that $\widetilde{r}_0({\mathcal {C}})=\widetilde{r}_0(\mathcal {C}\setminus {\mathbf{v}})=1$ and $\widetilde{r}_1({\mathcal {C}})=\widetilde{r}_1({\mathcal {C}} \setminus {\mathbf{v}})+\widetilde{r}_0(\mathcal {C}_{\mathbf{v}})=|\mathcal {C}|$. For each $k\in\{2,\ldots,r(\mathcal {C})\}$, we should show $\widetilde{r}_k({\mathcal {C}})=\widetilde{r}_k({\mathcal {C}} \setminus {\mathbf{v}})+\widetilde{r}_{k-1}(\mathcal {C}_{\mathbf{v}})$. Since  $\mathcal {C}\setminus \mathbf{v}$ is obtained from $\mathcal {C}$ by removing the interval $\overline{I_{\mathbf{v}}}(\mathcal {C})$, every $k$-rook configuration in $\mathcal {C}\setminus \mathbf{v}$ is also a $k$-rook configuration in $\mathcal {C}$, i.e., $\mathcal {R}_k(\mathcal {C}\setminus \mathbf{v})\subseteq \mathcal {R}_k(\mathcal {C})$. Consequently, the inclusion induces a map $\varphi_k:  \widetilde{\mathcal {R}}_k(\mathcal {C}\setminus \mathbf{v}) \to  \widetilde{\mathcal {R}}_k(\mathcal {C})$ given by $\widetilde{F} \mapsto \overline{F}$ for all $F \in \mathcal {R}_k(\mathcal {C}\setminus \mathbf{v})$, where $\widetilde{F}$ and $\overline{F}$ denote equivalence classes containing $F$ in $\widetilde{\mathcal {R}}_k(\mathcal {C}\setminus \mathbf{v})$ and $\widetilde{\mathcal {R}}_k(\mathcal {C})$, respectively. Let $\mathcal {R}'_k(\mathcal {C}) =\{F \in \mathcal {R}_k(\mathcal {C}) \,|\, F\overset{\scriptscriptstyle\mathcal{\mathcal {C}}}{\nsim} G \,\text{for each}\, G \in \mathcal {R}_k(\mathcal {C}\setminus \mathbf{v})\}$.
 It is direct to check that $\mathcal {R}'_k(\mathcal {C})/ \overset{\scriptscriptstyle\mathcal {C}}{\thicksim}\;= \widetilde{\mathcal {R}}_k(\mathcal {C}) \setminus \;\text{Im}\varphi_k$. In the following, we show that $|\text{Im}\varphi_k|=|\widetilde{\mathcal {R}}_k(\mathcal {C}\setminus \mathbf{v})|= \widetilde{r}_k({\mathcal {C}} \setminus {\mathbf{v}})$ and $|\mathcal {R}'_k(\mathcal {C})/ \overset{\scriptscriptstyle\mathcal {C}}{\thicksim}|= |\widetilde{\mathcal {R}}_{k-1}(\mathcal {C}_{\mathbf{v}})|=\widetilde{r}_{k-1}(\mathcal {C}_{\mathbf{v}})$. As a consequence,  $\widetilde{r}_k({\mathcal {C}})=|\widetilde{\mathcal {R}}_k(\mathcal {C})|=|\text{Im}\varphi_k|+|\mathcal {R}'_k(\mathcal {C})/ \overset{\scriptscriptstyle\mathcal {C}}{\thicksim}|=\widetilde{r}_k({\mathcal {C}} \setminus {\mathbf{v}})+\widetilde{r}_{k-1}(\mathcal {C}_{\mathbf{v}})$, which completes the proof.

{\bf Step 1:} We show that $\varphi_k$ is an injection. As a consequence, $|\text{Im}\varphi_k|=|\widetilde{\mathcal {R}}_k(\mathcal {C}\setminus \mathbf{v})|= \widetilde{r}_k({\mathcal {C}} \setminus {\mathbf{v}})$.

Suppose, to the contrary, that  $\varphi_k$ is not injective. Then there exists a pair of $k$-rook configurations in $\mathcal {C}\setminus \mathbf{v}$ that are not equivalent in $\mathcal {C}\setminus \mathbf{v}$ but equivalent in $\mathcal {C}$. Denote by $m_k(\mathcal {C}) = \min{\{n_{\mathcal {C}}(F_1, F_2)\,|\, F_1, F_2 \in \mathcal {R}_k(\mathcal {C\setminus \mathbf{v}}), F_1\overset{\scriptscriptstyle\mathcal{\mathcal {C}\setminus \mathbf{v}}}{\nsim} F_2, F_1\overset{\scriptscriptstyle\mathcal{C}}{\thicksim} F_2\}}$, and denote by $M_k(\mathcal {C}) = \{(F_1, F_2)\,|\, F_1, F_2 \in \mathcal {R}_k(\mathcal {C\setminus \mathbf{v}}), F_1\overset{\scriptscriptstyle\mathcal{\mathcal {C}\setminus \mathbf{v}}}{\nsim} F_2, n_{\mathcal {C}}(F_1, F_2) = m_k(\mathcal {C})\}$. Assume that $m_k(\mathcal {C})=t$. Choose a pair $(F, G) \in M_k(\mathcal {C})$.
 Consider the sequence of switches
\[
F  \to L_1 \to L_2 \to \cdots \to L_{t-1} \to L_t=G.
\]
If there exists some $1\leq j\leq t-1$ such that $L_j$ is a $k$-rook configuration in $\mathcal {C}\setminus \mathbf{v}$, then $F\overset{\scriptscriptstyle\mathcal{\mathcal {C}\setminus \mathbf{v}}}{\nsim} L_j$ or $L_j\overset{\scriptscriptstyle\mathcal{\mathcal {C}\setminus \mathbf{v}}}{\nsim} G$ by the assumption that $F\overset{\scriptscriptstyle\mathcal{\mathcal {C}\setminus \mathbf{v}}}{\nsim} G$. Assume without loss of generality that $F\overset{\scriptscriptstyle\mathcal{\mathcal {C}\setminus \mathbf{v}}}{\nsim} L_j$, then $n_{\mathcal {C}}(F, L_j)\leq j<t$, a contradiction. Thus, for all $1\leq j\leq t-1$, we have $L_j \in \mathcal {R}_k(\mathcal {C})\setminus \mathcal {R}_k(\mathcal {C}\setminus \mathbf{v})$, i.e., $L_j$ contains a rook placed on the basic interval $\overline{I_{\mathbf{v}}}(\mathcal {C})$. Consequently, $F$ contains two rooks $R_1$ and $R_2$ which are placed on the anti-diagonal basic intervals of an inner interval $[\mathbf{u},\mathbf{v}]$ of $\mathcal {C}$ for some $\mathbf{u}\in U_{\mathbf{v}}(\mathcal {C})$, and $L_1$ is obtained from $F$ by switching $R_1, R_2$ for $R'_1, R'_2$, where $R'_1$ and $R'_2$ are placed on $\overline{I_{\mathbf{v}}}(\mathcal {C})$ and $I_{\mathbf{u}}(\mathcal {C})$ respectively. Assume that $F=\{R_1,R_2, R_3,\ldots R_k\}$, then $L_1=\{R'_1,R'_2, R_3,\ldots R_k\}$. Consider the following three cases.

(1) $L_2$ is obtained from $L_1$ by switching $R_p, R_q$ for $R'_p, R'_q$ for some $3\leq p,q\leq  k$ with $p\neq q$. In this case, we construct a new sequence of switchs from $F$ to $G$:
\[
F  \to L'_1 \to L_2 \to \cdots \to L_{t-1} \to L_{t}=G,
\]
where $L'_1$ is obtained from $F$ by switching $R_p, R_q$ for $R'_p, R'_q$ and $L_2$ is obtained from $L'_1$ by switching $R_1, R_2$ for $R'_1, R'_2$. One can see that $L'_1\in \mathcal {R}_k(\mathcal {C}\setminus \mathbf{v})$ and $F\overset{\scriptscriptstyle\mathcal{\mathcal {C}\setminus \mathbf{v}}}{\sim} L'_1$. By $F\overset{\scriptscriptstyle\mathcal{\mathcal {C}\setminus \mathbf{v}}}{\nsim} G$, we have $L'_1\overset{\scriptscriptstyle\mathcal{\mathcal {C}\setminus \mathbf{v}}}{\nsim} G$. Note that $n_{\mathcal {C}}(L'_1, G)\leq t-1<t$, a contradiction.

(2) $L_2$ is obtained from $L_1$ by switching $R'_1, R_p$ for $R''_1, R''_p$ for some $3\leq p\leq  k$. It is clear that $L_2\in \mathcal {R}_k(\mathcal {C}\setminus \mathbf{v})$. Hence $L_2 = G$. Assume without loss of generality that $p=3$, then $G=\{R''_1,R'_2,R''_3, R_4,\ldots, R_k\}$, where $R''_1$ and $R''_3$ are in anti-diagonal position of $[\mathbf{a},\mathbf{v}]$ for some $\mathbf{a}\in U_{\mathbf{v}}(\mathcal {C})$ and $\mathbf{a}\neq \mathbf{u}$. Note that $R_3$ is placed on $I_{\mathbf{a}}(\mathcal {C})$.
The configuration of $R'_1, R'_2, R_3$ in $L_1$ must be one of the four cases as shown in Figure \ref{fig13}. Correspondingly, the configuration of $R_1, R_2, R_3$ in $F$ must be one of the four cases as shown in Figure \ref{fig14} and the configuration of  $R''_1, R'_2, R''_3$ in $G$ must be one of the four cases as shown in Figure \ref{fig15}. For Cases 1, 2 and 4 of $R_1, R_2, R_3$ in $F$,  one can construct a new sequence of switches:
\[
F  \;\xrightarrow{\text{switch $R_1, R_3$ for $R''_1, R^{\ast}_3$ }} \: \{R''_1, R_2, R^{\ast}_3, R_4,\ldots, R_k\}  \; \xrightarrow{{\text{switch $R_2, R^{\ast}_3$ for $R'_2, R''_3$ }}} \: G.
\]
It is easy to see that $\{R''_1, R_2, R^{\ast}_3, R_4,\ldots, R_k\}\in \mathcal {R}_k(\mathcal {C}\setminus \mathbf{v})$. Thus, $F\overset{\scriptscriptstyle\mathcal{\mathcal {C}\setminus \mathbf{v}}}{\sim} G$, a contradiction.  For Case 3 of $R_1, R_2, R_3$ in $F$,  it is easy observe that
\[
F  \;\xrightarrow{\text{switch $R_2,R_3$ for $R^{\ast}_2, R''_3$ }} \: \{R_1,R^{\ast}_2, R''_3,  R_4,\ldots, R_k\}  \; \xrightarrow{{\text{switch $R_1,R^{\ast}_2$ for $R''_1,R'_2$ }}} \: G,
\]
and $\{R_1,R^{\ast}_2, R''_3,  R_4,\ldots, R_k\}\in \mathcal {R}_k(\mathcal {C}\setminus \mathbf{v})$. Thus, $F\overset{\scriptscriptstyle\mathcal{\mathcal {C}\setminus \mathbf{v}}}{\sim} G$, a contradiction.
\begin{figure}[htbp]
\centering
\begin{minipage}{0.2\textwidth}
\centering
\resizebox{\linewidth}{!}{%
\begin{tikzpicture}[every node/.style={font=\small}]
  \draw[fill=black!16,line width=0.8pt]
    (0,0)--(3,0)--(3,3)--(0,3)--(0,0)--cycle;
  \draw[line width=0.8pt] (1,3)--(1,1)--(3,1) (0,0.5)--(0.5,0.5)--(0.5,0)  (1,1.5)--(1.5,1.5)--(1.5,1)  (2.5,3)--(2.5,2.5)--(3,2.5) ;

  \draw[fill=black] (0,0) circle (1pt);
  \draw[fill=black] (1,1) circle (1pt);
  \draw[fill=black] (3,3) circle (1pt);

 \node[above, font=\small,scale=0.6] at (3,3) {$\mathbf{v}$};
 \node[below, font=\small,scale=0.6] at (1,1) {$\mathbf{u}$};
 \node[below, font=\small,scale=0.6] at (0,0) {$\mathbf{a}$};

 \node[above, font=\small,scale=0.6] at (2.75,2.5) {$R'_1$};
 \node[below, font=\small,scale=0.6] at (1.25,1.4) {$R'_2$};
 \node[below, font=\small,scale=0.6] at (0.25,0.4) {$R_3$};

 \node[below, font=\small,scale=1] at (1.4,-0.25) {(a) Case 1, $L_1$};


\end{tikzpicture}}
\end{minipage}\hfill
\begin{minipage}{0.2\textwidth}
\centering
\resizebox{\linewidth}{!}{%
\begin{tikzpicture}[every node/.style={font=\small}]
\draw[fill=black!16,line width=0.8pt]
    (0,0)--(3,0)--(3,3)--(0,3)--(0,0)--cycle;
  \draw[line width=0.8pt] (1,3)--(1,1)--(3,1) (0,0.5)--(0.5,0.5)--(0.5,0)  (1,1.5)--(1.5,1.5)--(1.5,1)  (2.5,3)--(2.5,2.5)--(3,2.5) ;

  \draw[fill=black] (0,0) circle (1pt);
  \draw[fill=black] (1,1) circle (1pt);
  \draw[fill=black] (3,3) circle (1pt);

 \node[above, font=\small,scale=0.6] at (3,3) {$\mathbf{v}$};
 \node[below, font=\small,scale=0.6] at (1,1) {$\mathbf{a}$};
 \node[below, font=\small,scale=0.6] at (0,0) {$\mathbf{u}$};

 \node[above, font=\small,scale=0.6] at (2.75,2.5) {$R'_1$};
 \node[below, font=\small,scale=0.6] at (1.25,1.4) {$R_3$};
 \node[below, font=\small,scale=0.6] at (0.25,0.4) {$R'_2$};

 \node[below, font=\small,scale=1] at (1.4,-0.25) {(b) Case 2, $L_1$};

\end{tikzpicture}}
\end{minipage}\hfill
\begin{minipage}{0.2\textwidth}
\centering
\resizebox{\linewidth}{!}{%
\begin{tikzpicture}[every node/.style={font=\small}]
  \draw[fill=black!16,line width=0.8pt]
    (3,3)--(0,3)--(0,1.5)--(3,1.5)--(3,3)--cycle;
 \draw[fill=black!16,line width=0.8pt]
    (3,3)--(1.5,3)--(1.5,0)--(3,0)--(3,3)--cycle;

  \draw[line width=0.8pt]  (1.5,1.5)--(3,1.5)  (0,2)--(0.5,2)--(0.5,1.5)  (1.5,0.5)--(2,0.5)--(2,0)   (2.5,3)--(2.5,2.5)--(3,2.5) ;

  \draw[fill=black] (3,3) circle (1pt);
  \draw[fill=black] (0,1.5) circle (1pt);
  \draw[fill=black] (1.5,0) circle (1pt);

 \node[above, font=\small,scale=0.6] at (3,3) {$\mathbf{v}$};
 \node[below, font=\small,scale=0.6] at (0,1.5) {$\mathbf{u}$};
 \node[below, font=\small,scale=0.6] at (1.5,0) {$\mathbf{a}$};

 \node[above, font=\small,scale=0.6] at (2.75,2.5) {$R'_1$};
 \node[below, font=\small,scale=0.6] at (0.25,1.9) {$R'_2$};
 \node[below, font=\small,scale=0.6] at (1.75,0.4) {$R_3$};

 \node[below, font=\small,scale=1] at (1.4,-0.25) {(c) Case 3, $L_1$};


\end{tikzpicture}}
\end{minipage}\hfill
\begin{minipage}{0.2\textwidth}
\centering
\resizebox{\linewidth}{!}{%
\begin{tikzpicture}[every node/.style={font=\small}]
  \draw[fill=black!16,line width=0.8pt]
    (3,3)--(0,3)--(0,1.5)--(3,1.5)--(3,3)--cycle;
 \draw[fill=black!16,line width=0.8pt]
    (3,3)--(1.5,3)--(1.5,0)--(3,0)--(3,3)--cycle;

  \draw[line width=0.8pt]  (1.5,1.5)--(3,1.5)  (0,2)--(0.5,2)--(0.5,1.5)  (1.5,0.5)--(2,0.5)--(2,0)  (2.5,3)--(2.5,2.5)--(3,2.5) ;

  \draw[fill=black] (3,3) circle (1pt);
  \draw[fill=black] (0,1.5) circle (1pt);
  \draw[fill=black] (1.5,0) circle (1pt);

 \node[above, font=\small,scale=0.6] at (3,3) {$\mathbf{v}$};
 \node[below, font=\small,scale=0.6] at (0,1.5) {$\mathbf{a}$};
 \node[below, font=\small,scale=0.6] at (1.5,0) {$\mathbf{u}$};

 \node[above, font=\small,scale=0.6] at (2.75,2.5) {$R'_1$};
 \node[below, font=\small,scale=0.6] at (0.25,1.9) {$R_3$};
 \node[below, font=\small,scale=0.6] at (1.75,0.4) {$R'_2$};

 \node[below, font=\small,scale=1] at (1.4,-0.25) {(d) Case 4, $L_1$};


\end{tikzpicture}}
\end{minipage}\hfill

\caption{Configurations of $R'_1, R'_2, R_3$ in $L_1$}\label{fig13}
\end{figure}

\begin{figure}[htbp]
\centering
\begin{minipage}{0.2\textwidth}
\centering
\resizebox{\linewidth}{!}{%
\begin{tikzpicture}[every node/.style={font=\small}]
\draw[fill=black!16,line width=0.8pt]
    (0,0)--(3,0)--(3,3)--(0,3)--(0,0)--cycle;
  \draw[line width=0.8pt] (1,3)--(1,1)--(3,1) (0,0.5)--(0.5,0.5)--(0.5,0)  (2.5,1)--(2.5,1.5)--(3,1.5)  (1.5,3)--(1.5,2.5)--(1,2.5) ;

  \draw[fill=black] (0,0) circle (1pt);
  \draw[fill=black] (1,1) circle (1pt);
  \draw[fill=black] (3,3) circle (1pt);

 \node[above, font=\small,scale=0.6] at (3,3) {$\mathbf{v}$};
 \node[below, font=\small,scale=0.6] at (1,1) {$\mathbf{a}$};
 \node[below, font=\small,scale=0.6] at (0,0) {$\mathbf{u}$};

 \node[above, font=\small,scale=0.6] at (1.25,2.5) {$R_1$};
 \node[below, font=\small,scale=0.6] at (2.75,1.4) {$R_2$};
 \node[below, font=\small,scale=0.6] at (0.25,0.4) {$R_3$};

 \node[below, font=\small,scale=1] at (1.4,-0.25) {(a) Case 1, $F$};


\end{tikzpicture}}
\end{minipage}\hfill
\begin{minipage}{0.2\textwidth}
\centering
\resizebox{\linewidth}{!}{%
\begin{tikzpicture}[every node/.style={font=\small}]
\draw[fill=black!16,line width=0.8pt]
    (0,0)--(3,0)--(3,3)--(0,3)--(0,0)--cycle;
  \draw[line width=0.8pt] (1,3)--(1,1)--(3,1) (0.5,3)--(0.5,2.5)--(0,2.5)  (1,1.5)--(1.5,1.5)--(1.5,1)  (2.5,0)--(2.5,0.5)--(3,0.5) ;

  \draw[fill=black] (0,0) circle (1pt);
  \draw[fill=black] (1,1) circle (1pt);
  \draw[fill=black] (3,3) circle (1pt);

 \node[above, font=\small,scale=0.6] at (3,3) {$\mathbf{v}$};
 \node[below, font=\small,scale=0.6] at (1,1) {$\mathbf{a}$};
 \node[below, font=\small,scale=0.6] at (0,0) {$\mathbf{u}$};

 \node[above, font=\small,scale=0.6] at (0.25,2.5) {$R_1$};
 \node[below, font=\small,scale=0.6] at (1.25,1.4) {$R_3$};
 \node[below, font=\small,scale=0.6] at (2.75,0.4) {$R_2$};

 \node[below, font=\small,scale=1] at (1.4,-0.25) {(b) Case 2, $F$};

\end{tikzpicture}}
\end{minipage}\hfill
\begin{minipage}{0.2\textwidth}
\centering
\resizebox{\linewidth}{!}{%
\begin{tikzpicture}[every node/.style={font=\small}]
  \draw[fill=black!16,line width=0.8pt]
    (3,3)--(0,3)--(0,1.5)--(3,1.5)--(3,3)--cycle;
 \draw[fill=black!16,line width=0.8pt]
    (3,3)--(1.5,3)--(1.5,0)--(3,0)--(3,3)--cycle;

  \draw[line width=0.8pt]  (1.5,1.5)--(3,1.5)   (0,2.5)--(0.5,2.5)--(0.5,3)  (1.5,0.5)--(2,0.5)--(2,0)   (2.5,1.5)--(2.5,2)--(3,2) ;

  \draw[fill=black] (3,3) circle (1pt);
  \draw[fill=black] (0,1.5) circle (1pt);
  \draw[fill=black] (1.5,0) circle (1pt);

 \node[above, font=\small,scale=0.6] at (3,3) {$\mathbf{v}$};
 \node[below, font=\small,scale=0.6] at (0,1.5) {$\mathbf{u}$};
 \node[below, font=\small,scale=0.6] at (1.5,0) {$\mathbf{a}$};

 \node[above, font=\small,scale=0.6] at (0.25,2.5) {$R_1$};
 \node[below, font=\small,scale=0.6] at (2.75,1.9) {$R_2$};
 \node[below, font=\small,scale=0.6] at (1.75,0.4) {$R_3$};

 \node[below, font=\small,scale=1] at (1.4,-0.25) {(c) Case 3, $F$};


\end{tikzpicture}}
\end{minipage}\hfill
\begin{minipage}{0.2\textwidth}
\centering
\resizebox{\linewidth}{!}{%
\begin{tikzpicture}[every node/.style={font=\small}]
  \draw[fill=black!16,line width=0.8pt]
    (3,3)--(0,3)--(0,1.5)--(3,1.5)--(3,3)--cycle;
 \draw[fill=black!16,line width=0.8pt]
    (3,3)--(1.5,3)--(1.5,0)--(3,0)--(3,3)--cycle;

  \draw[line width=0.8pt]  (1.5,1.5)--(3,1.5)  (0,2)--(0.5,2)--(0.5,1.5)  (2.5,0)--(2.5,0.5)--(3,0.5)  (2,3)--(2,2.5)--(1.5,2.5) ;

  \draw[fill=black] (3,3) circle (1pt);
  \draw[fill=black] (0,1.5) circle (1pt);
  \draw[fill=black] (1.5,0) circle (1pt);

 \node[above, font=\small,scale=0.6] at (3,3) {$\mathbf{v}$};
 \node[below, font=\small,scale=0.6] at (0,1.5) {$\mathbf{a}$};
 \node[below, font=\small,scale=0.6] at (1.5,0) {$\mathbf{u}$};

 \node[above, font=\small,scale=0.6] at (1.75,2.5) {$R_1$};
 \node[below, font=\small,scale=0.6] at (0.25,1.9) {$R_3$};
 \node[below, font=\small,scale=0.6] at (2.75,0.4) {$R_2$};

 \node[below, font=\small,scale=1] at (1.4,-0.25) {(d) Case 4, $F$};


\end{tikzpicture}}
\end{minipage}\hfill

\caption{Configurations of $R_1, R_2, R_3$ in $F$}\label{fig14}
\end{figure}

\begin{figure}[htbp]
\centering
\begin{minipage}{0.2\textwidth}
\centering
\resizebox{\linewidth}{!}{%
\begin{tikzpicture}[every node/.style={font=\small}]
  \draw[fill=black!16,line width=0.8pt]
    (0,0)--(3,0)--(3,3)--(0,3)--(0,0)--cycle;
  \draw[line width=0.8pt] (1,3)--(1,1)--(3,1)   (0,2.5)--(0.5,2.5)--(0.5,3)   (1,1.5)--(1.5,1.5)--(1.5,1)  (2.5,0)--(2.5,0.5)--(3,0.5) ;

  \draw[fill=black] (0,0) circle (1pt);
  \draw[fill=black] (1,1) circle (1pt);
  \draw[fill=black] (3,3) circle (1pt);

 \node[above, font=\small,scale=0.6] at (3,3) {$\mathbf{v}$};
 \node[below, font=\small,scale=0.6] at (1,1) {$\mathbf{u}$};
 \node[below, font=\small,scale=0.6] at (0,0) {$\mathbf{a}$};

 \node[above, font=\small,scale=0.6] at (0.25,2.5) {$R''_1$};
 \node[below, font=\small,scale=0.6] at (1.25,1.4) {$R'_2$};
 \node[below, font=\small,scale=0.6] at (2.75,0.4) {$R''_3$};

 \node[below, font=\small,scale=1] at (1.4,-0.25) {(a) Case 1, $G$};


\end{tikzpicture}}
\end{minipage}\hfill
\begin{minipage}{0.2\textwidth}
\centering
\resizebox{\linewidth}{!}{%
\begin{tikzpicture}[every node/.style={font=\small}]
\draw[fill=black!16,line width=0.8pt]
    (0,0)--(3,0)--(3,3)--(0,3)--(0,0)--cycle;
  \draw[line width=0.8pt] (1,3)--(1,1)--(3,1) (0,0.5)--(0.5,0.5)--(0.5,0)  (2.5,1)--(2.5,1.5)--(3,1.5)  (1.5,3)--(1.5,2.5)--(1,2.5);

  \draw[fill=black] (0,0) circle (1pt);
  \draw[fill=black] (1,1) circle (1pt);
  \draw[fill=black] (3,3) circle (1pt);

 \node[above, font=\small,scale=0.6] at (3,3) {$\mathbf{v}$};
 \node[below, font=\small,scale=0.6] at (1,1) {$\mathbf{a}$};
 \node[below, font=\small,scale=0.6] at (0,0) {$\mathbf{u}$};

 \node[above, font=\small,scale=0.6] at (1.25,2.5) {$R''_1$};
 \node[below, font=\small,scale=0.6] at (2.75,1.4) {$R''_3$};
 \node[below, font=\small,scale=0.6] at (0.25,0.4) {$R'_2$};

 \node[below, font=\small,scale=1] at (1.4,-0.25) {(b) Case 2, $G$};

\end{tikzpicture}}
\end{minipage}\hfill
\begin{minipage}{0.2\textwidth}
\centering
\resizebox{\linewidth}{!}{%
\begin{tikzpicture}[every node/.style={font=\small}]
  \draw[fill=black!16,line width=0.8pt]
    (3,3)--(0,3)--(0,1.5)--(3,1.5)--(3,3)--cycle;
 \draw[fill=black!16,line width=0.8pt]
    (3,3)--(1.5,3)--(1.5,0)--(3,0)--(3,3)--cycle;

  \draw[line width=0.8pt]  (1.5,1.5)--(3,1.5)  (0,2)--(0.5,2)--(0.5,1.5)  (2.5,0)--(2.5,0.5)--(3,0.5)  (2,3)--(2,2.5)--(1.5,2.5);

  \draw[fill=black] (3,3) circle (1pt);
  \draw[fill=black] (0,1.5) circle (1pt);
  \draw[fill=black] (1.5,0) circle (1pt);

 \node[above, font=\small,scale=0.6] at (3,3) {$\mathbf{v}$};
 \node[below, font=\small,scale=0.6] at (0,1.5) {$\mathbf{u}$};
 \node[below, font=\small,scale=0.6] at (1.5,0) {$\mathbf{a}$};

 \node[above, font=\small,scale=0.6] at (1.75,2.5) {$R''_1$};
 \node[below, font=\small,scale=0.6] at (0.25,1.9) {$R'_2$};
 \node[below, font=\small,scale=0.6] at (2.75,0.4) {$R''_3$};

 \node[below, font=\small,scale=1] at (1.4,-0.25) {(c) Case 3, $G$};


\end{tikzpicture}}
\end{minipage}\hfill
\begin{minipage}{0.2\textwidth}
\centering
\resizebox{\linewidth}{!}{%
\begin{tikzpicture}[every node/.style={font=\small}]
  \draw[fill=black!16,line width=0.8pt]
    (3,3)--(0,3)--(0,1.5)--(3,1.5)--(3,3)--cycle;
 \draw[fill=black!16,line width=0.8pt]
    (3,3)--(1.5,3)--(1.5,0)--(3,0)--(3,3)--cycle;

  \draw[line width=0.8pt]  (1.5,1.5)--(3,1.5)  (0,2.5)--(0.5,2.5)--(0.5,3)  (1.5,0.5)--(2,0.5)--(2,0)   (2.5,1.5)--(2.5,2)--(3,2) ;

  \draw[fill=black] (3,3) circle (1pt);
  \draw[fill=black] (0,1.5) circle (1pt);
  \draw[fill=black] (1.5,0) circle (1pt);

 \node[above, font=\small,scale=0.6] at (3,3) {$\mathbf{v}$};
 \node[below, font=\small,scale=0.6] at (0,1.5) {$\mathbf{a}$};
 \node[below, font=\small,scale=0.6] at (1.5,0) {$\mathbf{u}$};

 \node[above, font=\small,scale=0.6] at (0.25,2.5) {$R''_1$};
 \node[below, font=\small,scale=0.6] at (2.75,1.9) {$R''_3$};
 \node[below, font=\small,scale=0.6] at (1.75,0.4) {$R'_2$};

 \node[below, font=\small,scale=1] at (1.4,-0.25) {(d) Case 4, $G$};


\end{tikzpicture}}
\end{minipage}\hfill

\caption{Configurations of $R''_1, R'_2, R''_3$ in $G$}\label{fig15}
\end{figure}

(3) $L_2$ is obtained from $L_1$ by switching $R'_2, R_p$ for $R''_2, R''_p$ for some $3\leq p\leq  k$. Assume without loss of genrality that $p=3$, then  $L_2=\{R'_1,R''_2,R''_3, R_4,\ldots, R_k\}$.
 Assume that $R_3$ is place on a basic interval $I_{\mathbf{a}}(\mathcal {C})$ for some $\mathbf{a}\in \mathcal {C}\setminus \mathbf{v}$. Based on the above discussion, the configuration of $R'_1, R'_2, R_3$ in $L_1$ must be one of the six cases as shown in Figure \ref{fig16}. Correspondingly, the configuration of  $R_1, R_2, R_3$ in $F$ must be one of the six cases as shown in Figure \ref{fig17} and the configuration of  $R'_1, R''_2, R''_3$ in $L_2$ must be one of the six cases as shown in Figure \ref{fig18}. For Case 1 of $R'_1, R'_2, R_3$ in $L_1$, applying Theorem \ref{condition}, we obtain $[\mathbf{a},\mathbf{u}_1]$ or $[\mathbf{a},\mathbf{u}_2]$ is an inner interval of $\mathcal {C}$, where $\mathbf{u}_1$ and $\mathbf{u}_2$ are shown in  Figure \ref{fig16} (a). Without loss of generality, we assume that  $[\mathbf{a},\mathbf{u}_1]$ is an inner interval of $\mathcal {C}$. For Cases 1, 2 and 3 of $R_1, R_2, R_3$ in $F$  one can construct a new sequence of switches:
\[
F  \; \xrightarrow{\text{switch $R_1, R_3$ for $R^{\ast}_1, R''_3$ }}  \; L'_1   \; \xrightarrow{{\text{switch $R^{\ast}_1, R_2$ for $R'_1, R''_2$}}}  \; L_2  \;\to\cdots\to \; L_t=G,
\]
where $L'_1=\{R^{\ast}_1, R_2, R''_3, R_4,\ldots, R_k\}$.
It is easy to see that $L'_1\in \mathcal {R}_k(\mathcal {C}\setminus \mathbf{v})$.
By $F\overset{\scriptscriptstyle\mathcal {C}\setminus \mathbf{v}}{\nsim} G$, we have $L'_1\overset{\scriptscriptstyle\mathcal {C}\setminus \mathbf{v}}{\nsim} G$. Note that $n_{\mathcal {C}}(L'_1, G)\leq t-1<t$, a contradiction.  For Cases 4, 5 and 6 of $R_1, R_2, R_3$ in $F$,  one can construct a new sequence of switches:
\[
F  \;\xrightarrow{\text{switch $R_2,R_3$ for $R''_2,R^{\ast}_3$}}  \;L''_1 \; \xrightarrow{{\text{switch $R_1,R^{\ast}_3$ for $R'_1,R''_3$}}} \; L_2 \; \to\cdots\to \;L_t= G,
\]
where $L''_1=\{R_1,R''_2, R^{\ast}_3,  R_4,\ldots, R_k\}$. It is easy to see that $L''_1\in \mathcal {R}_k(\mathcal {C}\setminus \mathbf{v})$.
By $F\overset{\scriptscriptstyle\mathcal {C}\setminus \mathbf{v}}{\nsim} G$, we have $L''_1\overset{\scriptscriptstyle\mathcal {C}\setminus \mathbf{v}}{\nsim} G$. Note that $n_{\mathcal {C}}(L''_1, G)\leq t-1<t$, a contradiction.

All possibilities lead to contradictions, completing the proof that $\varphi_k$ is an injection.

\begin{figure}[htbp]
\centering
\begin{minipage}{0.22\textwidth}
\centering
\resizebox{\linewidth}{!}{%
\begin{tikzpicture}[every node/.style={font=\small}]
  \draw[fill=black!16,line width=0.8pt]
    (0,0)--(1.5,0)--(1.5,1.5)--(0,1.5)--(0,0)--cycle;
  \draw[fill=black!16,line width=0.8pt]
    (1,1)--(2.5,1)--(2.5,2.5)--(1,2.5)--(1,1)--cycle;
  \draw[line width=0.8pt] (0,0.5)--(0.5,0.5)--(0.5,0)  (1,1.5)--(1.5,1.5)--(1.5,1)  (2,2.5)--(2,2)--(2.5,2) ;

 \textcolor[rgb]{0.00,0.07,1.00}{ \draw[line width=1.2pt] (1,0)--(0,0)--(0,2.5)--(1,2.5)--(1,0); }
 \textcolor[rgb]{0.00,1.00,0.00}{ \draw[line width=0.6pt] (0,1)--(0,0)--(2.5,0)--(2.5,1)--(0,1); }

  \draw[fill=black] (0,0) circle (1pt);
  \draw[fill=black] (1,1) circle (1pt);
  \draw[fill=black] (2.5,2.5) circle (1pt);
    \draw[fill=black] (1,2.5) circle (1pt);
        \draw[fill=black] (2.5,1) circle (1pt);

 \node[above, font=\small,scale=0.6] at (2.5,2.5) {$\mathbf{v}$};
  \node[above, font=\small,scale=0.6] at (1,2.5) {$\mathbf{u}_1$};
    \node[right, font=\small,scale=0.6] at (2.5,1) {$\mathbf{u}_2$};
 \node[below, font=\small,scale=0.6] at (0.8,1) {$\mathbf{u}$};
 \node[below, font=\small,scale=0.6] at (0,0) {$\mathbf{a}$};

 \node[above, font=\small,scale=0.6] at (2.25,2) {$R'_1$};
 \node[below, font=\small,scale=0.6] at (1.25,1.4) {$R'_2$};
 \node[below, font=\small,scale=0.6] at (0.25,0.4) {$R_3$};

 \node[below, font=\small,scale=1] at (1.2,-0.25) {(a) Case 1, $L_1$};


\end{tikzpicture}}
\end{minipage}\hfill
\begin{minipage}{0.22\textwidth}
\centering
\resizebox{\linewidth}{!}{%
\begin{tikzpicture}[every node/.style={font=\small}]
  \draw[fill=black!16,line width=0.8pt]
    (0,0)--(2.5,0)--(2.5,2.5)--(0,2.5)--(0,0)--cycle;
  \draw[fill=black!16,line width=0.8pt]
   (0,0)--(1.5,0)--(1.5,1.5)--(0,1.5)--(0,0)--cycle;
  \draw[line width=0.8pt] (0,0.5)--(0.5,0.5)--(0.5,0)  (1,1.5)--(1,1)--(1.5,1)  (2,2.5)--(2,2)--(2.5,2) ;

  \draw[fill=black] (0,0) circle (1pt);
  \draw[fill=black] (1,1) circle (1pt);
  \draw[fill=black] (2.5,2.5) circle (1pt);

 \node[above, font=\small,scale=0.6] at (2.5,2.5) {$\mathbf{v}$};
 \node[below, font=\small,scale=0.6] at (1,1) {$\mathbf{a}$};
 \node[below, font=\small,scale=0.6] at (0,0) {$\mathbf{u}$};

 \node[above, font=\small,scale=0.6] at (2.25,2) {$R'_1$};
 \node[below, font=\small,scale=0.6] at (1.25,1.4) {$R_3$};
 \node[below, font=\small,scale=0.6] at (0.25,0.4) {$R'_2$};

 \node[below, font=\small,scale=1] at (1.2,-0.25) {(b) Case 2, $L_1$};

\end{tikzpicture}}
\end{minipage}\hfill
\begin{minipage}{0.22\textwidth}
\centering
\resizebox{\linewidth}{!}{%
\begin{tikzpicture}[every node/.style={font=\small}]
  \draw[fill=black!16,line width=0.8pt]
    (0,1)--(2.5,1)--(2.5,2.5)--(0,2.5)--(0,1)--cycle;
  \draw[fill=black!16,line width=0.8pt]
   (0,0)--(1.5,0)--(1.5,1.5)--(0,1.5)--(0,0)--cycle;
  \draw[line width=0.8pt] (0,1)--(2.5,1) (1,0)--(1,0.5)--(1.5,0.5)  (0.5,1.5)--(0.5,1)  (2,2.5)--(2,2)--(2.5,2) ;

  \draw[fill=black] (1,0) circle (1pt);
  \draw[fill=black] (0,1) circle (1pt);
  \draw[fill=black] (2.5,2.5) circle (1pt);

 \node[above, font=\small,scale=0.6] at (2.5,2.5) {$\mathbf{v}$};
 \node[left, font=\small,scale=0.6] at (0,1) {$\mathbf{u}$};
 \node[below, font=\small,scale=0.6] at (1,0) {$\mathbf{a}$};

 \node[above, font=\small,scale=0.6] at (2.25,2) {$R'_1$};
 \node[below, font=\small,scale=0.6] at (0.25,1.4) {$R'_2$};
 \node[below, font=\small,scale=0.6] at (1.25,0.4) {$R_3$};

 \node[below, font=\small,scale=1] at (1.2,-0.25) {(c) Case 3, $L_1$};


\end{tikzpicture}}
\end{minipage}\hfill

\begin{minipage}{0.22\textwidth}
\centering
\resizebox{\linewidth}{!}{%
\begin{tikzpicture}[every node/.style={font=\small}]
  \draw[fill=black!16,line width=0.8pt]
    (0,1)--(1.5,1)--(1.5,2.5)--(0,2.5)--(0,1)--cycle;
  \draw[fill=black!16,line width=0.8pt]
   (0,0)--(2.5,0)--(2.5,1.5)--(0,1.5)--(0,0)--cycle;
  \draw[line width=0.8pt] (0,1)--(1.5,1)--(1.5,1.5) (2,0)--(2,0.5)--(2.5,0.5)  (0.5,1.5)--(0.5,1)  (1,2.5)--(1,2)--(1.5,2) ;

  \draw[fill=black] (2,0) circle (1pt);
  \draw[fill=black] (0,1) circle (1pt);
  \draw[fill=black] (1.5,2.5) circle (1pt);

 \node[above, font=\small,scale=0.6] at (1.5,2.5) {$\mathbf{v}$};
 \node[left, font=\small,scale=0.6] at (0,1) {$\mathbf{u}$};
 \node[below, font=\small,scale=0.6] at (2,0) {$\mathbf{a}$};

 \node[above, font=\small,scale=0.6] at (1.25,2) {$R'_1$};
 \node[below, font=\small,scale=0.6] at (0.25,1.4) {$R'_2$};
 \node[below, font=\small,scale=0.6] at (2.25,0.4) {$R_3$};

 \node[below, font=\small,scale=1] at (1.2,-0.25) {(d) Case 4, $L_1$};


\end{tikzpicture}}
\end{minipage}\hfill
\begin{minipage}{0.22\textwidth}
\centering
\resizebox{\linewidth}{!}{%
\begin{tikzpicture}[every node/.style={font=\small}]
  \draw[fill=black!16,line width=0.8pt]
    (0,0)--(1.5,0)--(1.5,1.5)--(0,1.5)--(0,0)--cycle;
  \draw[fill=black!16,line width=0.8pt]
   (1,0)--(2.5,0)--(2.5,2.5)--(1,2.5)--(1,0)--cycle;
  \draw[line width=0.8pt] (1.5,0)--(1.5,1.5)--(1,1.5) (1,0.5)--(1.5,0.5)  (0,1)--(0.5,1)--(0.5,1.5)  (2,2.5)--(2,2)--(2.5,2) ;

  \draw[fill=black] (1,0) circle (1pt);
  \draw[fill=black] (0,1) circle (1pt);
  \draw[fill=black] (2.5,2.5) circle (1pt);

 \node[above, font=\small,scale=0.6] at (2.5,2.5) {$\mathbf{v}$};
 \node[below, font=\small,scale=0.6] at (1,0) {$\mathbf{u}$};
 \node[left, font=\small,scale=0.6] at (0,1) {$\mathbf{a}$};

 \node[above, font=\small,scale=0.6] at (2.25,2) {$R'_1$};
 \node[below, font=\small,scale=0.6] at (0.25,1.4) {$R_3$};
 \node[below, font=\small,scale=0.6] at (1.25,0.4) {$R'_2$};

 \node[below, font=\small,scale=1] at (1.2,-0.25) {(e) Case 5, $L_1$};


\end{tikzpicture}}
\end{minipage}\hfill
\begin{minipage}{0.22\textwidth}
\centering
\resizebox{\linewidth}{!}{%
\begin{tikzpicture}[every node/.style={font=\small}]
  \draw[fill=black!16,line width=0.8pt]
    (0,0)--(1.5,0)--(1.5,2.5)--(0,2.5)--(0,0)--cycle;
  \draw[fill=black!16,line width=0.8pt]
   (0,0)--(2.5,0)--(2.5,1.5)--(0,1.5)--(0,0)--cycle;
  \draw[line width=0.8pt] (1.5,1.5)--(1.5,0) (0,0.5)--(0.5,0.5)--(0.5,0)  (2,1.5)--(2,1)--(2.5,1) (1,2.5)--(1,2)--(1.5,2);

  \draw[fill=black] (0,0) circle (1pt);
  \draw[fill=black] (1.5,2.5) circle (1pt);
  \draw[fill=black] (2,1) circle (1pt);

 \node[above, font=\small,scale=0.6] at (1.5,2.5) {$\mathbf{v}$};
 \node[below, font=\small,scale=0.6] at (0,0) {$\mathbf{u}$};
 \node[left, font=\small,scale=0.6] at (2,1) {$\mathbf{a}$};

 \node[above, font=\small,scale=0.6] at (1.25,2) {$R'_1$};
 \node[below, font=\small,scale=0.6] at (2.25,1.4) {$R_3$};
 \node[below, font=\small,scale=0.6] at (0.25,0.4) {$R'_2$};

 \node[below, font=\small,scale=1] at (1.2,-0.25) {(f) Case 6, $L_1$};


\end{tikzpicture}}
\end{minipage}\hfill
\caption{Configurations of $R'_1, R'_2, R_3$ in $L_1$}\label{fig16}
\end{figure}

\begin{figure}[htbp]
\centering
\begin{minipage}{0.22\textwidth}
\centering
\resizebox{\linewidth}{!}{%
\begin{tikzpicture}[every node/.style={font=\small}]
  \draw[fill=black!16,line width=0.8pt]
    (0,0)--(1.5,0)--(1.5,1.5)--(0,1.5)--(0,0)--cycle;
  \draw[fill=black!16,line width=0.8pt]
    (1,1)--(2.5,1)--(2.5,2.5)--(1,2.5)--(1,1)--cycle;
  \draw[line width=0.8pt] (1,1.5)--(1.5,1.5)--(1.5,1) (0,0.5)--(0.5,0.5)--(0.5,0)  (2,1)--(2,1.5)--(2.5,1.5)  (1.5,2.5)--(1.5,2)--(1,2) ;

  \draw[fill=black] (0,0) circle (1pt);
    \node[above, font=\small,scale=0.6] at (1,2.5) {$\mathbf{u}_1$};
    \node[right, font=\small,scale=0.6] at (2.5,1) {$\mathbf{u}_2$};
  \draw[fill=black] (1,1) circle (1pt);
  \draw[fill=black] (2.5,2.5) circle (1pt);

 \textcolor[rgb]{0.00,0.07,1.00}{ \draw[line width=1.2pt] (1,0)--(0,0)--(0,2.5)--(1,2.5)--(1,0); }
 \textcolor[rgb]{0.00,1.00,0.00}{ \draw[line width=0.6pt] (0,1)--(0,0)--(2.5,0)--(2.5,1)--(0,1); }

 \node[above, font=\small,scale=0.6] at (2.5,2.5) {$\mathbf{v}$};
 \node[below, font=\small,scale=0.6] at (0.8,1) {$\mathbf{u}$};
 \node[below, font=\small,scale=0.6] at (0,0) {$\mathbf{a}$};

 \node[above, font=\small,scale=0.6] at (1.25,2) {$R_1$};
 \node[below, font=\small,scale=0.6] at (2.25,1.4) {$R_2$};
 \node[below, font=\small,scale=0.6] at (0.25,0.4) {$R_3$};

 \node[below, font=\small,scale=1] at (1.2,-0.25) {(a) Case 1, $F$};


\end{tikzpicture}}
\end{minipage}\hfill
\begin{minipage}{0.22\textwidth}
\centering
\resizebox{\linewidth}{!}{%
\begin{tikzpicture}[every node/.style={font=\small}]
  \draw[fill=black!16,line width=0.8pt]
    (0,0)--(2.5,0)--(2.5,2.5)--(0,2.5)--(0,0)--cycle;
  \draw[fill=black!16,line width=0.8pt]
   (0,0)--(1.5,0)--(1.5,1.5)--(0,1.5)--(0,0)--cycle;
  \draw[line width=0.8pt]  (2,0)--(2,0.5)--(2.5,0.5)  (1,1.5)--(1,1)--(1.5,1)  (0,2)--(0.5,2)--(0.5,2.5) ;

  \draw[fill=black] (0,0) circle (1pt);
  \draw[fill=black] (1,1) circle (1pt);
  \draw[fill=black] (2.5,2.5) circle (1pt);

 \node[above, font=\small,scale=0.6] at (2.5,2.5) {$\mathbf{v}$};
 \node[below, font=\small,scale=0.6] at (1,1) {$\mathbf{a}$};
 \node[below, font=\small,scale=0.6] at (0,0) {$\mathbf{u}$};

 \node[above, font=\small,scale=0.6] at (0.25,2) {$R_1$};
 \node[below, font=\small,scale=0.6] at (1.25,1.4) {$R_3$};
 \node[below, font=\small,scale=0.6] at (2.25,0.4) {$R_2$};

 \node[below, font=\small,scale=1] at (1.2,-0.25) {(b) Case 2, $F$};

\end{tikzpicture}}
\end{minipage}\hfill
\begin{minipage}{0.22\textwidth}
\centering
\resizebox{\linewidth}{!}{%
\begin{tikzpicture}[every node/.style={font=\small}]
  \draw[fill=black!16,line width=0.8pt]
    (0,1)--(2.5,1)--(2.5,2.5)--(0,2.5)--(0,1)--cycle;
  \draw[fill=black!16,line width=0.8pt]
   (0,0)--(1.5,0)--(1.5,1.5)--(0,1.5)--(0,0)--cycle;
  \draw[line width=0.8pt]  (0,1)--(1.5,1) (1,0)--(1,0.5)--(1.5,0.5)  (2,1)--(2,1.5)--(2.5,1.5)  (0,2)--(0.5,2)--(0.5,2.5);

  \draw[fill=black] (1,0) circle (1pt);
  \draw[fill=black] (0,1) circle (1pt);
  \draw[fill=black] (2.5,2.5) circle (1pt);

 \node[above, font=\small,scale=0.6] at (2.5,2.5) {$\mathbf{v}$};
 \node[left, font=\small,scale=0.6] at (0,1) {$\mathbf{u}$};
 \node[below, font=\small,scale=0.6] at (1,0) {$\mathbf{a}$};

 \node[above, font=\small,scale=0.6] at (0.25,2) {$R_1$};
 \node[below, font=\small,scale=0.6] at (2.25,1.4) {$R_2$};
 \node[below, font=\small,scale=0.6] at (1.25,0.4) {$R_3$};

 \node[below, font=\small,scale=1] at (1.2,-0.25) {(c) Case 3, $F$};


\end{tikzpicture}}
\end{minipage}\hfill

\begin{minipage}{0.22\textwidth}
\centering
\resizebox{\linewidth}{!}{%
\begin{tikzpicture}[every node/.style={font=\small}]
  \draw[fill=black!16,line width=0.8pt]
    (0,1)--(1.5,1)--(1.5,2.5)--(0,2.5)--(0,1)--cycle;
  \draw[fill=black!16,line width=0.8pt]
   (0,0)--(2.5,0)--(2.5,1.5)--(0,1.5)--(0,0)--cycle;
  \draw[line width=0.8pt] (0,1)--(1.5,1)--(1.5,1.5) (2,0)--(2,0.5)--(2.5,0.5)  (1,1.5)--(1,1)  (0,2)--(0.5,2)--(0.5,2.5);

  \draw[fill=black] (2,0) circle (1pt);
  \draw[fill=black] (0,1) circle (1pt);
  \draw[fill=black] (1.5,2.5) circle (1pt);

 \node[above, font=\small,scale=0.6] at (1.5,2.5) {$\mathbf{v}$};
 \node[left, font=\small,scale=0.6] at (0,1) {$\mathbf{u}$};
 \node[below, font=\small,scale=0.6] at (2,0) {$\mathbf{a}$};

 \node[above, font=\small,scale=0.6] at (0.25,2) {$R_1$};
 \node[below, font=\small,scale=0.6] at (1.25,1.4) {$R_2$};
 \node[below, font=\small,scale=0.6] at (2.25,0.4) {$R_3$};

 \node[below, font=\small,scale=1] at (1.2,-0.25) {(d) Case 4, $F$};


\end{tikzpicture}}
\end{minipage}\hfill
\begin{minipage}{0.22\textwidth}
\centering
\resizebox{\linewidth}{!}{%
\begin{tikzpicture}[every node/.style={font=\small}]
  \draw[fill=black!16,line width=0.8pt]
    (0,0)--(1.5,0)--(1.5,1.5)--(0,1.5)--(0,0)--cycle;
  \draw[fill=black!16,line width=0.8pt]
   (1,0)--(2.5,0)--(2.5,2.5)--(1,2.5)--(1,0)--cycle;
  \draw[line width=0.8pt] (1.5,0)--(1.5,1.5)--(1,1.5) (2,0)--(2,0.5)--(2.5,0.5) (0,1)--(0.5,1)--(0.5,1.5)  (1.5,2.5)--(1.5,2)--(1,2) ;

  \draw[fill=black] (1,0) circle (1pt);
  \draw[fill=black] (0,1) circle (1pt);
  \draw[fill=black] (2.5,2.5) circle (1pt);

 \node[above, font=\small,scale=0.6] at (2.5,2.5) {$\mathbf{v}$};
 \node[below, font=\small,scale=0.6] at (1,0) {$\mathbf{u}$};
 \node[left, font=\small,scale=0.6] at (0,1) {$\mathbf{a}$};

 \node[above, font=\small,scale=0.6] at (1.25,2) {$R_1$};
 \node[below, font=\small,scale=0.6] at (0.25,1.4) {$R_3$};
 \node[below, font=\small,scale=0.6] at (2.25,0.4) {$R_2$};

 \node[below, font=\small,scale=1] at (1.2,-0.25) {(e) Case 5, $F$};


\end{tikzpicture}}
\end{minipage}\hfill
\begin{minipage}{0.22\textwidth}
\centering
\resizebox{\linewidth}{!}{%
\begin{tikzpicture}[every node/.style={font=\small}]
  \draw[fill=black!16,line width=0.8pt]
    (0,0)--(1.5,0)--(1.5,2.5)--(0,2.5)--(0,0)--cycle;
  \draw[fill=black!16,line width=0.8pt]
   (0,0)--(2.5,0)--(2.5,1.5)--(0,1.5)--(0,0)--cycle;
  \draw[line width=0.8pt] (1.5,1.5)--(1.5,0) (1,0)--(1,0.5)--(1.5,0.5)  (2,1.5)--(2,1)--(2.5,1) (0,2)--(0.5,2)--(0.5,2.5);

  \draw[fill=black] (0,0) circle (1pt);
  \draw[fill=black] (1.5,2.5) circle (1pt);
  \draw[fill=black] (2,1) circle (1pt);

 \node[above, font=\small,scale=0.6] at (1.5,2.5) {$\mathbf{v}$};
 \node[below, font=\small,scale=0.6] at (0,0) {$\mathbf{u}$};
 \node[left, font=\small,scale=0.6] at (2,1) {$\mathbf{a}$};

 \node[above, font=\small,scale=0.6] at (0.25,2) {$R_1$};
 \node[below, font=\small,scale=0.6] at (2.25,1.4) {$R_3$};
 \node[below, font=\small,scale=0.6] at (1.25,0.4) {$R_2$};

 \node[below, font=\small,scale=1] at (1.2,-0.25) {(f) Case 6, $F$};


\end{tikzpicture}}
\end{minipage}\hfill
\caption{Configurations of $R_1, R_2, R_3$ in $F$}\label{fig17}
\end{figure}

\begin{figure}[htbp]
\centering
\begin{minipage}{0.22\textwidth}
\centering
\resizebox{\linewidth}{!}{%
\begin{tikzpicture}[every node/.style={font=\small}]
  \draw[fill=black!16,line width=0.8pt]
    (0,0)--(1.5,0)--(1.5,1.5)--(0,1.5)--(0,0)--cycle;
  \draw[fill=black!16,line width=0.8pt]
    (1,1)--(2.5,1)--(2.5,2.5)--(1,2.5)--(1,1)--cycle;
  \draw[line width=0.8pt]  (1,1.5)--(1.5,1.5)--(1.5,1) (1,0)--(1,0.5)--(1.5,0.5) (0,1)--(0.5,1)--(0.5,1.5)  (2,2.5)--(2,2)--(2.5,2) ;

  \draw[fill=black] (0,0) circle (1pt);
  \draw[fill=black] (1,1) circle (1pt);
  \draw[fill=black] (2.5,2.5) circle (1pt);

 \textcolor[rgb]{0.00,0.07,1.00}{ \draw[line width=1.2pt] (1,0)--(0,0)--(0,2.5)--(1,2.5)--(1,0); }
 \textcolor[rgb]{0.00,1.00,0.00}{ \draw[line width=0.6pt] (0,1)--(0,0)--(2.5,0)--(2.5,1)--(0,1); }

 \node[above, font=\small,scale=0.6] at (2.5,2.5) {$\mathbf{v}$};
   \node[above, font=\small,scale=0.6] at (1,2.5) {$\mathbf{u}_1$};
    \node[right, font=\small,scale=0.6] at (2.5,1) {$\mathbf{u}_2$};
 \node[below, font=\small,scale=0.6] at (0.8,1) {$\mathbf{u}$};
 \node[below, font=\small,scale=0.6] at (0,0) {$\mathbf{a}$};

 \node[above, font=\small,scale=0.6] at (2.25,2) {$R'_1$};
 \node[below, font=\small,scale=0.6] at (0.25,1.4) {$R''_2$};
 \node[below, font=\small,scale=0.6] at (1.25,0.4) {$R''_3$};

 \node[below, font=\small,scale=1] at (1.2,-0.25) {(a) Case 1, $L_2$};


\end{tikzpicture}}
\end{minipage}\hfill
\begin{minipage}{0.22\textwidth}
\centering
\resizebox{\linewidth}{!}{%
\begin{tikzpicture}[every node/.style={font=\small}]
  \draw[fill=black!16,line width=0.8pt]
    (0,0)--(2.5,0)--(2.5,2.5)--(0,2.5)--(0,0)--cycle;
  \draw[fill=black!16,line width=0.8pt]
   (0,0)--(1.5,0)--(1.5,1.5)--(0,1.5)--(0,0)--cycle;
  \draw[line width=0.8pt] (1,0)--(1,0.5)--(1.5,0.5) (0,1)--(0.5,1)--(0.5,1.5)   (1,1.5)--(1,1)--(1.5,1)  (2,2.5)--(2,2)--(2.5,2) ;

  \draw[fill=black] (0,0) circle (1pt);
  \draw[fill=black] (1,1) circle (1pt);
  \draw[fill=black] (2.5,2.5) circle (1pt);

 \node[above, font=\small,scale=0.6] at (2.5,2.5) {$\mathbf{v}$};
 \node[below, font=\small,scale=0.6] at (1,1) {$\mathbf{a}$};
 \node[below, font=\small,scale=0.6] at (0,0) {$\mathbf{u}$};

 \node[above, font=\small,scale=0.6] at (2.25,2) {$R'_1$};
 \node[below, font=\small,scale=0.6] at (0.25,1.4) {$R''_3$};
 \node[below, font=\small,scale=0.6] at (1.25,0.4) {$R''_2$};

 \node[below, font=\small,scale=1] at (1.2,-0.25) {(b) Case 2, $L_2$};

\end{tikzpicture}}
\end{minipage}\hfill
\begin{minipage}{0.22\textwidth}
\centering
\resizebox{\linewidth}{!}{%
\begin{tikzpicture}[every node/.style={font=\small}]
  \draw[fill=black!16,line width=0.8pt]
    (0,1)--(2.5,1)--(2.5,2.5)--(0,2.5)--(0,1)--cycle;
  \draw[fill=black!16,line width=0.8pt]
   (0,0)--(1.5,0)--(1.5,1.5)--(0,1.5)--(0,0)--cycle;
  \draw[line width=0.8pt] (0,1)--(2.5,1)   (0,0.5)--(0.5,0.5)--(0.5,0)  (1,1.5)--(1,1)--(1.5,1)     (2,2.5)--(2,2)--(2.5,2) ;

  \draw[fill=black] (1,0) circle (1pt);
  \draw[fill=black] (0,1) circle (1pt);
  \draw[fill=black] (2.5,2.5) circle (1pt);

 \node[above, font=\small,scale=0.6] at (2.5,2.5) {$\mathbf{v}$};
 \node[left, font=\small,scale=0.6] at (0,1) {$\mathbf{u}$};
 \node[below, font=\small,scale=0.6] at (1,0) {$\mathbf{a}$};

 \node[above, font=\small,scale=0.6] at (2.25,2) {$R'_1$};
 \node[below, font=\small,scale=0.6] at (1.25,1.4) {$R''_2$};
 \node[below, font=\small,scale=0.6] at (0.25,0.4) {$R''_3$};

 \node[below, font=\small,scale=1] at (1.2,-0.25) {(c) Case 3, $L_2$};


\end{tikzpicture}}
\end{minipage}\hfill

\begin{minipage}{0.22\textwidth}
\centering
\resizebox{\linewidth}{!}{%
\begin{tikzpicture}[every node/.style={font=\small}]
  \draw[fill=black!16,line width=0.8pt]
    (0,1)--(1.5,1)--(1.5,2.5)--(0,2.5)--(0,1)--cycle;
  \draw[fill=black!16,line width=0.8pt]
   (0,0)--(2.5,0)--(2.5,1.5)--(0,1.5)--(0,0)--cycle;
  \draw[line width=0.8pt] (0,1)--(1.5,1)--(1.5,1.5) (0,0.5)--(0.5,0.5)--(0.5,0)  (2,1.5)--(2,1)--(2.5,1)  (1,2.5)--(1,2)--(1.5,2) ;

  \draw[fill=black] (2,0) circle (1pt);
  \draw[fill=black] (0,1) circle (1pt);
  \draw[fill=black] (1.5,2.5) circle (1pt);

 \node[above, font=\small,scale=0.6] at (1.5,2.5) {$\mathbf{v}$};
 \node[left, font=\small,scale=0.6] at (0,1) {$\mathbf{u}$};
 \node[below, font=\small,scale=0.6] at (2,0) {$\mathbf{a}$};

 \node[above, font=\small,scale=0.6] at (1.25,2) {$R'_1$};
 \node[below, font=\small,scale=0.6] at (2.25,1.4) {$R''_2$};
 \node[below, font=\small,scale=0.6] at (0.25,0.4) {$R''_3$};

 \node[below, font=\small,scale=1] at (1.2,-0.25) {(d) Case 4, $L_2$};


\end{tikzpicture}}
\end{minipage}\hfill
\begin{minipage}{0.22\textwidth}
\centering
\resizebox{\linewidth}{!}{%
\begin{tikzpicture}[every node/.style={font=\small}]
  \draw[fill=black!16,line width=0.8pt]
    (0,0)--(1.5,0)--(1.5,1.5)--(0,1.5)--(0,0)--cycle;
  \draw[fill=black!16,line width=0.8pt]
   (1,0)--(2.5,0)--(2.5,2.5)--(1,2.5)--(1,0)--cycle;
  \draw[line width=0.8pt] (0,1)--(1,1) (1.5,0)--(1.5,1.5)--(1,1.5)  (2,2.5)--(2,2)--(2.5,2) ;

  \draw[line width=0.8pt]  (0,0.5)--(0.5,0.5)--(0.5,0)  (1,1.5)--(1,1)--(1.5,1);

  \draw[fill=black] (1,0) circle (1pt);
  \draw[fill=black] (0,1) circle (1pt);
  \draw[fill=black] (2.5,2.5) circle (1pt);

 \node[above, font=\small,scale=0.6] at (2.5,2.5) {$\mathbf{v}$};
 \node[below, font=\small,scale=0.6] at (1,0) {$\mathbf{u}$};
 \node[left, font=\small,scale=0.6] at (0,1) {$\mathbf{a}$};

 \node[above, font=\small,scale=0.6] at (2.25,2) {$R'_1$};
 \node[below, font=\small,scale=0.6] at (1.25,1.4) {$R''_3$};
 \node[below, font=\small,scale=0.6] at (0.25,0.4) {$R''_2$};

 \node[below, font=\small,scale=1] at (1.2,-0.25) {(e) Case 5, $L_2$};


\end{tikzpicture}}
\end{minipage}\hfill
\begin{minipage}{0.22\textwidth}
\centering
\resizebox{\linewidth}{!}{%
\begin{tikzpicture}[every node/.style={font=\small}]
  \draw[fill=black!16,line width=0.8pt]
    (0,0)--(1.5,0)--(1.5,2.5)--(0,2.5)--(0,0)--cycle;
  \draw[fill=black!16,line width=0.8pt]
   (0,0)--(2.5,0)--(2.5,1.5)--(0,1.5)--(0,0)--cycle;
  \draw[line width=0.8pt] (0,1)--(2.5,1) (2,1.5)--(2,1)  (1.5,1.5)--(1.5,0) (2.5,0.5)--(2,0.5)--(2,0)  (0.5,1.5)--(0.5,1)--(0,1) (1,2.5)--(1,2)--(1.5,2);

  \draw[fill=black] (0,0) circle (1pt);
  \draw[fill=black] (1.5,2.5) circle (1pt);
  \draw[fill=black] (2,1) circle (1pt);

 \node[above, font=\small,scale=0.6] at (1.5,2.5) {$\mathbf{v}$};
 \node[below, font=\small,scale=0.6] at (0,0) {$\mathbf{u}$};
 \node[below, font=\small,scale=0.6] at (2,1) {$\mathbf{a}$};

 \node[above, font=\small,scale=0.6] at (1.25,2) {$R'_1$};
 \node[below, font=\small,scale=0.6] at (0.25,1.4) {$R''_3$};
 \node[below, font=\small,scale=0.6] at (2.25,0.4) {$R''_2$};

 \node[below, font=\small,scale=1] at (1.2,-0.25) {(f) Case 6, $L_2$};


\end{tikzpicture}}
\end{minipage}\hfill
\caption{Configurations of $R'_1, R''_2, R''_3$ in $L_2$}\label{fig18}
\end{figure}

{\bf Step 2:} We show that there exists a bijection from $\mathcal {R}'_k(\mathcal {C})/ \overset{\scriptscriptstyle\mathcal {C}}{\thicksim}$ to $\widetilde{\mathcal {R}}_{k-1}(\mathcal {C}_{\mathbf{v}})$. As a consequence, $|\mathcal {R}'_k(\mathcal {C})/ \overset{\scriptscriptstyle\mathcal {C}}{\thicksim}|= |\widetilde{\mathcal {R}}_{k-1}(\mathcal {C}_{\mathbf{v}})|=\widetilde{r}_{k-1}(\mathcal {C}_{\mathbf{v}})$.

It is easy to see that $F\in \mathcal {R}'_k(\mathcal {C})$ if and only if $F$ is a $k$-rook configuration in $\mathcal {R}_k(\mathcal {C})$ with a rook $R_1$ placed on the basic interval $\overline{I_{\mathbf{v}}}(\mathcal {C})$ and  other rooks placed on the  set $\big\{I_{\mathbf{u}}(\mathcal {C})\mid \mathbf{u}\in T(\mathcal {C})\setminus U_{\mathbf{v}}(\mathcal {C}) \big\}$, where $T(\mathcal {C})$ is the set of the lower left corners of basic intervals of $\mathcal {C}$.
In fact, if there exists some $R_2\in F\setminus\{R_1\}$ such that $R_2$ is placed on $I_{\mathbf{a}}(\mathcal {C})$ for some $\mathbf{a}\in U_{\mathbf{v}}(\mathcal {C})$.
Then $R_1$ and $R_2$ are  in switching position, and $F$ is equivalent to a $k$-rook configuration in $\mathcal {C}\setminus \mathbf{v}$, a contradiction.


Let $\mathcal {R}''_{i}(\mathcal {C})$ (resp. $\mathcal {R}^*_{i}(\mathcal {C})$) be the collection of all $i$-rook configurations (resp. $i$-rook placements) in $\mathcal {C}$, each rook of which is placed on $\big\{I_{\mathbf{u}}(\mathcal {C})\mid \mathbf{u}\in T(\mathcal {C})\setminus U_{\mathbf{v}}(\mathcal {C}) \big\}$. Note that $F \in \mathcal {R}'_k(\mathcal {C})$ if and only if $F \setminus \{R_1\} \in \mathcal {R}''_{k-1}(\mathcal {C})$, and $F \overset{\scriptscriptstyle\mathcal {C}}{\thicksim} G$ if and only if $F \setminus \{R_1\} \overset{\scriptscriptstyle\mathcal {C}}{\thicksim} G \setminus \{R_1\}$ for each pair of  $F, G \in \mathcal {R}'_k(\mathcal {C})$, where $R_1$ is a rook placed on $\overline{I_{\mathbf{v}}}(\mathcal {C})$. It follows that there exists a bijection from $\mathcal {R}'_k(\mathcal {C})/ \overset{\scriptscriptstyle\mathcal {C}}{\thicksim}$ to $\mathcal {R}''_{k-1}(\mathcal {C})/ \overset{\scriptscriptstyle\mathcal {C}}{\thicksim}$.
Thus, it suffices to show that there exists a bijection from $\mathcal {R}''_{k-1}(\mathcal {C})/ \overset{\scriptscriptstyle\mathcal {C}}{\thicksim}$ to $\widetilde{\mathcal {R}}_{k-1}(\mathcal {C}_{\mathbf{v}})$.

According to the proof of Proposition \ref{induction-2}, $T(\mathcal {C})\setminus U_{\mathbf{v}}(\mathcal {C})$ is the set of the lower left corners of basic intervals of $\mathcal {C}_{\mathbf{v}}$. Hence there exists a bijection $\psi$: $\big\{I_{\mathbf{u}}(\mathcal {C})\mid \mathbf{u}\in T(\mathcal {C})\setminus U_{\mathbf{v}}(\mathcal {C}) \big\} \to \mathcal {C}_{\mathbf{v}}$ given by $I_{\mathbf{u}}(\mathcal {C}) \mapsto I_{\mathbf{u}}(\mathcal {C}_{\mathbf{v}})$. This could induce a natural bijection $\psi_{i}$:  $\mathcal {R}^*_{i}(\mathcal {C}) \to \mathcal {R}^*_{i}(\mathcal {C}_{\mathbf{v}})$ given by $F \mapsto \psi_{i}(F)$, where $F$ is an $i$-rook placement placed on $I_{\mathbf{u}_1}(\mathcal {C}), \ldots, I_{\mathbf{u}_i}(\mathcal {C})$ and $\psi_{i}(F)$ is an $i$-rook placement placed on $I_{\mathbf{u}_1}(\mathcal {C}_{\mathbf{v}}), \ldots, I_{\mathbf{u}_i}(\mathcal {C}_{\mathbf{v}})$.
It follows from Setup \ref{setup} and Remark \ref{T1} that two rooks $R_1, R_2$ are in a same row (resp. in a same column) of  $\mathcal {C}$ if and only if $\psi_{1}(R_1), \psi_{1}(R_2)$ are in a same row (resp. in a same column) of  $\mathcal {C}_{\mathbf{v}}$. As a consequence, $F$ is a $(k-1)$-rook configuration of $\mathcal {R}''_{k-1}(\mathcal {C})$ if and only if $\psi_{k-1}(F)$ is a $(k-1)$-rook configuration of $\mathcal {R}_{k-1}(\mathcal {C}_{\mathbf{v}})$. Hence $\psi_{k-1}$ induces a bijection from $\mathcal {R}''_{k-1}(\mathcal {C})$ to $\mathcal {R}_{k-1}(\mathcal {C}_{\mathbf{v}})$.
Furthermore, it follows from Setup \ref{setup}, Remark \ref{T1} and Lemma \ref{cv-2} that $[\mathbf{u}, urc(I_{\mathbf{w}}(\mathcal {C}))]$ is an inner interval of $\mathcal {C}$ containing rooks $R_1, R_2$ if and only if $[\mathbf{u}, urc(I_{\mathbf{w}}(\mathcal {C}_{\mathbf{v}}))]$ is an inner interval of $\mathcal {C}_{\mathbf{v}}$ containing rooks $\psi_{1}(R_1), \psi_{1}(R_2)$. As a consequence, for each pair of $F, G \in \mathcal {R}''_{k-1}(\mathcal {C})$, $F \overset{\scriptscriptstyle\mathcal {C}}{\thicksim} G$ if and only if $\psi_{k-1}(F) \overset{\scriptscriptstyle\mathcal {C}_{\mathbf{v}}}{\thicksim} \psi_{k-1}(G)$. Hence $\psi_{k-1}$ induces a bijection from $\mathcal {R}''_{k-1}(\mathcal {C})/ \overset{\scriptscriptstyle\mathcal {C}}{\thicksim}$ to $\widetilde{\mathcal {R}}_{k-1}(\mathcal {C}_{\mathbf{v}})$.
\end{proof}

It follows from \cite[Remark 2.1]{2025nrr} and Lemma \ref{sum} that the following result is true.
\begin{lem}\label{weakly-connected}
Let $\mathcal {C}$ be a polyocollection with weakly connected components $\mathcal {C}_1,\mathcal {C}_2,\ldots,\mathcal {C}_s$. If $H_{\mathbb{K}[\mathcal {C}_i]}(t)=\frac{\widetilde{r}_{\mathcal {C}_i}(t)}{(1-t)^{\text{\em dim}\mathbb{K}[\mathcal {C}_i]}}$ for $i=1,2,\ldots,s$, then $H_{\mathbb{K}[\mathcal {C}]}(t)=\frac{\widetilde{r}_{\mathcal {C}}(t)}{(1-t)^{|V(\mathcal {C})|-|\mathcal {C}|}}$.
\end{lem}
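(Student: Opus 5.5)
The plan is to reduce everything to two multiplicativity facts, one on the algebraic side and one on the combinatorial side, and then combine them with the hypothesis.

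\emph{Algebraic side.} Let $\mathcal{C}_1,\ldots,\mathcal{C}_s$ be the weakly connected components. Distinct components share no vertex, so $V(\mathcal{C})$ is the disjoint union of the $V(\mathcal{C}_i)$ and $S_{\mathcal{C}}=S_{\mathcal{C}_1}\otimes_{\mathbb{K}}\cdots\otimes_{\mathbb{K}}S_{\mathcal{C}_s}$. Next I would check that every inner interval of $\mathcal{C}$ lies in a single component. If $[\mathbf{a},\mathbf{b}]\in\mathcal{I}(\mathcal{C})$, its closure is tiled by basic intervals with pairwise disjoint interiors. Any two tiles can be joined by a chain of tiles whose consecutive members share a vertex, so all tiles lie in one weakly connected component. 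It follows that $\mathcal{I}(\mathcal{C})=\bigsqcup_i\mathcal{I}(\mathcal{C}_i)$ and $I_{\mathcal{C}}=\sum_i I_{\mathcal{C}_i}S_{\mathcal{C}}$. Applying Lemma \ref{sum} repeatedly (by induction on $s$) gives
\[
H_{\mathbb{K}[\mathcal{C}]}(t)=\prod_{i=1}^s H_{\mathbb{K}[\mathcal{C}_i]}(t).
\]
By hypothesis this equals $\prod_i\widetilde r_{\mathcal{C}_i}(t)\big/(1-t)^{\sum_i\dim\mathbb{K}[\mathcal{C}_i]}$.

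\emph{Combinatorial side.} I need two identities: $\widetilde r_{\mathcal{C}}(t)=\prod_i\widetilde r_{\mathcal{C}_i}(t)$, and $\sum_i\dim\mathbb{K}[\mathcal{C}_i]=|V(\mathcal{C})|-|\mathcal{C}|$.

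For the first, this is the content of \cite[Remark 2.1]{2025nrr} carried over to polyocollections. Two basic intervals in different components are never in the same row or column, since both relations require an inner interval containing the two, and we just saw inner intervals stay in one component. So a rook configuration on $\mathcal{C}$ is exactly a tuple of configurations on the $\mathcal{C}_i$. Each switch takes place inside one inner interval, hence inside one component, so the equivalence classes factor as products of classes on the components. Reading off coefficients gives $\widetilde r_{\mathcal{C}}(t)=\prod_i\widetilde r_{\mathcal{C}_i}(t)$.

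\emph{Dimension count, which is the main obstacle.} Summing the dimensions is the delicate step, because it needs $\dim\mathbb{K}[\mathcal{C}_i]=|V(\mathcal{C}_i)|-|\mathcal{C}_i|$ for each $i$. In the intended application the components are natural polyocollections of type $\mathcal{Q}_1$ or $\mathcal{Q}_2$. Each component inherits this property: its inner intervals are those of $\mathcal{C}$ lying in it, so condition $(\dag)$ restricts. Proposition \ref{dim-depth} then applies to each component.

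If instead the lemma is read in full generality, I would derive the dimension from the given formula by evaluating at $t=1$. Since $\widetilde r_{\mathcal{C}_i}(1)\neq 0$, the pole order of $H_{\mathbb{K}[\mathcal{C}_i]}(t)$ at $t=1$ is $\dim\mathbb{K}[\mathcal{C}_i]$. I would then compare this with the known dimension count for polyocollections. Either way, summing gives
\[
\sum_i\bigl(|V(\mathcal{C}_i)|-|\mathcal{C}_i|\bigr)=|V(\mathcal{C})|-|\mathcal{C}|,
\]
because the vertex sets and the sets of basic intervals partition. Substituting both identities into the product formula gives the claimed expression for $H_{\mathbb{K}[\mathcal{C}]}(t)$.
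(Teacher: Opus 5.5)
Your route is the paper's: its proof is just \cite[Remark 2.1]{2025nrr} (multiplicativity of the switching rook polynomial over weakly connected components) together with Lemma \ref{sum}. You also correctly supply the fact that lets both carry over to polyocollections: every inner interval, and hence every row/column relation and every switch, stays inside one weakly connected component. For the chaining claim inside an inner interval, add the one-line reason. Two tiles meeting along a segment of positive length have edges sharing at least two lattice points, so by the polyocollection axiom they meet in a full common edge and therefore share two corners.

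The step that does not work is your ``full generality'' fallback for the exponent. Evaluating at $t=1$ only shows that the pole order of $H_{\mathbb{K}[\mathcal{C}_i]}(t)$ is $\dim\mathbb{K}[\mathcal{C}_i]$, which is just the exponent already in the hypothesis. There is no general dimension count for polyocollections to compare it with. In general one only has $\dim\mathbb{K}[\mathcal{C}_i]\geq |V(\mathcal{C}_i)|-|\mathcal{C}_i|$, which comes from $I_{\mathcal{C}_i}\subseteq I_{\Lambda_{\mathcal{C}_i}}$ and $\operatorname{rank}\Lambda_{\mathcal{C}_i}=|\mathcal{C}_i|$; equality holds, for instance, when $I_{\mathcal{C}_i}$ is prime. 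So $\dim\mathbb{K}[\mathcal{C}_i]=|V(\mathcal{C}_i)|-|\mathcal{C}_i|$ is an extra input that neither Lemma \ref{sum} nor the product formula for $\widetilde{r}$ provides, and the paper's one-line proof leaves it implicit as well. The lemma is applied in only one place, the reduction to the weakly connected case in the proof of Theorem \ref{hs-r}. There the equality is supplied by Proposition \ref{dim-depth} applied to each component, which is exactly your first branch. Keep that branch and drop the second.
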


Now, we could give (3) of Main theorem.

\begin{thm} \label{hs-r}
Let $\mathcal {C}$ be a natural polyocollection of type $\mathcal{Q}_1$ or of type $\mathcal{Q}_2$. Then the following statements are true:\\[-0.3cm]
\begin{itemize}
\item[(1)]  The $h$-polynomial of $\mathbb{K}[\mathcal {C}]$ is $\widetilde{r}_{\mathcal {C}}(t)$$;$ \\[-0.3cm]
\item[(2)] $\text{\em reg}(\mathbb{K}[\mathcal {C}])=r(\mathcal {C})$.\\[-0.3cm]
\end{itemize}
\end{thm}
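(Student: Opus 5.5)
We reduce to type $\mathcal{Q}_1$ by symmetry and prove (1) by induction on $|V(\mathcal{C})|$ (equivalently on $|\mathcal{C}|$). By Lemma \ref{weakly-connected} we may assume $\mathcal{C}$ is weakly connected, although the induction can also be run directly for arbitrary $\mathcal{C}$. Set $d(\mathcal{C})=|V(\mathcal{C})|-|\mathcal{C}|$. The precise claim to prove is
$$H_{\mathbb{K}[\mathcal{C}]}(t)=\frac{\widetilde{r}_{\mathcal{C}}(t)}{(1-t)^{d(\mathcal{C})}}.$$
By Proposition \ref{dim-depth}, $d(\mathcal{C})$ is the Krull dimension, so this claim is exactly (1). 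For the base case, take $\mathcal{C}$ to be empty or a single basic interval. For a single interval we get $H=(1+t)/(1-t)^3$ and $\widetilde{r}_{\mathcal{C}}=1+t$, which checks directly.

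For the inductive step, let $\mathbf{v}$ be the uppermost rightmost vertex and work in Setup \ref{setup}. Lemmas \ref{intial} and \ref{exact} give
$$H_{\mathbb{K}[\mathcal{C}]}=t\,H_{S_{\mathcal{C}}/(\mathrm{in}(I_{\mathcal{C}}):x_{\mathbf{v}})}+H_{S_{\mathcal{C}}/(\mathrm{in}(I_{\mathcal{C}}),x_{\mathbf{v}})}.$$
By Proposition \ref{induction-1}, the second quotient is $\mathbb{K}[\mathcal{C}\setminus\mathbf{v}]$ with its initial ideal, tensored with a polynomial ring on the variables of $V(\mathcal{C})\setminus(V(\mathcal{C}\setminus\mathbf{v})\cup\{\mathbf{v}\})$. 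Call these the freed vertices; they are the single vertices of $\overline{I_{\mathbf{v}}}(\mathcal{C})$ other than $\mathbf{v}$. Likewise, the first quotient is $\mathbb{K}[\mathcal{C}_{\mathbf{v}}]$ (through its initial ideal) tensored with a polynomial ring on $V(\mathcal{C})\setminus(V(\mathcal{C}_{\mathbf{v}})\cup U_{\mathbf{v}}(\mathcal{C})\cup\{\mathbf{v}\})$. Both $\mathcal{C}\setminus\mathbf{v}$ and $\mathcal{C}_{\mathbf{v}}$ are of type $\mathcal{Q}_1$. By Lemma \ref{cv-1}, $\mathcal{C}_{\mathbf{v}}$ is algebraically isomorphic to the natural polyocollection $\mathcal{C}'_{\mathbf{v}}$. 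That isomorphism matches basic intervals, inner intervals and rook data, so the inductive hypothesis (applied to $2\mathcal{C}'_{\mathbf{v}}$, which has fewer basic intervals) gives the Hilbert series of $\mathbb{K}[\mathcal{C}_{\mathbf{v}}]$ as $\widetilde{r}_{\mathcal{C}_{\mathbf{v}}}/(1-t)^{d(\mathcal{C}_{\mathbf{v}})}$.

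The key bookkeeping is the dimension count, where we need
$$d(\mathcal{C}_{\mathbf{v}})+\#\{\text{free variables}\}=d(\mathcal{C})\quad\text{and}\quad d(\mathcal{C}\setminus\mathbf{v})+\#\{\text{freed vertices}\}=d(\mathcal{C}).$$
For the first equality, note that $V(\mathcal{C}_{\mathbf{v}})$ is contained in $V(\mathcal{C})\setminus(U_{\mathbf{v}}(\mathcal{C})\cup\{\mathbf{v}\})$. So the left side equals $|V(\mathcal{C})|-|U_{\mathbf{v}}(\mathcal{C})|-1-|\mathcal{C}_{\mathbf{v}}|$, and Proposition \ref{induction-2} turns this into $d(\mathcal{C})-1$ plus the count of variables not in $V(\mathcal{C}_{\mathbf{v}})$. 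I expect this to work out so that the total exponent is $d(\mathcal{C})$, since the factor $t$ accounts for the extra $x_{\mathbf{v}}$. A cleaner route is to avoid this count altogether. Lemma \ref{intial} together with Lemma \ref{sum} shows that both summands have denominators of the form $(1-t)^{e}$, and the exponents $e$ must all equal the common dimension. This holds because both quotients are Cohen–Macaulay of dimension $d(\mathcal{C})$; alternatively, compare dimensions via Proposition \ref{dim-depth} applied to $\mathcal{C}\setminus\mathbf{v}$ and $\mathcal{C}'_{\mathbf{v}}$. For the $\mathcal{C}\setminus\mathbf{v}$ summand, removing one basic interval decreases $|\mathcal{C}|$ by one and removes $\mathbf{v}$ together with the freed vertices, and the freed vertices come back as free variables, so the exponents agree. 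Combining, the numerator is $t\,\widetilde{r}_{\mathcal{C}_{\mathbf{v}}}(t)+\widetilde{r}_{\mathcal{C}\setminus\mathbf{v}}(t)$, which equals $\widetilde{r}_{\mathcal{C}}(t)$ by Proposition \ref{induction-3}. This proves (1).

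The main obstacle is this exponent matching for the colon term, i.e. checking $|V(\mathcal{C})|-|V(\mathcal{C}_{\mathbf{v}})|-|U_{\mathbf{v}}(\mathcal{C})|-1$ against $|\mathcal{C}|-|\mathcal{C}_{\mathbf{v}}|-|U_{\mathbf{v}}(\mathcal{C})|$ using Proposition \ref{induction-2}. I would first try to show that every vertex of $\mathcal{C}$ outside $U_{\mathbf{v}}(\mathcal{C})\cup\{\mathbf{v}\}$ is still a vertex of $\mathcal{C}_{\mathbf{v}}$ except for the corners $\mathbf{u}_{i,n}$ of the top row that do not appear in $\mathcal{T}_2$, and then count these. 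If that becomes messy, I would instead use the dimension argument: the $h$-polynomial is determined up to the denominator power, and Cohen–Macaulayness (Corollary \ref{cor1}) fixes that power.

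For (2), Corollary \ref{cor1} says $\mathbb{K}[\mathcal{C}]$ is Cohen–Macaulay, so Lemma \ref{h-r} gives $\operatorname{reg}=\deg h=\deg\widetilde{r}_{\mathcal{C}}(t)$. This degree is $r(\mathcal{C})$, since $\widetilde{r}_{r(\mathcal{C})}(\mathcal{C})\ge1$: there is at least one maximal rook configuration, hence at least one equivalence class.
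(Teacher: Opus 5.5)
Your strategy is the paper's: the same exact sequence in $x_{\mathbf v}$, Propositions \ref{induction-1}--\ref{induction-3}, the passage to $\mathcal C'_{\mathbf v}$, and Lemma \ref{h-r} for part (2). The base case, the $\mathcal C\setminus\mathbf v$ summand and part (2) are fine. The gap is at the step you yourself call the main obstacle, and it comes from misidentifying the free variables of the colon quotient.

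The variable $x_{\mathbf v}$ is \emph{free} in $S_{\mathcal C}/(\mathrm{in}(I_{\mathcal C}):x_{\mathbf v})$, where $\mathrm{in}$ is taken with respect to $<_{\text{lex}}$. The initial ideal is generated by squarefree monomials $x_{\mathbf a}x_{\mathbf b}$ with $[\mathbf a,\mathbf b]$ inner, so $x_{\mathbf v}\notin \mathrm{in}(I_{\mathcal C}):x_{\mathbf v}$. Also $\mathbf v\notin V(\mathcal C_{\mathbf v})\cup U_{\mathbf v}(\mathcal C)$. So the polynomial factor lives on $V(\mathcal C)\setminus\bigl(V(\mathcal C_{\mathbf v})\cup U_{\mathbf v}(\mathcal C)\bigr)$, which is the paper's $S_2$ modulo the $x_{\mathbf u}$ with $\mathbf u\in U_{\mathbf v}(\mathcal C)$. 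It is not the set with $\mathbf v$ also removed.

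With your set, Proposition \ref{induction-2} gives exponent exactly $d(\mathcal C)-1$. Your remedy, that ``the factor $t$ accounts for the extra $x_{\mathbf v}$'', is not valid: multiplying by $t$ is a degree shift and does not change the order of the pole at $t=1$. If the exponent really were $d(\mathcal C)-1$, the numerator would be $t(1-t)\widetilde r_{\mathcal C_{\mathbf v}}(t)+\widetilde r_{\mathcal C\setminus\mathbf v}(t)$, and Proposition \ref{induction-3} would not give $\widetilde r_{\mathcal C}(t)$.

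With the correct set the count is one line: $\bigl(|V(\mathcal C)|-|V(\mathcal C_{\mathbf v})|-|U_{\mathbf v}(\mathcal C)|\bigr)+\bigl(|V(\mathcal C_{\mathbf v})|-|\mathcal C_{\mathbf v}|\bigr)=|V(\mathcal C)|-|\mathcal C|$ by Proposition \ref{induction-2}. The $|U_{\mathbf v}(\mathcal C)|$ killed variables are paid for by the $|U_{\mathbf v}(\mathcal C)|$ lost basic intervals. This is exactly how, in the $\mathcal C\setminus\mathbf v$ term, the killed $x_{\mathbf v}$ is paid for by the one lost interval.

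Your fallback, ``both quotients are Cohen--Macaulay of dimension $d(\mathcal C)$'', does not repair this as written. That dimension claim is the very thing in question. Proposition \ref{dim-depth} applied to $\mathcal C'_{\mathbf v}$ only gives $\dim\mathbb K[\mathcal C_{\mathbf v}]$, which returns you to the free-variable count.

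The fallback can be made rigorous. $S_{\mathcal C}/\mathrm{in}(I_{\mathcal C})$ is the Stanley--Reisner ring of the independence complex of $G(\mathcal C)$, which is Cohen--Macaulay and hence pure. The colon and the sum correspond to the star and the deletion of $\mathbf v$. Both have full dimension, because $\mathbf v$ lies in a facet and has the neighbour $\mathbf u_{m-1,n-1}$. Finally, $\widetilde r(1)\neq 0$ forces each exponent to equal the Krull dimension. The direct count is simpler, though, and is what the paper does.
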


\begin{proof}
We first consider the case that $\mathcal {C}$ is a natural polyocollection of type $\mathcal{Q}_1$.
From Lemma \ref{weakly-connected}, we can assume that $\mathcal {C}$ is weakly connected. Since $\mathbb{K}[\mathcal {C}]$ is Cohen-Macaulay by Corollary \ref{cor1}, then, using Lemma \ref{h-r}, it suffices to show that $H_{\mathbb{K}[\mathcal {C}]}(t)=\frac{\widetilde{r}_{\mathcal {C}}(t)}{(1-t)^{|V(\mathcal {C})|-|\mathcal {C}|}}$.

We use induction on $|\mathcal {C}|$. If $|\mathcal {C}|=1$, then the result is obvious. Now assume that  $|\mathcal {C}|\geq 2$ and the result is true for $|\mathcal {C}|-1$.  From Lemmas \ref{intial} and  \ref{exact}, it follows that
\[H_{\mathbb{K}[\mathcal {C}]}(t)=H_{S_{\mathcal {C}}/\text{in}_{<_{\text{lex}}}(I_{\mathcal {C}})}(t)=t\cdot H_{S_{\mathcal {C}}/(\text{in}_{<_{\text{lex}}}(I_{\mathcal {C}}):x_{\mathbf{v}})}(t)+H_{S_{\mathcal {C}}/(\text{in}_{<_{\text{lex}}}(I_{\mathcal {C}}),x_{\mathbf{v}})}(t),\tag{$\ddag\ddag$} \label{formula4}\]
where $\mathbf{v}\in V(\mathcal {C})$  is the uppermost and rightmost vertex of $\mathcal {C}$ and $<_{\text{lex}}$ is the monomial order on $S_{\mathcal {C}}$ defined in Section \ref{sec1}.

By Proposition \ref{induction-1}, $(\text{in}_{<_{\text{lex}}}(I_{\mathcal {C}}),x_{\mathbf{v}})=\text{in}_{<_{\text{lex}}}(I_{\mathcal {C}\setminus \mathbf{v}})+(x_{\mathbf{v}})$ and $\mathcal {C}\setminus \mathbf{v}$ is a natural polyocollection of type $\mathcal{Q}_1$.  Note that  $\mathcal {C}\setminus \mathbf{v}$ can be obtained from $\mathcal {C}$ by removing the interval $\overline{I_{\mathbf{v}}}(\mathcal {C})$, then $|\mathcal {C}|=|\mathcal {C}\setminus \mathbf{v}|+1$.
 Using Lemmas \ref{intial}, \ref{sum} and induction hypothesis on $|\mathcal {C}|$, we can get that
\begin{eqnarray*}
H_{S_{\mathcal {P}}/(\text{in}_{<_{\text{lex}}}(I_{\mathcal {C}}),x_{\mathbf{v}})}(t)&=&H_{S_{1}/(x_{\mathbf{v}})}(t)\cdot H_{S_{\mathcal {C}\setminus \mathbf{v}}/\text{in}_{<_{\text{lex}}}(I_{\mathcal {C}\setminus \mathbf{v}})}(t)\\
&=&\frac{1}{(1-t)^{|V(\mathcal {C})|-|V(\mathcal {C}\setminus \mathbf{v})|-1}}\cdot H_{\mathbb{K}[\mathcal {C}\setminus \mathbf{v}]}(t)\\
&=&\frac{1}{(1-t)^{|V(\mathcal {C})|-|V(\mathcal {C}\setminus \mathbf{v})|-1}}\cdot\frac{\widetilde{r}_{\mathcal {C}\setminus \mathbf{v}}(t)}{(1-t)^{|V(\mathcal {C}\setminus \mathbf{v})|-|\mathcal {C}\setminus \mathbf{v}|}}\\
&=&\frac{\widetilde{r}_{\mathcal {C}\setminus \mathbf{v}}(t)}{(1-t)^{|V(\mathcal {C})|-|\mathcal {C}|}},
\end{eqnarray*}
where $S_1=\mathbb{K}[x_{\mathbf{a}}\mid \mathbf{a}\in V(\mathcal {C})\setminus V(\mathcal {C}\setminus \mathbf{v})]$.

By Lemma \ref{cv-1} and Proposition \ref{induction-1}, $$(\text{in}_{<_{\text{lex}}}(I_{\mathcal {C}}):x_{\mathbf{v}})=\text{in}_{<_{\text{lex}}}(I_{\mathcal {C}_{\mathbf{v}}})+\big(x_{\mathbf{u}}\mid \mathbf{u}\in U_{\mathbf{v}} (\mathcal {C})\big)$$
 and $\mathcal {C}_{\mathbf{v}}$ is polyocollection of type $\mathcal{Q}_1$, where $U_{\mathbf{v}} (\mathcal {C})$ and $\mathcal {C}_{\mathbf{v}}$ are defined as Setup \ref{setup}.  Using Lemmas \ref{intial} and \ref{sum}, we can get that
\begin{eqnarray*}
\text{($\ddag\ddag\ddag$)} \hspace{1cm} H_{S_{\mathcal {C}}/(\text{in}_{<_{\text{lex}}}(I_{\mathcal {C}}):x_{\mathbf{v}})}(t)&=&H_{S_{2}/(x_{\mathbf{u}}\mid \mathbf{u}\in U_{\mathbf{v}} (\mathcal {C}) )}(t)\cdot H_{S_{\mathcal {C}_{\mathbf{v}}}/\text{in}_{<_{\text{lex}}}(I_{\mathcal {C}_{\mathbf{v}}})}(t)\\
&=&\frac{1}{(1-t)^{|V(\mathcal {C})|-|V(\mathcal {C}_{\mathbf{v}})|-|U_{\mathbf{v}} (\mathcal {C})|}}\cdot H_{\mathbb{K}[\mathcal {C}_{\mathbf{v}}]}(t),
\end{eqnarray*}
where $S_2=\mathbb{K}[x_{\mathbf{u}}\mid \mathbf{u}\in V(\mathcal {C})\setminus V(\mathcal {C}_{\mathbf{v}})]$. Recall that $\mathcal {C}_{\mathbf{v}}$ is  algebraically isomorphic to $\mathcal {C}'_{\mathbf{v}}$ defined as Setup \ref{setup} and $\mathcal {C}'_{\mathbf{v}}$ can be regarded as a natural polyocollection of type $\mathcal{Q}_1$ by Lemma \ref{cv-2}, then by hypothesis on $|\mathcal {C}|$,
$$H_{\mathbb{K}[\mathcal {C}_{\mathbf{v}}]}(t)= H_{\mathbb{K}[\mathcal {C}'_{\mathbf{v}}]}(t)
=\frac{\widetilde{r}_{\mathcal {C}'_{\mathbf{v}}}(t)}{(1-t)^{|V(\mathcal {C}'_{\mathbf{v}})|-|\mathcal {C}'_{\mathbf{v}}|}}
=\frac{\widetilde{r}_{\mathcal {C}_{\mathbf{v}}}(t)}{(1-t)^{|V(\mathcal {C}_{\mathbf{v}})|-|\mathcal {C}_{\mathbf{v}}|}}.$$
Since $|\mathcal {C}|=|\mathcal {C}_{\mathbf{v}}|+|U_{\mathbf{v}} (\mathcal {C})|$ by Proposition \ref{induction-2}, then from formula ($\ddag\ddag\ddag$) it follows that
$$H_{S_{\mathcal {C}}/(\text{in}_{<_{\text{lex}}}(I_{\mathcal {C}}):x_{\mathbf{v}})}(t)=\frac{\widetilde{r}_{\mathcal {C}_{\mathbf{v}}}(t)}{(1-t)^{|V(\mathcal {C})|-|\mathcal {C}|}}.$$
Furthermore, from formula (\ref{formula4}) and Proposition \ref{induction-3}  it follows that
$$H_{\mathbb{K}[\mathcal {C}]}(t)=\frac{t\cdot\widetilde{r}_{\mathcal {C}_{\mathbf{v}}}(t)+\widetilde{r}_{\mathcal {C}\setminus \mathbf{v}}(t)}{(1-t)^{|V(\mathcal {C})|-|\mathcal {C}|}}=\frac{\widetilde{r}_{\mathcal {C}}(t)}{(1-t)^{|V(\mathcal {C})|-|\mathcal {C}|}},$$
as desired.

If $\mathcal {C}$ is is a natural polyocollection of type $\mathcal{Q}_2$, then its horizontal flip is a natural polyocollection of type $\mathcal{Q}_1$, which verifies the conclusion proved above.
\end{proof}

Theorem \ref{hs-r} gives positive answers for the switching rook polynomial conjecture and the rook number problem for collections of cells of type $\mathcal{Q}_1$ or of type $\mathcal{Q}_2$.
Since some well-studied collections of cells are natural polyocollections of type $\mathcal{Q}_1$ or of type $\mathcal{Q}_2$,
Theorem \ref{hs-r} directly yields the following corollary, which recovers their consistency with Conjecture \ref{conj2} in a unified way.
\begin{cor}
Let $\mathcal{P}$ be one of the following collections of cells. Then the $h$-polynomial of $\mathbb{K}[\mathcal{P}]$ is equal to
$\widetilde{r}_{\mathcal{P}}(t)$, and its regularity is equal to  $r(\mathcal{P})$. \\[-0.4cm]
\begin{itemize}
  \item[(1)] Grid polyominoes (\cite{2023dn})$;$ \\[-0.4cm]
  \item[(2)] Frame polyominoes (\cite{2024jn})$;$ \\[-0.4cm]
  \item[(3)] Collection of cells of type $\mathcal{Q}_1$ or of type $\mathcal{Q}_2$,  each of whose weakly connected components is convex (\cite{2025nrr})$;$ \\[-0.4cm]
\end{itemize}
\end{cor}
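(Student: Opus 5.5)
The corollary is a specialization of Theorem \ref{hs-r}, so the plan is to show that each listed family of collections of cells is, viewed as a polyocollection whose basic intervals are its cells, a natural polyocollection of type $\mathcal{Q}_1$ or of type $\mathcal{Q}_2$. First we record that any collection of cells $\mathcal{P}$ is a natural polyocollection. Two distinct cells have disjoint interiors. Two cells either share an edge or meet in at most a vertex, so the edge condition in the definition of polyocollection holds. The inner intervals, rook configurations, switches and the ideal $I_{\mathcal{P}}$ are the same as the classical ones, so $\widetilde{r}_{\mathcal{P}}(t)$ and $r(\mathcal{P})$ agree with the original notions. Once the type condition is established, Theorem \ref{hs-r} gives both conclusions at once.

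The work is checking condition $(\dag)$ or $(\dag\dag)$ for each family. Item (3) is immediate, since being of type $\mathcal{Q}_1$ or $\mathcal{Q}_2$ is part of the hypothesis; convexity of the components is not even needed. For grid polyominoes and frame polyominoes, the plan is to invoke Theorem \ref{condition} in reverse. The cited works (\cite{2023dn}, \cite{2024jn}, and the Gr\"obner basis results in \cite{2022mrr} for grid and subgrid polyominoes) show, or it can be read off from their structure, that the inner $2$-minors form a reduced quadratic Gr\"obner basis for a lexicographic order of the kind used there. After a horizontal flip if necessary, this order is $<_{\text{lex}}$ or $<'_{\text{lex}}$, and Theorem \ref{condition} converts the statement into $(\dag)$ or $(\dag\dag)$.

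If a direct argument is preferred, one verifies $(\dag)$ geometrically. Take two inner intervals $[\mathbf{a},\mathbf{b}]$ and $[\mathbf{b},\mathbf{c}]$ meeting only at the corner $\mathbf{b}$. In a grid polyomino, inner intervals lie within a single maximal horizontal or vertical strip, and the strips meet in rectangular blocks. Hence $\mathbf{b}$ lies on a strip containing both intervals, and one of $[\mathbf{e},\mathbf{c}]$, $[\mathbf{d},\mathbf{c}]$ lies inside that strip. A frame polyomino is a rectangle minus a convex polyomino. There the hole sits inside both $[\mathbf{a},\mathbf{c}]$-type regions at most on one side of $\mathbf{b}$, and the complementary anti-diagonal interval avoids it.

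The main obstacle is this verification for grid polyominoes, where holes are arranged in a grid pattern and one must argue that two inner intervals meeting at a corner cannot have holes blocking both candidate intervals. I would first try to handle it through the known Gr\"obner basis statement combined with Theorem \ref{condition}. Only if that statement is unavailable for the needed order would I fall back on the strip case analysis.
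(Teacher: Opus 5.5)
Your plan is the same as the paper's. The paper only remarks that grid polyominoes, frame polyominoes and the collections in (3) are natural polyocollections of type $\mathcal{Q}_1$ or $\mathcal{Q}_2$ and then applies Theorem~\ref{hs-r}, with no verification of the type condition, so your extra checks already go beyond what it writes.

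If you do the direct check, tighten two points:
\begin{itemize}
\item In a grid polyomino there need not be a single strip containing both $[\mathbf{a},\mathbf{b}]$ and $[\mathbf{b},\mathbf{c}]$. The argument still works because every strip spans the whole ambient rectangle: $[\mathbf{d},\mathbf{c}]$ (resp.\ $[\mathbf{e},\mathbf{c}]$) lies in the vertical (resp.\ horizontal) strip containing $[\mathbf{b},\mathbf{c}]$.
\item For frame polyominoes the real reason is the convexity of the removed polyomino. A right/down monotone path of removed cells, from the part of $[\mathbf{e},\mathbf{c}]$ left of $\mathbf{b}$ to the part of $[\mathbf{d},\mathbf{c}]$ below $\mathbf{b}$, would have to enter $[\mathbf{a},\mathbf{b}]$ or $[\mathbf{b},\mathbf{c}]$.
\end{itemize}
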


The key to the proof of Theorem \ref{hs-r} is constructing a suitable short exact sequence, which reduces the computation of the Hilbert series of the coordinate ring of a natural polyocollection of type $\mathcal{Q}_1$ to that of the coordinate rings of two new natural polyocollections of type $\mathcal{Q}_1$ of smaller rank. However, restricted to collections of cells, the method fails, as the following example shows. This failure is one key reason we focus on polyocollections.

\begin{ex}\label{exm1}
Let $\mathcal {C}$ be a collection of cells of type $\mathcal{Q}_1$, as shown in Figure \ref{fig12} (a),  and let  $\mathbf{v}\in V(\mathcal {C})$ be its uppermost and rightmost vertex. From Proposition \ref{induction-1}  it follows that
$$(\text{in}_{<_{\text{lex}}}(I_{\mathcal {C}}):x_{\mathbf{v}})=\text{in}_{<_{\text{lex}}}(I_{\mathcal {C}_{\mathbf{v}}})+\big(x_{\mathbf{u}_1}, x_{\mathbf{u}_2}\big)$$
where $\mathcal {C}_{\mathbf{v}}$ is shown in Figure \ref{fig12} (b). $\mathcal {C}_{\mathbf{v}}$ is clearly  a natural polocollection, but it is difficult to determine whether it is algebraically isomorphic to a collection of cells.
\end{ex}

\begin{figure}[htbp]
\centering
\begin{minipage}{0.40\textwidth}
\centering
\resizebox{\linewidth}{!}{%
\begin{tikzpicture}[every node/.style={font=\small}]
\draw[fill=black!16,line width=0.8pt]
    (1,0)--(2,0)--(2,1)--(4,1)--(4,3)--(3,3)--(3,2)--(1,2)--(1,0)--cycle;
\draw[fill=black!16,line width=0.8pt]
    (1,2)--(1,3)--(3,3)--(3,4)--(0,4)--(0,2)--(1,2)--cycle;
\draw[fill=black!16,line width=0.8pt]
    (2,4)--(3,4)--(3,5)--(2,5)--(2,4)--cycle;

\draw[fill=black!16,line width=0.8pt]
    (-2,4)--(-2,2)--(0,2)--(0,4)--(-2,4)--cycle;

\draw[fill=black!16,line width=0.8pt]
    (4,1)--(6,1)--(6,3)--(4,3)--(4,1)--cycle;

  \draw[line width=0.8pt]   (0,3)--(4,3) (1,2)--(4,2) (1,1)--(4,1) (1,0)--(2,0) (-2,3)--(0,3) (4,2)--(6,2);

    \draw[line width=0.8pt] (1,0)--(1,4) (2,0)--(2,2) (2,3)--(2,4) (3,1)--(3,4) (-1,4)--(-1,2) (5,1)--(5,3);

  \draw[fill=black] (1,0) circle (1pt);
  \draw[fill=black] (2,0) circle (1pt);

  \draw[fill=black] (1,1) circle (1pt);
  \draw[fill=black] (2,1) circle (1pt);
  \draw[fill=black] (3,1) circle (1pt);
  \draw[fill=black] (4,1) circle (1pt);

  \draw[fill=black] (0,2) circle (1pt);
  \draw[fill=black] (1,2) circle (1pt);
  \draw[fill=black] (2,2) circle (1pt);
  \draw[fill=black] (3,2) circle (1pt);
  \draw[fill=black] (4,2) circle (1pt);

  \draw[fill=black] (0,3) circle (1pt);
  \draw[fill=black] (1,3) circle (1pt);
  \draw[fill=black] (2,3) circle (1pt);
  \draw[fill=black] (3,3) circle (1pt);
  \draw[fill=black] (4,3) circle (1pt);

  \draw[fill=black] (0,4) circle (1pt);
  \draw[fill=black] (1,4) circle (1pt);
  \draw[fill=black] (2,4) circle (1pt);
  \draw[fill=black] (3,4) circle (1pt);

  \draw[fill=black] (2,5) circle (1pt);
  \draw[fill=black] (3,5) circle (1pt);

   \draw[fill=black] (-2,4) circle (1pt);
  \draw[fill=black] (-1,4) circle (1pt);
    \draw[fill=black] (-2,3) circle (1pt);
  \draw[fill=black] (-1,3) circle (1pt);
     \draw[fill=black] (-2,2) circle (1pt);
  \draw[fill=black] (-1,2) circle (1pt);

     \draw[fill=black] (5,3) circle (1pt);
  \draw[fill=black] (6,3) circle (1pt);
     \draw[fill=black] (5,2) circle (1pt);
  \draw[fill=black] (6,2) circle (1pt);
     \draw[fill=black] (5,1) circle (1pt);
  \draw[fill=black] (6,1) circle (1pt);

 \node[above, font=\small,scale=0.6] at (3,5) {$\mathbf{v}$};
 \node[above, font=\small,scale=0.6] at (1.8,4) {$\mathbf{u}_1$};
 \node[below, font=\small,scale=0.6] at (2,3) {$\mathbf{u}_2$};

\textcolor[rgb]{1.00,1.00,1.00}{   \node[left, font=\small,scale=0.6] at (-3,0) {$1$};}

 \node[below, font=\small,scale=1] at (2,-0.5) {(a) ~~~~~~~~~ $\mathcal {C}$};

\end{tikzpicture}}
\end{minipage}\hfill
\begin{minipage}{0.40\textwidth}
\centering
\resizebox{\linewidth}{!}{%
\begin{tikzpicture}[every node/.style={font=\small}]
\draw[fill=black!16,line width=0.8pt]
    (1,0)--(2,0)--(2,1)--(4,1)--(4,3)--(3,3)--(3,2)--(1,2)--(1,0)--cycle;
\draw[fill=black!16,line width=0.8pt]
    (1,2)--(1,3)--(3,3)--(3,4)--(0,4)--(0,2)--(1,2)--cycle;

\draw[fill=black!16,line width=0.8pt]
    (-2,4)--(-2,2)--(0,2)--(0,4)--(-2,4)--cycle;

\draw[fill=black!16,line width=0.8pt]
    (4,1)--(6,1)--(6,3)--(4,3)--(4,1)--cycle;

  \draw[line width=0.8pt]   (0,3)--(4,3) (1,2)--(4,2) (1,1)--(4,1) (1,0)--(2,0) (-2,3)--(0,3) (4,2)--(6,2);

    \draw[line width=0.8pt] (1,0)--(1,4) (2,0)--(2,2)   (3,1)--(3,4) (-1,4)--(-1,2) (5,1)--(5,3);

  \draw[fill=black] (1,0) circle (1pt);
  \draw[fill=black] (2,0) circle (1pt);

  \draw[fill=black] (1,1) circle (1pt);
  \draw[fill=black] (2,1) circle (1pt);
  \draw[fill=black] (3,1) circle (1pt);
  \draw[fill=black] (4,1) circle (1pt);

  \draw[fill=black] (0,2) circle (1pt);
  \draw[fill=black] (1,2) circle (1pt);
  \draw[fill=black] (2,2) circle (1pt);
  \draw[fill=black] (3,2) circle (1pt);
  \draw[fill=black] (4,2) circle (1pt);

  \draw[fill=black] (0,3) circle (1pt);
  \draw[fill=black] (1,3) circle (1pt);
  \draw[fill=black] (3,3) circle (1pt);
  \draw[fill=black] (4,3) circle (1pt);

  \draw[fill=black] (0,4) circle (1pt);
  \draw[fill=black] (1,4) circle (1pt);
  \draw[fill=black] (3,4) circle (1pt);

     \draw[fill=black] (-2,4) circle (1pt);
  \draw[fill=black] (-1,4) circle (1pt);
    \draw[fill=black] (-2,3) circle (1pt);
  \draw[fill=black] (-1,3) circle (1pt);
     \draw[fill=black] (-2,2) circle (1pt);
  \draw[fill=black] (-1,2) circle (1pt);

     \draw[fill=black] (5,3) circle (1pt);
  \draw[fill=black] (6,3) circle (1pt);
     \draw[fill=black] (5,2) circle (1pt);
  \draw[fill=black] (6,2) circle (1pt);
     \draw[fill=black] (5,1) circle (1pt);
  \draw[fill=black] (6,1) circle (1pt);

 \textcolor[rgb]{1.00,1.00,1.00}{\node[left, font=\small,scale=0.6] at (2,5) {$1$};}

\textcolor[rgb]{1.00,1.00,1.00}{   \node[left, font=\small,scale=0.6] at (8,0) {$1$};}

 \node[below, font=\small,scale=1] at (2,-0.5) {(b)~~~~~~~~~ $\mathcal {C}_{\mathbf{v}}$};
\end{tikzpicture}}
\end{minipage}\hfill
\caption{An illustration of Example \ref{exm1}}\label{fig12}
\end{figure}

The above example raises a natural question for further investigation:  under what conditions is a polyocollection algebraically isomorphic to a collection of cells?


\end{document}